\numberwithin{equation}{section}
\theoremstyle{plain}
\newtheorem{theorem}{Theorem}[section]
\newtheorem{lemma}[theorem]{Lemma}
\newtheorem{conjecture}[theorem]{Conjecture}
\newtheorem{claim}[theorem]{Claim}
\newtheorem{proposition}[theorem]{Proposition}
\newtheorem{corollary}[theorem]{Corollary}
\theoremstyle{definition}
\newtheorem{definition}[theorem]{Definition}
\theoremstyle{remark}
\DeclareMathAlphabet{\mathbbold}{U}{bbold}{m}{n}
\def\bb1{\mathbbold{1}}
\def\bbn{\mathbb{N}}
\DeclareMathOperator\Id{Id}
\DeclareMathOperator\Aut{Aut}
\DeclareMathOperator\ad{ad}
\DeclareMathOperator\Lie{Lie}
\DeclareMathOperator\End{End}
\DeclareMathOperator\spn{Span}
\newcommand{\til}[1]{\widetilde{#1}}
\newtheorem*{remark*}{Remark}
\DeclareDocumentCommand{\adx}{ O{2} O{x_1}  }{\ad_{#2}^{[#1]}}
\begin{document}
\title{Lie algebras and torsion groups with identity}
\author{E. Zelmanov}
	
	\keywords{the Burnside problem, pro-p groups, PI-algebras, Lie algebras}
	
	\subjclass[2010]{Primary: 20E18, 20F50, 20F40, 16R99}
	
	\dedicatory{To my teacher Leonid A. Bokut on his 80th birthday.}

	\address{Efim Zelmanov\\		
		ezelmano@math.ucsd.edu\\		
		Department of Mathematics, University of California, San Diego\\
		 9500 Gilman Drive, La Jolla, CA 92093-0112, U.S.A.}
	
	\begin{abstract}
	We prove that a finitely generated Lie algebra $L$ such that (i) every commutator in generators is ad-nilpotent, and (ii) $ L$ satisfies a polynomial identity,   is nilpotent. As a corollary we get that a finitely generated residually-$p$ torsion group whose pro-$p$ completion satisfies a pro-$p$ identity  is finite. 
	\end{abstract}

\maketitle

\section{introduction}

In 1941 A.G Kurosh formulated a Burnside-type problem for algebras \cite{Ku}.  Let $A$ be an associative algebra over a field $F$.  An element $a\in A$ is said to be nilpotent if $a^{n(a)}=0$ for some $n(a)\geq1$.  An algebra $A$ is said to be nil if every element of $A$ is nilpotent.

\underline{The Kurosh Problem}: Is it true that a finitely generated nil algebra is nilpotent?

Examples by E. S. Golod \cite{G} (see also the far reaching examples from \cite{LS}) showed that this is not always the case.  However the Kurosh Problem has positive solution in the class of algebras satisfying a polynomial identity (PI-algebras).

Let $f(x_1,x_2\cdots, x_m)$ be a nonzero element of the free associative $F$-algebra.  We say that an algebra $A$ satisfies the polynomial identity $f=0$ if $f(a_1,a_2,\cdots,a_m)=0$ for arbitrary elements $a_1,a_2,\cdots, a_m \in A$.

One of the high points of the theory of PI-algebras was the solution of the Kurosh Problem (I. Kaplansky \cite{Kap}, J. Levitzki \cite{L}, A. I. Shirshov \cite{Sh}) in the following form:

Let $A$ be an associative algebra generated by elements $a_1, \cdots,a_m$.  Let $S$ be the multiplicative semigroup generated by the elements \linebreak$a_1,\cdots,a_m$.  Suppose that an arbitrary element of $S$ is nilpotent.  Then the algebra $A$ is nilpotent.

Now let $L$ be a Lie algebra over a field $F$.  As above, for a nonzero element $f(x_1,x_2,\cdots,x_m)$ of the free Lie algebra we say that $L$ satisfies the identity $f=0$ if $f(a_1,a_2,\cdots,a_m)=0$ for arbitrary elements $a_1,a_2,\cdots,a_m \in A$, see \cite{B}.

An element $a\in L$ is said to be \underline{ad-nilpotent} if the linear operator 
\[
ad(a):L\rightarrow L,
 x\rightarrow[x,a]\]
  is nilpotent.

A subset $S\subset L$ is called a \underline{Lie set} if, for arbitrary elements $a,b\in S$, we have $[a,b]\in S$.  For a subset $X\subset L$, the Lie set generated by $X$ is the smallest Lie set $S\langle X\rangle$ containing $X$.  It consists of $X$ and of all iterated commutators in elements from $X$.

\begin{theorem} \label{Theorem1}
	Let $L$ be a Lie algebra satisfying a polynomial identity and generated by elements $a_1, \cdots, a_m$.  If an arbitrary element $s\in S\langle a_1,\cdots, a_m\rangle$ is ad-nilpotent then the Lie algebra $L$ is nilpotent.
\end{theorem}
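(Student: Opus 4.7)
The plan is to combine Zelmanov's sandwich machinery with the Shirshov height theorem for PI Lie algebras, in order to convert the pointwise ad-nilpotence on the Lie set $S = S\langle a_1,\ldots,a_m\rangle$ into a uniform nilpotence bound on $L$. First, I would replace $L$ by its associated $\mathbb{Z}_{\geq 0}^m$-multigraded Lie algebra with respect to multidegree in the generators. The polynomial identity passes to the multigraded algebra, and the ad-nilpotence index of any homogeneous commutator is bounded by that of any of its lifts; hence we may and do assume $L$ is multigraded. Under this reduction, nilpotence of $L$ becomes equivalent to a uniform bound on total multidegree of the nonzero homogeneous components.

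Next, I would invoke the Shirshov height theorem in the PI Lie setting: there exist a finite subset $T \subset S$ of ``hard'' commutators and an integer $h$ such that $L$ is spanned, as a vector space, by Hall-type products of length at most $h$ in the elements of $T$. By hypothesis, each $t \in T$ is ad-nilpotent, with some index $n_t$. If the $n_t$ were uniformly bounded, the desired nilpotence would follow by a classical Engel-style argument on the finite-height basis. The content of the theorem is that the PI hypothesis is strong enough to substitute for such a uniform Engel bound: it forces structural rigidity on how these potentially very different $n_t$ can interact.

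The main difficulty is the production of \emph{sandwich} elements in $L$, i.e.~nonzero $c \in L$ satisfying $\ad(c)^2 = 0$ and $\ad(c)\,\ad(L)\,\ad(c) = 0$. Starting from a hard commutator $a \in T$ with $\ad(a)^{n_a} = 0$, the operators built from $\ad(a)^{n_a-1}$ produce ``thin'' candidates, and a careful multilinearization of the polynomial identity — carried out in the local Jordan algebra attached to $a$ and combined with the solution of the Kurosh problem for PI Jordan algebras — lets one refine these candidates into genuine sandwich elements. Once a nonzero sandwich $c \in L$ is produced, the Kostrikin--Zelmanov theorem that the ideal generated by a sandwich in a Lie algebra is locally nilpotent, together with the finite generation and multigraded structure of $L$, shows that the ideal $I(c) \subseteq L$ is in fact nilpotent.

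To finish, I would pass to the quotient $L/I(c)$, which still satisfies all the hypotheses of the theorem (the PI is inherited by quotients, and ad-nilpotence of commutators in the images of the $a_i$ descends from $L$), and iterate the sandwich construction. The finite-height picture from Shirshov's theorem, together with the multigrading, bounds how many times the process can be repeated, so it terminates and $L$ itself is nilpotent. The main obstacle throughout is the sandwich-production step: without the PI hypothesis, pointwise ad-nilpotence of finitely many commutators need not produce any sandwich at all, and it is precisely Zelmanov's Jordan-algebraic and identity-theoretic machinery that forces the existence of these absolute zero divisors from the combination of ad-nilpotence on $S$ and the polynomial identity.
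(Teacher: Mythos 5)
The central problem with your proposal is the appeal to a ``Shirshov height theorem for PI Lie algebras'' in the form you state it: that $L$ is spanned by Hall-type commutators of \emph{bounded length} in a finite set $T \subset S$. If that were true, $L$ would automatically be finite-dimensional (there are only finitely many Hall words of bounded length in finitely many letters), and then nilpotence would follow at once from Engel--Jacobson, with no use of the polynomial identity or of sandwiches at all. So either this finiteness statement is false, or it already presupposes the conclusion. In fact no such bounded-length spanning result exists: the associative Shirshov theorem bounds the \emph{height}, not the length, of spanning monomials, and the individual blocks $u_i^{k_i}$ can be arbitrarily long; the Lie analogue of ``powers'' requires ad-iteration and precisely reduces to the Engel-type questions that this paper is fighting to solve. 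The paper instead reduces, via \Cref{Lemma3} and \Cref{Lemma4} (and \Cref{Lemma5} in positive characteristic), to a graded just infinite quotient, so that the eventual contradiction -- that this supposedly infinite-dimensional quotient is in fact nilpotent -- terminates the argument cleanly. Your multigrading step plus ``the process terminates by the finite-height picture'' has no argument behind it.

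A second gap concerns what happens after a sandwich is produced. You propose to quotient by the locally nilpotent ideal $I(c)$ and iterate. But the quotient $L/I(c)$ still satisfies all hypotheses of the theorem, so the iteration by itself gives no new information and no bound on how often it can recur; you would be repeating the same step without a measure of progress. The paper resolves this differently: in the just infinite reduction any nonzero locally nilpotent ideal already has finite codimension, and (via \Cref{Lemma3}) is itself finitely generated, hence nilpotent and finite-dimensional, immediately contradicting infinite dimensionality of $L$ -- that is the whole argument in characteristic zero, and no iteration is needed. In positive characteristic the situation is harder still: a single sandwich is not enough, and the paper goes on (Sections \ref{Section5}--\ref{Section6}) to manufacture a \emph{regular divided polynomial} all of whose values generate nilpotent ideals in the associative enveloping algebra $\til{A}$, deduces from this that $A$ satisfies a genuine associative PI (\Cref{Lemma32}), and only then invokes the structure theory of prime associative PI-algebras to conclude. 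Your proposal does not touch the associative enveloping algebra at all.

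Finally, your sandwich-production sketch is essentially the characteristic-zero argument. You write that Kostrikin's lemma lets one descend from $\mathrm{ad}(a)^{n_a}=0$ to low-index nilpotent operators and then attach a Jordan algebra. But Kostrikin's lemma requires $n < \operatorname{char} F$ (or characteristic zero), so it simply fails in small positive characteristic -- which is exactly the case needed for the group-theoretic corollaries. The bulk of the paper (the tensoring with the algebra $E$, the divided power operators $U_k(\Omega)$, divided polynomials, and the quadratic Jordan algebra construction of \Cref{Proposition1}) is the replacement for Kostrikin's lemma in arbitrary characteristic. Your proposal does not address this at all; it would prove only the characteristic-zero special case, and even there the termination gap above remains.
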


This theorem has implications in group theory:

Let $p$ be a prime number.  A group $G$ is said to be \underline{residually-p} if there exists a family of homomorphisms $\phi_i:G\rightarrow G_i$ into finite $p$-groups $G_i$ such that $\bigcap\limits_i Ker(\phi_i)=(1)$.

Let $\mathds{Z}_p$ be the field of order $p$.  Consider the group algebra\\ $(\mathds{Z}_p)[G]$ and its fundamental ideal $w$ spanned by all elements $1-g,g\in G$.  It is easy to see that the group $G$ is residually-$p$ if and only if $\bigcap\limits_{i\geq1}w^i=(0)$.  The \underline{Zassenhaus filtration} is defined as
\[G=G_1>G_2>\cdots\]
where $G_i=\{g\in G|1-g\in w^i\}$.  Then $[G_i,G_j]\subseteq G_{i+j}$ 
and each factor $G_i/G_{i+1}$ is an elementary abelian $p$-group.  Hence
\[L_p(G)=\bigoplus\limits_{i\geq 1}G_i/G_{i+1}\]
is a Lie algebra over $\mathds{Z}_p$.

\begin{theorem}\label{Theorem2} Let $G$ be a residually-$p$ finitely generated torsion group such that the Lie algebra $L_p(G)$ satisfies a polynomial identity.  Then $G$ is a finite group.
\end{theorem}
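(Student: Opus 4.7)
\textbf{Proof proposal for Theorem~\ref{Theorem2}.} The plan is to reduce Theorem~\ref{Theorem2} to Theorem~\ref{Theorem1} applied to a suitable Lie subalgebra of $L_p(G)$, and then to conclude finiteness of $G$ from finite-dimensionality of $L_p(G)$. Fix generators $a_1,\dots,a_m$ of $G$ and denote by $\bar a_i\in G_1/G_2\subset L_p(G)$ their images.

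The first step is to verify the ad-nilpotency hypothesis of Theorem~\ref{Theorem1}. Since $G$ is residually-$p$ and torsion, every $g\in G$ has $p$-power order $p^{n(g)}$. Using the identity $(g-1)^{p}=g^{p}-1$ in $\mathds{Z}_p[G]$ together with the definition $G_i=\{g:g-1\in w^i\}$, one checks that $\bar g^{[p^{n(g)}]}=0$ in the restricted Lie algebra $L_p(G)$; then the restricted Lie algebra identity $\ad(x^{[p]})=(\ad x)^p$ yields $(\ad\bar g)^{p^{n(g)}}=0$. Applied to any iterated group commutator in the $a_i$'s (which is again a torsion element of $G$), this shows that every element of the Lie set $S\langle\bar a_1,\dots,\bar a_m\rangle$ is ad-nilpotent in $L_p(G)$. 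Applying Theorem~\ref{Theorem1} to the Lie subalgebra $L_0\subseteq L_p(G)$ generated by $\bar a_1,\dots,\bar a_m$ (which inherits the polynomial identity of $L_p(G)$) then gives that $L_0$ is nilpotent, hence finite-dimensional over $\mathds{Z}_p$.

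The second step upgrades finite-dimensionality from $L_0$ to the full $L_p(G)$. By Lazard's theorem, $L_p(G)$ is generated by $\bar a_1,\dots,\bar a_m$ as a \emph{restricted} Lie algebra, so it coincides with the $[p]$-closure of $L_0$. Each basis element of $L_0$ is the image of some group commutator in the $a_i$'s, hence of a torsion element of $G$, and so is $[p]$-nilpotent in $L_p(G)$. Combining this $[p]$-nilpotency with nilpotency of $L_0$ (so that the bracket correction terms in the Jacobson formula $(x+y)^{[p]}=x^{[p]}+y^{[p]}+\cdots$ live in $L_0$ itself) one shows by iteration that the $[p]$-closure of $L_0$ is still finite-dimensional. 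Finally, $L_p(G)=\bigoplus_{i\geq 1}G_i/G_{i+1}$ being finite-dimensional forces $G_i=G_{i+1}$ for all sufficiently large $i$, and combined with $\bigcap_i G_i=(1)$ this gives $G_N=(1)$ for some $N$; so $G$ has a composition series of finite length with finite elementary abelian $p$-quotients, and hence is finite.

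The step I expect to be the main obstacle is the $[p]$-closure argument in the second paragraph: once Theorem~\ref{Theorem1} has delivered nilpotency of $L_0$, propagating this to finite-dimensionality of the full restricted Lie algebra $L_p(G)$ requires juggling the $p$-semilinearity of the restricted operation, the $[p]$-nilpotency of torsion-image elements, and Lazard's generation statement simultaneously. Once $\dim_{\mathds{Z}_p}L_p(G)<\infty$ is in hand, the passage to $|G|<\infty$ is formal.
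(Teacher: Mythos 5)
Your first step coincides with the paper's: Theorem~\ref{Theorem1} applied to the Lie subalgebra $L_0\subseteq L_p(G)$ generated by $G_1/G_2$ gives nilpotency, hence finite-dimensionality, of $L_0$, with the ad-nilpotency hypothesis supplied by torsion exactly as you argue. After that the two proofs diverge. The paper quotes \cite{DSMS} for the fact that nilpotency of $L_0$ makes $G_{\hat{p}}$ $p$-adic analytic, then Lazard's theorem that $p$-adic analytic groups are linear, and finally Burnside--Schur for finiteness of finitely generated torsion linear groups. Your route stays inside the Lie algebra: you show that the restricted closure $\langle L_0\rangle^{[p]}$, which coincides with $L_p(G)$, is itself finite-dimensional, so the Zassenhaus filtration terminates at $(1)$ and $G$ is finite. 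This trades three substantial external theorems for an explicit Lie-theoretic computation and is genuinely more self-contained; what it loses is the information that $G_{\hat{p}}$ is $p$-adic analytic, which the paper's route makes visible.

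The step you flag as the main obstacle does work, but your sketch (focusing on Jacobson's formula) leaves out the structural fact that actually controls the restricted closure, namely what happens when the $[p]$-power generators are bracketed with one another. Choose a homogeneous basis $\rho_1,\dots,\rho_r$ of $L_0$ consisting of images of group commutators, so each $\rho_i^{[p^{n_i}]}=0$ for some $n_i$, and set $W = L_0 + \sum_{i,k}\mathds{Z}_p\,\rho_i^{[p^k]}$ (inner sum over $1\le k<n_i$), a finite-dimensional subspace of $L_p(G)$. The key observation is $[W,W]\subseteq L_0$: for $y\in L_0$ one has $[y,\rho_i^{[p^k]}] = y\,(\ad\rho_i)^{p^k}\in L_0$ since $\ad\rho_i$ preserves the Lie subalgebra $L_0$; and in $[\rho_j^{[p^l]},\rho_i^{[p^k]}]=(\ad\rho_i)^{p^k}(\rho_j^{[p^l]})$, the first application of $\ad\rho_i$ already gives $-(\ad\rho_j)^{p^l}(\rho_i)\in L_0$, after which further applications stay in $L_0$. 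Once $[W,W]\subseteq L_0$ is in hand, your Jacobson-formula observation closes $W$ under $[p]$, so $W$ is a restricted Lie subalgebra containing $L_0$ and hence $L_p(G)=W$ is finite-dimensional. Note also that nilpotency of $L_0$ enters this second step only by forcing $\dim L_0<\infty$; the Jacobson correction terms land in $L_0$ simply because $L_0$ is a Lie subalgebra, regardless of nilpotency.
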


Let $g(x_1,x_2,\cdots,x_m)$ be a nonidentical element of the free pro-$p$ group (see \cite{S}, \cite{DSMS}) on the set of free generators $x_1,x_2,\cdots,x_m$.  We say that a pro-$p$ group $G$ satisfies the identity $g=1$ if \linebreak$g(a_1, a_2, \cdots,a_m)=1$ for arbitrary elements $a_1,a_2,\cdots,a_m\in G$.

\begin{theorem}\label{Theorem3}
	Let $G$ be a residually-$p$ finitely generated torsion group such that its pro-$p$ completion $G_{\hat{p}}$ satisfies a nontrivial identity.  Then $G$ is a finite group.	
\end{theorem}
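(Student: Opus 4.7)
The plan is to reduce Theorem~\ref{Theorem3} to Theorem~\ref{Theorem2}. Since $G$ is a finitely generated residually-$p$ group, the Zassenhaus filtration of $G$ is obtained by intersecting with $G$ the terms of the Zassenhaus filtration of $G_{\hat p}$, and the natural map $L_p(G)\to L_p(G_{\hat p})$ is an isomorphism in each degree. Hence it suffices to show that a nontrivial pro-$p$ identity on $G_{\hat p}$ forces $L_p(G_{\hat p})$ to satisfy a nontrivial polynomial Lie identity; Theorem~\ref{Theorem2} will then deliver that $G$ is finite.

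To extract such a Lie identity I would use the Magnus embedding. Let $F=F(x_1,\ldots,x_m)$ be the free pro-$p$ group on $m$ generators and embed its completed group algebra $\mathds{Z}_p[[F]]$ into the noncommutative formal power series ring $\mathds{Z}_p\langle\langle X_1,\ldots,X_m\rangle\rangle$ via $x_i\mapsto 1+X_i$. Under this map the Zassenhaus filtration on $F$ corresponds to the filtration by degree, and the associated graded algebra is the free restricted universal enveloping algebra of the free restricted Lie algebra $\mathcal{L}$ on $X_1,\ldots,X_m$; in particular $L_p(F)\cong\mathcal{L}$. Writing the pro-$p$ identity as $g(x_1,\ldots,x_m)=1+h$ with $h$ of minimal total degree $n\geq 1$, the leading homogeneous component $h_n$ is a nonzero element of the degree-$n$ piece of this restricted enveloping algebra.

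I would then extract from $h_n$ a multilinear Lie polynomial $\ell(y_1,\ldots,y_k)$ as follows: using the PBW decomposition for restricted enveloping algebras together with a standard linearization argument (substituting sums $X_i=Y_{i,1}+\cdots+Y_{i,r}$ and taking the multilinear coefficient in the $Y_{i,j}$), one obtains a nonzero multilinear element of $\mathcal{L}$. To see that $\ell$ is an identity on $L_p(G_{\hat p})$, evaluate at an arbitrary continuous homomorphism $\phi\colon F\to G_{\hat p}$: by hypothesis $\phi(g)=1$, so $\phi(h)=0$ in the completed group algebra of $G_{\hat p}$, and passing to the degree-$n$ part of the associated graded shows that $\ell$ vanishes on the images of $\phi$ in $L_p(G_{\hat p})$. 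Since $\phi$ was arbitrary, every homogeneous tuple in $L_p(G_{\hat p})$ is covered, so $\ell$ is a polynomial identity on $L_p(G_{\hat p})\cong L_p(G)$.

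The main obstacle is the extraction step: one must ensure that the linearized leading term does not collapse to zero when projected onto the Lie subalgebra of the free restricted enveloping algebra. In particular one must rule out that $h_n$ lies entirely in the image of the restricted $p$-power map, and one must verify that multilinearization preserves nonvanishing in characteristic $p$. Both of these are the technical core of the Wilson--Zelmanov reduction of pro-$p$ identities to polynomial Lie identities, and once they are handled the combination with Theorem~\ref{Theorem2} completes the proof.
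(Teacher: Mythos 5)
Your reduction to Theorem~\ref{Theorem2} is the same strategy as the paper, and your diagnosis of the crux is accurate: the issue is precisely whether the leading term of the pro-$p$ identity, viewed in the degree-$n$ homogeneous component of the free restricted Lie algebra $L_p(\mathrm{Fr})$, can lie entirely in the span of $p$-th powers, in which case it projects to zero in the free (ordinary) Lie algebra and gives no usable Lie identity. But you stop at flagging this and saying ``once they are handled'' — you do not actually handle it, so the proposal has a genuine gap at exactly the point the argument must turn.

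The paper closes this gap with a short trick you are missing. Write $w \in \mathrm{Fr}_n \setminus \mathrm{Fr}_{n+1}$ so that modulo $\mathrm{Fr}_{n+1}$ one has $w \equiv \rho_1^{p^{s_1}} \cdots \rho_r^{p^{s_r}}$, where $\rho_i$ is a group commutator of length $l_i$ with $p^{s_i} l_i = n$. If $p \mid n$, replace $w$ by $[w, x_0]$ for a fresh free generator $x_0$; the new identity lives in degree $n+1$, which is not divisible by $p$, and one checks it is still nontrivial (the degree-$n$ part of $L_p(\mathrm{Fr})$ has trivial centralizer by freeness). Once $p \nmid n$, every $s_i$ must be $0$, so $w \equiv \rho_1 \cdots \rho_r \pmod{\mathrm{Fr}_{n+1}}$, and the leading term is a genuine sum of Lie commutators $\sum_i \bar{\rho_i}$ with no $p$-power component. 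This element is a nonzero Lie identity for $L_p(G)$, and Theorem~\ref{Theorem2} applies. Without the $[w,x_0]$ step (or some replacement for it), your extraction of a Lie identity is not justified and the reduction does not go through.
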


Remark: the examples of infinite residually-$p$ groups due to E.S. Golod \cite{G}, R. I. Grigorchuk \cite{Gr}, and N. Gupta-S. Sidki \cite{GS} are finitely generated and torsion.

The results above significantly extend the positive solution of the Restricted Burnside Problem \cite{Z4, Z5} and the work of \cite{Z6} on compact torsion groups.  They were announced in \cite{Z7, Z8} but no detailed proof followed.  Meanwhile they were used in numerous papers.  Therefore I feel compelled to present a detailed proof.

The proof essentially uses the ideas and techniques from \cite{Z4, Z5}.

\begin{center}
\underline{Acknowledgments}
\end{center}

The author is grateful to the referees of this paper and to A. Fernandez Lopez for numerous valuable comments.

\section{The case of zero characteristic}
\label{Section2}

In this section we assume that char$F=0$.  This assumption allows us to avoid major difficulties but also miss major applications.

The following lemma is due to A. I. Kostrikin [Kos1; Kos2, Lemma 2.1.1].

\underline{Kostrikin Lemma}.  Let $L$ be a Lie algebra, $a\in L$, ad$(a)^n=0.$  If $4\leq n$ $\textless$ char$F$ (here zero characteristic is viewed as $\infty$), then 
\[\text{ad}(b\, \text{ad}(a)^{n-1})^{n-1}=0\]
 for an arbitrary element $b\in L$.
 
Choose a nonzero element $s\in S=S\langle a_1,\cdots, a_m\rangle$.  The element $s$ is ad-nilpotent.  Repeatedly using the Kostrikin lemma we can assume that ad$(s)^3=0$.

Recall that a linear algebra over a field $F$ of characteristic $\neq2$ is called a \underline{Jordan algebra} if it satisfies the identities
\begin{align*}
\text{(J1) } &x\circ y=y\circ x\\
\text{(J2) } & (x^2\circ y)\circ x = x^2\circ (y\circ x).
\end{align*}

If A is an associative algebra then $A^{(+)}=\{A, a\circ b=\displaystyle \frac{1}{2}(ab+ba)\}$ is a Jordan algebra.  For more information on Jordan algebras see \cite{J2,ZSSS, M3}.

We will use a construction of a Jordan algebra from \cite{FGG} which is a refined version of the Tits-Kantor-Koecher construction \cite{T1,T2,Kan,Ko}.

Let $L$ be a Lie algebra over a field of characteristic $\neq 2, 3$.  Let $s\in L$, ad$(s)^3=0$.  Define a new operation $a\circ b=[a,[s,b]]$, $a,b\in L$.  Then the vector space $K=\{a\in L|a$ ad$(s)^2=0\}$ is an ideal of the algebra $(L,\circ)$.

\begin{theorem}[\cite{FGG}]
	The factor algebra $(L,\circ)/K$ is a Jordan algebra.
\end{theorem}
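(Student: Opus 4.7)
The plan is to first derive a key consequence of $\operatorname{ad}(s)^3=0$, then use it to show $K$ is an ideal of $(L,\circ)$, and finally verify the Jordan axioms on the quotient. Throughout, write $D=\operatorname{ad}(s)$ in the left-action form $Dx=[s,x]$, so that $K=\ker D^2$ and $D^3=0$.

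Since $D$ is a derivation of $L$, Leibniz expands $D^3[a,b]=\sum_{i=0}^{3}\binom{3}{i}[D^{i}a,D^{3-i}b]$, and the terms with a factor $D^3a$ or $D^3b$ vanish by hypothesis, leaving
\[
0=D^3[a,b]=3\bigl([D^2a,Db]+[Da,D^2b]\bigr).
\]
Since $\operatorname{char} F\neq 3$, this gives the basic identity
\[
[D^2a,Db]+[Da,D^2b]=0\qquad\text{for all }a,b\in L. \qquad (\ast)
\]

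With $(\ast)$ in hand, the fact that $K$ is an ideal of $(L,\circ)$ follows by direct computation. For $k\in K$ (so $D^2k=0$), Leibniz yields
\[
D^2(a\circ k)=[D^2a,Dk],\qquad D^2(k\circ a)=2[Dk,D^2a],
\]
and both sides vanish by $(\ast)$, since for instance $[D^2a,Dk]=-[Da,D^2k]=0$. Commutativity modulo $K$ is then immediate from Jacobi: a one-line calculation gives $a\circ b-b\circ a=D[a,b]$, and $D^2\bigl(D[a,b]\bigr)=D^3[a,b]=0$, so $a\circ b\equiv b\circ a\pmod K$.

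The main work is the Jordan identity $(J2)$. I would expand both sides of $(x\circ x\circ y)\circ x=(x\circ x)\circ(x\circ y)$ using $u\circ v=[u,Dv]$, then repeatedly apply Jacobi to migrate the $D$-factors through the brackets, and finally use $(\ast)$ together with its polarizations (obtained by applying $D^3=0$ to deeper iterated brackets such as $D^3[a,[b,c]]=0$) to collect the difference as a sum of terms that either are of the form $D^2(\,\cdot\,)$ or live in $[D^2L,DL]$, both of which lie in $K$ by $(\ast)$. Equivalently, one can linearize $(J2)$ to the operator form
\[
[L_a,L_{b\circ c}]+[L_b,L_{c\circ a}]+[L_c,L_{a\circ b}]\equiv 0\pmod{K},
\]
where $L_x(y)=x\circ y$, and verify it by the same expand-and-match procedure. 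The obstacle here is purely combinatorial: $(J2)$ is a quartic identity, so the expansion produces many terms, and the task is to pair them off systematically using $(\ast)$ and its polarizations; no new ideas beyond Jacobi and $D^3=0$ are needed, only careful bookkeeping (with the factors of $2$ and $3$ absorbed using $\operatorname{char} F\neq 2,3$).
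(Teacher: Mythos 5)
The paper does not prove this theorem itself; it cites \cite{FGG}, so there is no in-paper argument to compare against — you are reconstructing that external result. Your preliminaries are correct: the linearized identity $[D^2a,Db]+[Da,D^2b]=0$ follows from $D^3=0$ and $\operatorname{char}F\neq 3$, the verification that $K$ is an ideal of $(L,\circ)$ is sound, and the observation that $a\circ b - b\circ a = D[a,b]\in K$ correctly establishes (J1) modulo $K$. These are accurate, if routine.

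The genuine gap is the verification of (J2), which is the substantive content of the theorem and which you only outline. You assert that the check is ``purely combinatorial'' and needs ``no new ideas beyond Jacobi and $D^3=0$,'' but you do not carry it out, and the claim is not self-evident. Setting $u=[x,Dx]$, a short Jacobi manipulation reduces $(x^2\circ y)\circ x-x^2\circ(y\circ x)$ to $[[u,Dx],Dy]-[u,[y,D^2x]]$, and it is not immediate from $(\ast)$ alone that this lies in $K$: one needs several further facts — e.g.\ $[K,DL]\subseteq K$, $[u,Dx]\equiv[x,Du]\pmod K$, and control of expressions built from $Du=[x,D^2x]$ and $D^2u=[Dx,D^2x]$ — assembled into a systematic reduction. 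Asserting that the terms ``pair off'' without exhibiting these intermediate identities or the cancellation itself leaves the main claim unproved. The first half of your write-up handles the easy parts; a complete proof still needs the (J2) computation.
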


For a set $X=\{x_1,x_2,\cdots\}$, let $FJ\langle X \rangle$ denote the free Jordan algebra (see \cite{J2, ZSSS, M3}).  Consider also the free associative algebra $F\langle X\rangle$.  Let $\phi$ be the homomorphism $\phi:FJ\langle X\rangle\rightarrow F\langle X \rangle^{(+)}$, $x\rightarrow x$, $x\in X$.  An element lying in the kernal $\ker\phi$, is called an $S$-identity.  A Jordan algebra $J$ is said to be $PI$ if there exists an element $f(x_1,\cdots,x_n)\in FJ\langle X\rangle$ that is not an $S$-identity such that $f(a_1,\cdots,a_n)=0$ for all elements $a_1,\cdots,a_n\in J$.

For elements $x, y, z$ of a Jordan algebra $J$, define their triple product $\{x,y,z\}=(xy)z+x(yz)-y(xz)$.  An element $a\in J$ is called an \underline{absolute zero divisor} if $a^2=0$ and $\{a,J,a\}=(0)$.

A Jordan algebra that does not contain nonzero absolute zero divisors is called \underline{nondegenerate}.  The smallest ideal $Mc(J)$ such that the factor algebra $J/Mc(J)$ is nondegenerate is called the McCrimmon radical of $J$.

\begin{lemma} \label{Lemma1}
	Let $J$ be a Jordan algebra with PI such that every element of $J$ is a sum of nilpotent elements.  Then $J=Mc(J)$.
	\begin{proof}
		Let $J\neq Mc(J)$.  Then without loss of generality we will assume that the algebra $J$ is nondegenerate.  Moreover, since a nondegenerate Jordan algebra is a subdirect product of prime nondegenerate Jordan algebras (see \cite{Z3}), we will assume that the algebra $J$ is prime and nondegenerate.
		
		In \cite{Z2} it was shown that a prime nondegenerate PI-algebra $J$ has nonzero \underline{center} 
\[Z(J)=\{z\in J|(za)b=z(ab) \text{ for arbitrary elements } a,b\in J\}\] 
and the ring of fractions $\til J=(Z(J)\setminus\{0\})^{-1}J$ is either a simple finite dimensional algebra over the field $\til Z=(Z(J)\setminus\{0\})^{-1}Z(J)$ or else an algebra of a symmetric nondegenerate bilinear form.  In both cases the algebra $\til J$ has a nonzero linear trace $t:\til J\rightarrow \til Z$ such that the trace of a nilpotent element is zero.  Since every element of $\til J$ is a sum of nilpotent elements it follows that $t(\til J)=(0)$, a contradiction that finishes the proof of the lemma.
	\end{proof}
\end{lemma}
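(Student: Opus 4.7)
The plan is to argue by contradiction, peeling off structural assumptions until $J$ is concrete enough to admit a trace form. Assuming $J\neq Mc(J)$, I would first replace $J$ by $J/Mc(J)$ and so reduce to the case where $J$ is nondegenerate; the hypothesis that every element is a sum of nilpotent elements is manifestly preserved by Jordan homomorphisms (a nilpotent maps to a nilpotent), and the polynomial identity descends to every quotient. Next, using the standard fact from \cite{Z3} that every nondegenerate Jordan algebra is a subdirect product of prime nondegenerate ones, I would pass to a prime subdirect factor, which is still PI and still has every element a sum of nilpotents.

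At this point the argument reduces to showing that no prime nondegenerate PI Jordan algebra $J$ can have every element a sum of nilpotents. Here I would invoke the structure theorem of \cite{Z2}: for such $J$, the center $Z(J)$ is nonzero, and the central localization $\til{J}=(Z(J)\setminus\{0\})^{-1}J$ is either a simple finite-dimensional Jordan algebra over $\til{Z}=(Z(J)\setminus\{0\})^{-1}Z(J)$, or the Jordan algebra of a nondegenerate symmetric bilinear form over $\til{Z}$. In either case, there is a nonzero $\til{Z}$-linear trace $t:\til{J}\to\til{Z}$ vanishing on nilpotent elements: the generic trace for finite-dimensional simple algebras, and the scalar-part projection $\alpha\cdot 1 + v \mapsto 2\alpha$ for the bilinear-form algebra (where any nilpotent has zero scalar part since its square already lands in the scalar line).

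To derive the contradiction, note that the hypothesis lifts to $\til{J}$: if $a=\sum b_i$ in $J$ with each $b_i$ nilpotent and $z\in Z(J)\setminus\{0\}$, then $z^{-1}a=\sum z^{-1}b_i$, where centrality of $z$ makes each summand nilpotent (since $(z^{-1}b_i)^n = z^{-n}b_i^n = 0$ in the Jordan-theoretic sense once we pass to powers in the central localization). Hence every element of $\til{J}$ is a sum of nilpotents, forcing $t(\til{J})=(0)$ and contradicting $t\neq 0$.

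The main obstacle is bibliographical rather than technical: the whole strategy leans on citing the deep classification results of \cite{Z2, Z3} as black boxes. The only genuinely Jordan-theoretic ingredient that would need careful checking from scratch is the existence, in both structural cases, of a nonzero linear trace that vanishes on nilpotents; once that is granted, the central-localization step and the resulting contradiction are essentially formal.
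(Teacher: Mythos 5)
Your proof follows essentially the same route as the paper: reduce to prime nondegenerate via $Mc(J)$ and the \cite{Z3} subdirect decomposition, invoke the \cite{Z2} structure theorem for the central localization, and derive the contradiction from a nilpotent-vanishing trace. The extra details you supply (the explicit scalar-part trace in the bilinear-form case and the check that ``sum of nilpotents'' lifts to $\tilde{J}$) are correct elaborations of steps the paper leaves implicit.
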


\begin{lemma} \label{Lemma2}
	The Jordan algebra $J=(L,\circ)/K$ is McCrimmon radical, i.e. $J=Mc(J)$.
	\begin{proof}
		By our assumption, the Lie algebra $L$ satisfies a nontrivial polynomial identity.  Passing to the full linearization of this identity (see \cite{ZSSS}) we can assume that the identity looks like
		\[\sum\limits_{\sigma\in S_n}\alpha_\sigma x_0 \text{ ad}(x_{\sigma(1)})\cdots\text{ ad}(x_{\sigma(n)})=0,\]
		where not all coefficients $\alpha_\sigma\in F$ are equal to 0.  This implies that
		\[\sum\limits_{\sigma\in S_n} \alpha_\sigma a_0 R(a_{\sigma(1)})\cdots R(a_{\sigma(n)})=0\]
		for arbitrary elements $a_0, a_1.\cdots,a_n\in J$, where $R(a):x\rightarrow xa$ denotes the multiplication operator in $J$.		
		
		It is easy to see that the element $\sum\limits_{\sigma\in S_n} \alpha_\sigma a_0 R(a_{\sigma(1)})\cdots R(a_{\sigma(n)})$ is not an $S$-identity.  Hence $J$ is a PI-algebra.  
		
		The Lie algebra $L$ is spanned by the Lie set $S=S\langle a_1,\cdots a_m\rangle.$  For an arbitrary element $a\in S$ let $\bar{a}=a+K$ be its image in the Jordan algebra $J$. The t$^{th}$ power of $\bar{a}$ in $J$ is $a\text{ ad}([s,a])^{t-1}+K$, which implies that the element $\bar{a}$ is nilpotent.  By \Cref{Lemma1} $J=Mc(J)$, which finishes the proof of \Cref{Lemma2}.
	\end{proof}
\end{lemma}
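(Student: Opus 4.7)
The plan is to verify the two hypotheses of \Cref{Lemma1} for $J=(L,\circ)/K$, namely that $J$ is a Jordan PI-algebra and that every element of $J$ is a sum of nilpotent elements.

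For the PI condition I start with the polynomial identity of $L$ and multilinearize it fully; after a standard reduction we may assume it has the form $\sum_{\sigma\in S_n}\alpha_\sigma\, x_0\,\text{ad}(x_{\sigma(1)})\cdots\text{ad}(x_{\sigma(n)})=0$ with not all $\alpha_\sigma\in F$ zero. Substituting $x_i=[s,y_i]$ converts each $\text{ad}([s,y_i])$ into the Jordan right multiplication $R(\bar y_i)$ modulo $K$ (this is the content of the definition $a\circ b=[a,[s,b]]$), so the Jordan polynomial $\tilde f=\sum_\sigma\alpha_\sigma\, y_0 R(y_{\sigma(1)})\cdots R(y_{\sigma(n)})$ vanishes identically on $J$. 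To conclude $J$ is PI I then need $\tilde f$ to not be an $S$-identity, i.e.\ $\phi(\tilde f)\neq 0$ in the free associative algebra.

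For the sum-of-nilpotents condition, the computation sketched in the statement gives $\bar a^t=a\,\text{ad}([s,a])^{t-1}+K$ for $a\in S$ by a short induction on $t$. Since $S$ is a Lie set and $s\in S$, we have $[s,a]\in S$, which is ad-nilpotent by hypothesis; hence $\bar a$ is nilpotent in $J$. Because $L$ is the linear span of $S$, the Jordan algebra $J$ is the linear span of these nilpotent images, so every element of $J$ is in particular a sum of nilpotent elements.

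The main obstacle is the non-$S$-identity check. I would handle it by isolating a single monomial in $\phi(\tilde f)$: a short induction shows that when the Jordan product $y_0 R(y_{\sigma(1)})\cdots R(y_{\sigma(n)})$ is expanded in the free associative algebra, the associative monomial $y_0 y_{\sigma(1)}\cdots y_{\sigma(n)}$ appears with coefficient $2^{-n}$, and it appears in $y_0 R(y_{\tau(1)})\cdots R(y_{\tau(n)})$ for no permutation $\tau\neq\sigma$ (each $R$ can only adjoin its argument immediately to the left or to the right of the current word, so producing the prescribed suffix in that order forces $\tau=\sigma$). Hence the coefficient of $y_0 y_{\sigma(1)}\cdots y_{\sigma(n)}$ in $\phi(\tilde f)$ is $2^{-n}\alpha_\sigma\neq 0$ for any $\sigma$ with $\alpha_\sigma\neq 0$, and \Cref{Lemma1} yields $J=Mc(J)$.
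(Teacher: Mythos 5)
Your proof is correct and follows the same overall route as the paper: reduce to the two hypotheses of Lemma \ref{Lemma1}, obtain the Jordan identity by substituting into the multilinearized Lie identity, and show each $\bar a$ with $a\in S$ is nilpotent via the power formula. The one place you add genuine content is the non-$S$-identity check, where the paper simply writes ``it is easy to see.'' Your monomial-isolation argument is a clean way to make that precise: in the expansion of $y_0 R(y_{\tau(1)})\cdots R(y_{\tau(n)})$ inside $F\langle X\rangle^{(+)}$, a word beginning with $y_0$ can arise only when every $R$ adjoins its argument on the right, which yields the unique word $y_0 y_{\tau(1)}\cdots y_{\tau(n)}$ with coefficient $2^{-n}$; since the $y_i$ are distinct free generators this forces $\tau=\sigma$, so the coefficient of $y_0 y_{\sigma(1)}\cdots y_{\sigma(n)}$ in the associative image is $2^{-n}\alpha_\sigma\neq 0$. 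That observation is exactly what justifies the paper's claim, and it relies on the characteristic-zero assumption in force throughout Section \ref{Section2} (otherwise $2^{-n}$ would not be available). The nilpotency part matches the paper verbatim once you note $[s,a]\in S$ by the Lie-set property.
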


Following A. I. Kostrikin\cite{K1, K3}, we call an element $a$ of a Lie algebra $L$ a \underline{sandwich} if (i) ad$(a)^2=0$ and (ii) ad$(a)\text{ ad}(b)\text{ ad}(a)=0$ for an arbitrary element $b\in L$.

If char$F\neq2$ then (i) implies (ii).

If $a$ is a nonzero absolute zero divisor of the Jordan algebra $J$ and char$F\neq2,3$ then for the nonzero element $b=a\text{ ad}(s)^2$ we have ad$(b)^2=\text{ad}(s)^2\text { ad}(a)^2\text{ ad}(s)^2.$  Hence $L\text{ ad}(b)^2\subseteq\{a,J,a\}\text{ ad}(s)^2=(0)$.  Hence, $b$ is a sandwich of the Lie algebra $L$.

To summarize, we showed that if $L=\langle a_1,\cdots,a_m\rangle$ is a nonzero Lie algebra over a field $F$ of zero characteristic, every element of the Lie set $S=S\langle a_1,\cdots ,a_m\rangle$ is ad-nilpotent, and if $L$ satisfies a nontrivial polynomial identity, then $L$ contains a nonzero sandwich.

\begin{lemma}	\label{Lemma3}
	Let $L=\langle a_1,\cdots,a_m\rangle$ be a finitely generated Lie algebra such that an arbitrary element of the Lie set $S\langle a_1,\cdots, a_m\rangle$ is ad-nilpotent.  Let $I$ be an ideal of $L$ of finite codimension.  Then $I$ is finitely generated as a Lie algebra.
	\begin{proof}
		The finite dimensional Lie algebra $L/I$ is spanned by a Lie set for which every element in the set is ad-nilpotent.  By the Engel-Jacobson theorem \cite{J1} the Lie algebra $L/I$ is nilpotent.  In other words, there exists $k\geq1$ such that $L^k\subseteq I$.
		
		Suppose that every commutator $\rho$ in $a_1\cdots,a_m$ of length $<k$ is ad-nilpotent of degree at most $t$, i.e. ad$(\rho)^t=0$.  Let $N=ktm^k$.  In \cite[Lemma 2.5]{Z8} it is shown that every product ad$(a_{i_1})\cdots \text{ad}(a_{i_N})$, $1\leq i_1,\cdots,i_N\leq m$, can be represented as \[\text{ad}(a_{i_1})\cdots\text{ad}(a_{i_N})=\sum\limits_j v_j\text{ ad}(\rho_j),\] where the $v_j$'s are (possibly empty) products of the $ad(a_i)$'s and the $\rho_j$'s are commutators in $a_1,\cdots,a_m$ of length $\geq$k.  Each summand on the right hand side has the same degree in each $a_i$ as the left hand side.
		
		It follows now that the algebra $L^k$ is generated by commutators $\rho$ in $a_1,\cdots,a_m$ such that $k\leq \text{length}(\rho)<2N$.
		
		We have dim$_F(L/L^k)<\infty$.  Let $b_1,\cdots,b_r\in I$ be a basis of $I$ modulo $L^k$.  Now the algebra $I$ is generated by $b_1,\cdots,b_r$ and by all commutators $\rho$ in $a_1,\cdots,a_m$ such that $k\leq \text{length}(\rho)<2N$, which proves the lemma.
	\end{proof}
\end{lemma}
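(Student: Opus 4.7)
The plan is to reduce finite generation of $I$ to finite generation of some term $L^k$ of the lower central series, and then to establish the latter using the combinatorial substitution identity from \cite[Lemma 2.5]{Z8}.

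For the first reduction, observe that $L/I$ is finite dimensional and is spanned by the images of the Lie set $S\langle a_1,\dots,a_m\rangle$, every element of which is ad-nilpotent. The Engel-Jacobson theorem then forces $L/I$ to be nilpotent, so there exists $k\ge 1$ with $L^k\subseteq I$. Since $L$ is finitely generated, $L/L^k$ is itself finite dimensional (it is a quotient of the free $(k-1)$-step nilpotent Lie algebra on $m$ generators), and we may pick finitely many $b_1,\dots,b_r\in I$ whose cosets span $I/L^k$. It therefore suffices to produce a finite generating set for $L^k$ as a Lie algebra; combined with $b_1,\dots,b_r$ it will generate $I$.

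To generate $L^k$, note that the commutators in $a_1,\dots,a_m$ of length less than $k$ form a finite set, all of which are ad-nilpotent by hypothesis; let $t$ be a common bound on their ad-nilpotency indices and set $N=ktm^k$. The combinatorial lemma \cite[Lemma 2.5]{Z8} then asserts that every product $\ad(a_{i_1})\cdots\ad(a_{i_N})$ can be rewritten as $\sum_j v_j\,\ad(\rho_j)$, where each $\rho_j$ is a commutator in the $a_i$ of length $\ge k$, each $v_j$ is a (possibly empty) product of the $\ad(a_i)$, and the multidegree is preserved. Applying this identity to the trailing block of $N$ ads inside any left-normed commutator of length $\ge 2N$ rewrites it in terms of brackets $[\,\cdot\,,\rho_j]$ with $\rho_j$ of length $\ge k$ and a strictly shorter outer word; iterating shows that $L^k$ is generated as a Lie algebra by the finitely many commutators $\rho$ with $k\le\mathrm{length}(\rho)<2N$.

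The main obstacle is already packaged for us as the combinatorial substitution lemma \cite[Lemma 2.5]{Z8}, which is precisely the place where the hypothesis that \emph{every} element of $S\langle a_1,\dots,a_m\rangle$ (and not merely the generators) be ad-nilpotent is used. The remaining work is routine: verifying that $L/L^k$ is finite dimensional, calibrating the constants $t$ and $N=ktm^k$, and checking that iterated application of the substitution really does collapse all commutators down to those of length below $2N$.
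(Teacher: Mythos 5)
Your proof is correct and follows the same route as the paper: Engel–Jacobson to get $L^k\subseteq I$, the substitution identity of \cite[Lemma 2.5]{Z8} with $N=ktm^k$ to rewrite long products of $\ad(a_i)$'s, the resulting finite generation of $L^k$ by commutators of length between $k$ and $2N$, and then completion by a finite spanning set of $I$ modulo $L^k$. The only difference is cosmetic: you spell out the iteration step (rewriting the trailing block of $N$ ad's and noting the outer word shortens) that the paper leaves to the reader with ``It follows now that\dots'', and you add the observation that $L/L^k$ is finite dimensional as a quotient of the free $(k-1)$-step nilpotent Lie algebra, which the paper states without explanation.
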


Recall that an algebra $L$ is called \underline{just infinite} if it is infinite dimensional but every nonzero ideal of $L$ is of finite codimension.

\begin{lemma}
	\label{Lemma4} Let $L$ be an infinite dimensional Lie algebra generated by elements $a_1,\cdots,a_m$ such that an arbitrary element from $S\langle a_1,\cdots,a_m\rangle$ is ad-nilpotent.  Then $L$ has a just infinite homomorphic image.
	\begin{proof}
		Let $I_1\subseteq I_2\subseteq\cdots$ be an ascending chain of ideals of infinite codimension.  We claim that the union $I=\bigcup\limits_i I_i$ also has infinite codimension.  Indeed, if dim$_F(L/I)<\infty$ then by \Cref{Lemma3} the ideal $I$ is generated by a finite collection of elements, hence $I$ is equal to one of the terms in the ascending chain, a contradiction.
		
		By Zorn's Lemma the algebra $L$ has a maximal ideal $J$ of infinite codimension.  The factor algebra $L/J$ is just infinite, which proves the lemma.
	\end{proof}
\end{lemma}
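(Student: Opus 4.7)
The plan is a straightforward application of Zorn's Lemma to the poset of ideals of $L$ of infinite codimension, ordered by inclusion. The only thing to verify is that this poset has the chain condition needed for Zorn, i.e.\ that the union of an ascending chain of ideals of infinite codimension is still of infinite codimension. Once that is in place, a maximal element $J$ exists, and any nonzero ideal of the quotient $L/J$ corresponds to an ideal of $L$ strictly containing $J$, hence (by maximality) of finite codimension in $L$, hence of finite codimension in $L/J$; while $L/J$ itself is infinite dimensional because $J$ has infinite codimension. That makes $L/J$ just infinite.

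So the real content is the chain step. Let $I_1 \subseteq I_2 \subseteq \cdots$ be an ascending chain of ideals of $L$ of infinite codimension, and set $I = \bigcup_i I_i$. I would argue by contradiction: suppose $\dim_F(L/I) < \infty$. Then \Cref{Lemma3} applies (the hypothesis on $S\langle a_1,\dots,a_m\rangle$ being ad-nilpotent is inherited directly from $L$) and yields that $I$ is finitely generated as a Lie algebra, say by $b_1,\dots,b_r \in I$. But each $b_j$ lies in some $I_{i(j)}$, and the chain is ascending, so all the $b_j$ live in a common $I_{i_0}$; hence $I = I_{i_0}$. This contradicts the assumption that $I_{i_0}$ has infinite codimension, so in fact $I$ must have infinite codimension as claimed.

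With the chain condition established, Zorn's Lemma produces a maximal ideal $J$ of $L$ among those of infinite codimension. The quotient $\bar L = L/J$ is then infinite dimensional, and any proper inclusion of ideals in $\bar L$ pulls back to a proper inclusion of ideals of $L$ strictly above $J$; by maximality of $J$, any such ideal has finite codimension in $L$, whence the corresponding ideal of $\bar L$ has finite codimension in $\bar L$. Therefore $\bar L$ is just infinite, which is what we wanted.

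The main — and really only — obstacle is the verification that ascending unions stay of infinite codimension, and the clever input there is to invoke \Cref{Lemma3}: without the finite-generation of ideals of finite codimension, there is no reason that the union could not jump from infinite codimension to finite codimension in the limit. Everything else is formal.
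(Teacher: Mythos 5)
Your proposal is correct and follows essentially the same route as the paper: apply Zorn's Lemma to the poset of ideals of infinite codimension, using \Cref{Lemma3} to show that the union of an ascending chain of such ideals cannot suddenly become of finite codimension (since a finite-codimension ideal would be finitely generated, hence captured at a finite stage of the chain). You spell out slightly more detail than the paper — why the finitely many generators land in a single $I_{i_0}$, and why $L/J$ is just infinite via the correspondence theorem — but the argument is the same.
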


Now we are ready to finish the proof of \Cref{Theorem1} in the case of char$F=0$.

Let $L$ be a Lie algebra satisfying the assumptions of the theorem.  In view of \Cref{Lemma4} without loss of generality we will assume the algebra $L$ to be just infinite. 

We proved that $L$ contains a nonzero sandwich.  Recall that an algebra is called locally nilpotent if every finitely generated subalgebra is nilpotent.  A. N. Grishkov  \cite{Gri} proved that in a Lie algebra over a field of zero characteristic, an arbitrary sandwich generates a locally nilpotent ideal.  Since the Lie algebra $L$ is just infinite it follows that $L$ contains a locally nilpotent ideal $I$ of finite codimension.  By \Cref{Lemma1} the algebra $I$ is finitely generated, hence nilpotent and finite dimensional.  This contradicts the assumption that the algebra $L$ is infinite dimensional and proves the theorem.

\section{Divided polynomials}\label{Section3}

The main Theorem \ref{Theorem1} is valid for Lie algebras over an arbitrary ground field $F$. The applications to Theorems \ref{Theorem2}, \ref{Theorem3} use only the case when the ground field $F$ is finite.

We will show that without loss of generality, we can assume that the field $F$ is infinite. Indeed, let $F'$ be an infinite field extension of $F$. The Lie algebra $L'=L\otimes_F F'$ is generated by the same elements $a_1, \cdots, a_m$ as $L$ and an arbitrary element $s\in S\langle a_1,\cdots, a_m\rangle$ is ad-nilpotent in $L'$. Since the Lie algebra $L$ satisfies a polynomial identity, it satisfies a nontrivial multilinear identity $f(x_1,\cdots, x_n)=0$ (see \cite{B}). Then the Lie algebra $L'$ also satisfies the identity $f=0$.

From now on we assume that char$F=p>0$ and the field $F$ is infinite.  Let $L\langle X\rangle$ be the free Lie $F$-algebra on the set of free generators $X=\{x_1,\cdots,x_m\}$ in the variety of algebras satisfying the identity $f=0$ (see \cite{B}).  Let $P$ be the set of all commutators in $X$ and let $n:P\rightarrow N$ be a function.  Let $J$ be the ideal of $L\langle X \rangle$ generated by $\bigcup\limits_{\rho\in P} L\langle X \rangle \text{ ad}(\rho)^{n(\rho)}$.

Our aim is to show that the algebra $L'=L\langle X \rangle/J$ is nilpotent.  Suppose that this is not true.  Letting $\deg(x_i)=1$, $1\leq i\leq m$, we define a gradation of $L'$ by positive integers.

We say that a graded infinite dimensional algebra is graded just infinite if every nonzero graded ideal of it is of finite codimension.

\begin{lemma}
	\label{Lemma5}The algebra $L'$ has a graded just infinite homomorphic image $L$.
\end{lemma}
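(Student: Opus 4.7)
The plan is to imitate the proof of \Cref{Lemma4} restricted to graded ideals. First I would observe that \Cref{Lemma3} applies to $L'$: by construction of the ideal $J$, every commutator $\rho$ in the generators $x_1,\dots,x_m$ of $L'$ satisfies $\mathrm{ad}(\rho)^{n(\rho)}=0$ and is in particular ad-nilpotent, so every element of $S\langle x_1,\dots,x_m\rangle$ is ad-nilpotent in $L'$. (The fact that $L'$ also satisfies the identity $f=0$ is not needed for \Cref{Lemma3}, but is inherited anyway.)

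Next I would verify the ascending chain condition for graded ideals of infinite codimension. Let $I_1\subseteq I_2\subseteq\cdots$ be such a chain of graded ideals of $L'$, and set $I=\bigcup_i I_i$, which is again a graded ideal. Suppose for contradiction that $\dim_F(L'/I)<\infty$. By \Cref{Lemma3} the ideal $I$ is finitely generated as a Lie algebra, say by $b_1,\dots,b_N$. Each $b_j$ lies in some $I_{k_j}$, so if $k=\max_j k_j$ then $I=I_k$, contradicting the assumption that $I_k$ has infinite codimension.

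With the chain condition in hand I would apply Zorn's Lemma to the poset of graded ideals of $L'$ of infinite codimension, ordered by inclusion. The argument above shows that every chain has an upper bound in this poset, so we obtain a maximal element $J'$. The quotient $L=L'/J'$ is naturally graded (since $J'$ is graded), infinite dimensional (since $J'$ has infinite codimension), and by maximality every nonzero graded ideal of $L$ has finite codimension, so $L$ is graded just infinite, as required.

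The only substantive point in this plan is the ascending chain condition, which is where \Cref{Lemma3} is invoked; everything else is a bookkeeping check that all ideals produced along the way remain graded, which is automatic because we only ever work inside the poset of graded ideals. I do not expect a real obstacle beyond confirming that the hypotheses of \Cref{Lemma3} genuinely transfer to $L'$, which they do by the construction of $J$.
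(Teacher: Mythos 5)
Your proof is correct and is exactly the argument the paper intends: the paper's own proof of Lemma~\ref{Lemma5} simply says ``the proof follows the proof of Lemma~\ref{Lemma4} (verbatim),'' and you have carried out that transfer, checking that Lemma~\ref{Lemma3} applies to $L'$ (its generators' commutators are ad-nilpotent by construction of $J$) and that the chain-condition/Zorn argument goes through unchanged when restricted to the poset of graded ideals of infinite codimension.
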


The proof follows the proof of \Cref{Lemma4} (verbatim).

Consider the adjoint embedding $L\rightarrow End_F(L)$, $a\rightarrow \text{ad}(a)$.  Let $A'$ be the associative subalgebra of $End_F(L)$ generated by the image of $L$.  The algebra $A'$ is graded and we assume that $L\subseteq A'^{(-)}$.  Let $I$ be a maximal graded ideal of the algebra $A'$ such that $L\cap I=(0)$, $A=A'/I$, $L\subseteq A^{(-)}$.  If $J$ is a nonzero graded ideal of the algebra $A$ then the ideal $J\cap L$ has finite codimension in $L$.  From the Poincare-Birkhoff-Witt theorem it follows that the factor algebra $A/J$ is nilpotent and finite dimensional.  We have proved that the algebra $A$ is graded just infinite.

To summarize, we assume that 
\begin{enumerate}
	\item the graded Lie algebra $L$ is generated by elements $s_1,\cdots,s_m$ of degree 1; every element from the Lie set $S=S\langle s_1,\cdots,s_m\rangle$ is ad-nilpotent;
	\item $L$ satisfies a polynomial identity;
	\item $L$ is graded just infinite. 
\end{enumerate}
 We fix also a graded just infinite associative enveloping algebra $A$ of $L$.  The algebra $A$ is a homomorphic image of the subalgebra $\langle \text{ad}(L)\rangle\subseteq End_F(L)$
 
 For elements $a_1,\cdots,a_k\in L $ let $[a_1,a_2,\cdots,a_k]$ denote their left-normed commutator by $[ \cdots [a_1,a_2],a_3],\cdots,a_k]$  We also denote \linebreak $[a,\underbrace{b,b\cdots,b}_\text{k}]=[ab^k].$

 \begin{lemma}
	\label{Lemma6}  Let $I$ be an ideal of $L$, $s\in S$, $k \geq 2$, $[Is^k]=(0)$.  Suppose that the Lie algebra $L$ satisfies an identity of degree $n$.  Then the subalgebra $[Is^{k-1}]$ satisfies an identity of degree $<n$.
 	\begin{proof}
 		Let $L$ satisfy an identity 
 		\[\sum\limits_{\sigma\in S_{n-1}}\alpha_\sigma[x_0,x_{\sigma(1)},\cdots,x_{\sigma(n-1)}]=0,\]
 		where $\alpha_\sigma\in F$, $\alpha_1=1$. For the variables $x_0,x_1,\cdots,x_{n-1}$ choose values $x_0=s$, $x_1=a\in I$, $x_i=a_i\in [Is^{k-1}]$, $2\leq i\leq n-1$.  Then   \[\sum\limits_{\sigma\in H}\alpha_\sigma[[s,a],a_{\sigma(2)},\cdots,a_{\sigma(n-1)}]=0,\]
 		where $\sigma$ runs over the stabilizer $H$ of 1 in $S_{n-1}$.  It follows now that the Lie algebra $[Is^{k-1}]$ satisfies the identity
 		\[\sum\limits_{\sigma\in H}\alpha_\sigma[x_0,x_{\sigma(2)},\cdots,x_{\sigma(n-1)}]=0\]
 		of degree $n-1$.  This finishes the proof of the lemma.
 	\end{proof}
 \end{lemma}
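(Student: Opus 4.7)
The plan is to begin with a nontrivial multilinear identity of $L$, which after full linearization can be written in the form
$$\sum_{\sigma\in S_{n-1}}\alpha_\sigma[x_0,x_{\sigma(1)},\ldots,x_{\sigma(n-1)}]=0,$$
normalized so that $\alpha_1=1$ (for the identity permutation). I would then perform the substitution $x_0=s$, $x_1=a\in I$, and $x_j=a_j\in[Is^{k-1}]$ for $2\leq j\leq n-1$, and exploit the hypothesis $[Is^k]=(0)$ to collapse almost the entire sum.

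The crucial observation is that whenever $\sigma(1)\neq 1$, the innermost bracket of the term $[s,x_{\sigma(1)},\ldots,x_{\sigma(n-1)}]$ equals $[s,a_{\sigma(1)}]$ with $a_{\sigma(1)}\in[Is^{k-1}]$, and
$$[s,a_{\sigma(1)}]=-[a_{\sigma(1)},s]\in[[Is^{k-1}],s]\subseteq[Is^k]=(0),$$
so the whole left-normed commutator vanishes. Only permutations with $\sigma(1)=1$ survive, and these form the stabilizer $H\cong S_{n-2}$ of $1$ in $S_{n-1}$. What is left is
$$\sum_{\sigma\in H}\alpha_\sigma\bigl[[s,a],a_{\sigma(2)},\ldots,a_{\sigma(n-1)}\bigr]=0,$$
valid for all $a\in I$ and $a_j\in[Is^{k-1}]$.

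To convert this into a genuine identity on $[Is^{k-1}]$, I would use that $[Is^{k-1}]$ is by definition spanned by elements $[s,b]$ with $b\in[Is^{k-2}]$ (setting $[Is^0]:=I$), and since $I$ is an ideal we have $[Is^{k-2}]\subseteq I$. Hence every element of $[Is^{k-1}]$ is a sum of brackets $[s,a]$ with $a\in I$. Because the displayed relation is linear in $a$, extending by linearity gives the identity
$$\sum_{\sigma\in H}\alpha_\sigma[x_0,x_{\sigma(2)},\ldots,x_{\sigma(n-1)}]=0$$
in $n-1$ arguments drawn from $[Is^{k-1}]$, an identity of degree $n-1<n$ as required. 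The only delicate point is nontriviality, which is secured by the normalization $\alpha_1=1$ together with $\mathrm{id}\in H$; the remainder of the argument is the direct substitution-and-collapse just sketched, so I expect the main (mild) obstacle to be the bookkeeping that ensures the multilinearized identity really does have a nonzero coefficient on the trivial permutation after the reduction to $H$.
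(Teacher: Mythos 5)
Your proof is correct and follows the paper's argument exactly — same substitution $x_0=s$, $x_1=a\in I$, $x_i=a_i\in[Is^{k-1}]$, same collapse of the sum to the stabilizer $H$ via $[Is^k]=(0)$, same conclusion. You spell out two points the paper leaves implicit (why the terms with $\sigma(1)\ne 1$ vanish, and why elements $[s,a]$, $a\in I$, span $[Is^{k-1}]$ so that the relation extends by linearity), but the underlying argument is identical.
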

 
 \begin{lemma}
	\label{Lemma7} Let $I$ be an ideal of $L$, $s \in L$, $[Is^k]=(0)$, $k \geq 2$. Then for any integer $t \geq 2$ and any elements $a_1, \cdots, a_N \in I$, $N=kt - 1$, we can write the operator $\text{ad}([a_1s^{k-1}]) \cdots \text{ad}([a_Ns^{k-1}])$ as a linear combination of operators of the type 
	\[P'\text{ad}(s)^{k-1}\prod_{j=0}^{t-2}(\text{ad}(a_{i+j})\text{ ad}(s)^{k-1})P''\]
	%\[=P'\text{ad}(s)^{k-1}\prod_{j=0}^{t-2}\text{ad}([a_{i+j}s^{k-1}])P'',\]
	where $P',$ $P''$ are products of $\text{ad}(a_1), \cdots, \text{ad}(a_n), \text{ad}(s)$, which may be empty.
% 	\label{Lemma7} Let $I$ be an ideal of $L$, $s\in L$, $[Is^k]=(0)$, $k\geq2$.  Then for an arbitrary integer $t\geq2$, arbitrary elements $a_1,\cdots,a_N\in I$, $N=kt-1$, we have 
 	%\begin{align*}
 	%&\text{ad}([a_1s^{k-1}])\cdots\text{ad}([a_Ns^{k-1}])=\sum\pm \cdots \text{ad}(s)^{k-1}\text{ ad}(a_i)\text{ ad}(s)^{k-1}\cdots\\
 	%&\cdots\text{ad}(a_{i+t-2})\text{ ad}(s)^{k-1}\cdots
 %\end{align*}
 \begin{proof}
 	By the Jacobi identity
 	\[\text{ad}([a_1s^{k-1}])\cdots\text{ad}([a_Ns^{k-1}]\]
	\[=\sum\pm\text{ad}(s)^{j_0}\text{ ad}(a_1)\text{ ad}(s)^{j_1}\cdots\text{ad}(s)^{j_{N-1}}\text{ ad}(a_N)\text{ ad}(s)^{j_N},\]
 	and in each summand $0\leq j_0, j_1,\cdots, j_N\leq k-1$, $j_0+\cdots+j_N=(k-1)N$. If, in each segment $j_\mu,j_{\mu+1}\cdots,j_{\mu+t-1}$ of length t, at least one term is $\leq k-2$, then $j_\mu+\cdots\jmath_{\mu+t-1}\leq(k-1)t-1$.  Summing all $k$ segments we get $j_0+\cdots+\jmath_N\leq k((k-1)t-1)<(k-1)(kt-1)$, a contradiction that proves the lemma.
 \end{proof}
  \end{lemma}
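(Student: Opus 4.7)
The plan is to expand each factor $\text{ad}([a_is^{k-1}])$ using the derivation identity $\text{ad}([x,y])=[\text{ad}(x),\text{ad}(y)]$ iterated $k-1$ times, yielding
\[\text{ad}([a_is^{k-1}])=\sum_{j=0}^{k-1}(-1)^j\binom{k-1}{j}\,\text{ad}(s)^j\,\text{ad}(a_i)\,\text{ad}(s)^{k-1-j}.\]
Multiplying the $N$ expansions together gives a linear combination of monomials of the shape $\text{ad}(s)^{j_0}\text{ad}(a_1)\text{ad}(s)^{j_1}\cdots\text{ad}(a_N)\text{ad}(s)^{j_N}$, in which the exponents $j_i$ are non-negative integers summing to $(k-1)N$.

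Next I would normalize each monomial so that every exponent lies in $\{0,1,\ldots,k-1\}$. The leading exponent $j_0$ is automatically bounded by $k-1$ from the single-factor expansion. For an interior exponent $j_\mu$ with $\mu\ge 1$, note that after the first application of $\text{ad}(a_1)$ the operator acts within the ideal $I$; the hypothesis $[Is^k]=(0)$ therefore kills any monomial in which some $j_\mu\ge k$. Hence only monomials with $j_0,\ldots,j_N\in\{0,\ldots,k-1\}$ survive, while the total $\sum j_\mu=(k-1)N$ is preserved.

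The remainder reduces to a clean counting argument. We have $N+1=kt$ exponents in $\{0,\ldots,k-1\}$ summing to $(k-1)(kt-1)$. Partition the index set $\{0,1,\ldots,N\}$ into $k$ consecutive blocks of length $t$. If each block contained at least one index with $j_\mu\le k-2$, the sum over a single block would be at most $(k-1)t-1$, so the overall sum would be at most $k[(k-1)t-1]=k(k-1)t-k$, strictly less than the actual total $k(k-1)t-(k-1)$. This contradiction forces some block of $t$ consecutive indices $\mu,\mu+1,\ldots,\mu+t-1$ to satisfy $j_\mu=j_{\mu+1}=\cdots=j_{\mu+t-1}=k-1$. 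Isolating this block in the monomial (setting $i=\mu+1$) then produces exactly the asserted form $P'\,\text{ad}(s)^{k-1}\prod_{j=0}^{t-2}(\text{ad}(a_{i+j})\,\text{ad}(s)^{k-1})\,P''$.

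The step I expect to require the most care is the normalization modulo $[Is^k]=(0)$: one must verify that eliminating any interior run of $k$ or more $\text{ad}(s)$'s does not spoil the shape of neighbouring exponents, so that after the reduction every surviving monomial has all $j_\mu\le k-1$. Once this bookkeeping is settled, the pigeonhole count is essentially a one-line inequality and the lemma follows.
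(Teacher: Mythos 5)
Your proposal is correct and follows the same route as the paper: expand each $\text{ad}([a_is^{k-1}])$ via iterated Jacobi into signed monomials $\text{ad}(s)^{j'}\text{ad}(a_i)\text{ad}(s)^{j''}$, discard the cross terms whose merged interior exponent exceeds $k-1$ using $[Is^k]=(0)$ together with the fact that $I$ is an ideal, and then run the pigeonhole count on the $kt$ exponents summing to $(k-1)(kt-1)$. One small remark: the ``bookkeeping'' you flag as delicate is in fact immediate. A monomial with some interior $j_\mu\geq k$ ($\mu\geq 1$) is the zero operator on all of $L$, since after applying $\text{ad}(a_1),\dots,\text{ad}(a_\mu)$ the value already lies in $I$ and $I\,\text{ad}(s)^{j_\mu}\subseteq[Is^k]=(0)$; there is no rewriting of neighbouring exponents to worry about, such terms simply drop out of the sum. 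The paper states the bound $0\leq j_\mu\leq k-1$ without comment; you supply the reason, which is a welcome addition, but it needs no further care beyond that one observation.
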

  
\begin{lemma}\label{Lemma8}
	There exist elements $c_1,\cdots,c_r\in S$ and integers $m\geq1, N\geq1$ such that
	\begin{enumerate}
		\item $[L^i,c_1,c_2,\cdots,c_r]\neq(0)$ for arbitrary $i\geq1$,
		\item $[L^m,c_1, c_2, \cdots, c_r,c_i]=(0)$ for $1\leq i\leq r$,
		\item for arbitrary elements $a_1,\cdots,a_N\in [L^m,c_1,\cdots ,c_r]$ we have ad$(a_1)\cdots\text{ad}(a_N)=0.$
	\end{enumerate}
	\begin{proof}
		Choose a nonzero element $s\in S$.  For an arbitrary $i\geq1$ choose a minimal integer $k(i)\geq1$ such that $[L^is^{k(i)}]=(0)$.  Since the Lie algebra $L$ is graded just infinite it follows that each power of $L$ has zero centralizer.  Hence $k(i)\geq2$.  We have $k(1)\geq k(2)\geq\cdots$.  There exists a sufficiently large integer $m_1$ such that
		\[ k_1:=k(m_1)=k(m_1+1)=\cdots.\]
		In other words, $[L^{m_1}s^{k_1}]=(0),[L^is^{k_1-1}]\neq(0)$ for any $i\geq1$.
		
		Now suppose that we have found $l$ elements $s_1=s,s_2,\cdots,s_l\in S$ and $2l$ integers $k_1,\cdots,k_l\geq 2;1\leq m_1\leq m_2\leq\cdots\leq m_l$ with the following properties:
		\begin{enumerate}
			\item $s_i\in [L^{m_{i-1}}s_1^{k_1-1}\cdots s_{i-1}^{k_{i-1}-1}],2\leq i\leq l$,
			\item $[L^{m_i}s_1^{k_1-1}\cdots s_{i-1}^{k_{i-1}-1} s_{i}^{k_i}]=(0),1\leq i\leq l$,
			\item for an arbitrary $i\geq1$ we have $[L^is_1^{k_1-1}\cdots s_{l}^{k_{l}-1}]\neq(0)$.
		\end{enumerate}
	\begin{claim}
		\label{Claim1}
	 For arbitrary $1\leq i,j\leq l$ we have $[s_i,s_j]=0$.
	 \begin{proof}
	Indeed, let $i<j$.  Then $s_j\in[L^{m_{j-1}}s_1^{k_1-1}\cdots s_{j-1}^{k_{j-1}-1}]$.  We will show that
	\[[L^{m_{j-1}}s_1^{k_1-1}\cdots s_{j-1}^{k_{j-1}-1}s_i]=(0).\]
	By the inductive assumption on $i+j$ the element $s_i$ commutes with $s_{i+1},\cdots,s_{j-1}$.  Hence \[[L^{m_{j-1}}s_1^{k_1-1}\cdots s_{j-1}^{k_{j-1}-1}s_i]=[L^{m_{j-1}}s_1^{k_1-1}\cdots s_i^{k_i}\cdots]=0,\] which proves the claim.\renewcommand{\qedsymbol}{$\blacksquare$}
	\end{proof}
\end{claim}
	
	\underline{Case 1.} Suppose that there exists an element $s'\in S^{m_l}=[\underbrace{S,S,\cdots,S]}_{m_l}$ such that
	\[ [[L^is_1^{k_1-1}\cdots s_{l}^{k_{l}-1}],[s's_1^{k_1-1}\cdots s_{l}^{k_{l}-1}]]\neq(0)\]
	for any $i\geq1.$  Denote $s_{l+1}=[s's_1^{k_1-1}\cdots s_{l}^{k_{l}-1}]$.  As above we find integers $m_{l+1}\geq m_l$ and $k_{l+1}\geq 2$ such that
	\[ [[L^{m_{l+1}}s_1^{k_1-1}\cdots s_{l}^{k_{l}-1}]s_{l+1}^{k_{l+1}}]=(0),\]
\[ [L^is_1^{k_1-1}\cdots s_{l}^{k_{l}-1}s_{l+1}^{k_{l+1}-1}]\neq(0)\]
	for any $i\geq 1$.  The elements $s_1,\cdots,s_{l+1}$ and the integers $m_1,\cdots,m_{l+1};$ $k_1,\cdots k_{l+1}$ satisfy the conditions 1), 2), and 3) above.
	
	\underline{Case 2} Now suppose that for an arbitrary element $s'\in S^{m_l}$ there exists an integer $i(s')\geq1$ such that
	\[ [[L^{i(s')}s_1^{k_1-1}\cdots s_{l}^{k_{l}-1}],[s's_1^{k_1-1}\cdots s_{l}^{k_{l}-1}]]=(0).\]
	Since $S^{m_l}$ spans $L^{m_l}$ it follows that for an arbitrary element $a\in [L^{m_l}s_{1}^{k_{1}-1}\cdots s_l^{k_l-1}]$ there exists $i(a)\geq1$ such that 
	\[ [[L^{i(a)} s_{1}^{k_{1}-1}\cdots s_l^{k_l-1}],a]=(0). \tag{1} \] 
	Let  $t=2k_1\cdots k_{l}-1$.  Choose $2t-1$ elements $a_1,\cdots,a_{2t-1}\in$ \break $[L^{m_l}s_{1}^{k_{1}-1}\cdots s_l^{k_l-1}]$.  Let $q=\text{max}\{i(a_\mu),1\leq \mu\leq2t-1,m_l\}$.
	
	Our immediate aim will be to show that 
	\[ [L^q,a_1,\cdots, a_t]=(0).\]
	Denote $t_j=2k_{j+1}\cdots k_l-1$, so $t_0=t$. We let $t_l=1$.  From \Cref{Claim1} it follows that for any $1\leq i\leq l$, $L_j:=[L^q s_{1}^{k_{1}-1}\cdots s_j^{k_j-1}]$ is a subalgebra of $L$.  Let $L_0=L^q$.
	
	\begin{claim}
		\label{Claim2}
		For $t_j$ arbitrary elements $b_1,\cdots,b_{t_j}\in \{a_1,\cdots, a_t\}$ we have 
		\[ [L_j,b_1,\cdots,b_{t_j}]=(0).\]
		\begin{proof}
		To prove the claim, we will use reverse induction on $j=0, \cdots, l$. For $j=l$ we have $t_l=1$ and $[L_l, a_i] = [L^qs_1^{k_1-1}\cdots s_l^{k_l-1}, a_i]=(0)$ by the choice of $q$. Now suppose that the assertion is true for $j$, $1 \leq j \leq l$. We have $t_{j-1}=k_j(t_j+1)-1.$ By Claim \ref{Claim1} an arbitrary element $a \in [L^q s_1^{k_1-1}\cdots s_l^{k_l-1}]$ can be represented as $a=[a's_j^{k_j-1}],$ where $a' \in [L^qs_1^{k_1-1}\cdots s_{j-1}^{k_1-1}] = L_{j-1}.$ Let $b_\mu=[b_\mu ' s_j^{k_j-1}],$ $b_\mu ' \in L_{j-1}.$ We apply Lemma \ref{Lemma7} to the algebra $L_{j-1}+Fs_j$ and its ideal $L_{j-1}$. By Lemma \ref{Lemma7} $\text{ad}(b_1)\cdots \text{ ad}(b_{t_{j-1}})$ is a linear combination of operators $P' \text{ad}(s_j)^{k_j-1}\big( \prod\limits_{\mu=0}^{t_j-1}\text{ad}(b'_{i+\mu}\text{ad}(s_j)^{k_j-1})\big) P''.$ By the induction assumption \break 
\[L_{j-1} P' \text{ad}(s_j)^{k_j-1}(\prod\limits_{\mu=0}^{t_j-1}\text{ad}(b'_{i+\mu})\text{ad}(s_j)^{k_j-1})\] 
\[\subseteq L_{j-1}\text{ ad}(s_j)^{k_j-1}(\prod\limits_{\mu=0}^{t_j-1} \text{ad}(b'_{i+\mu})\text{ad}(s_j)^{k_j-1}) \]
\[=L_j\prod\limits_{\mu =0}^{t-1}\text{ad}(b_{i+\mu }),\] 
which finishes the proof of Claim \ref{Claim2}.
		%To prove the claim we will use reverse induction on $j=0,\cdots,l$.  For $j=l$ we have $t_l=1$ and $[L_l,a_i]=[L^qs_1^{k_1-1}\cdots s_{l}^{k_{l}-1},a_i]=(0)$ by the choice of $q$.  Now suppose that the assertion is true for $j$, $1\leq j\leq l$.  We have $t_{j-1}=2k_j(t_j+1)-1$.  By \Cref{Claim1} an arbitrary element $a\in [L^qs_1^{k_1-1}\cdots s_{l}^{k_{l-1}}]$ can be represented as $a=[a's_j^{k_j-1}]$ where $a'\in[L^qs_1^{k_1-1}\cdots s_{j-1}^{k_{j-1}-1}]$.  Hence by \Cref{Lemma7} we have
		%\[[L_{j-1},b_1,\cdots,b_{t_{j-1}}]\subseteq\sum[L_{j-1}\cdots s_j^{k_j-1},b_\mu,b_{\mu+1}\cdots,b_{\mu+t_j-1},\cdots]=(0).\]
		%By the induction assumption this is equal to zero, which finishes the proof of \Cref{Claim2}.\renewcommand{\qedsymbol}{$\blacksquare$}						
		\end{proof}
	\end{claim}
	
	In particular, for $j=0$ we have 
	\[[L^q,a_1,a_2,\cdots,a_t]=(0).\]
	Now,
	\[ [L^q,[L,a_1,a_2,\cdots,a_{2t-1}]]\subseteq\sum[L^q,a_{i_1},\cdots,a_{i_\mu},L,a_{j_1},\cdots,a_{j_\nu}],\]
	where in each summand $\mu+\nu=2t-1$.  If $\mu\geq t$ then \[[L^q,a_{i_1},\cdots,a_{i_\mu}]=(0)].\]  If $\nu\geq t$ then 
	\[ [L^q,a_i,\cdots,a_{i_\mu},L,a_{j_1}\cdots,a_{j\nu}]\subseteq[L^q,a_{j_1},\cdots,a_{j_\nu}]=(0).\]
	Since the power $L^q$ has zero centralizer it follows that 
	\[[L,a_1,a_2,\cdots,a_{2t-1}]=(0).\]
	
	We showed that in case 2, the elements \[c_1,\cdots,c_r=\underbrace{s_1,\cdots,s_1}_{k_1-1},\underbrace{s_2,\cdots,s_2}_{k_2-1},\cdots,\underbrace{s_l,\cdots,s_l}_{k_l-1}\]
	and the integers $m=m_l,N=4k_1\cdots k_l-3$ satisfy the conditions of the lemma.

	Let the algebra $L$ satisfy an identity of degree $n$.  We will show that enlarging the system $s_1,\cdots,s_l$; $k_1\cdots,k_l\geq2$; $1\leq m_1\leq\cdots\leq m_l$ we will encounter case 2 in $\leq n-2$ steps.
	
	The subalgebra $I_i=[L^{m_i}s_1^{k_1-1}\cdots,s_{i-1}^{k_{i-1}-1}]$ is an ideal of 
	\[[L^{m_{i-1}}s_1^{k_1-1}\cdots,s_{i-1}^{k_{i-1}-1}]\] and $[I_is_i^{k_i}]=(0)$.  If $[L^{m_{i-1}}s_1^{k_1-1}\cdots s_{i-1}^{k_{i-1}-1}]$ satisfies an identity of degree $n_{i-1}$ then by \Cref{Lemma6} the algebra $[L^{m_i}s_1^{k_1-1}\cdots,s_{i-1}^{k_{i-1}-1}]$ satisfies an identity of degree $<n_{i-1}$.  This implies that $l\leq n-2$ and finishes the proof of the lemma.
	
\end{proof}
\end{lemma}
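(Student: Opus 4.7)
The plan is to construct, iteratively, a finite sequence of pairwise commuting elements $s_1, \ldots, s_l \in S$ together with integers $k_1, \ldots, k_l \geq 2$ and $1 \leq m_1 \leq \cdots \leq m_l$, so that the desired $c_1, \ldots, c_r$ will be the $s_j$'s each repeated $k_j - 1$ times, with $m = m_l$. Two principles drive the construction. First, because $L$ is graded just infinite, every nonzero power $L^i$ has zero centralizer (its centralizer would otherwise be a nonzero graded ideal of infinite codimension), which will guarantee that the ad-nilpotence exponents we encounter are at least $2$. Second, \Cref{Lemma6} forces each iterative step to strictly drop the PI degree of the subalgebra under consideration, so the process must terminate in at most $n - 2$ steps, where $n$ is the degree of the polynomial identity.

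I would start by picking a nonzero $s_1 \in S$ and, using its ad-nilpotence, defining $k(i)$ as the least integer with $[L^i s_1^{k(i)}] = (0)$. The just-infinite hypothesis gives $k(i) \geq 2$ for all $i$, and the sequence $k(i)$ is monotonically non-increasing, hence stabilizes at some $k_1 \geq 2$ from some index $m_1$ onward. Inductively, given $s_1, \ldots, s_l$, $k_1, \ldots, k_l$, $m_1 \leq \cdots \leq m_l$ satisfying $s_{i+1} \in [L^{m_i} s_1^{k_1 - 1} \cdots s_i^{k_i - 1}]$, $[L^{m_i} s_1^{k_1 - 1} \cdots s_i^{k_i}] = (0)$, and $[L^i s_1^{k_1 - 1} \cdots s_l^{k_l - 1}] \neq (0)$ for all $i$, a short induction on $i + j$ (pushing $s_i$ through $s_{i+1}, \ldots, s_{j-1}$ using the commutativity already established, which reassembles a bracket containing $s_i^{k_i}$) shows that the $s_i$'s commute pairwise. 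This commutativity is the key technical fact enabling later rearrangements.

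The core of the step is a dichotomy. Either there exists $s' \in S^{m_l}$ with $\bigl[\, [L^i s_1^{k_1 - 1} \cdots s_l^{k_l - 1}],\, [s' s_1^{k_1 - 1} \cdots s_l^{k_l - 1}]\, \bigr] \neq (0)$ for every $i$, in which case I set $s_{l+1}$ equal to the inner bracket and continue; or else for every such $s'$ the bracket vanishes for some index $i(s')$, and I claim the current data already fulfills the lemma. Property (1) is exactly the maintained invariant; property (2) follows since $S^{m_l}$ spans $L^{m_l}$ linearly, so the terminating hypothesis gives $[[L^{i(a)}, c_1, \ldots, c_r], a] = (0)$ for every $a \in [L^{m_l}, c_1, \ldots, c_r]$. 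For property (3) I would fix $2t - 1$ such elements with $t = 2 k_1 \cdots k_l - 1$, take $q$ large enough that each individually annihilates a sufficient power of $L$, and prove by reverse induction on $j = l, \ldots, 0$ that any $t_j := 2 k_{j+1} \cdots k_l - 1$ of them kill $L_j := [L^q s_1^{k_1 - 1} \cdots s_j^{k_j - 1}]$. Each inductive step uses \Cref{Lemma7}, applied to the ideal $L_{j-1} \triangleleft L_{j-1} + F s_j$, to rewrite the relevant product $\text{ad}(b_1) \cdots \text{ad}(b_{t_{j-1}})$ as a combination of operators ending in exactly the shape the $j$-step has already annihilated. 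Descending to $j = 0$ gives $[L^q, a_1, \ldots, a_t] = (0)$, and a final Leibniz expansion combined with the zero-centralizer property of $L^q$ upgrades this to $[L, a_1, \ldots, a_{2t-1}] = (0)$, i.e., condition (3) with $N = 2t - 1$.

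The bound $l \leq n - 2$ comes from iterating \Cref{Lemma6}: the subalgebra $[L^{m_i} s_1^{k_1 - 1} \cdots s_{i-1}^{k_{i-1} - 1}]$ is an ideal of the previous level annihilated by $\text{ad}(s_i)^{k_i}$, so its PI degree strictly decreases at each extension. The main obstacle I anticipate is the reverse induction for property (3): keeping precise track of how \Cref{Lemma7} redistributes the $\text{ad}(s_j)^{k_j - 1}$ factors, and checking that the residual terms from the Jacobi expansions land in subspaces already killed by the preceding induction step. The rest is combinatorial bookkeeping arranged around the stabilization-and-dichotomy skeleton.
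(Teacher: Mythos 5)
Your proposal reproduces the paper's argument essentially step for step: the same choice of a nonzero $s_1\in S$ with stabilizing nilpotence degrees, the same inductive construction of pairwise-commuting $s_1,\ldots,s_l\in S$ with the three maintained invariants, the same Case 1 / Case 2 dichotomy, the same reverse induction on $j$ via Lemma~\ref{Lemma7} to prove $[L^q,a_1,\ldots,a_t]=(0)$, the same zero-centralizer upgrade to $[L,a_1,\ldots,a_{2t-1}]=(0)$ (so $N=2t-1=4k_1\cdots k_l-3$), and the same appeal to Lemma~\ref{Lemma6} to bound $l\le n-2$. One small slip: you attribute property (2) of the lemma, namely $[L^m,c_1,\ldots,c_r,c_i]=(0)$, to the Case~2 termination condition and the span of $S^{m_l}$, but that termination condition only yields the auxiliary fact $[[L^{i(a)},c_1,\ldots,c_r],a]=(0)$ for $a\in[L^{m_l},c_1,\ldots,c_r]$, which is what feeds into property (3). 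Property (2) is instead an immediate consequence of the commutativity established in Claim~\ref{Claim1} together with the inductive hypothesis $[L^{m_j}s_1^{k_1-1}\cdots s_{j-1}^{k_{j-1}-1}s_j^{k_j}]=(0)$: pushing the final $s_j$ leftward past $s_{j+1},\ldots,s_l$ produces a factor $s_j^{k_j}$ acting on $L^{m_l}\subseteq L^{m_j}$, which vanishes. This is a minor misattribution rather than a structural gap; the rest of the proposal matches the paper's proof.
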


Let $E$ be the associative commutative $F$-algebra presented by the countable set of generators $e_i$, $i\geq1$, and relations $e_i^2=0$, $i\geq1$.  Ordered products $e_\pi=e_{i_1}\cdots e_{i_r}$, $\pi=\{i_1<i_2<\dots <i_r\}$ form a basis of the algebra $E$.  Notice that we don't consider empty products and therefore the algebra $E$ does not have 1.

We will start with a short \textbf{overview} of the rest of the proof of Theorm \ref{Theorem1}. The crucial role is played by the ``linearized" Lie algebra $\til{L}=L\otimes_F E$. In section \ref{Section3}, we define divided polynomials: a generalization of usual Lie polynomials that make sense in the context of the Lie algebra $\til{L}$. A divided polynomial is regular if it is not identically zero on any ideal $\til{L^m}=L^m\otimes_F E$, $m\geq 1$. We use Lemma \ref{Lemma8} to establish existence of a regular divided polynomial whose every value is divided ad-nilpotent of degree $k\geq 3$. Then we use Kostrikin-type arguments (\cite{K1}, \cite{K2}, \cite{Z7}) to reduce $k$ to 3.

In section \ref{J alg}, we show how such regular divided polynomials give rise to a family of quadratic Jordan algebras. This result is new only for $p=2$ or $3$. For $p\geq 5$, it follows from \cite{FGG}. Using structure theory of quadratic Jordan algebras \cite{MZ}, we establish existence of a regular Jordan polynomial, every value of which is an absolute zero divisor. These references are an essential (hidden) part of the proof. This Jordan polynomial gives rise to a regular divided polynomial whose every value is divided ad-nilpotent of degree 2, i.e., is a sandwich (see \cite{K2}, \cite{KZ}).

In sections \ref{Section5} and \ref{Section6}, we further push the envelope and construct a regular divided polynomial whose every value generates a nilpotent ideal in an associative enveloping algebra to reduce the problem to the case when the associative enveloping algebra $\til{A}$ satisfies a polynomial identity and finish the proof using structure theory of PI-algebras.

For an arbitrary (not necessarily associative) $F$-algebra $A$ and its Lie algebra of derivations $D=\text{Der}(A)$ denote $\til{A}=A\otimes_FE$, $\til{D}=D\otimes_FE$.  Clearly, $\til{D}\subseteq\text{Der}(\til{A})$. Let $i\in N$ and let $\til{D_i}=\sum\limits_\pi D\otimes e_\pi$, where the sum is taken over all ordered subsets of $\pi$ that contain $i$.  Clearly, $\til{D_i}\lhd\til{D}$, $\til{D_i}^2=(0)$, $(AD_i)(AD_i)=(0)$, and $\til{D}=\sum\limits_i\til{D_i}$.

Let $\Omega$ be a finite family of elements of $\til{D}$ such that
\begin{enumerate}[(U1)]
	\item \label{U1} every element $d\in\Omega$ lies in some ideal $\til{D_i}$,
	\item \label{U2} $[d_1,d_2]=0$ for arbitrary elements $d_1, d_2\in\Omega$,.
\end{enumerate}

Consider the following linear operator on $\til{A}$:
\[U_k(\Omega)=\sum d_1\cdots d_k,\]
where the summation runs over all $k$-element subsets of $\Omega$.

We further define $U_0(\Omega)=\text{Id}$.  Clearly, $U_1(\Omega)=\sum\limits_{d\in\Omega} d$.

We have $k! U_k(\Omega)=(\sum\limits_{d\in\Omega} d)^k$ and if the characteristic of the field exceeds $k$ then 
\[U_k(\Omega)=\frac{1}{k!}(\sum\limits_{d\in\Omega} d)^k.\]
Hence the operators $U_k$ play the role of divided powers.

The following properties of the operators $U_k(\Omega)$ are straightforward (see also \cite{Z5,Z8}).

\begin{lemma}
	\label{Lemma9} \hfill{}
	\begin{enumerate}
		\item $(ab)U_m(\Omega)=\sum\limits_{i=0}^m(aU_i(\Omega))(bU_{m-i}(\Omega))$ for arbitrary elements \linebreak$a,b\in\til{A}$, $m\geq0$;
			\item The operator $\til{A}\rightarrow\til{A}$, $a\rightarrow\sum\limits_{i=0}^\infty aU_i(\Omega)$ is an automorphism of the algebra $\til{A}$.  We remark that the sum $\sum\limits_{i=0}^\infty a U_i(\Omega)$ is finite;
			\item \label{Lemma9.3} For an element $a\in\til A$ let $R(a)$ denote the operator of right multiplication by $a$.  Then \[R(aU_m(\Omega))=\sum\limits_{i=0}^m(-1)^iU_i(\Omega)R(a)U_{m-i}(\Omega);\]
			\item $U_i(\Omega)U_j(\Omega)={i+j \choose i} U_{i+j}(\Omega)$.
		\end{enumerate}
		\end{lemma}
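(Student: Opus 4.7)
My approach is to package all four identities through the single formal generating function
\[\Phi(t) \;=\; \sum_{k\geq 0} U_k(\Omega)\, t^k \;=\; \prod_{d\in\Omega}(1 + t d),\]
viewed as an operator on $\widetilde{A}$ (or, more formally, on $\widetilde{A}[[t]]$). The product expression is unambiguous because the elements of $\Omega$ commute by (U2), and it agrees with the explicit sum of $U_k$'s because $d^2 = 0$ on $\widetilde{A}$ for every $d \in \Omega$: this holds since each such $d$ lies in some $\widetilde{D_i}$ and $\widetilde{D_i}^2 = (0)$.

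The one substantive step is to check that each factor $(1 + td)$ is an $F$-algebra endomorphism of $\widetilde{A}$. By Leibniz,
\[(1+td)(ab) \,-\, (1+td)(a)\,(1+td)(b) \;=\; -\,t^2\,(ad)(bd),\]
and the right-hand side vanishes because $(ad)(bd)$ lies in $(A\widetilde{D_{i(d)}})(A\widetilde{D_{i(d)}}) = (0)$ by the structural hypothesis. Since these commuting algebra maps compose to $\Phi(t)$, the operator $\Phi(t)$ is itself an algebra endomorphism, and property (1) is the coefficient of $t^m$ in $\Phi(t)(ab) = \Phi(t)(a)\Phi(t)(b)$. Moreover each $(1+td)$ is invertible with inverse $(1-td)$ (from $(1+td)(1-td) = 1 - t^2 d^2 = 1$), so $\Phi(1)$ is an algebra \emph{automorphism} of $\widetilde A$; this is (2), and it also yields the formula $\Phi(t)^{-1} = \Phi(-t) = \sum_j (-t)^j U_j(\Omega)$.

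For (4), the key identity is
\[\Phi(t)\Phi(s) \;=\; \prod_{d\in\Omega}(1+td)(1+sd) \;=\; \prod_{d\in\Omega}\bigl(1 + (t+s)d + ts\,d^2\bigr) \;=\; \Phi(t+s),\]
using $d^2 = 0$ one more time; reading off the coefficient of $t^i s^j$ on both sides gives $U_i(\Omega)\,U_j(\Omega) = \binom{i+j}{i} U_{i+j}(\Omega)$. Finally, (3) follows from the general fact that for any algebra automorphism $\psi$ (acting on the right, matching the paper's convention) and any $a$, one has $R(\psi(a)) = \psi^{-1} R(a)\, \psi$; applying this with $\psi = \Phi(t)$ and expanding $\Phi(t)$ and $\Phi(t)^{-1}$ in powers of $t$, the coefficient of $t^m$ yields exactly $R(aU_m(\Omega)) = \sum_{i=0}^{m}(-1)^i U_i(\Omega)\, R(a)\, U_{m-i}(\Omega)$. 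The only step that uses the specific hypotheses on $\Omega$ is the vanishing of $(ad)(bd)$ making each $(1 + td)$ an algebra map; everything else is formal manipulation inside $\widetilde A[[t,s]]$, so I do not expect any real obstacle.
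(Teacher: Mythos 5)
Your proof is correct and complete, and it supplies a verification that the paper itself omits: the paper only asserts that these identities are ``straightforward'' with references to \cite{Z5,Z8}, so there is no in-text argument to compare against. The generating-function packaging $\Phi(t)=\prod_{d\in\Omega}(1+td)$ is a clean way to organize all four parts at once. You correctly isolate the one non-formal ingredient, namely that each $(1+td)$ is a unital $F$-algebra endomorphism of $\widetilde A$ (acting on the right), which reduces by Leibniz to the vanishing of $(ad)(bd)$, and this in turn is exactly where (U\ref{U1}) and the structural fact $(\widetilde A\,\widetilde{D_i})(\widetilde A\,\widetilde{D_i})=(0)$ enter; (U\ref{U2}) is what makes the product $\prod_d(1+td)$ well-defined and lets you regroup $\Phi(t)\Phi(s)=\prod_d(1+td)(1+sd)$ in part (4); and the finiteness of $\Omega$ gives the finiteness of the sum in (2). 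The conjugation identity $R(a\psi)=\psi^{-1}R(a)\psi$ for a right-acting algebra automorphism $\psi$ is correct and gives (3) on extracting the coefficient of $t^m$ from $R(a\Phi(t))=\Phi(-t)R(a)\Phi(t)$.

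One tiny inaccuracy worth flagging: you state that $\prod_{d\in\Omega}(1+td)$ ``agrees with the explicit sum of $U_k$'s because $d^2=0$.'' The agreement $\prod_{d\in\Omega}(1+td)=\sum_k U_k(\Omega)\,t^k$ is just the expansion of a product of binomials indexed by subsets of $\Omega$ and does not use $d^2=0$ at all; the identity $d^2=0$ is genuinely needed only for the invertibility of $(1+td)$ in part (2) and for collapsing $(1+td)(1+sd)=1+(t+s)d$ in part (4). This does not affect the validity of the argument, but the attribution should be corrected.
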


\bigskip

\begin{remark*}
This lemma will be primarily applied to Lie algebras where the operator $R(a)$ of right multiplication by an element $a$ is the adjoint operator $\ad(a)$.
\end{remark*}

An arbitrary element $a\in\til{A}$ can be uniquely represented as $a=\sum a_\pi$ where $a_\pi\in A\otimes e_\pi$.  We call it the standard decomposition of $a$.

Let $X$ be a countable set and let $I$ be an ideal of a Lie algebra $L$.  Consider the set Map($X,\til{I})$ of mappings $X\rightarrow\til{I}=I\otimes_FE$.  Consider also $M(I)=\text{Map(Map}(X,\til{I}),\til{I}).$  In other words, if $f\in M(I)$ and we assign values from $\til{I}$ to variables from $X$ then $f$ takes values in $\til{I}$.

Let Lie$\langle X\rangle$ be the free Lie algebra on the set of free generators $X$.  Consider the free product $L\ast \Lie\langle X \rangle$.  Let $(X)$ be the ideal of the algebra $L\ast Lie\langle X\rangle$ generated by $X$.  An arbitrary element from $(X)$ gives rise to an element from $M(I)$.

We will define a subset $U(I)\subseteq M(I)$ that we will call the set of divided polynomials defined on $I$:
\begin{enumerate}[(DP1)]
	\item \label{DP1} All elements from $(X)$ lie in $U(I)$;

\vspace{7mm}

	\item \label{DP2} suppose that a divided polynomial $w$ does not depend on any variables except $x_1, \cdots, x_r$. We represent this fact as $w=w(x_1, \cdots, x_r).$ If $v_1, \cdots, v_r \in U(I)$, then $w(v_1, \cdots, v_r) \in U(I)$ as well. \\
If $w=w(x_1, \cdots, x_r)$ and $v_i=v_i(y_1, \cdots, y_m)$, $1 \leq i \leq r$, are homogeneous divided polynomials of degrees $deg_{x_i}(w)$, $deg_{y_k}(v_i)$ in each variable, then $w(v_1, \cdots, v_r)$ is a homogeneous divided polynomial of degrees $\sum\limits_{i=1}^rdeg_{x_i}(w)\cdot deg_{y_k}(v_i)$ in $y_k$, $1\leq k\leq m$;

\vspace{7mm}

	\item \label{DP3} let $w=w(x_1,\cdots, x_r) \in U(I)$. Suppose that
	\begin{enumerate}[i)]
		\item \label{DP3i} for arbitrary elements $a, b, a_2, \cdots, a_r \in \til{I}$, we have 
			\[[w(a, a_2, \cdots, a_r), w(b, a_2, \cdots, a_r)]=0;\]
		\item \label{DP3ii} $w$ is linear in $x_1$, which means that $w(\alpha a + \beta b, a_2, \cdots, a_r) = \alpha w(a, a_2, \cdots, a_r) + \beta w(b, a_2, \cdots, a_r)$ for arbitrary $\alpha, \beta \in E$; $a, b, a_2, \cdots, a_r \in \til{I}$ and 
			\[w(I \otimes e_\pi, a_2, \cdots, a_r) \subseteq I \otimes e_\pi + I \otimes e_\pi E.\]
			Then for an arbitary $k \geq 0$ the function $w' = x_0 \text{ ad}_{x_1}^{[k]}(w),$ where $w'$ is defined as 
			\[w'(a_0, a_1, \cdots, a_r) = a_0U_k(\Omega ),\] 
			\[\Omega = \{\text{ad}(w(a_{1\pi},a_2, \cdots, a_r)\}_\pi,\]
			where $a_1=\sum\limits_\pi a_{1\pi}$ in the standard decomposition of the element $a_1$, is a divided polynomial defined on $I$.\\
	\end{enumerate}
\end{enumerate}

If $w$ is a homogeneous divided polynomial of degrees $deg_{x_i}(w)$ in $x_1, \cdots, x_r$, then $w'$ is a homogeneous divided polynomial of degrees $1, \kappa \cdot deg_{x_i}(w)$, $1 \leq i \leq r$, in $x_0, x_1, x_2, \cdots, x_r$.

An element of $M(I)$ lies in $U(I)$ if and only if starting with elements from $(X)$ and using rules DP\ref{DP2}-DP\ref{DP3}, it can be shown to be a divided polynomial.

 A divided polynomial from $U(I)$ is a homogeneous divded polynomial if and only if starting with homogeneous elements from $(X)$ and applying rules DP\ref{DP2}-DP\ref{DP3} to homogeneous polynomials, it can be shown to be a homogeneous divided polynomial.

Let's recall the definition of a polynomial map of vector spaces.

Let $V,W$ be vector spaces over an infinite field $F$ and let \\$f:\underbrace{V \times \cdots \times V}_{m} \rightarrow W$, $(v_1, \cdots, v_m) \rightarrow f(v_1, \cdots, v_m) \in W$. If $f$ is multilinear then it is said to be a polynomial of degrees $(1, 1, \cdots, 1)$ in $v_1, \cdots, v_m$. Let $d_i\geq 1$, $i=1, \cdots, m$. We say that $f$ is a homogeneous polynomial map of degrees $(d_1, \cdots, d_m)$ in $v_1, \cdots, v_m$ if
\begin{enumerate}[(1)]
\item for $d_i=1$ $f$ is linear in $v_i$;
\item for $d_i\geq 2$ we have
\[f(v_1,\cdots, v_{i-1}, v_i'+v_i'',v_{i+1},\cdots, v_m) -f(v_1, \cdots, v_{i-1}, v_i',v_{i+1}, \cdots, v_m)\]
\[-f(v_1,\cdots, v_{i-1},v_i'',v_{i+1},\cdots, v_m)\]
\[=\sum\limits_{k=1}^{d_i-1}f_k(v_1,\cdots,v_{i-1},v_i',v_i'',v_{i+1},\cdots,v_m),\]
where $f_k(v_1,\cdots, v_{i-1},v_i',v_i'',v_{i+1},\cdots,v_m)$ is a homogeneous polynomial map of degrees $d_1,\cdots,d_{i-1},k,d_i-k,d_{i+1},\cdots, d_m$ in \\$v_1,\cdots,v_{i-1},v_i',v_i'',v_{i+1},\cdots,v_m$.
\end{enumerate}

Now recall the definition of a full linearization of a homogeneous polynomial map $f$ of degrees $d_1\geq 1, \cdots, d_m\geq 1$ in $v_1,\cdots, v_m$. For every $1 \leq i \leq m$ choose $d_i$ elements $v_{i1}, \cdots, v_{id_i}\in V$. Let $\pi \subseteq \{v_{i1}, \cdots, v_{id_i}\}$ be a nonempty subset. Denote \\$f_\pi =f(v_1, \cdots, v_{i-1}, \sum\limits_{v \in \pi} v, v_{i+1}, \cdots, v_m).$ The mapping
\[\Delta_i(f) = \sum\limits_{\varnothing \neq \pi \subseteq\{v_{i1},\cdots, v_{id_i}\}} (-1)^{(d_i-|\pi|)} f_\pi\]
is called the linearization of $f$ with respect to $v_i$. The mapping \break $\Delta_i(f)(v_1, \cdots, v_{i-1}, v_{i1}, \cdots, v_{id_i}, v_{i+1}, \cdots, v_m)$ is multilinear in $v_{i1}, \cdots, v_{id_i}$.

Consecutively applying linearizations with respect to all variables, we get the full linearization $\til{f}: \underbrace{V \times \cdots \times V}_{\sum\limits_{i=1}^m d_i} \rightarrow W$. Clearly $\til{f}$ is a multilinear map.

\begin{lemma}
\label{Lemma33}
For an arbitrary homogeneous divided polynomial $w \in U(I)$
\begin{enumerate}[1)]
	\item the full linearization $\til{w}$ of $w$ lies in $L * \text{Lie}\langle X \rangle$;
	\item the span of all values of $w$ on $\til{I}$ is equal to the span of all values of $\til{w}$ on $\til{I}$.
\end{enumerate}
\end{lemma}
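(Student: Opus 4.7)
The plan is to prove both assertions simultaneously by induction on the construction of $w$ as a divided polynomial, following the three rules DP\ref{DP1}--DP\ref{DP3}.

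In the base case DP\ref{DP1}, $w\in(X)\subseteq L\ast\text{Lie}\langle X\rangle$ is already an ordinary element of the free product, and its full linearization $\til{w}$ is obtained by the standard polarization procedure, which sends Lie polynomials to Lie polynomials in more variables; hence $\til w\in L\ast\text{Lie}\langle X\rangle$. For the equality of spans, one inclusion is the Vandermonde extraction over the infinite field $F$: every multihomogeneous component of $w(t_1b_1+\cdots+t_db_d,\ldots)$, viewed as a polynomial in $t_i\in F$, is an $F$-linear combination of values of $w$, and the multilinear component recovers the full linearization. The reverse inclusion uses the structure of $E$: if $a_i=\sum_\pi a_{i\pi}\otimes e_\pi$ is the standard decomposition of $a_i\in\til I$, then any repetition of a subscript $\pi$ within a single variable slot yields a factor $e_\pi^2=0$, so expanding $w(a_1,\ldots,a_m)$ produces
\[w(a_1,\ldots,a_m)=\sum\til w\bigl(a_{1\pi_{11}}\otimes e_{\pi_{11}},\ldots,a_{1\pi_{1d_1}}\otimes e_{\pi_{1d_1}},\ldots\bigr),\]
summed over pairwise disjoint choices of the $\pi$'s within each variable group. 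Each summand is a value of $\til w$ on $\til I$.

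For the inductive step DP\ref{DP2}, the full linearization of a composition $w(v_1,\ldots,v_r)$ decomposes, by the polarization chain rule, into a sum of substitutions of the linearizations $\til v_i$ into $\til w$, with the new variables partitioned among the $\deg_{x_i}(w)$ slots reserved for each $v_i$. Since $\til w,\til v_1,\ldots,\til v_r\in L\ast\text{Lie}\langle X\rangle$ by the inductive hypothesis, the result also lies in $L\ast\text{Lie}\langle X\rangle$. The equality of spans propagates through composition from the inductive hypothesis applied to $w$ and to each $v_i$.

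The inductive step DP\ref{DP3} is the technical heart of the proof. We have $w'=x_0\,\adx[k][x_1](w)$, so
\[w'(a_0,a_1,\ldots,a_r)=a_0U_k(\Omega),\qquad \Omega=\{\ad(w(a_{1\pi},a_2,\ldots,a_r))\}_\pi.\]
In $w'$ the variable $x_1$ appears with total degree $k\cdot\deg_{x_1}(w)$, so to linearize it we introduce $k$ independent groups of variables, one per factor in the divided power $U_k$. The recipe $U_k(\Omega)=\sum d_1\cdots d_k$ summed over $k$-element subsets of $\Omega$ converts, once the $k$ copies of $x_1$ are separated, into an ordered sum
\[\sum_{\sigma\in S_k}\ad\bigl(\til w(\text{group }\sigma(1),\ldots)\bigr)\cdots\ad\bigl(\til w(\text{group }\sigma(k),\ldots)\bigr),\]
acting on $x_0$. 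Further linearization inside each occurrence of $\til w$ is handled by the inductive hypothesis for $w$. The resulting expression is a genuine iterated commutator, so $\til{w'}\in L\ast\text{Lie}\langle X\rangle$. The factor $k!$ that would appear in characteristic zero is absorbed by the passage to the fully multilinear component, so the argument goes through in arbitrary positive characteristic. The equality of spans then follows as in the base case: Vandermonde over $F$ gives one inclusion and the $E$-structure gives the other. The main obstacle is thus the DP\ref{DP3} step, where one must carefully track how the full linearization unwinds the divided power operator $U_k$ into an ordinary symmetric product of adjoint operators so that no divided symbol survives in $\til{w'}$; this bookkeeping relies essentially on the standard decomposition in $E$ and on working over the infinite ground field $F$.
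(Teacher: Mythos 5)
Your overall approach — induction on the construction steps DP2--DP3, linearizing $x_1$ first in the DP3 step, and using $e_\pi^2=0$ for the harder inclusion in part 2) — is the same as the paper's. However, the DP3 step for part 1) contains a genuine error.

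You write that linearizing $x_1$ in $w'=x_0\,\adx[k][x_1](w)$ produces the symmetrized sum
\[\sum_{\sigma\in S_k}\ad\bigl(v(y_{\sigma(1)},\ldots)\bigr)\cdots\ad\bigl(v(y_{\sigma(k)},\ldots)\bigr),\]
and then remark that the factor $k!$ ``is absorbed by the passage to the fully multilinear component.'' This does not work: since by DP3(i) the operators $\ad(v(y_i,\ldots))$ pairwise commute, the $S_k$-sum is literally $k!\cdot\ad(v(y_1,\ldots))\cdots\ad(v(y_k,\ldots))$, and for $p\mid k!$ (which is exactly the range the whole argument is designed to handle, e.g.\ $p=2,3$) this is identically zero. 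There is no multilinear component left to extract. The correct observation is that the alternating-sum definition of $\Delta_{x_1}$ applied to $x_0U_k(\Omega)$ produces a \emph{single} ordered product
\[\Delta_{x_1}(w')(x_0,y_1,\ldots,y_k,x_2,\ldots,x_r)=x_0\,\ad\bigl(v(y_1,x_2,\ldots,x_r)\bigr)\cdots\ad\bigl(v(y_k,x_2,\ldots,x_r)\bigr),\]
with no $\sigma$-sum: when $|\pi|<k$ the term $U_k(\Omega_\pi)$ vanishes because $U_k$ sums over $k$-element subsets of a set smaller than $k$, and when $|\pi|=k$ there is exactly one $k$-element subset. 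Divided powers already encode the division by $k!$; re-introducing a symmetric sum destroys this.

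For part 2), your expansion
\[w(a_1,\ldots,a_m)=\sum\til{w}\bigl(a_{1\pi_{11}}\otimes e_{\pi_{11}},\ldots\bigr)\]
over pairwise disjoint $\pi$'s is the desired conclusion, not a proof of it. The non-trivial content is that \emph{all} intermediate partial linearizations $g$ of $w$ of degree $\geq 2$ in some slot vanish on elements of $L\otimes e_i+(L\otimes e_i)E$; otherwise the Taylor-type expansion in positive characteristic is not the naive one, because the recursive definition of a homogeneous polynomial map introduces coefficients one cannot simply ignore. The paper handles this by an induction over the finite system $\mathcal{F}$ of repeated partial linearizations of $w$, together with the explicit condition that $g(V,\ldots,v,\ldots,V)=(0)$ for $v\in\Omega$ whenever $g\in\mathcal{F}$ has degree $\geq 2$ in that slot. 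Your $E$-structure intuition is exactly the reason this condition holds, but the step from ``each such $g$ vanishes on $\Omega$'' to ``$w$ on $\til I$ lands in span of $\til w$ on $\Omega$'' needs that induction rather than a direct claim.
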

\begin{proof}
$1)$ We will use induction on the number of steps DP\ref{DP2}-DP\ref{DP3} needed to construct the divided polynomial $w$.

If $v_1, \cdots, v_r, w$ are homogeneous divided polynomials, then the full linearization of $w(v_1, \cdots, v_r)$ is a linear combination of values $\til{w}(\til{v_1}, \cdots, \til{v_r})$ in appropriate variables. 

Let $w=x_0\text{ ad}_{x_1}^{[k]}(v)$, where $v=v(x_1, \cdots, x_r)$ in a homogeneous divided polynomial satisfying the conditions DP3 (i), (ii). Since we can linearize variables in an arbitrary order, let's start with the variable $x_1$. Then
\[\Delta_{x_1}(w)(x_0, y_1, \cdots, y_k, x_2, \cdots, x_r) \]
\[= x_0\text{ ad}(v(y_1, x_2, \cdots, x_r)) \cdots \text{ ad}(v(y_k, x_2, \cdots, x_r)).\]
%It remains to consider a divided polynomial $u=v\text{ ad}^{[2]}(y)\text{ ad}_{x_1}^{[2]}(w)$, where the homogeneous divided polynomials $v=v(x_1, \cdots, x_r)$, $w=w(x_1, \cdots, x_r)$ satisfy the assumptions DP\ref{DP4} (i), (ii), (iii). We have
%\[\Delta_y(u)(x_0, y_1, y_2, x_1, \cdots, x_r)\]
%\[=v(x_1, \cdots, x_r) \text{ ad}(y_1)\text{ ad}(y_2)\text{ ad}_{x_1}^{[2]}(w).\]

This completes the proof of assertion 1).

We will prove part 2) of the Lemma in a slightly more general context of polynomial maps of spaces. Consider again vector spaces $V, W$ and a homogeneous polynomial map $f: \underbrace{V \times \cdots \times V}_m \rightarrow W$, $(x_1, \cdots, x_m) \rightarrow f(x_1, \cdots, x_m) \in W$, $x_i \in V$. Let $f$ have degrees $d_1 \geq 1, \cdots, d_m \geq 1$ with respect to $x_1, \cdots, x_m$. Choose $x_i', x_i'' \in V$, $1 \leq i \leq m$. Consider
\[f(x_1, \cdots, x_{i-1}, x_i'+x_i'', x_{i+1}, \cdots, x_m) - f(x_1, \cdots, x_{i-1}, x_i', x_{i+1}, \cdots, x_m) \]
\[- f(x_1, \cdots, x_{i-1}, x_i'', x_{i+1}, \cdots, x_m) \]
\[= \sum\limits_{j=1}^{d_1-1} f_j(x_1, \cdots, x_{i-1}, x_i', x_i'', x_{i+1}, \cdots, x_m),\]
where the summand $f_j$ has degree $j$ in $x_i'$ and degree $d_i-j$ in $x_i''$. The homogeneous polynomial mappings $f_j$ are called partial linearizations of $f$.

Consider the finite system $\mathcal{F}$ of homogeneous polynomial maps from $V$ to $W$ that are obtained from $f$ by repeated partial linearizations.

Let $\Omega \subset V$ be a family of elements with the following property: \\
\[ \text{if }g(x_1, \cdots, x_{r}) \in \mathcal{F} \text{ and } g \text{ has degree} \geq 2 \text{ in } x_i, \text{ then for an }\label{star} \tag{$\ast$}\]
\hspace{.74cm} arbitrary element  $v\in \Omega$ we have
\[ g(\underbrace{V, \cdots, V}_{i-1}, v, \underbrace{V, \cdots, V}_{r-i}) = (0).\]

We claim that for an arbitrary element $g(x_1, \cdots, x_r) \in \mathcal{F}$, 
\[g(\text{span } \Omega, \cdots, \text{span } \Omega) \subseteq \text{ span}\til{f}(\Omega, \cdots, \Omega). \]

Applying this inclusion to $f=w$, $\Omega=\bigcup\limits_{i\geq1}(I\otimes e_i+I\otimes e_iE)$, we will prove part $2)$ of Lemma \ref{Lemma33}.

If $g$ is multilinear, then $g = \til{f}$. In any case, without loss of generality, we assume that the claim is true for all partial linearizations of $g$. But modulo partial linearizations the mapping $g$ is multilinear. More precisely,
\[g(\text{span } \Omega, \cdots, \text{span } \Omega) \subseteq\]
\[ \text{ span }(g(v_1, \cdots, v_r), v_i \in \Omega) + \sum g'( \text{span } \Omega, \cdots, \text{span }\Omega),\]
where $g'$ are partial linearizations of $g$. Since $g$ has degree $\geq 2$ with respect to at least one variable, we conclude that $g(v_1, \cdots, v_r) = 0.$

In particular,
\[\text{span }f(\text{span }\Omega, \cdots, \text{span }\Omega)= \text{span }(\til{f}(v_1, v_2, \cdots), v_i \in \Omega).\]
If $f$ is a homogeneous divided polynomial and $\Omega =\{a \otimes e_i, a\in L, i\geq 1\}$, then condition (\ref{star}) is clearly satisfied, which completes the proof of assertion 2).
\end{proof}

The following lemma is a linearization version of the celebrated Kostrikin Lemma ([Kos1; Kos2, Lemma 2.1.1]).

\begin{lemma}
\label{Lemma34}
Let $L$ be a Lie algebra. Let $\Omega \subset \text{Der }(L)\otimes E$ be a finite family of elements satisfying the conditions (U\ref{U1}), (U\ref{U2}). Suppose that $m \geq 1$ and for an arbitary $k \geq m$, we have $U_k(\Omega) = 0$.
\begin{enumerate}[1)]
	\item Let $m\geq 2$. Then for arbitary elements $a, b \in L$, we have
		\[[aU_{m-1}(\Omega), bU_{m-1}(\Omega)] =0.\]
	\item Now suppose that $m \geq 4$. Let $a \in \til{L},$ $a = \sum\limits_\pi a_\pi$ be a standard decomposition, and $\Omega' = \{a_\pi U_{m-1}(\Omega)\}_\pi$. Then $U_k(\Omega')=0$ for $k \geq m-1.$
\end{enumerate}
\end{lemma}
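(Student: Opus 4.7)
The proof is the divided-power analogue of the classical proof of the Kostrikin lemma, with the automorphism $\exp(\ad a)$ replaced throughout by the automorphism $\phi := \sum_{i\ge 0}U_i(\Omega)$ of $\tilde L$ supplied by \Cref{Lemma9}(2). The central device is to introduce a formal parameter $t$ and work with the rescaled family $\Omega_t := \{td:d\in\Omega\}$, which still satisfies (U\ref{U1})--(U\ref{U2}) and has $U_k(\Omega_t)=t^k U_k(\Omega)=0$ for $k\ge m$. Hence $\phi_t := \sum_i t^i U_i(\Omega)$ is a polynomial in $t$ of degree at most $m-1$ that acts as an automorphism of $\tilde L\otimes_F F[t]$; likewise $\phi_t^{-1}=\sum_i(-t)^i U_i(\Omega)$ has $t$-degree at most $m-1$.

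For part 1, I would expand the automorphism identity $[a\phi_t,b\phi_t]=[a,b]\phi_t$ as
\[\sum_{i,j=0}^{m-1}t^{i+j}[aU_i(\Omega),bU_j(\Omega)] \;=\; \sum_{k=0}^{m-1}t^k\,[a,b]\,U_k(\Omega),\]
and read off the coefficient of $t^{2(m-1)}$. The right side contributes nothing, since $2(m-1)>m-1$ for $m\ge 2$, whereas on the left only the diagonal pair $(i,j)=(m-1,m-1)$ survives; this forces $[aU_{m-1}(\Omega),bU_{m-1}(\Omega)]=0$.

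For part 2, I would first verify (U\ref{U1})--(U\ref{U2}) for $\Omega'$. Since $a_\pi\in L\otimes e_\pi$ and the elements of $\Omega$ act by derivations, $a_\pi U_{m-1}(\Omega)\in L\otimes e_\pi E$, so $\ad(a_\pi U_{m-1}(\Omega))\in\tilde D_i$ for any $i\in\pi$, giving (U\ref{U1}). Part 1 applied to $a_\pi$ and $a_{\pi'}$ gives $[a_\pi U_{m-1}(\Omega),a_{\pi'}U_{m-1}(\Omega)]=0$, so the corresponding adjoints commute, giving (U\ref{U2}). Next I raise the operator form $R(\phi_t(a))=\phi_t R(a)\phi_t^{-1}$ of the automorphism identity to the $k$-th power, obtaining $R(\phi_t(a))^k=\phi_t R(a)^k\phi_t^{-1}$. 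The right side has $t$-degree at most $2(m-1)$; the left side expands as $\sum_{(i_1,\ldots,i_k)}t^{i_1+\cdots+i_k}\prod_{j=1}^k R(aU_{i_j}(\Omega))$, and its coefficient of $t^{k(m-1)}$ is exactly $R(aU_{m-1}(\Omega))^k$, since only the tuple $(m-1,\ldots,m-1)$ contributes. For $k\ge 3$ one has $k(m-1)>2(m-1)$, hence $R(aU_{m-1}(\Omega))^k=0$; the hypothesis $m\ge 4$ is precisely what forces the critical value $k=m-1$ to satisfy $k\ge 3$. Since $\sum_{d'\in\Omega'}d'=R(aU_{m-1}(\Omega))$ and the $d'$'s commute pairwise and square to zero, the standard expansion $(\sum d')^k=k!\,U_k(\Omega')$ converts this into $k!\,U_k(\Omega')=0$ for $k\ge m-1$.

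The main obstacle is the passage from $k!\,U_k(\Omega')=0$ to $U_k(\Omega')=0$ in positive characteristic when $p\le k$. I would sharpen the argument by using the multivariable scaling $\phi_{\vec t}:=\prod_{d\in\Omega}(1+t_d d)=\sum_S t^S U^S$, assigning an independent formal variable $t_d$ to each $d\in\Omega$. The right side of $R(\phi_{\vec t}(a))^k=\phi_{\vec t}R(a)^k\phi_{\vec t}^{-1}$ has degree at most $2$ in every $t_d$ separately, so equating the coefficient of any monomial $\prod_d t_d^{\mu_d}$ with some $\mu_d\ge 3$ forces the corresponding partial sum $\sum_{(S_j)}\prod_j R(aU^{S_j})$ to vanish. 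Combining this with the further substitution $a\mapsto\sum_\pi s_\pi a_\pi$ and extracting the coefficient multilinear in the $s_\pi$'s isolates each individual summand $U^P(\Omega')=\prod_{\pi\in P}R(a_\pi U_{m-1}(\Omega))$ of $U_k(\Omega')=\sum_{|P|=k}U^P$, bypassing the $k!$ factor and yielding the required conclusion in arbitrary characteristic.
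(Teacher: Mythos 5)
Your proof of part 1) is correct and is a legitimate reformulation of the paper's argument: where the paper reads off $U_{m-1}(\Omega)\ad(b)U_{m-1}(\Omega)=0$ directly from Lemma~\ref{Lemma9}(3) applied to $bU_{2m-2}(\Omega)=0$, you package the same information into the $t$-degree of the automorphism identity $[a\phi_t,b\phi_t]=[a,b]\phi_t$. Both work, and the $t$-trick is a clean device. (One small point you should make explicit: $\phi_t$ is an automorphism of $\til{L}\otimes_F F[t]$ because $\{td:d\in\Omega\}$ satisfies (U\ref{U1}), (U\ref{U2}) after the obvious scalar extension, so Lemma~\ref{Lemma9}(2) applies.)

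Part 2) is where the gap lies, and you have put your finger on it yourself. Raising $R(\phi_t(a))$ to the $k$-th power with a \emph{fixed} $a$ only yields $R(aU_{m-1}(\Omega))^k=0$, and, after expanding $a=\sum_\pi a_\pi$ and using commutativity plus $d'^2=0$, this is $k!\,U_k(\Omega')=0$ — vacuous whenever $p\le k$, i.e.\ essentially always in the situation the paper cares about. Your proposed multivariable repair does not close this gap. The operator $\phi_{\vec t}=\prod_{d\in\Omega}(1+t_dd)$ has degree~$1$ in each $t_d$ but total degree up to $|\Omega|$, not $m-1$: the vanishing $U_k(\Omega)=0$ for $k\ge m$ is a statement about \emph{sums} $\sum_{|S|=k}d^S$, and splitting $t$ into independent $t_d$'s destroys exactly that coherence. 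Concretely, the coefficient of a monomial $\prod_d t_d^{\mu_d}$ in $R(\phi_{\vec t}(a))^k$ vanishes only when some $\mu_d\ge3$; but the terms of $R(aU_{m-1}(\Omega))^k=\sum_{(S_1,\dots,S_k),|S_i|=m-1}\prod_i R(aU^{S_i})$ can easily have all multiplicities $\mu_d\le 2$ once $|\Omega|\ge k(m-1)/2$, and you obtain no information about them. Moreover, the further multilinearization in the $s_\pi$'s does not ``isolate each individual summand'': the multilinear coefficient of $s_{\pi_1}\cdots s_{\pi_k}$ is $\sum_{\sigma\in S_k}R(a_{\pi_{\sigma(1)}}U_{m-1}(\Omega))\cdots R(a_{\pi_{\sigma(k)}}U_{m-1}(\Omega))$, and since those factors commute by part~1), this is again $k!$ times the desired product — the $k!$ you were trying to avoid reappears.

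The fix is small and sits right next to what you wrote: do not take the $k$-th power of $R(\phi_t(a))$ for a fixed $a$; instead apply the automorphism identity to a \emph{product of distinct arguments},
\[
R(\phi_t(a_1))\cdots R(\phi_t(a_k))=\phi_t^{-1}\,R(a_1)\cdots R(a_k)\,\phi_t .
\]
The right-hand side has $t$-degree at most $2(m-1)$, while the coefficient of $t^{k(m-1)}$ on the left is exactly $R(a_1U_{m-1}(\Omega))\cdots R(a_kU_{m-1}(\Omega))$, since each factor has $t$-degree at most $m-1$. For $k\ge 3$ this coefficient must vanish, so
\[
\ad(a_1U_{m-1}(\Omega))\cdots\ad(a_kU_{m-1}(\Omega))=0
\]
for \emph{arbitrary} $a_1,\dots,a_k$, with no factorial in sight; then $U_k(\Omega')=0$ follows term by term for $k\ge m-1\ge 3$. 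This is the same statement the paper establishes — the paper proves it by expanding each $\ad(a_jU_{m-1}(\Omega))$ via Lemma~\ref{Lemma9}(3)--(4) and running a lexicographic-maximality argument on the exponent tuple $(i_0,\dots,i_{m-1})$, which the single-variable $t$-degree count replaces with a one-line degree comparison.

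Also note, as a minor point, that (U\ref{U2}) for $\Omega'$ requires applying part~1) to the standard components $a_\pi,a_{\pi'}\in\til{L}$ rather than to elements of $L$; the proof of part~1) does extend to $\til{L}$, but you should say so, since the statement as written is for $a,b\in L$.
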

\begin{proof}
1) We have $2m-2 \geq m$. Adjoint operators are right multiplications in Lie algebras. Hence Lemma \ref{Lemma9} $(3)$ is applicable. By Lemma \ref{Lemma9} (3),
\[0=\text{ad}(bU_{2m-2}(\Omega)) = \sum\limits_{i+j=2m-2}\pm U_i(\Omega) \text{ ad}(b)U_j(\Omega).\]
It implies
\[U_{m-1}(\Omega)\text{ ad}(b)U_{m-1}(\Omega)= \sum\limits_{\substack{i\geq m \\ \text{or } j \geq m}} \pm U_i(\Omega)\text{ ad}(b)U_j(\Omega)=0.\]
By Lemma \ref{Lemma9} $(3)$, $(4)$, we have
\begin{align*}
[aU_{m-1}(\Omega), bU_{m-1}(\Omega)]&=aU_{m-1}(\Omega)\ad\left(bU_{m-1}(\Omega)\right)\\
&=aU_{m-1}(\Omega)\ad(b)U_{m-1}(\Omega)\\
&=0;
\end{align*}
which completes the proof of the assertion 1).

2) We will show that 
\[\text{ad}(a_1U_{m-1}(\Omega))\cdots\text{ad}(a_kU_{m-1}(\Omega))=0\]
for arbitrary elements $a_1, \cdots, a_k \in L$, $k \geq m-1$. Without loss of generality we will assume $k=m-1$.

By Lemma \ref{Lemma9} (3) and (4) the left hand side is a linear combination of generators
\[U_{i_0}(\Omega)\text{ ad}(a_1)U_{i_1}(\Omega) \cdots U_{i_{m-2}}(\Omega)\text{ ad}(a_{m-1})U_{i_{m-1}}(\Omega),\]
where $0 \leq i_0, i_1, \cdots, i_{m-1} \leq m-1$ and $i_0+i_1+\cdots+i_{m-1}=(m-1)^2.$

Suppose that $U_{i_0}(\Omega)\text{ ad}(a_1)\cdots\text{ad}(a_{m-1})U_{i_{m-1}}(\Omega)\neq 0$ and the $m$-tuple $(i_0, i_1, \cdots, i_{m-1})$ is lexicographically maximal with this property.

We claim that none of the indices $i_0, i_1, \cdots, i_{m-1}$ are equal to 0. Indeed, if one of the indices is equal to 0, then all other indices have to be equal to $m-1$. Since $m \geq 4$ it follows that there exists $t$, $0 \leq t \leq m-1$, such that $i_t = i_{t+1}=m-1$. Now from 1) it follows that $U_{i_t}(\Omega) \text{ ad}(a_{t+1})U_{i_{t+1}}(\Omega) = U_{m-1}(\Omega) \text{ ad}(a_{t+1})U_{m-1}(\Omega)=0$, a contradiction.

Since $(m-2)m< (m-1)^2$ it follows that at least one index $i_t$, $0 \leq t \leq m-1$, is equal to $m-1$. All of the indices $i_1, \cdots, i_{m-1}$ are smaller than $m-1$. Indeed, suppose that $i_t=m-1$, $1 \leq t\leq m-1$. We have $i_{t-1} \geq 1$ by the above. Now Lemma \ref{Lemma9} (3) implies
\[0= \text{ad}(a_tU_{i_{t-1}+i_t}(\Omega))=\sum\limits_{i+j=i_{t-1}+i_t} \pm U_i(\Omega) \text{ ad}(a_t)U_j(\Omega)\]
and therefore
\[U_{i_{t-1}}(\Omega)\text{ ad}(a_t)U_{i_t}(\Omega)=\sum\limits_{\substack{i> i_t \\ j < m-1}} \pm U_i(\Omega) \text{ ad}(a_t)U_j(\Omega),\]
which contradicts lexicographical maximality of $(i_0, \cdots, i_{m-1})$.

We have proved that $i_0=m-1,$ $i_1=i_2=\cdots = i_{m-1}=m-2$. Now our aim will be to show that
\[U_{m-1}(\Omega)\text{ ad}(a_1)U_{m-2}(\Omega)=0.\]
Since $(m-1)+(m-3) \geq m$, Lemma \ref{Lemma9} (3) implies that
\[U_{m-1}(\Omega)\text{ ad}(a_1)U_{m-3}(\Omega) - U_{m-2}(\Omega)\text{ ad}(a_1)U_{m-2}(\Omega) \]
\[+ U_{m-3}(\Omega)\text{ ad}(a_1)U_{m-1}(\Omega)=0.\]
Multiplying the left hand side by $U_1(\Omega)$ on the right and taking into account Lemma \ref{Lemma9} (4), we get
\[(m-2)U_{m-1}(\Omega)\text{ ad}(a_1)U_{m-2}(\Omega) -(m-1)U_{m-2}(\Omega)\text{ ad}(a_1)U_{m-1}(\Omega) = 0.\]
On the other hand Lemma \ref{Lemma9} (3) implies that
\[U_{m-1}(\Omega)\text{ ad}(a_1)U_{m-2}(\Omega) - U_{m-2}(\Omega)\text{ ad}(a_1)U_{m-1}(\Omega)=0.\]
The system of equations implies
\[U_{m-1}(\Omega)\text{ ad}(a_1)U_{m-2}(\Omega)=0,\]
which completes the proof of the lemma.
\end{proof}

\begin{definition}
	We say that a divided polynomial $w(x_1,\cdots,x_r)$ defined on $L^s,\ s\geq1$, is \underline{regular} if for an arbitrary $i\geq s$ we have $w(\til{L}^i,\cdots,\til{L}^i)\neq(0)$.

\end{definition}

By Lemma \ref{Lemma33} $(2)$, a homogeneous divided polynomial $w$ is regular if and only if its full linearization is regular.

\begin{lemma}
	\label{Lemma10}There exist integers $m\geq1,N\geq1$ and a regular homogeneous divided polynomial $w(x_1,\cdots,x_r)$ defined on $L^m$ such that $w$ satisfies the conditions in (DP3) and $x_0\ad_{x_1}^{[t]}(w)=0$ holds identically on $\til L^m$ for all $t \geq N$.
	\begin{proof}
		Consider the elements $c_1,\cdots,c_r\in S$ and integers $m\geq1,\ N\geq1$ of \Cref{Lemma8}.  By property 2), for an $i\geq m$ the subspace $L_i=[L^i,c_1,\cdots,c_r]$ is a subalgebra of $L$.  By 3) this subalgebra is nilpotent, say, of degree $d(i),\ d(m)\geq d(m+1)\geq\cdots$.  This sequence stabilizes at some step, $d=d(k)=d(k+1)=\cdots$.  Thus $L_k^d=(0)$ and $L_i^{d-1}\neq(0)$ for any $i\geq m$.  Let 
		\begin{multline*}w(x_1,\cdots,x_{d-1})=[[x_1,c_1,\cdots,c_r],[x_2,c_1,\cdots,c_r],\cdots\\
		\cdots,[x_{d-1},c_1,\cdots,c_r]]\in L\ast\Lie\langle X\rangle.
		\end{multline*}
		The divided polynomial $w$ is regular and linear in $x_1$.  For arbitrary elements $a_2,\cdots,a_{d-1}\in L^k$ we have \[[w(L^k,a_2,\cdots,a_{d-1}),w(L^k,a_2,\cdots,a_{d-1})]=(0)\]  
because the left hand side lies in $L_k^d$. Hence the divided polynomial $w$ satisfies the condition (DP3). Therefore the divided polynomial $x_0 \text{ad}_{x_1}^{[t]}(w)$ is defined on $L^k$ for any $t \geq 1.$ For $t \geq N$ the polynomial $x_0\text{ad}_{x_1}^{[t]}(w)$ is identically zero on $\til{L}^k$ by Lemma \ref{Lemma8} (3). This finishes the proof of the lemma.
%Hence the divided polynomial $x_0\ad_{x_1}^{[N]}(w)$ is defined on $L^k$ and \linebreak$x_0\ad_{x_1}^{[N]}(w)=0$ holds identically on $\til L^k$, which finishes the proof of the lemma.
	\end{proof}
\end{lemma}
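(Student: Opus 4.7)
The plan is to construct $w$ as an iterated left-normed commutator built from the data supplied by Lemma~\ref{Lemma8}, and then to read off each required property from one of the three conclusions of that lemma. First I would invoke Lemma~\ref{Lemma8} to extract elements $c_1,\dots,c_r\in S$ and integers $m\geq 1$, $N\geq 1$ satisfying conditions (1)--(3) there. For any $i\geq m$, condition (2) says that every $c_j$ commutes with $L_i:=[L^i,c_1,\dots,c_r]$ modulo, in effect, $L_m$-level terms, and so $L_i$ is closed under the Lie bracket (one checks that a commutator $[[a,c_1,\dots,c_r],[b,c_1,\dots,c_r]]$ can be re-written, via the Jacobi identity and (2), back into $L_i$). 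Condition (3) then says that any product of $N$ adjoint operators of elements in $L_m$ vanishes, so $L_i$ is a nilpotent subalgebra.

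Let $d(i)$ denote the nilpotency class of $L_i$. The sequence $d(m)\geq d(m+1)\geq\cdots$ of positive integers stabilizes at some value $d$; after enlarging $m$ if necessary I may assume $L_m^d=(0)$ while $L_i^{d-1}\neq(0)$ for all $i\geq m$ (the non-vanishing uses Lemma~\ref{Lemma8}(1)). I then set
\[
w(x_1,\dots,x_{d-1})=\bigl[[x_1,c_1,\dots,c_r],[x_2,c_1,\dots,c_r],\dots,[x_{d-1},c_1,\dots,c_r]\bigr],
\]
an element of the free product $L\ast\mathrm{Lie}\langle X\rangle$, hence a divided polynomial by (DP1). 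It is homogeneous, multilinear in $x_1,\dots,x_{d-1}$ (in particular linear in $x_1$), and regularity is immediate: a generic value $w(a_1,\dots,a_{d-1})$ with $a_i\in\tilde L^i$ is an arbitrary product of $d-1$ elements of $L_i\otimes E$, and the span of such products is $L_i^{d-1}\otimes E^{d-1}\neq (0)$.

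Next I would verify the two parts of (DP3). Part (ii), linearity in $x_1$ and the $E$-grading condition $w(I\otimes e_\pi,a_2,\dots)\subseteq I\otimes e_\pi + I\otimes e_\pi E$, is visible directly from the construction, since the outermost commutator is taken in $x_1$ and the rest of $w$ evaluated on $a_2,\dots,a_{d-1}$ acts as a derivation-style operator on the $x_1$-slot. For part (i), given $a,b,a_2,\dots,a_{d-1}\in\tilde L^m$, both $w(a,a_2,\dots)$ and $w(b,a_2,\dots)$ are $(d-1)$-fold products of elements of $L_m\otimes E$, hence lie in $L_m^{d-1}\otimes E$; their bracket therefore lies in $L_m^d\otimes E=(0)$.

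Finally, for $t\geq N$ I would unpack the definition of $w':=x_0\,\ad_{x_1}^{[t]}(w)$: it is $a_0\,U_t(\Omega)$ with $\Omega=\{\ad\bigl(w(a_{1\pi},a_2,\dots,a_r)\bigr)\}_\pi$, and every entry of $\Omega$ is the adjoint operator of an element of $L_m\otimes E$. Since $U_t(\Omega)$ is a signed sum of $t$-fold products of such operators, condition (3) of Lemma~\ref{Lemma8} with $t\geq N$ forces $U_t(\Omega)=0$, whence $w'\equiv 0$ on $\tilde L^m$. The step I expect to require the most care is the verification of the DP3 hypothesis (i); matching the flat bracket in $L\ast\mathrm{Lie}\langle X\rangle$ with the nilpotency of $L_m$ (rather than of $L$ itself) is precisely what Lemma~\ref{Lemma8}(2)--(3) was engineered for, and is the only place where all three conclusions of Lemma~\ref{Lemma8} interact simultaneously.
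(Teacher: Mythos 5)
Your proposal is correct and follows essentially the same path as the paper's proof: invoke Lemma~\ref{Lemma8} for $c_1,\dots,c_r$, $m$, $N$; observe that each $L_i=[L^i,c_1,\dots,c_r]$ is a nilpotent subalgebra; let the nilpotency class stabilize at $d$; take the left-normed $(d-1)$-fold commutator $w$ of $[x_j,c_1,\dots,c_r]$; verify regularity and (DP3) from $L_i^{d-1}\neq(0)$ and $L_k^d=(0)$; and kill $x_0\,\ad_{x_1}^{[t]}(w)$ for $t\geq N$ by Lemma~\ref{Lemma8}(3). The only cosmetic difference is that you absorb the stabilization point $k$ into a relabelled $m$, whereas the paper keeps both indices.
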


Let $q\geq1$ be a minimal integer with the following property:

there exists an $m \geq 1$ and a regular homogeneous divided polynomial $w=w(x_1,\cdots,x_r)$ defined on $L^m$, linear in $x_1$, such that 
\begin{enumerate}[i)]
	\item for arbitrary elements $a,b,a_2,\cdots,a_r\in \til L^m$ we have \[[w(a,a_2,\cdots,a_r),w(b,a_2,\cdots,a_r)]=0;\]
	\item $\til L^m\ad_{x_1}^{[t]}(w)=(0)$ holds identically on $\til L^m$ for all $t \geq q.$ Clearly, $q \leq N.$
\end{enumerate}

\begin{lemma}
	\label{Lemma11}$q\leq3$.
	\begin{proof}
		Suppose that $q\geq4$.  Consider the divided polynomial \linebreak $v(x_0,x_1,\cdots,x_r)=x_0\ad_{x_1}^{[q-1]}(w)$ defined on $L^m$.  In view of the minimality of $q$, the divided polynomial $v$ is regular.
		
		By Lemma \ref{Lemma34} (1) for arbitrary elements $a,b\in\til L^m;\ a_1,\cdots,a_r\in\til L^m$ we have
		\[[v(a,a_1,\cdots,a_r),v(b,a_1,\cdots,a_r)]=0.\]
		%Indeed, by \Cref{Lemma9} (3),(4), the left hand side is equal to
		%\[(-1)^{q-1}a\ad_{x_1}^{[q-1]}(w(a_1,\cdots,a_r))\ad(b)\ad_{x_1}^{[q-1]}(w(a_1,\cdots,a_r)).\]
		%Again my \Cref{Lemma9}(3),
		%\begin{multline*}
		%\ad(b\ad_{x_1}^{[2(q-1)]}(w(a_1,\cdots,a_r)))=\\
		%\displaystyle\sum\limits_{\mu+\nu=2(q-1)}\pm\ad_{x_1}^{[\mu]}(w(a_1,\cdots,a_r))
		%\ad(b)\ad_{x_1}^{[\nu]}(w(a_1,\cdots,a_r))=0.
		%\end{multline*}
		%Hence,
		%\begin{multline*}
		%\ad_{x_1}^{[q-1]}(w(a_1,\cdots,a_r))\ad(b)\ad_{x_1}^{[q-1]}(w(a_1,\cdots,a_r))=\\
		%\sum\limits_{\substack{u+v=2(q-1)\\u\geq q\text{ or }b\geq q}}\pm\ad_{x_1}^{[\mu]}(w(a_1,\cdots,a_r))\ad(b)\ad_{x_1}^{[\nu]}(w(a_1,\cdots,a_r))=0.
		%\end{multline*}
		
		We proved that the divided polynomial $y\ad_{x_0}^{[q-1]}(v(x_0,\cdots,x_r))$ is defined on $\til L^m$.  If $q\geq4$, then by Lemma \ref{Lemma34} (2), this divided polynomial is identically zero, which contradicts the minimality of $q$ and finishes the proof of the lemma.
	\end{proof}
\end{lemma}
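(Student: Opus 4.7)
Suppose for contradiction that $q \geq 4$, and let $w = w(x_1, \ldots, x_r)$, defined on $L^m$, realize the minimal value $q$. The plan is to construct from $w$ a new regular divided polynomial $v$, linear in a fresh variable $x_0$, satisfying the analogues of conditions (i) and (ii) but with ad-exponent bound $q-1$, contradicting the minimality of $q$.

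I would define $v(x_0, x_1, \ldots, x_r) = x_0 \ad_{x_1}^{[q-1]}(w)$. By rule (DP3), $v$ is a divided polynomial defined on $L^m$ and linear in $x_0$. First I verify that $v$ is regular: if $v(\til{L}^i, \ldots, \til{L}^i) = (0)$ for some $i \geq m$, then $\ad_{x_1}^{[q-1]}(w)$ already vanishes identically on $\til{L}^i$, so $(i, w, q-1)$ would witness a smaller exponent than $q$ in the minimization problem --- contradicting minimality. Hence $v$ is regular.

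The heart of the argument is to apply \Cref{Lemma34}. For fixed $a_2, \ldots, a_r \in \til{L}^m$ and $a_1 \in \til{L}^m$ with standard decomposition $a_1 = \sum_\pi a_{1\pi}$, set $\Omega = \{\ad(w(a_{1\pi}, a_2, \ldots, a_r))\}_\pi$. The second half of (DP3~ii) places $w(a_{1\pi}, \ldots)$ in $L \otimes e_\pi + L \otimes e_\pi E$, so $\ad(w(a_{1\pi}, \ldots)) \in \til{D}_i$ for any $i \in \pi$, giving (U\ref{U1}); condition (i) on $w$ makes the values $w(a_{1\pi}, \ldots)$ commute pairwise, hence their adjoints commute, giving (U\ref{U2}). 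By the very definition of divided powers in (DP3), $a_0 \ad_{x_1}^{[k]}(w) = a_0 U_k(\Omega)$, so condition (ii) on $w$ translates to $U_k(\Omega) = 0$ for $k \geq q$. Thus the hypotheses of \Cref{Lemma34} hold with the parameter ``$m$'' there equal to our $q$.

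Now \Cref{Lemma34}(1), applicable since $q \geq 2$, yields $[a U_{q-1}(\Omega), b U_{q-1}(\Omega)] = 0$ for all $a, b \in L$, which reads $[v(a, a_1, \ldots, a_r), v(b, a_1, \ldots, a_r)] = 0$ --- exactly condition (i) for $v$. Next, \Cref{Lemma34}(2), applicable since $q \geq 4$, yields $U_k(\Omega') = 0$ for $k \geq q - 1$, where $\Omega' = \{a_\pi U_{q-1}(\Omega)\}_\pi$. Unwinding, $\Omega'$ is precisely the family of adjoints of the values of $v$ on the pieces of a standard decomposition, so the vanishing says $\til{L}^m \ad_{x_0}^{[t]}(v) = (0)$ identically on $\til{L}^m$ for all $t \geq q - 1$. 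Hence $v$ is a regular divided polynomial satisfying the defining conditions with bound $q - 1 < q$, contradicting the minimality of $q$. I expect the main obstacle to be the bookkeeping dictionary between the divided-power operators $U_k(\Omega)$ of \Cref{Lemma34} and the divided-polynomial notation $\ad_{x_1}^{[k]}(w)$: concretely, checking (U\ref{U1}) from (DP3~ii), and verifying that the family $\Omega'$ produced by \Cref{Lemma34}(2) really is the adjoint family of $v$ so that its vanishing can be read back as condition (ii) for $v$.
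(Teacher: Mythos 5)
Your proof is correct and follows exactly the paper's route: define $v = x_0\,\ad_{x_1}^{[q-1]}(w)$, argue regularity from the minimality of $q$, use Lemma \ref{Lemma34}(1) to obtain condition (i) for $v$, and use Lemma \ref{Lemma34}(2) to obtain condition (ii) with exponent $q-1$, contradicting minimality. The paper's proof is terse and leaves the dictionary between $U_k(\Omega)$ and $\ad_{x_1}^{[k]}(w)$ implicit, whereas you spell it out, including the verification of (U\ref{U1}) from DP3(ii) and (U\ref{U2}) from condition (i); the content is the same.
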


\begin{lemma}
	\label{Lemma12}
	Let $L$ be a Lie algebra. Let $\Omega \subset \til{L}$ be a finite family of elements such that ad($\Omega$) satisfies the assumptions (U\ref{U1}), (U\ref{U2}). Suppose that $U_2(\text{ad}(\Omega))=0$. Then $a = \sum\limits_{b \in \Omega} b$ is a sandwich of the Lie algebra $\til{L}$.
	\begin{proof}
		We have $\text{ad}(a)^2=U_1(\text{ad}(\Omega))U_1(\text{ad}(\Omega))=2U_2(\text{ad}(\Omega))=0.$ By Lemma \ref{Lemma9} (\ref{Lemma9.3}) for an arbitrary element $c \in \til{L}$ we have 
		\begin{multline*}
			\text{ad}(cU_2(\text{ad}(\Omega)))=\text{ad}(c)U_2(\text{ad}(\Omega))-U_1(\text{ad}(\Omega))\text{ad}(c)U_1(\text{ad}(\Omega))\\
			+ U_2(\text{ad}(\Omega))\text{ad}(c),
		\end{multline*}
		which implies $\text{ad}(b)\text{ad}(c)\text{ad}(b) = 0$ and completes the proof of the lemma.
	\end{proof}
\end{lemma}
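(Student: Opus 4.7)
The plan is to verify the two defining properties of a sandwich for $a = \sum_{b \in \Omega} b$, namely $\ad(a)^2 = 0$ and $\ad(a)\ad(c)\ad(a) = 0$ for every $c \in \til{L}$. The starting observation is that because addition commutes with $\ad$, we have $\ad(a) = \sum_{b \in \Omega} \ad(b) = U_1(\ad(\Omega))$, so both conditions can be rewritten purely in terms of the divided-power operators $U_k(\ad(\Omega))$, and I expect to read them both off of Lemma \ref{Lemma9}.

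For $\ad(a)^2 = 0$, I would apply Lemma \ref{Lemma9}(4) with $i = j = 1$ to get $U_1(\ad(\Omega))\,U_1(\ad(\Omega)) = \binom{2}{1} U_2(\ad(\Omega)) = 2\,U_2(\ad(\Omega))$, which vanishes by the hypothesis $U_2(\ad(\Omega)) = 0$.

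For the second identity, I would fix an arbitrary $c \in \til{L}$ and apply Lemma \ref{Lemma9}(3) with $m = 2$, letting $c$ play the role of the element called $a$ in that lemma:
\[
\ad\!\bigl(c\,U_2(\ad(\Omega))\bigr) = \ad(c)\,U_2(\ad(\Omega)) - U_1(\ad(\Omega))\,\ad(c)\,U_1(\ad(\Omega)) + U_2(\ad(\Omega))\,\ad(c).
\]
Since $U_2(\ad(\Omega)) = 0$, the left side is zero, as are the first and third terms on the right, leaving $U_1(\ad(\Omega))\,\ad(c)\,U_1(\ad(\Omega)) = 0$, i.e.\ $\ad(a)\,\ad(c)\,\ad(a) = 0$.

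There is no serious obstacle here: both defining properties of a sandwich are immediate specializations of Lemma \ref{Lemma9}. The only mild caveat is notational, namely keeping the roles of the two $a$'s straight when invoking part (3), since the element labelled $a$ in Lemma \ref{Lemma9}(3) must here be taken to be the arbitrary test element $c$, not the element whose sandwich property is being verified.
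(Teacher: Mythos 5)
Your proof matches the paper's essentially line for line: the same computation $\ad(a)^2 = U_1 U_1 = 2U_2 = 0$, and the same application of Lemma \ref{Lemma9}(3) with $m=2$ to extract $U_1(\ad(\Omega))\,\ad(c)\,U_1(\ad(\Omega)) = 0$ from the vanishing of $U_2(\ad(\Omega))$. The only difference is that your write-up is slightly more careful about notation (the paper's final line has a typo, writing $\ad(b)$ where $\ad(a)$ is meant).
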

%Let $w(x_1,\cdots,x_r)$ be a divided polynomial defined on $\til L^m$ and linear in $x_1$.  Suppose that $\ad_{x_1}^{[2]}(w)=0$ holds identically on $\til L^m$.  Then every value of $w$ on $\til L^m$ is a sandwich of $\til L^m$.
%	\begin{proof}
%		Choose arbitrary elements $a_1,\cdots,a_r\in\til L^m$ and also let \linebreak $a=w(a_1,\cdots,a_r)$.  Then $\ad(a)^2=2\ad_{x_1}^{[2]}(w(a_1,\cdots,a_r))=0.$  For an arbitrary element $b\in\til L^m$ we have
%		\begin{multline*}
%		\ad(b\ad_{x_1}^{[2]}(w(a_1,\cdots,a_r)))=\ad(b)\ad_{x_1}^{2]}(w(a_1,\cdots,a_r))-\\
%		\ad(a)\ad(b)\ad(a)+\ad_{x_1}^{[2]}(w(a_1,\cdots,a_r))\ad(b)=0,
%		\end{multline*}
%		which implies $\ad(a)\ad(b)\ad(a)=0$ and finishes the proof of the lemma.
%	\end{proof}
%\end{lemma}

%Our aim now is to prove that $q\leq2$ and use \Cref{Lemma12} to construct sandwich valued divided polynomials.  To do that we will need to use theory of Jordan algebras.

In what follows, we will use the subsequent lemma.

\begin{lemma}
\label{Lemma40}
Let $L$ be a Lie algebra. Let $\Omega=\{a_1,\cdots, a_n\}\subset L$ be a finite family of elements. Let $\Omega=\Omega_1\dot{\cup}\cdots\dot{\cup}\Omega_s=\Omega'_1\dot{\cup}\cdots\dot{\cup}\Omega'_t$ be two disjoint decompositions. Denote $b_k=\sum\limits_{a_i\in\Omega_k}a_i$, $c_\ell=\sum\limits_{a_j\in\Omega'_\ell}a_j$. Denote also $\ad[\Omega,\Omega]=\spn(\ad[a_i,a_j],1\leq i,j\leq n)$.

Suppose that if $a_i,a_j$ lie in the same $\Omega_k$ or in the same $\Omega'_\ell$, then $\ad(a_i)\ad(a_j)=0$. Then $\sum\ad(b_{k_1})\ad(b_{k_2})=\sum\ad(c_{\ell_1})\ad(c_{\ell_2})$ mod $\ad[\Omega,\Omega]$, where both sums run over all 2-element subsets $\{k_1,k_2\}\subseteq\{1,\cdots,s\}$ and $\{\ell_1,\ell_2\}\subseteq\{1,2,\cdots,t\}$ respectively.
\end{lemma}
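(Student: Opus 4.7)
The plan is to work in the quotient $\End_F(L)/\ad[\Omega,\Omega]$, where the images of the operators $\ad(a_i)$ pairwise commute: indeed, by the Jacobi identity, $\ad(a_i)\ad(a_j)-\ad(a_j)\ad(a_i)=\ad([a_i,a_j])\in\ad[\Omega,\Omega]$, so mod $\ad[\Omega,\Omega]$ any product of the $\ad(a_i)$'s depends only on the multiset of factors and not on the order. In particular, each expression $\ad(b_{k_1})\ad(b_{k_2})$ is well-defined modulo $\ad[\Omega,\Omega]$ once we know the unordered pair $\{k_1,k_2\}$, so the sums in the statement are unambiguous.

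Next, I would expand $\ad(b_k)=\sum_{a_i\in\Omega_k}\ad(a_i)$ and compute
\[
\sum_{\{k_1,k_2\}}\ad(b_{k_1})\ad(b_{k_2})
=\sum_{\{k_1,k_2\}}\sum_{\substack{a_i\in\Omega_{k_1}\\a_j\in\Omega_{k_2}}}\ad(a_i)\ad(a_j).
\]
Each unordered pair $\{i,j\}$ with $a_i,a_j$ in \emph{different} $\Omega$-blocks appears here exactly once, and modulo $\ad[\Omega,\Omega]$ we may symmetrize the order. On the other hand, if $a_i,a_j$ lie in the same $\Omega_k$, then by hypothesis $\ad(a_i)\ad(a_j)=0$ (and likewise $\ad(a_j)\ad(a_i)=0$, swapping $i,j$), so the same-block contributions vanish and we can extend the range to all unordered pairs $\{i,j\}\subseteq\{1,\dots,n\}$:
\[
\sum_{\{k_1,k_2\}}\ad(b_{k_1})\ad(b_{k_2})\;\equiv\;\sum_{1\le i<j\le n}\ad(a_i)\ad(a_j)\pmod{\ad[\Omega,\Omega]}.
\]

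The right-hand side is intrinsic to $\Omega$ and does not remember the decomposition used. Running the identical computation with the second decomposition $\Omega=\Omega'_1\dot\cup\cdots\dot\cup\Omega'_t$ and its corresponding sums $c_\ell$ yields
\[
\sum_{\{\ell_1,\ell_2\}}\ad(c_{\ell_1})\ad(c_{\ell_2})\;\equiv\;\sum_{1\le i<j\le n}\ad(a_i)\ad(a_j)\pmod{\ad[\Omega,\Omega]},
\]
and comparing the two congruences gives the lemma. The argument is essentially bookkeeping; the only conceptual point, and the step to handle carefully, is the passage to the commutative quotient modulo $\ad[\Omega,\Omega]$, which avoids any need to divide by $2$ and thus makes the statement characteristic-free. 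I do not foresee any substantive obstacle.
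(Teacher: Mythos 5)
Your proposal is correct and follows exactly the paper's route: the paper simply asserts that both sums equal $\sum_{1\le i<j\le n}\ad(a_i)\ad(a_j)$ modulo $\ad[\Omega,\Omega]$, and your argument supplies the elementary bookkeeping (commuting the $\ad(a_i)$ modulo $\ad[\Omega,\Omega]$ via Jacobi, expanding the $\ad(b_k)$, killing same-block pairs by hypothesis) that justifies this identity.
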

\begin{proof}
It is easy to see that
\begin{align*}
\sum\ad(b_{k_1})\ad(b_{k_2})&=\sum\ad(c_{\ell_1})\ad(c_{\ell_2})\\
&=\sum\limits_{1\leq i<j\leq n}\ad(a_i)\ad(a_j) \text{ \hspace{0.1cm} mod} \ad[\Omega,\Omega].
\end{align*}
\end{proof}

\section{Jordan algebras}
\label{J alg}

Let $J$ be a vector space over a field $F$ (of arbitrary characteristic) with two quadratic mappings $J\rightarrow J,\ x\rightarrow x^2$, and $Q:J\rightarrow End_F(J)$.  For elements $x,y,z\in J$ denote
\[x\circ y=(x+y)^2-x^2-y^2,\ \{x,y,z\}=y(Q(x+z)-Q(x)-Q(z)).\]

Following K. McCrimmon (\cite{M1}) we say that $(J,x\rightarrow x^2,Q)$ is a quadratic Jordan algebra if it satisfies the identities
\begin{enumerate}[(M1)]
	\item $\{x,x,y\}=x^2\circ y$;
	\item $(yQ(x))\circ x=(y\circ x)Q(x)$;
	\item $x^2Q(x)=(x^2)^2$;
	\item $x^2Q(y)Q(x)=(yQ(x))^2$;
	\item $Q(x^2)=Q(x)^2$;
	\item $Q(yQ(x))=Q(x)Q(y)Q(x)$
	\vspace{-.4 cm}
\end{enumerate}
and all their partial linearizations.

We reiterate the assumption made at the beginning of section \ref{Section3}: all algebras are considered over an infinite field $F$ of characteristic $p>0.$

Let $w'=w'(x_1,\cdots,x_{r-1})$ be a regular homogeneous divided polynomial defined on $L^m$ such that $w'$ is linear in $x_1$ and satisfies all the assumptions of (DP\ref{DP3}). Moreover, assume that $\til{L^m}\text{ ad}_{x_1}^{[k]}(w')=0$ holds identically for $k\geq 3$. If there exists $s \geq m$ such that $x_r\text{ ad}_{x_1}^{[2]}(w')$ holds identically on $\til{L^s}$ then our goal of constructing a sandwich valued regular homogeneous divided polynomial has been achieved. We assume therefore that the divided polynomial $w(x_1,\cdots,x_r)=x_r\ad_{x_1}^{[2]}(w')$ is regular.

The divided polynomial $w$ satisfies both assumptions of (DP3): it is clearly linear in $x_r$ and for arbitrary elements $a,b,a_1,\cdots, a_{r-1}\in\til{L^m}$, we have
\[[a\ad_{x_1}^{[2]}w'(a_1,\cdots,a_{r-1}), b\ad_{x_1}^{[2]}w'(a_1,\cdots, a_{r-1})]=0\]
by Lemma \ref{Lemma34} (1).

Choose $a_1,\cdots, a_r \in \til{L^m}$ and denote $a'=w'(a_1,\cdots, a_{r-1})$, \linebreak$a=w(a_1,\cdots,a_r)$. Denote $\ad^{[k]}(a')=(\ad_{x_1}^{[k]}w')(a_1,\cdots,a_{r-1})$. If \linebreak $a_r=\sum\limits_\pi a_{r\pi}$ is the standard decomposition, then we denote $\ad^{[k]}(a)=(\ad_{x_r}^{[k]}w)(a_1,\cdots,a_r)=\sum\ad(a_{r\pi_1}\ad^{[2]}(a'))\cdots\ad(a_{r\pi_k}\ad^{[2]}(a'))$, where the sum runs over all $k$-element sets $(\pi_1,\cdots,\pi_k)$.

Notice that $\ad^{[k]}(a)=0$ for $k\geq3$. Indeed, by Lemma \ref{Lemma9} (3), the equalities $\ad(a_{r\pi_i}\ad^{[3]}(a'))=0$ and $\ad(a_{r\pi_i}\ad^{[4]}(a'))=0$ imply

\[\label{eqn:ast}\tag{$\ast$} \ad^{[2]}(a')\ad(a_{r\pi_i})\ad(a')=\ad(a')\ad(a_{r\pi_i})\ad^{[2]}(a'),\]

\[\label{eqn:dast}\tag{$\ast\ast$}\ad^{[2]}(a')\ad(a_{r\pi_i})\ad^{[2]}(a')=0\]

\noindent respectively.

We have

\[\ad^{[k]}(a)=\sum\pm\ad^{[i_1]}(a')\ad(a_{r\pi_1})\ad^{[i_2]}(a')\cdots\ad(a_{r\pi_k})\ad^{[i_{k+1}]}(a'),\]

\noindent where $0\leq i_1, \cdots, i_{k+1}\leq 2$, $i_1+\cdots+i_{k+1}=2k$. If at least one $i_\mu$, $1\leq \mu\leq k+1$, is equal to 0, then all other $i_\nu$, $\nu\neq\mu$, are equal to 2. In this case, the product is equal to 0 by (\ref{eqn:dast}). Suppose that all $i_\mu\neq0$. Then all $i_\mu$, except two, are equal to 2. These two are equal to 1. Since $k+1\geq 4$, we have at least two degrees $i_\mu$ that are equal to 2. Using (\ref{eqn:ast}), we can move two operators $\ad^{[2]}(a')$ together and then use (\ref{eqn:dast}).

Consider the subspaces $K'_a=\{x\in\til{L}^m|x\ad^{[2]}(a)=0\}$ and $K_a=\sum\limits_i(L^m\otimes e_i+\til{L}^me_i)\cap K'_a$ and the factor space $J_a=\til{L}^m/K_a$.

Let $x=\sum\limits_{\pi}x_\pi$ be the standard decomposition of an element $x\in \til L^m$.  Define $x^2=a\sum\ad(x_{\pi_1})\ad(x_{\pi_2})+K_a$, where the sum runs over all 2-element sets $(\pi_1,\pi_2)$.  The order of the factors in $\ad(x_{\pi_1})\ad(x_{\pi_2})$ is irrelevant since $[a,\til L^m]\subseteq K_a$.  Define further
\[yQ(x)=y\ad^{[2]}(a)\sum\ad(x_{\pi_1})\ad(x_{\pi_2})+K_a\]

Again, the order of factors in $\ad(x_{\pi_1})\ad(x_{\pi_2})$ is irrelevant since \linebreak $y\ad^{[2]}(a)\ad(\til L^m)\subseteq K_a$.

Linearizing the above operations, we get $x\circ y =[[a,x],y]+K_a$ for $x,y\in J_a$, and $\{x,y,z\}=[y\ad^{[2]}(a),x,z]+K_a$ for $x,y,z\in J$.

\begin{lemma}
	\label{Lemma13} \hfill{}
	\begin{enumerate}
		\item the element $u=y_1\ad^{[i_1]}(a)\ad(y_2)\ad^{[i_2]}(a)\cdots\ad(y_s)\ad^{[i_s]}(a),$\linebreak where $y_1,\cdots,y_s\in\til L^m;\ i_1+\cdots+i_s\geq s+2$, is equal to 0;
		\item an operator $ad^{[i_1]}(a)\ad(y_1)\cdots\ad(y_s)\ad^{[i_{s+1}]}(a),$ for $y_1,\cdots,y_s\in\til L^m;\ i_1+\cdots,i_{s+1}\geq s+3,$ is equal to 0 on $\til L^m$;
		\item consider an operator 
\[v=\ad^{[i_1]}(a)\ad(y_1)\cdots\ad(y_s)\ad^{[i_{s+1}]}(a),\] 
where $y_1,\cdots,y_s\in \til L^m,\ i_1+\cdots+i_{s+1}\geq s+2$.  Suppose that there exists $1\leq k\leq s-2$ such that $i_{k+1}=i_{k+2}=0$, in other words $v=\sum\cdots\ad(y_k)\ad(y_{k+1})\ad(y_{k+2})\cdots$.  Then $v$ is zero on $\til L^m$.
	\end{enumerate}
	\begin{proof}
		To prove (1) we will use induction on $s$.  If $s=1$ then $u=y_1\ad^{[i_1]}(a),\ i_1\geq3$.  Hence $u=0.$  
		
		Let $s\geq2$.  If $i_1\geq3$ then again $u=0.$  If $i_1\leq1$ then choosing $y_1'=y_1\ad^{[i_1]}(a)\ad(y_2)$ we can use the induction assumption.  Therefore we let $i_1=2$.  If $i_2=0$ then choosing $y_1'=i_1\ad^{[2]}(a)\ad(y_2)\ad(y_3)$ we again use the induction assumption.  Let $i_2=1$.  Then by \linebreak\Cref{Lemma9}(3) we have
		\[\ad^{[2]}(a)\ad(y_2)\ad(a)=\ad(a)\ad(y_2)\ad^{[2]}(a),\]
		the case that has already been considered.  Finally, if $i_2=2$ then $\ad^{[2]}(a)\ad(y_2)\ad^{[2]}(a)=0,$ again by \Cref{Lemma9}(3), which finishes the proof of part (1).
		
		To prove (2) we consider the element
		\[y_0\ad^{[i_1]}(a)\ad(y_1)\cdots\ad(y_s)\ad^{[i_{s+1}]}(a)\]
		and use part (1).
		
		Consider now an operator $v=\ad^{[i_1]}(a)\ad(y_1)\cdots\ad(y_s)\ad^{[i_{s+1}]}(a)$ and suppose that \[v=v'\ad(y_k)\ad(y_{k+1})\ad(y_{k+2})v'',\] where
		 \[v'=\ad^{[i_{1}]}(a)\ad(y_1)\cdots\ad(y_{k-1})\ad^{[i_k]}(a),\] 

\[v''=\ad^{[i_{k+3}]}(a)\cdots\ad^{[i_{s+1}]}(a).\]

		 By part (2) if $v\neq0$ on $\til L^m$ then $i_1+\cdots+i_k\leq(k-1)+2=k+1,\ i_{k+1}+\cdots+i_{s+1}\leq(s-k-2)+2=s-k$.  However, $i_1+\cdots+i_k+i_{k+3}+\cdots+i_{s+1}\geq s+2$, a contradiction that finishes the proof of the lemma.	
	\end{proof}
\end{lemma}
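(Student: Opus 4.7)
The plan is to prove the three parts in order, with (2) an immediate corollary of (1) and (3) a counting argument built on (2). The key ingredients are the three facts about $a$ established just before the lemma: $\ad^{[k]}(a)=0$ for $k\geq 3$; the commutation relation $\ad^{[2]}(a)\ad(y)\ad(a)=\ad(a)\ad(y)\ad^{[2]}(a)$ for any $y\in\til L^m$; and the annihilation identity $\ad^{[2]}(a)\ad(y)\ad^{[2]}(a)=0$. I will label these $(\ast)$ and $(\ast\ast)$ and use them repeatedly to slide or kill blocks of $\ad^{[2]}(a)$.

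For part (1) I would induct on $s$. The base case $s=1$ is immediate since $\ad^{[i_1]}(a)=0$ whenever $i_1\geq 3$. For the inductive step, case on $i_1$: if $i_1\geq 3$ the expression vanishes trivially; if $i_1\leq 1$, absorb $y_1\,\ad^{[i_1]}(a)\,\ad(y_2)$ into a single element $y_1'\in\til L^m$ (using only that $\til L^m$ is an ideal), leaving $s-1$ factors whose exponent sum $i_2+\cdots+i_s\geq(s-1)+2$ satisfies the inductive hypothesis. The remaining case $i_1=2$ splits on $i_2$: if $i_2=0$, absorb $y_1\,\ad^{[2]}(a)\,\ad(y_2)\,\ad(y_3)$ and invoke the hypothesis for $s-2$ variables (the new exponent sum $i_3+\cdots+i_s\geq s$ is exactly what is required); if $i_2=1$, apply $(\ast)$ to turn the prefix $\ad^{[2]}(a)\,\ad(y_2)\,\ad(a)$ into $\ad(a)\,\ad(y_2)\,\ad^{[2]}(a)$, reducing to the already-handled $i_1\leq 1$ case; if $i_2=2$, the prefix is killed by $(\ast\ast)$.

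Part (2) then follows by feeding an auxiliary $y_0\in\til L^m$ into the operator: the resulting expression matches the shape of part (1) with $s+1$ arguments and exponent sum $\geq s+3=(s+1)+2$. For part (3), factor $v=v'\,\ad(y_k)\,\ad(y_{k+1})\,\ad(y_{k+2})\,v''$ with $v'=\ad^{[i_1]}(a)\,\ad(y_1)\cdots\ad(y_{k-1})\,\ad^{[i_k]}(a)$ and $v''=\ad^{[i_{k+3}]}(a)\cdots\ad^{[i_{s+1}]}(a)$, and argue by contradiction. Since $\til L^m$ is preserved by every $\ad$-operator involved, if $v$ is not identically zero on $\til L^m$ then neither $v'$ nor $v''$ can be zero on $\til L^m$; applying part (2) to each piece forces $i_1+\cdots+i_k\leq k+1$ and $i_{k+3}+\cdots+i_{s+1}\leq s-k$, whose sum together with $i_{k+1}+i_{k+2}=0$ yields $i_1+\cdots+i_{s+1}\leq s+1$, contradicting the assumption $i_1+\cdots+i_{s+1}\geq s+2$.

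The only delicate point is the bookkeeping inside part (1): verifying that each absorption really produces an element of $\til L^m$ and that the shortened exponent sum still meets the inductive threshold. In particular, the $i_1=2,\ i_2=0$ case consumes two variables at once, so one must check the arithmetic $i_3+\cdots+i_s\geq s$ carefully; and in the $i_1=2,\ i_2=1$ case the rewrite via $(\ast)$ must be carried out \emph{before} any absorption, so that the exponent $2$ migrates into a position whose contribution is harmless for the reduction.
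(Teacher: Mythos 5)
Your proposal reproduces the paper's own argument essentially verbatim: the same induction on $s$ for part (1) with the identical case split on $(i_1,i_2)$, the same reduction of (2) to (1) by prepending $y_0$, and the same factorization $v=v'\,\ad(y_k)\ad(y_{k+1})\ad(y_{k+2})\,v''$ with the counting contradiction for (3). The arithmetic checks you flag as delicate are exactly the ones implicit in the paper and they do go through as you describe.
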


\begin{lemma}
	\label{Lemma14} Let $\Omega$ be a finite family of commuting elements from $\til{L}$ such that every element from $\Omega$ lies in one of the ideals $L \otimes e_\pi + \til{L} e_\pi$. Denote for brevity $U_k(\ad(\Omega))=\ad^{[k]}(\Omega)$ and suppose that $\ad^{[3]}(\Omega)=\ad^{[4]}(\Omega)=0$. Then for arbitrary elements $y_1, y_2 \in \til{L}$ we have
	\[\ad(y_1\adx[2][](\Omega))\ad(y_2\adx[2][](\Omega))=\adx[2][](\Omega)\ad(y_1)\ad(y_2)\adx[2][](\Omega).\]
	\begin{proof}
		By \Cref{Lemma9}(3) we have
		\[\begin{multlined}\ad(y_i\adx[2][](\Omega))=\ad(y_i)\adx[2][](\Omega)-\\\adx[1][](\Omega)\ad(y_i)\adx[1][](\Omega)+\adx[2][](\Omega)\ad(y_i),\end{multlined}\]
		$i=1,2$.  By \cref{Lemma9}(4) we have also
		\[
		\adx[1][](\Omega)\adx[1][](\Omega)=2\adx[2][](\Omega),\]
		\[ \adx[1][](\Omega)\adx[2][](\Omega)=
		\adx[2][](\Omega)\adx[1][](\Omega)=3\adx[3][](\Omega)=0.\]
		
		Again by \Cref{Lemma9}(3) we have
		\begin{multline*}
		\ad(y_i\adx[3][](\Omega))=\ad(y_i)\adx[3][](\Omega)-\adx[1][](\Omega)\ad(y_i)\adx[2][](\Omega)+\\
		\adx[2][](\Omega)\ad(y_i)\adx[1][](\Omega)-\adx[3][](\Omega)\ad(y_i)=0,
		\end{multline*}
		which implies
		\[\adx[1][](\Omega)\ad(y_i)\adx[2][](\Omega)=\adx[2][](\Omega)\ad(y_i)\adx[1][](\Omega).\]
		Similarly, $\ad(y_i\adx[4][](\Omega))=0$ implies $\adx[2][](\Omega)\ad(y_1)\adx[2][](\Omega)=0.$  Hence,
		\begin{align*}
		&\ad(y_1\adx[2][](\Omega))\ad(y_2\adx[2][](\Omega))=
		(\ad(y_i)\adx[2][](\Omega)-\\
		&\adx[1][](\Omega)\ad(y_1)\adx[1][](\Omega)+	\adx[2][](\Omega)\ad(y_1))(\ad(y_2)\adx[2][](\Omega)-\\
		&\adx[1][](\Omega)\ad(y_2)\adx[1][](\Omega)+\adx[2][](\Omega)\ad(y_2))=\\
		&-\adx[1][](\Omega)\ad(y_1)\adx[1][](\Omega)\ad(y_2)\adx[2][](\Omega)+\\
		& 2\adx[1][](\Omega)\ad(y_1)\adx[2][](\Omega)\ad(y_2)\adx[1][](\Omega)+\\
		&\adx[2][](\Omega)\ad(y_1)\ad(y_2)\adx[2][](\Omega)-\\
		&\adx[2][](\Omega)\ad(y_1)\adx[1][](\Omega)\ad(y_2)\adx[1][](\Omega)=\\
		&\adx[2][](\Omega)\ad(y_1)\ad(y_2)\adx[2][](\Omega),
		\end{align*}
		which proves the lemma.
	\end{proof}
\end{lemma}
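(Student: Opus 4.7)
The plan is to expand both factors on the left-hand side using Lemma \ref{Lemma9}(3), multiply them out, and show that out of the nine resulting summands, all but one vanish, and the remaining one is exactly $\ad^{[2]}(\Omega)\ad(y_1)\ad(y_2)\ad^{[2]}(\Omega)$. Writing
\[
\ad(y_i\ad^{[2]}(\Omega))=\ad(y_i)\ad^{[2]}(\Omega)-\ad^{[1]}(\Omega)\ad(y_i)\ad^{[1]}(\Omega)+\ad^{[2]}(\Omega)\ad(y_i)
\]
for $i=1,2$ splits the product into nine terms $A_iB_j$, each of which I will classify according to the shape of its factors.

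First I would collect the \emph{numerical} vanishing identities coming from Lemma \ref{Lemma9}(4) combined with the hypothesis $\ad^{[3]}(\Omega)=\ad^{[4]}(\Omega)=0$: namely $\ad^{[1]}(\Omega)\ad^{[1]}(\Omega)=2\ad^{[2]}(\Omega)$, $\ad^{[1]}(\Omega)\ad^{[2]}(\Omega)=\ad^{[2]}(\Omega)\ad^{[1]}(\Omega)=3\ad^{[3]}(\Omega)=0$, and $\ad^{[2]}(\Omega)\ad^{[2]}(\Omega)=6\ad^{[4]}(\Omega)=0$. These immediately kill four of the nine summands (those in which two consecutive $U_k(\Omega)$ factors are adjacent with $k+j\geq 3$, and the ``$PP'$/$RR'$'' type product that contains $\ad^{[2]}(\Omega)\ad(y_2)\ad^{[2]}(\Omega)$, which I will address next).

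Second, I would derive two ``motion'' identities by applying Lemma \ref{Lemma9}(3) to the elements $y_i\ad^{[3]}(\Omega)$ and $y_i\ad^{[4]}(\Omega)$, both of which are zero by hypothesis. The first expansion gives
\[
\ad^{[1]}(\Omega)\ad(y_i)\ad^{[2]}(\Omega)=\ad^{[2]}(\Omega)\ad(y_i)\ad^{[1]}(\Omega),
\]
and the second (after using the previously established cancellations for the outer terms) gives $\ad^{[2]}(\Omega)\ad(y_i)\ad^{[2]}(\Omega)=0$. The latter eliminates the $PP'$, $PR'$, $RP'$-type collisions that remained, leaving exactly four surviving terms:
\[
-\ad^{[1]}(\Omega)\ad(y_1)\ad^{[1]}(\Omega)\ad(y_2)\ad^{[2]}(\Omega),
\]
\[
+2\,\ad^{[1]}(\Omega)\ad(y_1)\ad^{[2]}(\Omega)\ad(y_2)\ad^{[1]}(\Omega),
\]
\[
-\ad^{[2]}(\Omega)\ad(y_1)\ad^{[1]}(\Omega)\ad(y_2)\ad^{[1]}(\Omega),
\]
and $+\,\ad^{[2]}(\Omega)\ad(y_1)\ad(y_2)\ad^{[2]}(\Omega)$, together with the $QQ'$ reduction $\ad^{[1]}(\Omega)\ad^{[1]}(\Omega)=2\ad^{[2]}(\Omega)$.

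Finally, applying the motion identity to the three non-final summands converts each of them into a scalar multiple of $\ad^{[2]}(\Omega)\ad(y_1)\ad^{[1]}(\Omega)\ad(y_2)\ad^{[1]}(\Omega)$, with coefficients $-1$, $+2$, $-1$ respectively, which sum to $0$. What survives is precisely $\ad^{[2]}(\Omega)\ad(y_1)\ad(y_2)\ad^{[2]}(\Omega)$, as desired. I expect the main bookkeeping obstacle to be organizing the nine terms so that it is transparent which vanish for ``numerical'' reasons (Lemma \ref{Lemma9}(4)) and which require the motion identity, and keeping the $\pm$ signs and factors of $2$ straight; once this is laid out, every individual simplification is a direct application of Lemma \ref{Lemma9}.
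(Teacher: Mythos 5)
Your proof is correct and follows essentially the same route as the paper: expand both factors via Lemma~\ref{Lemma9}(3), derive the numerical vanishing identities from Lemma~\ref{Lemma9}(4) together with $\ad^{[3]}(\Omega)=\ad^{[4]}(\Omega)=0$, obtain the ``motion'' identity $\ad^{[1]}(\Omega)\ad(y_i)\ad^{[2]}(\Omega)=\ad^{[2]}(\Omega)\ad(y_i)\ad^{[1]}(\Omega)$ and the collapse $\ad^{[2]}(\Omega)\ad(y_i)\ad^{[2]}(\Omega)=0$ from $\ad(y_i\ad^{[3]}(\Omega))=0$ and $\ad(y_i\ad^{[4]}(\Omega))=0$ respectively, and then reduce the four surviving summands so that the coefficients $-1,+2,-1$ of $\ad^{[2]}(\Omega)\ad(y_1)\ad^{[1]}(\Omega)\ad(y_2)\ad^{[1]}(\Omega)$ cancel. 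One small bookkeeping slip: three, not four, of the nine summands vanish for purely numerical reasons (the other two killed terms use the second identity $\ad^{[2]}(\Omega)\ad(y_i)\ad^{[2]}(\Omega)=0$), but your final count of four survivors and their coefficients is exactly right, so the argument goes through.
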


\begin{lemma}
\label{Lemma41}
\begin{enumerate}[(1)]
\item The operations $x\rightarrow x^2$ and $Q$ are well defined on $J_a$;
\item let $f:\til{L}^m\times\cdots\times\til{L}^m\rightarrow\til{L}^m$ be a homogeneous polynomial map, and let $\til{f}(x_1,\cdots, x_n)$ be its full linearization. Suppose that $\til{f}(L^m\otimes e_i+\til{L}^me_i,\til{L}^m,\cdots,\til{L}^m)\subseteq L^m\otimes e_i+\til{L}^me_i$ for all $i$.
\end{enumerate}
Then, if an arbitrary value of $f$ lies in $K'_a$, then an arbitrary value of $f$ lies in $K_a$.
\end{lemma}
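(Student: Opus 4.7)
I prove part~(2) first and then reduce part~(1) to it.

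For part~(2), by Lemma~\ref{Lemma33}(2) the span of values of $f$ equals the span of values of its full linearization $\til{f}$, and since $K'_a$ is a subspace every value of $\til{f}$ also lies in $K'_a$. Given $v=\til{f}(x_1,\dots,x_n)$, use the standard decomposition $x_1=\sum_\rho y_\rho$ with $y_\rho\in L^m\otimes Fe_\rho$; multilinearity of $\til f$ gives
\[
v=\sum_\rho\til{f}(y_\rho,x_2,\dots,x_n),
\]
and each summand is itself a value of $\til f$, hence lies in $K'_a$. Since $y_\rho\in L^m\otimes e_i+\til{L}^me_i$ for every $i\in\rho$, the hypothesis on $\til f$ forces $\til{f}(y_\rho,x_2,\dots,x_n)\in L^m\otimes e_i+\til{L}^me_i$ for any $i\in\rho$. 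For each nonempty $\rho$ pick $i(\rho)\in\rho$ and group accordingly: with $u_i=\sum_{\rho:\,i(\rho)=i}\til{f}(y_\rho,x_2,\dots,x_n)$, each $u_i$ lies in $(L^m\otimes e_i+\til{L}^me_i)\cap K'_a$, and so $v=\sum_i u_i\in K_a$.

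For part~(1), well-definedness of $x\mapsto x^2$ on $J_a$ amounts to showing that for $z\in K_a$ and any $x\in\til{L}^m$ the difference $(x+z)^2-x^2=G(x,z)+z^2$ lies in $K_a$, where $G(x,z)=a\sum[\ad(x_{\pi_1})\ad(z_{\pi_2})+\ad(z_{\pi_1})\ad(x_{\pi_2})]$ is the bilinear cross-term. The analogous reductions for $Q$ (well-definedness in both slots and invariance of $K_a$ under $Q(x)$) are parallel, with the additional leading factor $\ad^{[2]}(a)$. For each of these polynomial expressions my plan is (i) to verify that it already lies in $K'_a$ whenever $z\in K'_a$, and then (ii) to invoke part~(2), viewed as a polynomial in $z$ with $x$ (or $y$) fixed, to promote $K'_a$-membership to $K_a$-membership. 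The ideal-preservation hypothesis of part~(2) is clear from the defining formulas, since each $\ad(z_{\pi_j})$-factor carries the $e_\rho$-grading of the corresponding component of $z$.

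Step~(i) is the technical heart. Given $z\in K'_a$, one commutes $\ad^{[2]}(a)$ leftward through the $\ad(z_{\pi_j})$-factors via Lemma~\ref{Lemma9}(3), generating correction terms of the shape $\ad^{[i_1]}(a)\ad(y_1)\cdots\ad(y_s)\ad^{[i_{s+1}]}(a)$. Terms whose total $i$-degree exceeds the thresholds of Lemma~\ref{Lemma13} vanish (using also $\ad^{[k]}(a)=0$ for $k\geq 3$); the remaining correction terms involve factors of the form $\ad(z_{\pi_j}\ad^{[2]}(a))$, and after summing over $\pi_j$ these reduce to $\ad(z\,\ad^{[2]}(a))=\ad(0)=0$, while residual ``diagonal'' contributions with repeated $e_\pi$-support vanish via $\til{D}_i^2=(0)$. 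This is the same operator calculus that underpins Lemmas~\ref{Lemma13} and~\ref{Lemma14}, and its careful bookkeeping across $z^2$, $G(x,z)$, and the $Q$-analogues is the main obstacle of the proof.
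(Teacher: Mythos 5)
Your proof of part (2) is correct and essentially matches the paper's: both use Lemma~\ref{Lemma33}(2) to pass to the full linearization, then exploit $\til{L}^m=\sum_i(L^m\otimes e_i+\til{L}^me_i)$ and the hypothesis on $\til f$ to land each multilinear piece in an ideal block intersected with $K'_a$, hence in $K_a$. The grouping by ``$i(\rho)\in\rho$'' you describe is a fine presentation of the same argument.

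Your plan for part (1) has two genuine gaps. First, the promotion step (ii) is a misuse of part~(2). Part~(2) applies to a polynomial map $f\colon\til{L}^m\times\cdots\times\til{L}^m\to\til{L}^m$ \emph{all} of whose values on $\til{L}^m\times\cdots\times\til{L}^m$ lie in $K'_a$. The expressions you must control ($z^2$, the cross term $G(x,z)$, $yQ(z)$, etc.) are in $K'_a$ only when the variable $z$ is restricted to $K_a$ (or $K'_a$); for generic $z\in\til{L}^m$ they are not in $K'_a$, so the hypothesis of part~(2) fails. To salvage the idea you would need a restated version of part~(2) whose domain in the first slot is $K_a$, using that $K_a=\sum_i\bigl(K'_a\cap(L^m\otimes e_i+\til{L}^me_i)\bigr)$; this is precisely the decomposition the paper exploits directly, so the ``reduction'' would be circular rather than a shortcut.

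Second, step (i) as you state it --- showing the expressions lie in $K'_a$ whenever $z\in K'_a$ --- is not actually true in general. It works for the cross terms (e.g.\ $a\ad(z)\ad(x_\pi)\ad^{[2]}(a)=-z\ad^{[2]}(a)\ad(x_\pi)\ad(a)=0$ uses only $z\ad^{[2]}(a)=0$), but for $z^2$ and $yQ(z)$ your reduction to $\ad\bigl(z\ad^{[2]}(a)\bigr)=0$ only controls $U_1(\Omega')$ where $\Omega'=\{\ad(z_\pi\ad^{[2]}(a))\}_\pi$, while you need $U_2(\Omega')=0$; the identity $U_1(\Omega')^2=2U_2(\Omega')$ says nothing in characteristic~$2$. (This is exactly why the paper works with $K_a$ rather than $K'_a$ --- cf.\ the remark following Lemma~\ref{Lemma18}.) The paper's proof of part~(1) avoids this entirely: it writes $z=\sum_i z_i$ with $z_i\in K'_a\cap(L^m\otimes e_i+\til{L}^me_i)$ (available only because $z\in K_a$), invokes Lemma~\ref{Lemma40} to trade the standard $\pi$-decomposition for this $i$-decomposition modulo $K_a$, and then shows each resulting term lies simultaneously in $K'_a$ \emph{and} in the corresponding ideal block $L^m\otimes e_i+\til{L}^me_i$, hence in $K_a$. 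Lemma~\ref{Lemma40}, and the ``membership in both $K'_a$ and an ideal block'' criterion for $K_a$, are the missing ingredients in your outline; a pure ``$K'_a$ then promote'' strategy cannot replace them.
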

\begin{proof}
(1) Choose arbitrary elements $x,y\in\til{L}^m$ and $z',z\in K_a$. We need to show that $(y+z')Q(x+z)=yQ(x)$ and $(x+z)^2=x^2$. Let $x=\sum\limits_\pi x_\pi$ be the standard decomposition of the element $x$. We have $z'Q(x)=z'\ad^{[2]}(a)\sum\ad(x_{\pi_1})\ad(x_{\pi_2})+K_a=0$ since $z'\in K_a\subseteq K'_a$. Hence $(y+z')Q(x+z)=yQ(x+z)$. Furthermore, it is easy to see that
\[yQ(x+z)=yQ(x)+y\ad^{[2]}(a)\ad(x)\ad(z)+yQ(z) \text{ \hspace{0.1cm} mod } K_a.\]

By Lemma \ref{Lemma14}, for an arbitrary standard component $x_\pi$ of the element $x$, we have
\[\ad^{[2]}(a)\ad(x_\pi)\ad(z)\ad^{[2]}(a)=\ad(x_\pi\ad^{[2]}(a))\ad(z\ad^{[2]}(a))=0,\]
since $z\in K'_a$. Hence $y\ad^{[2]}(a)\ad(x_\pi)\ad(z)\in K_a$ and \linebreak $y\ad^{[2]}(a)\ad(x)\ad(z)\in K_a$.

Let us show that $yQ(z)=0$. We have $z=z_1+\cdots+z_s$, where $z_i\in(L^m\otimes e_i+\til{L}^me_i)\cap K'_a$. Let $z_i=\sum\limits_\pi z_{i\pi}$ be the standard decomposition of the element $z_i$. Then $z=\sum\limits_\pi z_\pi$, $z_\pi=\sum\limits_i z_{i\pi}$, is the standard decomposition of the element $z$. Consider the family of elements $\Omega=\{z_{i\pi}\}_{i,\pi}$ and two decompositions $\Omega=\bigcup\Omega_i$, $\Omega_i=\{z_{i\pi}\}_\pi$, and $\Omega=\bigcup\limits_\pi\Omega'_\pi$, $\Omega'_\pi=\{z_{i\pi}\}_i$. By Lemma \ref{Lemma40}, we have 
\begin{align*}
y\ad^{[2]}(a)&\sum\ad(z_{\pi_1})\ad(z_{\pi_2})\\
&=y\ad^{[2]}(a)\sum\limits_{1\leq i<j\leq s}\ad(z_i)\ad(z_j)\text{ \hspace{0.1cm} mod } y\ad^{[2]}(a)\ad(\til{L}^m).
\end{align*}
Recall that $y\ad^{[2]}(a)\ad(\til{L}^m)\subseteq K_a$. The element $y\ad^{[2]}(a)\ad(z_i)\ad(z_j)$ lies in $L^m\otimes e_i+\til{L}^me_i$ and 
\[y\ad^{[2]}(a)\ad(z_i)\ad(z_j)\ad^{[2]}(a)=y\ad(z_i\ad^{[2]}(a))\ad(z_j\ad^{[2]}(a))=0\]
by Lemma \ref{Lemma14}. Hence, $y\ad^{[2]}(a)\ad(z_i)\ad(z_j)\in K_a$. This implies $yQ(z)=0$.

Now let us show that $(x+z)^2=x^2$. We have $(x+z)^2=x^2+a\ad(z)\ad(x)+z^2$ mod $K_a$. For an arbitrary standard component $x_\pi$,
\begin{align*}
a\ad(z)\ad(x_\pi)\ad^{[2]}(a)&=-z\ad(a)\ad(x_\pi)\ad^{[2]}(a)\\
&=-z\ad^{[2]}(a)\ad(x_\pi)\ad(a)\\
&=0
\end{align*}
by Lemma \ref{Lemma9} (3). Hence $a\ad(z)\ad(x_\pi)\in K_a$ and $a\ad(z)\ad(x)\in K_a$.

Let us show that $z^2=0$. We have $z^2=a\sum\ad(z_{\pi_1})\ad(z_{\pi_2})+K_a$. By Lemma \ref{Lemma40},
\[a\sum\ad(z_{\pi_1})\ad(z_{\pi_2})=a\sum\limits_{1\leq i<j\leq s}\ad(z_i)\ad(z_j) \text{ \hspace{0.1cm} mod } a\ad(\til{L}^m)\subseteq K_a.\]

As above, 
\begin{align*}
a\ad(z_i)\ad(z_j)\ad^{[2]}(a)&=-z_i\ad(a)\ad(z_j)\ad^{[2]}(a)\\
&=z_i\ad^{[2]}(a)\ad(z_j)\ad(a)\\
&=0
\end{align*}
by Lemma \ref{Lemma9} (3). Hence $a\ad(z_i)\ad(z_j)\in K_a$ and \linebreak $a\sum\ad(z_{\pi_1})\ad(z_{\pi_2})\in K_a$. This completes the proof of part (1).
(2) Now let $f:\til{L}^m\times\cdots\times\til{L}^m\rightarrow\til{L}^m$ be a homogeneous polynomial map with the full linearization $\til{f}$. By Lemma \ref{Lemma33} (2), for polynomial maps, the $F$-linear span of all values of $f$ is equal to the $F$-linear span of all values of $\til{f}$. Hence, we need to show that $\til{f}(\til{L}^m,\cdots,\til{L}^m)\subseteq K_a$. Since $\til{L}^m=\sum\limits_i(L^m\otimes e_i+\til{L}^me_i)$, it follows that $\til{f}(\til{L}^m, \cdots, \til{L}^m)=\sum\limits_i\til{f}(L^m\otimes e_i+\til{L}^me_i, \til{L}^m,\cdots, \til{L}^m)$. By our assumption, $\til{f}(L^m\otimes  e_i+\til{L}^me_i, \til{L}^m,\cdots, \til{L}^m)\subseteq K_a'\cap(L^m\otimes e_i+\til{L}^me_i)\subseteq K_a$. This completes the proof of assertion (2).
\end{proof}

The following proposition is a linearized an quadratic version of the construction in \cite{FGG}.

\begin{proposition}\label{Proposition1}
$J_a=(J_a,x\rightarrow x^2, Q)$ is a quadratic Jordan algebra.
\end{proposition}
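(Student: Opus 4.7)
Well-definedness of $x\mapsto x^2$ and $Q$ on $J_a=\til L^m/K_a$ is exactly the content of Lemma \ref{Lemma41}(1), so the remaining task is to verify the McCrimmon axioms (M1)--(M6) together with all their partial linearizations modulo $K_a$. My approach is to lift each axiom to an operator identity on $\til L^m$ in the operators $\ad^{[i]}(a)$ ($i=0,1,2$, with $\ad^{[i]}(a)=0$ for $i\geq 3$) and $\ad(y)$, $y\in\til L^m$, and then to reduce it using the three tools already assembled: the divided-power calculus of Lemma \ref{Lemma9}(3)--(4); the vanishing statements of Lemma \ref{Lemma13}, which kill any operator word with too many ``$a$''s or with three consecutive $\ad(y_i)$'s sandwiched between $\ad^{[i]}(a)$'s; and the crucial straightening identity of Lemma \ref{Lemma14},
\[ \ad\bigl(y_1\ad^{[2]}(a)\bigr)\ad\bigl(y_2\ad^{[2]}(a)\bigr)=\ad^{[2]}(a)\,\ad(y_1)\ad(y_2)\,\ad^{[2]}(a). \]
This last identity is the conceptual heart of the construction: it shows that on the image of $\ad^{[2]}(a)$, the quadratic operator $R(yQ(x))$ agrees with a ``conjugation'' $\ad^{[2]}(a)\ad(x)\ad(y)\ad(x)\ad^{[2]}(a)$, which is the right divided-power surrogate for ``$\tfrac12\ad(x)^2$'' in the classical Jordan picture.

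\textbf{Execution.} I would dispose of (M1)--(M3) first. Each of $\{x,x,y\}=x^2\circ y$, $(yQ(x))\circ x=(y\circ x)Q(x)$, and $x^2Q(x)=(x^2)^2$ unfolds, via the defining formulas, into an equality between explicit operator words in $\ad^{[i]}(a)$ and $\ad(x)$; one then slides central $\ad(x)$'s past $\ad^{[i]}(a)$'s by Lemma \ref{Lemma9}(3) and discards everything killed by Lemma \ref{Lemma13}(1)--(3), leaving a small normal form on both sides that matches by inspection. For the genuinely quadratic axioms (M4) $x^2Q(y)Q(x)=(yQ(x))^2$, (M5) $Q(x^2)=Q(x)^2$, and (M6) $Q(yQ(x))=Q(x)Q(y)Q(x)$, the plan is to apply Lemma \ref{Lemma14} at each occurrence of an $\ad^{[2]}$ of a compound element, rewriting both sides into canonical expressions of the shape
\[ \ad^{[2]}(a)\,\ad(x)\ad(y)\ad(x)\,\ad^{[2]}(a) \]
(with suitable substitutions); once both sides are in this form, equality follows from a symmetry of indices together with one more application of Lemma \ref{Lemma13}. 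Lemma \ref{Lemma40} enters at the combinatorial interface where one must pass between the standard $\pi$-decomposition of a composite element such as $yQ(x)\in\til L^m$ and the natural decomposition inherited from $y=\sum y_\pi$: it is what allows one to identify the divided square $\sum\ad((yQ(x))_{\pi_1})\ad((yQ(x))_{\pi_2})$ with the operator composition prescribed by the definition of $Q$.

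\textbf{Main obstacle.} The hardest step will be (M6), because both sides are triply nested compositions of $Q$, and the outer $Q(yQ(x))$ requires unpacking the standard decomposition of an element that is itself built out of $\ad^{[2]}(a)$. The principal technical burden is exactly the bookkeeping needed to match $\sum\ad((yQ(x))_{\pi_1})\ad((yQ(x))_{\pi_2})$ with the three-fold composition $Q(x)Q(y)Q(x)$; this is where Lemma \ref{Lemma40} is indispensable, together with Lemma \ref{Lemma13}(2)--(3) to discard the unwanted summands that arise when one expands the triple nest. Finally, the partial linearizations of (M1)--(M6) demanded by the definition of a quadratic Jordan algebra come essentially for free: by Lemma \ref{Lemma33}(2) it suffices to check any such polynomial identity on the spans of values of its full linearization, and the operator-level derivations carried out for the homogeneous axioms are themselves multilinear in the data, so the partial linearizations follow by inspection of the same computations.
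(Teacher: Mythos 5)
Your plan identifies the right collection of tools (Lemmas \ref{Lemma9}, \ref{Lemma13}, \ref{Lemma14}, \ref{Lemma40}, \ref{Lemma41}), but the central computational step as you describe it would fail. You propose to ``apply Lemma \ref{Lemma14} at each occurrence of an $\ad^{[2]}$ of a compound element,'' e.g.\ with $\Omega=\{x_\pi\}$ the standard components of $x$ to straighten $\ad^{[2]}(a\,\ad^{[2]}(x))$ into $\ad^{[2]}(x)\ad^{[2]}(a)\ad^{[2]}(x)$. But the hypotheses of Lemma \ref{Lemma14} are that $\Omega$ is a \emph{commuting} family with $\ad^{[3]}(\Omega)=\ad^{[4]}(\Omega)=0$, and neither holds for the standard components of an arbitrary $x\in\til L^m$ (indeed $[x_\pi,x_\tau]=[x_\pi',x_\tau']\otimes e_\pi e_\tau$ is typically nonzero, and $\sum\ad(x_{\pi_1})\ad(x_{\pi_2})\ad(x_{\pi_3})$ has no reason to vanish). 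So the direct conjugation you envisage simply isn't licensed, and the ``symmetry of indices'' closing step never gets started for (M3)--(M6).

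The paper's proof works precisely because it does \emph{not} compute directly in $\til L^m$. It first proves (M1$'$)--(M6$'$) in an auxiliary quotient $L_0$ of a free-type Lie algebra $L_0'$, where the relations $[x_i,x_j]=0$, $[y_i,y_j]=0$, and vanishing of triple products $\ad(x_{i_1})\ad(x_{i_2})\ad(x_{i_3})$ are \emph{imposed}; in that setting Lemma \ref{Lemma14} and Lemma \ref{Lemma15} apply verbatim and the identities drop out cleanly. It then observes that in $L_0'$ (without the extra relations) the identities are linear combinations of error terms containing either a commutator $[x_{\pi_i},x_{\pi_j}]$ or a triple $\ad(x_{\pi_i})\ad(x_{\pi_j})\ad(x_{\pi_k})$, with controlled degree bookkeeping. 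Finally — and this is the ingredient your plan omits entirely — the specific form $a=a_r\ad^{[2]}(a')$ is used: substituting $\ad(a_i)\mapsto\ad(a_{ri}\ad^{[2]}(a'))=\ad(a_{ri})\ad^{[2]}(a')-\ad(a')\ad(a_{ri})\ad(a')+\ad^{[2]}(a')\ad(a_{ri})$ makes the degree in $a'$ exceed the total degree in the remaining variables, and then Lemma \ref{Lemma13} applied to $a'$ (not to $a$) annihilates the error terms. Without this two-stage argument and without exploiting the inner structure $a=a_r\ad^{[2]}(a')$, the non-commutativity obstruction has no way to disappear. You would need to rebuild this detour, or find a genuinely different mechanism, before your verification of (M4)--(M6) could go through.
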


Since the ground field is infinite partial linearizations of the identities (M1)-(M6) follows from these identities (see \cite{J2},\cite{ZSSS}).

 We will translate the identities (M1)-(M6) into the language of Lie algebras. The identities (M1)-(M6) translate as 
\begin{enumerate}[(M1)]
\item $x\ad^{[2]}(a)\ad(x)\ad(y)=a\ad(a\ad^{[2]}(x))\ad(y)$ mod $K_a$,
\item $y\ad^{[2]}(a)\ad^{[2]}(x)\ad(a)\ad(x)=-a\ad(x)\ad(y)\ad^{[2]}(a)\ad^{[2]}(x)\linebreak=y\ad([a,x])\ad^{[2]}(a)\ad^{[2]}(x)$ mod $K_a$,
\item $a\ad^{[2]}(x)\ad^{[2]}(a)\ad^{[2]}(x)=a\ad^{[2]}(a\ad^{[2]}(x))$ mod $K_a$,
\item $a\ad^{[2]}(x)\ad^{[2]}(a)\ad^{[2]}(y)\ad^{[2]}(a)\ad^{[2]}(x)\\=a\ad^{[2]}(y\ad^{[2]}(a)\ad^{[2]}(x))$ mod $K_a$,
\item $y\ad^{[2]}(a)\ad^{[2]}(a\ad^{[2]}(x))=y\ad^{[2]}(a)\ad^{[2]}(x)\ad^{[2]}(a)\ad^{[2]}(x)$ \linebreak mod $K_a$,
\item $z\ad^{[2]}(a)\ad^{[2]}(y\ad^{[2]}(a)\ad^{[2]}(x))\\=z\ad^{[2]}(a)\ad^{[2]}(x)\ad^{[2]}(a)\ad^{[2]}(y)\ad^{[2]}(a)\ad^{[2]}(x)$ mod $K_a$.
\end{enumerate}

\begin{remark*} In the formulas above, we have operators $\ad^{[2]}(x)$,\linebreak $\ad^{[2]}(a\ad^{[2]}(x))$, $\ad^{[2]}(y\ad^{[2]}(a)\ad^{[2]}(x))$ acting on elements from the space $Fa+\til{L^m}\ad^{[2]}(a)$. In the definition of Jordan operations on $J_a=\til{L^m}/K_a$ above, we noticed that $(Fa+\til{L^m}\ad^{[2]}(a))\ad(\til{L^m})\subseteq K_a$. Hence for an arbitrary element $u\in \{x,a\ad^{[2]}(x),y\ad^{[2]}(a)\ad^{[2]}(x)\}$, the operator $\ad^{[2]}(u)$ is understood as $\sum\ad(u_i)\ad(u_j)$, where $u=\sum u_j$ is the standard decomposition, the sum runs over all 2-element sets $(i,j)$ and the order of factors in $\ad(u_i)\ad(u_j)$ is irrelevant modulo $K_a$.

\end{remark*}

Let $x,y\in\til{L^m}$; $x=\sum\limits_\pi x_\pi$, $y=\sum\limits_\tau y_\tau$ the standard decompositions. At first, we will prove the identities (M1)-(M6) under the additional assumption that $[x_{\pi_i}, x_{\pi_j}]=[y_{\tau_i},y_{\tau_j}]=0$,
\[\ad(x_{\pi_i})\ad(x_{\pi_j})\ad(x_{\pi_k})=\ad(y_{\tau_1})\ad(y_{\tau_2})\ad(y_{\tau_3})=0\]
for all $i,j,k$.

More precisely, let $L_0'$ be the Lie algebra presented by generators $a_1,\cdots, a_n$, $x_1, \cdots, x_s$, $y_1,\cdots, y_t$ and the following relations:
\[[\Id(a_i),\Id(a_i)]=[\Id(x_j), \Id(x_j)]=[\Id(y_k), \Id(y_k)]=(0),\]
where $\Id(a_i)$, $\Id(x_j)$, $\Id(y_k)$ denote the ideals generated by $a_i$, $x_j$, $y_k$ respectively, $1\leq i\leq n$, $1\leq j\leq s$, $1\leq k\leq t$; $[a_i,a_j]=0$, $1\leq i,j\leq n$; the operators $\ad^{[k]}(a)=\sum\ad(a_{i_1})\cdots\ad(a_{i_k})$, where the sum is taken over all $k$-element subsets of $\{1,2,\cdots, n\}$ is equal to 0 for $k\geq 3$.

Denote $a=\sum\limits_{i=1}^na_i$, $x=\sum\limits_{j=1}^sx_j$.

\begin{remark*}
The generators $a_1,\cdots, a_n$ should not be confused with elements $a_1,\cdots, a_r\in\til{L^m}$ used to define $a'=w'(a_1,\cdots, a_{r-1})$,\linebreak $a=w(a_1,\cdots, a_r)$ above.
\end{remark*}

In the algebra $L_0'$, define linear operators
\[\ad^{[2]}(a)=\sum\limits_{1\leq i<j\leq n}\ad(a_i)\ad(a_j),\]
\[\ad^{[2]}(x)=\sum\limits_{1\leq i<j\leq s}\ad(x_i)\ad(x_j),\]
\[\ad^{[2]}(y)=\sum\limits_{1\leq i<j\leq t}\ad(y_i)\ad(y_j),\]
\[\ad^{[2]}(a\ad^{[2]}(x))=\sum\limits_{1\leq i<j\leq n}\ad(a_i\ad^{[2]}(x))\ad(a_j\ad^{[2]}(x)),\]
\[\ad^{[2]}(y\ad^{[2]}(a)\ad^{[2]}(x))=\sum\limits_{1\leq i<j\leq t}\ad(y_i\ad^{[2]}(a)\ad^{[2]}(x))\ad(y_j\ad^{[2]}(a)\ad^{[2]}(x))\]
and consider the elements
\begin{enumerate}[(M1$'$)]
\item $(x\ad^{[2]}(a)\ad(x)\ad(y)-a\ad(a\ad^{[2]}(x))\ad(y)\ad^{[2]}(a)$,
\item $(y\ad^{[2]}(a)\ad^{[2]}(x)\ad(a)\ad(x)\\+a\ad(x)\ad(y)\ad^{[2]}(a)\ad^{[2]}(x))\ad^{[2]}(a)$,
\item $(a\ad^{[2]}(x)\ad^{[2]}(a)\ad^{[2]}(x)-a\ad^{[2]}(a\ad^{[2]}(x)))\ad^{[2]}(a)$,
\item $(a\ad^{[2]}(x)\ad^{[2]}(a)\ad^{[2]}(y)\ad^{[2]}(a)\ad^{[2]}(x)\\-a\ad^{[2]}(y\ad^{[2]}(a)\ad^{[2]}(x)))\ad^{[2]}(a)$,
\item $(y\ad^{[2]}(a)\ad^{[2]}(a\ad^{[2]}(x))\\-y\ad^{[2]}(a)\ad^{[2]}(x)\ad^{[2]}(a)\ad^{[2]}(x))\ad^{[2]}(a)$,
\item $(z\ad^{[2]}(a)\ad^{[2]}(y\ad^{[2]}(a)\ad^{[2]}(x))\\-z\ad^{[2]}(a)\ad^{[2]}(x)\ad^{[2]}(a)\ad^{[2]}(y)\ad^{[2]}(a)\ad^{[2]}(x)\ad^{[2]}(a)$.
\end{enumerate}

Now, consider the Lie algebra $L_0$ that is obtained from $L_0'$ by imposing additional relations:\\
$[x_i,x_j]=0$, $1\leq i,j\leq s$; $[y_i,y_j]=0$, $1\leq i,j\leq t$; $[L_0,x_{i_1},x_{i_2}, x_{i_3}]=[L_0,y_{j_1},y_{j_2},y_{j_3}]=(0)$, for all $1\leq i_1,i_2,i_3\leq s$, $1\leq j_1,j_2,j_3\leq t$.

We will show that the elements (M1$'$)$-$(M6$'$) are equal to zero in the Lie algebra $L_0$.

\begin{lemma}\label{Lemma15}
$[a\ad^{[2]}(x),a]+[x\ad^{[2]}(a),x]\in [L_0,a,a].$
	\begin{proof}
		If $p\neq2$ then $\adx[2][](x)=\frac{1}{2}\ad(x)^2,\ \adx[2][](a)=\frac{1}{2}\ad^2(a),$ which makes the assertion of the lemma obvious.
		
		Let $p=2$.  Denote $a'=a_{i},\ a''=a_{j},\ x'=x_{k},\ x''=x_{e}.$  We will show that
		\[[a',x',x'',a'']+[a'',x',x'',a']=[x',a',a'',x'']+[x'',a',a'',x'].\]
		Indeed, $[a',x',x'',a'']+[a'',x',x'',a']=[[a',x'],[x'',a'']]+[a',x',a'',x'']+[[a'',x'],[x'',a']]+[a'',x',a',x'']=[[a',x'],[x'',a'']]+[[a'',x'],[x'',a']]$, since $[a',x',a'']+[a'',x',a']=[[a',a''],x']=0.$\\
		Similarly, 
	\[[x',a',a'',x'']+[x'',a',a'',x']=[[x',a'],[a'',x'']]+[[x'',a'],[a'',x']],\]
	 which finishes the proof of the lemma.
	\end{proof}
\end{lemma}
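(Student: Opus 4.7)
I would split on $\cha F=p$. A preliminary observation is that $\ad(a_i)^2=0$ as an operator on $L_0$: since $a_i\in\Id(a_i)$ and $[L_0,a_i]\subseteq\Id(a_i)$, the element $[[y,a_i],a_i]$ lies in $[\Id(a_i),\Id(a_i)]=(0)$ for every $y\in L_0$. Similarly $\ad(x_k)^2=0$. Combined with $[a_i,a_j]=[x_k,x_\ell]=0$, this yields $\ad(a)^2=2\ad^{[2]}(a)$ and $\ad(x)^2=2\ad^{[2]}(x)$. In characteristic $\neq 2$, dividing gives $a\ad^{[2]}(x)=\tfrac{1}{2}[[a,x],x]$ and $x\ad^{[2]}(a)=\tfrac{1}{2}[[x,a],a]$; the Leibniz expansion $[[[a,x],x],a]=[[[a,x],a],x]+[[a,x],[x,a]]$ together with $[[a,x],[x,a]]=0$ and the Jacobi consequence $[[a,x],a]=-[[x,a],a]$ (using $[a,a]=0$) forces $[[[a,x],x],a]+[[[x,a],a],x]=0$, so the sum lies in $[L_0,a,a]$ trivially.

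For $p=2$ the shortcut fails, and in fact $[L_0,a,a]=L_0\ad(a)^2=2L_0\ad^{[2]}(a)=0$, so I must show the sum vanishes outright. Expanding
\[[a\ad^{[2]}(x),a]=\sum_{i,j,\,k<\ell}[[[a_i,x_k],x_\ell],a_j],\]
the diagonal $i=j$ contributions die by $[\Id(a_i),\Id(a_i)]=(0)$, and I group $(i,j)$ with $(j,i)$ for $i<j$. Treating $[x\ad^{[2]}(a),x]$ symmetrically (with $[\Id(x_k),\Id(x_k)]=(0)$ killing the $k=\ell$ terms) reduces the problem to the local identity
\[[a',x',x'',a'']+[a'',x',x'',a']=[x',a',a'',x'']+[x'',a',a'',x'],\]
for every choice $a'=a_i$, $a''=a_j$, $x'=x_k$, $x''=x_\ell$ with $i<j$, $k<\ell$.

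For the local identity I apply Leibniz on the outermost bracket of each term, e.g.\ $[a',x',x'',a'']=[[a',x'],[x'',a'']]+[a',x',a'',x'']$. A single Jacobi step on $[a',a'']=0$ gives $[a',x',a'']=[a'',x',a']$, so $[a',x',a'',x'']+[a'',x',a',x'']=2[a',x',a'',x'']=0$ in char $2$; the symmetric use of $[x',x'']=0$ eliminates the analogous pair on the right. This collapses the LHS to $[[a',x'],[x'',a'']]+[[a'',x'],[x'',a']]$ and the RHS to $[[x',a'],[a'',x'']]+[[x'',a'],[a'',x']]$. Pushing signs through via inner-bracket antisymmetry ($[a',x']=-[x',a']$, $[x'',a'']=-[a'',x'']$, etc.) the two sides agree up to a residual $2[[x',a''],[a',x'']]$, which again vanishes in char $2$. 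The main obstacle is the sign bookkeeping through these nested Leibniz and antisymmetry moves, and pinpointing that char $2$ is used in exactly two places: the cancellation $2[a',x',a'',x'']=0$ (and its $x$-analogue) and the final identification $2[[x',a''],[a',x'']]=0$; everything else is formal Jacobi manipulation with the defining relations of $L_0$.
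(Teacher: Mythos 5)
Your proof is correct and follows essentially the same route as the paper's: in characteristic $p\neq 2$ you use $\ad^{[2]}=\tfrac12\ad^2$ (making the claim explicit where the paper says ``obvious''), and in characteristic $2$ you expand the double sum, kill the diagonal terms via $[\Id(a_i),\Id(a_i)]=[\Id(x_k),\Id(x_k)]=(0)$, and reduce to the same local identity $[a',x',x'',a'']+[a'',x',x'',a']=[x',a',a'',x'']+[x'',a',a'',x']$, proved by the same Leibniz/Jacobi manipulations. Your version is a bit more explicit about the reduction and tracks signs in general characteristic before specializing to $p=2$, but the decomposition, the key relations invoked, and the final identity are all the same as in the paper.
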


Now (M1$'$) immediately follows from Lemma \ref{Lemma15} since \linebreak $L_0\ad(a)^2\ad(y)\subseteq K_a$. The latter inclusion follows from the following argument. The equality (see Lemma \ref{Lemma9} (3)) 

$\begin{array}{r c l}
0&=&\ad(y\ad^{[4]}(a))\\
&=&\ad(y)\ad^{[4]}(a)-\ad(a)\ad(y)\ad^{[3]}(a)+\ad^{[2]}(a)\ad(y)\ad^{[2]}(a)\\
&&-\ad^{[3]}(a)\ad(y)\ad(a)+\ad^{[4]}(a)\ad(y)
\end{array}$

 implies $\ad^{[2]}(a)\ad(y)\ad^{[2]}(a)=0$. Hence, 
\[L_0\ad(a)^2\ad(y)\ad^{[2]}(a)\subseteq L_0\ad^{[2]}(a)\ad(y)\ad^{[2]}(a)=0.\]

Let us prove (M2$'$). From $\ad^{[2]}(x)\ad(a)\ad(x)=\ad(x)\ad(a)\ad^{[2]}(x)$ and $\ad^{[2]}(a)\ad(x)\ad(a)=\ad(a)\ad(x)\ad^{[2]}(a)$ (see Lemma \ref{Lemma9} (3)), it follows that
\begin{align*}
&y\ad^{[2]}(a)\ad^{[2]}(x)\ad(a)\ad(x)=\\
&y\ad^{[2]}(a)\ad(x)\ad(a)\ad^{[2]}(x)=\\
&y\ad(a)\ad(x)\ad^{[2]}(a)\ad^{[2]}(x)=\\
&y\ad([a,x])\ad^{[2]}(a)\ad^{[2]}(x),
\end{align*}
since $\ad(a)\ad^{[2]}(a)=3\ad^{[3]}(a)=0$.

Now we will prove (M3$'$). We have 
\[\ad^{[2]}(a\ad^{[2]}(x))=\sum\ad(a_i\ad^{[2]}(x))\ad(a_j\ad^{[2]}(x)),\] 
where the sum is taken over all 2-element subsets $(i,j)$. By applying Lemma \ref{Lemma14} to $\Omega=\{x_1,\cdots,x_m\}$, $y_1=a_i$, $y_2=a_j$, we get
\[\ad(a_i\ad^{[2]}(x))\ad(a_j\ad^{[2]}(x))=\ad^{[2]}(x)\ad(a_i)\ad(a_j)\ad^{[2]}(x).\]
Hence,
\begin{align*}
&\sum\ad(a_i\ad^{[2]}(x))\ad(a_j\ad^{[2]}(x))=\\
&\ad^{[2]}(x)\Big(\sum\ad(a_i)\ad(a_j)\Big)\ad^{[2]}(x)=\\
&\ad^{[2]}(x)\ad^{[2]}(a)\ad^{[2]}(x)
\end{align*}
as claimed.

Let us prove (M4$'$). We have
\begin{align*}
&\ad^{[2]}(y\ad^{[2]}(a)\ad^{[2]}(x))=\\
&\sum\ad(y_i\ad^{[2]}(a)\ad^{[2]}(x))\ad(y_j\ad^{[2]}(a)\ad^{[2]}(x)).
\end{align*}
By Lemma \ref{Lemma14}, with $\Omega=\{x_1,\cdots, x_m\}$, we get
\begin{align*}
&\ad(y_i\ad^{[2]}(a)\ad^{[2]}(x))\ad(y_j\ad^{[2]}(a)\ad^{[2]}(x))=\\
&\ad^{[2]}(x)\ad(y_i\ad^{[2]}(a))\ad(y_j\ad^{[2]}(a))\ad^{[2]}(x).
\end{align*}
Again, by Lemma \ref{Lemma14} with $\Omega=\{a_1, \cdots, a_n\}$
\[\ad(y_i\ad^{[2]}(a))\ad(y_j\ad^{[2]}(a))=\ad^{[2]}(a)\ad(y_i)\ad(y_j)\ad^{[2]}(a).\]
Finally, we get 
\[\ad^{[2]}(y\ad^{[2]}(a)\ad^{[2]}(x))=\ad^{[2]}(x)\ad^{[2]}(a)\ad^{[2]}(y)\ad^{[2]}(a)\ad^{[2]}(x),\] 
as claimed.

We will now prove (M5$'$). We have already shown above that by Lemma \ref{Lemma14}, we have 
\[\ad^{[2]}(a\ad^{[2]}(x))=\ad^{[2]}(x)\ad^{[2]}(a)\ad^{[2]}(x),\]
which implies the claim.

To prove (M6$'$), we need only to recall the equality
\[\ad^{[2]}(y\ad^{[2]}(a)\ad^{[2]}(x))=\ad^{[2]}(x)\ad^{[2]}(a)\ad^{[2]}(y)\ad^{[2]}(a)\ad^{[2]}(x)\]
that was proved above.

Since the elements (M1$'$)$-$(M6$'$) are equal to zero in $L_0$, it follows that in the algebra $L_0'$, the elements of (M1$'$)$-$(M6$'$) are linear combinations of
\begin{enumerate}[(1)]
\item expressions in $x_i$'s, $y_j$'s, $z$, $a_1,\cdots, a_n$ involving at least one commutator $[x_i,x_j]$, $1\leq i,j\leq s$ or $[y_i,y_j]$, $1\leq i,j\leq t$,
\item expressions involving $\ad(x_{i_1})\ad(x_{i_2})\ad(x_{i_3})$ or $\ad(y_{j_1})\ad(y_{j_2})\ad(y_{j_3})$, $1\leq i_1,i_2,i_3\leq s$, $1\leq j_1,j_2,j_3\leq t$.
\end{enumerate}
Moreover, since the relations of the algebra $L_0'$ are homogeneous in $x_i$'s, $y_j$'s, and in the total number of generators $a_1,\cdots, a_n$, it follows that the presentations of (M1$'$)$-$(M6$'$) as linear combinations of (1), (2) preserve the degrees in $x_i$'s, $y_j$'s, and the total degree in $a_1,\cdots, a_n$.

Now we consider arbitrary elements $x,y\in\til{L^m}$ and drop the assumptions on components of standard decompositions of $x,y$. Let $x=x_{\pi_1}+\cdots+x_{\pi_s}$, $y=y_{\tau_1}+\cdots+y_{\tau_s}$, $a_r=a_{r1}+\cdots+a_{rn}$ be the standard decompositions of $x$, $y$, $a_r$ respectively. Then $a=\sum\limits_{i=1}^na_i$, where $a_i=a_{ri}\ad^{[2]}(a')$.

The mapping $a_i\rightarrow a_{ri}\ad^{[2]}(a')$, $x_j\rightarrow x_{\pi_j}$, $y_k\rightarrow y_{\tau_k}$, $1\leq i\leq n$, $1\leq j\leq s$, $1\leq k\leq t$, extends to a homomorphism $L_0'\rightarrow\til{L^m}$. Moreover, the operators $\ad^{[2]}(a)$, $\ad^{[2]}(x)$, $\ad^{[2]}(y)$, $\ad^{[2]}(a\ad^{[2]}(x))$, $\ad^{[2]}(y\ad^{[2]}(a)\ad^{[2]}(x))$ project to the similar operators on $\til{L^m}$, by Lemma \ref{Lemma40}.

Hence, the elements (M1$'$)$-$(M6$'$) of $\til{L^m}$ are linear combinations of
\begin{enumerate}[(1)]
\item expressions in $x_{\pi_i}$, $y_{\tau_j}$, $a_1,\cdots, a_n$ involving at least one commutator $[x_{\pi_i},x_{\pi_j}]$ or $[y_{\tau_i},y_{\tau_j}]$,
\item expressions involving $\ad(x_{\pi_i})\ad(x_{\pi_j})\ad(x_{\pi_k})$ or $\ad(y_{\tau_i})\ad(y_{\tau_j})\ad(y_{\tau_k})$.
\end{enumerate}
These presentations, as linear combinations of (1) and (2), preserve the degrees in $x_{\pi_i}$'s, $y_{\tau_j}$'s and the total degree in $a_1,\cdots, a_n$.

Replacing $\ad(a_i)$ in these expressions by 
\[\ad(a_{ri}\ad^{[2]}(a'))=\ad(a_{ri})\ad^{[2]}(a')-\ad(a')\ad(a_{ri})\ad(a')+\ad^{[2]}(a')\ad(a_{ri})\]
we get expressions whose degree in $a'$ exceeds the total degree in the other variables $x_{\pi_i}$, $y_{\tau_j}$, $z$, $a_{ri}$ by 1. In case (1), we merge two elements $x_{\pi_i}$, $x_{\pi_j}$ or $y_{\tau_i}$, $y_{\tau_j}$ together. Hence, the degree in $a'$ exceeds the total degree in the other elements by 2. The only property of the element $a$ that was used in Lemma \ref{Lemma13} was $\ad^{[k]}(a)=0$ for $k\geq 3$. We have $\ad^{[k]}(a')=0$, $k\geq 3$. Hence, we can apply Lemma \ref{Lemma13} to the element $a'$. By Lemma \ref{Lemma13} (1), these expressions are equal to zero. In case (2), we only need to refer to Lemma \ref{Lemma13} (3). We proved that the expressions (M1$'$)$-$(M6$'$) are equal to 0, which means that the expressions (M1)$-$(M6) are equal to 0 modulo $K_a'$. By Lemma \ref{Lemma41} (2), they are equal to 0 modulo $K_a$, which finishes the proof of Proposition \ref{Proposition1}.

Let us consider basic examples of quadratic Jordan algebras.
	
\underline{Example 1.}  Let $A$ be an associative algebra.  Let $yQ(x)=xyx;\ x,y\in A.$  Then the vector space $A$ with the operators $x\rightarrow x^2$ and $x\rightarrow Q(x)$ is a quadratic Jordan algebra, which is denoted as $A^{(+)}$.
	
\underline{Example 2.}  Let $A$ be an associative algebra with an involution $\ast:A\rightarrow A$.  Then $H(A,\ast)=\{a\in A|a^\ast=a\}$ is a subalgebra of the quadratic Jordan algebra $A^{(+)}.$
	
\underline{Example 3.} Let $V$ be a vector space and let $q:V\rightarrow F$ be a quadratic form with the associated bilinear form $q(v,w)=q(v+w)-q(v)-q(w)$. Fix an element of $V$ that we will denote as $\bb1$ (a base point) such that $q(\bb1)=1$.  For arbitrary elements $v,w\in V$ define
	\[v^2=q(v,\bb1)v-q(v)\bb1,\ wQ(v)=q(v,\bar{w})v-q(v)\bar{w},\]
	where $\bar{w}=q(w,\bb1)\bb1-w$.  These equations make $V$ a quadratic Jordan algebra.  We will denote it as $J(q,\bb1).$
	
	\underline{Example 4.} Albert algebras of a nondegenerate admissible cubic form on a 27-dimensional space (see \cite{J2,J4}).
	
	Powers of elements in a quadratic Jordan algebra $J$ are defined inductively: we define $x^1=x$; for an even $n=2k$ we define $x^n=(x^k)^2;$ and for an odd $n=2k+1$ we define $x^n=xQ(x^k).$  For arbitrary integers $i\geq0,j\geq0,k\geq0$ we have $x^iQ(x^j)=x^{i+2j},\ x^i\circ x^j=2x^{i+j},\ \{x^i,x^j,x^k\}=2x^{i+j+k}.$ 
	
	A quadratic Jordan algebra $J$ is said to be nil of bounded degree $n$ if $x^n=0$ for an arbitrary element $x\in J$ and if $n$ is a minimal integer with this property.
	
	Just as in $\mathsection2$ we call an element of the free quadratic Jordan algebra $FJ\langle X\rangle$ an $S$-identity if it lies in the kernel of the homomorphism $FJ\langle X\rangle\rightarrow F\langle X\rangle^{(+)},\ x\rightarrow x,$ where $F\langle X\rangle$ is the free associative algebra.
	
	We say that a quadratic Jordan algebra $J$ is PI if there exists an element $f(x_1,\cdots,x_r)\in FJ\langle X\rangle$ that is not an $S$-identity and that is identically zero on $J$.
	
In this paper, we call an element $a$ of a quadratic Jordan algebra an \underline{absolute zero divisor} if $Q(a)=0$. This terminology is not standard. (In the standard terminology, we should have also assumed $a\neq 0$ and $a^2=0$.)  A quadratic Jordan algebra that does not contain nonzero absolute zero divisors is called nondegenerate.  The smallest ideal $M(J)$ of a Jordan algebra $J$ such that the factor algebra $J/M(J)$ is nondegenerate is called the McCrimmon radical of the algebra $J$.  The McCrimmon radical of an arbitrary quadratic Jordan algebra lies in the nil radical Nil$(J)$ (\cite{Z1,M3}).
	
	A nondegenerate quadratic Jordan algebra is said to be nondegenerate prime if two arbitrary nonzero ideals of $J$ have nonzero intersection.
	
	In \cite{Z3,Th} it is shown that an arbitrary nondegenerate Jordan algebra is a subdirect product of nondegenerate prime Jordan algebras.
	
	Let Sym$_n(x_1,\cdots,x_n)$ be the full linearization of $x_1^n$ in the free Jordan algebra $FJ\langle X\rangle$.
	
	\begin{lemma}
		\label{Lemma16} There exists a function $d:\bbn\rightarrow \bbn$ such that an arbitrary nondegenerate prime quadratic Jordan algebra over a field of characteristic $p>0$ satisfying a PI of degree $n$ satisfies the identity Sym$_{d(n)}(x_1,\cdots,x_{d(n)})=0.$
		\begin{proof}
			Let us notice first that if $J$ is a quadratic Jordan algebra of dimension $d$ then $J$ satisfies the identity Sym$_{d(p-1)+1}(x_1,\cdots,x_{d(p-1)+1})=0$.  Indeed, if $e_1,\cdots,e_d$ is a basis of $J$ then among any $d(p-1)+1$ elements from $\{e_1,\cdots,e_d\}$ at least $p$ elements are equal.  This implies the claim.
			
			In \cite{MZ} it was shown that if $J$ is a nondegenerate prime quadratic Jordan algebra, then one of the following possibilities holds:
			\begin{enumerate}[(1)]
				\item there exists a prime associative algebra $A$ such that \[A^{(+)}\subseteq J\subseteq \mathds{Q}(A)^{(+)},\]
				 where $\mathds{Q}(A)$ is the Martindale ring of the quotients of $A$ (see \cite{Ma});
				\item there exists a prime associative algebra $A$ with an involution \linebreak$\ast: A\rightarrow A,$ such that 
				\[H(A_0,\ast)\subseteq J\subseteq H(\mathds{Q}(A),\ast)\]
				where $A_0$ is the subalgebra of $A$ generated by elements $a+a^\ast,\ aa^\ast,$ $ a\in A,$ and $\mathds{Q}(A)$ is the Martindale ring of quotients of the algebra $A$ (see \cite{Ma});
			\item $J$ is a form of an exceptional 27-dimensional Albert algebra over a field $F$;
				\item $J$ is embeddable in a quadratic Jordan algebra $J(q,v_0)$ of a nondegenerate quadratic form $q$ with a basepoint $v_0$ in a vector space over some extension of the base field $F$.
			\end{enumerate}		
			If $A^{(+)}\subseteq J\subseteq\mathds{Q}(A)^{(+)}$, then $A$ is a prime associative algebra satisfying an identity of degree $n$.  Hence the center $Z(A)$ of $A$ is nonzero and the algebra $\mathds{Q}(A)=(Z(A)\setminus \{0\})^{-1}A$ is of dimension $\leq [\frac{n}{2}]^2$ over the field $K=(Z(A)\setminus\{0\})^{-1}Z(A)$ (see \cite{R}).  Hence the algebra $\mathds{Q}(A)$ satisfies the identity Sym$_{[\frac{n}{2}]^2(p-1)+1}=0$.
			
			Suppose that $H(A_0,\ast)\subseteq J\subseteq H(\mathds{Q}(A),\ast).$ S. Amitsur \cite{A} proved that there exists a function $h(n)$ with the following property:
			
			if an involutive associative algebra satisfies an identity of degree $n$ with an involution then it satisfies an identity of degree $\leq h(n)$. As we have shown above, the algebra $\mathds{Q}(A)$ in this case has dimension $\leq[\frac{h(n)}{2}]^2$ over its center and satisfies the identity Sym$_{[\frac{h(n)}{2}]^2(p-1)+1}=0.$
			
			The same argument applies to case (3): the algebra $J$ satisfies the identity Sym$_{27(p-1)+1}=0.$
			
			Consider now the quadratic Jordan algebra $J$ of a quadratic form $q$ on a vector space $V$ where $v_0\in V$ is a basepoint.  The quadratic form $q$ can be extended to the scalar product $V\otimes_F\widehat{E},\ \widehat{E}=E+F\cdot1$.  For an arbitrary element $u\in V\otimes_F{E}$ we have $u^2=q(u,v_0)u-q(u)v_0.$  The elements $a=q(u,v_0),\ b=q(u)$ lie in $E$.  Hence $a^p=b^p=0.$  For an arbitrary $k\geq 1$ we have
			\[u^{2k}=\sum\limits_{i+j=k}a^ib^ju_{ij},\ u_{ij}\in V\otimes_F\widehat{E}.\]
			Hence $u^{2(2p-1)}=0.$  This implies that the algebra $J$ satisfies the identity Sym$_{4p-2}$=0 and finishes the proof of the lemma.
		\end{proof}
	\end{lemma}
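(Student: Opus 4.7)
My plan is to invoke the structure theorem for nondegenerate prime quadratic Jordan algebras from \cite{MZ}, which divides such algebras into four classes: (i) sandwiched between $A^{(+)}$ and $Q(A)^{(+)}$ for a prime associative algebra $A$; (ii) sandwiched between $H(A_0,\ast)$ and $H(Q(A),\ast)$ for a prime associative algebra with involution; (iii) forms of the exceptional $27$-dimensional Albert algebra; (iv) embeddable in $J(q,v_0)$ for a nondegenerate quadratic form. I would handle each case separately, aiming to reduce each to either a finite dimension bound or a direct computation with the quadratic form, and then convert the dimension bound into an identity $\text{Sym}_N=0$.

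The conversion step is the following pigeonhole observation: if a quadratic Jordan algebra $J$ has dimension $d$ over $F$ of characteristic $p$, then in any product of $d(p-1)+1$ basis vectors, some basis vector must appear at least $p$ times; since $p\cdot(\text{anything})=0$ and $\text{Sym}_{d(p-1)+1}$ is fully symmetric and multilinear, this forces $\text{Sym}_{d(p-1)+1}=0$ on $J$. So I just need a uniform dimension bound in each of the first three cases and a separate argument for the fourth.

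For case (i), since $A$ is a prime associative PI-algebra of degree $n$, Posner's theorem (cf.\ \cite{R}) gives that its central ring of quotients $Q(A)$ has dimension at most $\lfloor n/2\rfloor^2$ over $(Z(A)\setminus\{0\})^{-1}Z(A)$, so one takes $d=\lfloor n/2\rfloor^2$. For case (ii), I invoke Amitsur's theorem \cite{A}: an involutive associative algebra satisfying a $\ast$-identity of degree $n$ satisfies an ordinary identity of some bounded degree $h(n)$, reducing back to case (i) with parameter $h(n)$. For case (iii) the dimension is just $27$.

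The main obstacle is case (iv), since $J(q,v_0)$ can be infinite dimensional and one cannot hope for a dimension bound. Here I work directly with the defining formula $u^2=q(u,v_0)u-q(u)v_0$ in the extended algebra $V\otimes_F\widehat{E}$, where $\widehat{E}=E\oplus F\cdot 1$ and $e_i^2=0$. For $u\in V\otimes_F\widehat{E}$ the scalars $a=q(u,v_0)$ and $b=q(u)$ lie in the ideal $E$, hence are nilpotent of exponent $p$ (since every element of $E$ has vanishing $p$-th power once it is in the square of the ideal of odd basis products — more directly, $a^p$ and $b^p$ vanish because $a,b$ are sums of products of distinct $e_i$'s whose squares are zero). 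Expanding $u^{2k}$ by induction on $k$ shows $u^{2k}=\sum_{i+j=k}a^ib^j u_{ij}$ for some $u_{ij}\in V\otimes_F\widehat{E}$, so once $k\geq 2p-1$ every monomial $a^ib^j$ with $i+j=k$ has $i\geq p$ or $j\geq p$ and therefore vanishes. Thus $u^{2(2p-1)}=0$, giving $\text{Sym}_{4p-2}=0$ on $J$. Taking the maximum of the four bounds defines $d(n)$ and completes the argument.
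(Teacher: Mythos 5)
Your proposal is correct and follows essentially the same line of argument as the paper: the same pigeonhole conversion of a dimension bound into $\text{Sym}_{d(p-1)+1}=0$, the same four-case structure theorem from \cite{MZ}, Posner's theorem plus Amitsur's theorem for the dimension bounds in cases (1)--(3), and the same direct computation $u^{2k}=\sum_{i+j=k}a^ib^ju_{ij}$ with $a^p=b^p=0$ giving $\text{Sym}_{4p-2}=0$ in case (4).
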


\begin{lemma}
\label{Lemma17}
Let $J$ be a quadratic Jordan $F$-algebra that satisfies the identity $x^n=0$, $n\geq 2$. Then,
\begin{enumerate}[(1)]
\item \label{thing1} for an arbitrary element $a\in J$, the elements $a^{n+1}, a^{n+2}, \cdots, a^{2n-1}$ are absolute zero divisiors of $J$;
\item \label{thingg2} if $J$ satisfies the identities $x^n=x^{n+1}=\cdots=x^{2n-1}=0$, then for an arbitrary $a\in J$, the element $a^{n-1}$ is an absolute zero divisor of $J$.
\end{enumerate}
\end{lemma}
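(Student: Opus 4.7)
The plan is to handle the two parts separately, using different classical facts from quadratic Jordan algebra theory. Part (1) follows from a one-line operator identity, while part (2) requires passing to an associative envelope via the Shirshov--Cohn theorem in its quadratic form.

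For part (1), the key input is the operator identity $Q(x^n) = Q(x)^n$, valid in any quadratic Jordan algebra. One proves it by induction on $n$ using (M5) and (M6): if $n = 2m$, then $Q(x^n) = Q((x^m)^2) = Q(x^m)^2 = Q(x)^n$ by (M5) and the induction hypothesis; if $n = 2m+1$, then $x^n = xQ(x^m)$ and (M6) gives $Q(x^n) = Q(xQ(x^m)) = Q(x^m)Q(x)Q(x^m) = Q(x)^n$. With this identity in hand, the hypothesis $x^n = 0$ yields $Q(a)^n = Q(a^n) = Q(0) = 0$, and therefore for every $k \geq 0$
\[
Q(a^{n+k}) = Q(a)^{n+k} = Q(a)^n \cdot Q(a)^k = 0,
\]
so each of $a^{n+1}, \ldots, a^{2n-1}$ is an absolute zero divisor.

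For part (2), the stronger hypotheses $x^{n+1} = \cdots = x^{2n-1} = 0$ are genuinely needed in the quadratic Jordan setting (e.g.\ in characteristic $2$ the identity $x^n = 0$ alone does not force $x^{n+1} = 0$). Fix arbitrary $a, z \in J$ and consider the two-generated Jordan subalgebra $\langle a, z \rangle \subseteq J$; by the quadratic Shirshov--Cohn theorem it is special, so it embeds into $B^{(+)}$ for some associative $F$-algebra $B$, under which Jordan powers agree with associative powers and $wQ(y) = ywy$. The identity $x^{2n-1} = 0$ on $J$ gives $(a + \alpha z)^{2n-1} = 0$ in $B$ for every $\alpha \in F$. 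Since $F$ is infinite, every coefficient of this polynomial in $\alpha$ must vanish; the coefficient of $\alpha^1$ is $\sum_{i=0}^{2n-2} a^i z a^{2n-2-i}$. Invoking $a^n = 0$ in $B$, every summand with exponent $\geq n$ on either side of $z$ vanishes, leaving only the middle term $a^{n-1} z a^{n-1}$. Hence $zQ(a^{n-1}) = a^{n-1} z a^{n-1} = 0$ in $B$, and therefore in $\langle a, z\rangle \subseteq J$. As $z$ was arbitrary, $Q(a^{n-1}) = 0$.

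The only substantive input beyond the defining quadratic Jordan axioms is the quadratic Shirshov--Cohn theorem, guaranteeing that any two elements of $J$ generate a special subalgebra; with that granted, part (1) reduces to a single operator identity and part (2) to the classical ``middle-term survives'' linearization argument applied inside the associative envelope.
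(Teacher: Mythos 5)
Your part (1) is correct and is essentially the paper's argument: both proofs rest on the power rule $Q(x^m)=Q(x)^m$ (equivalently $Q(x^{i-n})Q(x^n)=Q(x^i)$), which you derive by induction from (M5)--(M6); this is fine.

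Part (2) has a real gap: the ``quadratic Shirshov--Cohn theorem'' you invoke is false in characteristic $2$, which is precisely the characteristic this paper must handle. Concretely, over a field $F$ of characteristic $2$ consider $J=xF[x]^{(+)}$ and the subspace $I=Fx^2+\sum_{k\geq 4}Fx^k$. Since $xF[x]$ is commutative and $\mathrm{char}\,F=2$, the circle and triple products vanish identically, so the ideal conditions reduce to closure under squaring together with $IQ(J)\subseteq I$ and $JQ(I)\subseteq I$; a degree count shows all three hold, so $I$ is a quadratic Jordan ideal. The quotient $J/I$ is a $1$-generated quadratic Jordan algebra spanned by $\bar x,\bar x^3$ with $\bar x^2=0$ but $\bar x^3=\bar xQ(\bar x)\neq 0$. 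No subalgebra of an $A^{(+)}$ can exhibit $a^2=0$ yet $a^3\neq 0$, so $J/I$ is not special. Hence a ($1$- or) $2$-generated quadratic Jordan algebra need not be special, and the step in your argument ``invoking $a^n=0$ in $B$, every summand with exponent $\geq n$ vanishes'' is exactly the use of speciality that this counterexample rules out. Notice also the internal tension in your write-up: you correctly remark that in characteristic $2$ the identity $x^n=0$ does not force $x^{n+1}=0$, but if $\langle a,z\rangle$ really embedded in $B^{(+)}$ one would get $a^{n+1}=a\cdot a^n=0$ in $B$ and hence $a^{n+1}=0$ in $J$; the two assertions cannot both stand.

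The paper avoids this trap by never leaving the quadratic Jordan setting. It applies the partial linearization of $x^{2n-1}=0$ (legitimate over an infinite field), obtaining
\[
yQ(x^{n-1})+x^{2(n-1)}\circ y+\sum_{\substack{i+j=2(n-1)\\ 1\leq i<j\leq 2n-3}}\{x^i,y,x^j\}=0,
\]
and then kills \emph{every} extra term using \emph{all} of the hypotheses $x^n=\cdots=x^{2n-2}=0$: the middle term dies because $x^{2n-2}=0$, and each triple product dies because $i<j$ and $i+j=2n-2$ force $j\geq n$. These ``intermediate'' hypotheses are therefore not removable the way your argument suggests; they are doing genuine work precisely because one cannot reduce to the associative case.
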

\begin{proof}
For $i=n+1,n+2,\cdots, 2n-1$, we have $Q(x^i)=Q(x^{i-n})Q(x^n)=0$, which proves (\ref{thing1}).

Suppose now that the algebra $J$ satisfies the identities $x^{n+1}=x^{n+2}=\cdots=x^{2n-1}=0$. Since the ground field $F$ is infinite, the algebra $J$ satisfies also the following partial linearization of $x^{2n-1}=0$ (see \cite{J2}, \cite{ZSSS}):
\[yQ(x^{n-1})+x^{2(n-1)}\cdot y+\sum\limits_{\substack{i+j=2(n-1)\\ 1\leq i<j\leq 2n-3}}\{x^i,y,x^j\}=0.\]
Hence, $yQ(x^{n-1})=0$, which proves assertion \label{thing2} of the lemma.
\end{proof}

%\begin{lemma}
%	\label{Lemma17}
%	Let $J$ be a quadratic Jordan algebra that satisfies the identity $x^n=0,\ n\geq2,$ and all its linearizations.  Then for an arbitrary $a\in J$ the element $a^{n-1}$ is an absolute zero divisor of $J$.
%	\begin{proof}
%		The identity $yQ(x^{n-1})+x^{2(n-1})\circ y+\sum\limits_{\substack{i+j=2(n-1)\\1\leq i<j\leq2n-3}} \{x^i,y,x^i\}=0$ is a partial linearization of the identity $x^{(2n-1)}=0.$  Hence $yQ(x^{n-1})=0,$ which proves the lemma.
%	\end{proof}
%\end{lemma}

Let $J$ be a quadratic Jordan algebra, $a\in J.$  Define a new structure of a quadratic Jordan algebra on $J$ via:
\[x^{\ast 2}=aQ(x),\ yQ^\ast(x)=yQ(a)Q(x).\]
The new quadratic Jordan algebra is denoted as $J^{(a)}$ and is called a homotope of $J$ (see \cite{J2,J4,M3}).

For the quadratic Jordan algebra $\til{J}=J\otimes_F E$ and an element $a\in\til{J}$, consider the subspaces $K_a'=\{x\in\til{J}|xQ(a)=0\}$ and $K_a=\sum\limits_i(J\otimes e_i+\til{J}e_i)\cap K_a'$. It is easy to see that the subspace $K_a$ is an ideal of the algebra $\til{J}^{(a)}$.

\begin{remark*}
If $p\neq 2$, then $K_a'$ is also an ideal of $\til{J}^{(a)}$.
\end{remark*}

%The subspace
%\[K_a=\{x\in J|xQ(a)=0\}\]
%is an ideal of the algebra $J^{(a)}$ (see \cite{J2}).

\begin{lemma}
\label{Lemma18}
If $b\in J$ and $b+K_a$ is an absolute zero divisor of the algebra $J^{(a)}/K_a$, then $bQ(a)$ is an absolute zero divisor of the algebra $J$.
\end{lemma}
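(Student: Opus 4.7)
The plan is to unravel the homotope definitions and apply the fundamental identity (M6), $Q(yQ(x)) = Q(x)Q(y)Q(x)$.

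First, I would unpack the hypothesis. Saying $b + K_a$ is an absolute zero divisor of the quotient $\til{J}^{(a)}/K_a$ means precisely that $y Q^{\ast}(b) \in K_a$ for every $y \in \til{J}$. By the definition of the homotope, $Q^{\ast}(b) = Q(a)Q(b)$, so the condition becomes $yQ(a)Q(b) \in K_a$ for all $y \in \til{J}$. Since $K_a \subseteq K_a'$, this gives $\bigl(yQ(a)Q(b)\bigr) Q(a) = 0$, i.e.
\[
yQ(a)Q(b)Q(a) = 0 \quad \text{for every } y \in \til{J}.
\]

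Next, I would invoke the fundamental formula (M6) applied to $a$ and $b$, namely $Q(bQ(a)) = Q(a)Q(b)Q(a)$. Combining this with the display above yields $yQ(bQ(a)) = 0$ for every $y \in \til{J}$, and in particular for every $y \in J$. Hence $Q(bQ(a)) = 0$ on $J$, which is exactly the statement that $bQ(a)$ is an absolute zero divisor of $J$.

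There is essentially no obstacle here: the lemma is a direct algebraic consequence of (M6) together with the bookkeeping contained in the definitions of the homotope $\til{J}^{(a)}$ and the ideals $K_a' \supseteq K_a$. The only point requiring a touch of care is the passage from $y Q(a)Q(b) \in K_a$ to $yQ(a)Q(b)Q(a) = 0$, which uses only the inclusion $K_a \subseteq K_a'$ and the defining condition of $K_a'$.
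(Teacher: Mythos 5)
Your proof is correct and takes essentially the same route as the paper. The paper's argument is the one-liner $JQ(bQ(a))=JQ^{\ast}(b)Q(a)\subseteq K_aQ(a)=(0)$, which compresses exactly the three ingredients you identified: the fundamental formula (M6) giving $Q(bQ(a))=Q(a)Q(b)Q(a)=Q^{\ast}(b)Q(a)$, the hypothesis $JQ^{\ast}(b)\subseteq K_a$, and the inclusion $K_a\subseteq K_a'$ forcing $K_aQ(a)=(0)$.
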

\begin{proof}
We have $JQ(bQ(a))=JQ^\ast(b)Q(a)\subseteq K_aQ(a)=(0)$, which proves the lemma.
\end{proof}

\begin{lemma}
\label{Lemma42}
Let $a$ be an element of a quadratic Jordan algebra $J$. Let $f:\til{J}\times\cdots\times\til{J}\rightarrow\til{J}$ be a homogeneous polynomial map, let $\til{f}$ be its full linearization. Suppose that $\til{f}(J\otimes e_i+\til{J}e_i, \til{J}, \cdots, \til{J})\subseteq J\otimes e_i+\til{J}e_i$ for all $i$. If an arbitrary value of $f$ lies in $K_a'$, then an arbitrary value of $f$ lies in $K_a$.
\end{lemma}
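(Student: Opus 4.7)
The proof parallels exactly the argument for the Lie-algebra statement Lemma~\ref{Lemma41}(2). The key ingredient is the span-equality
\[
\mathrm{span}\,f(\til{J},\ldots,\til{J})=\mathrm{span}\,\til{f}(\til{J},\ldots,\til{J}),
\]
which is a special case of Lemma~\ref{Lemma33}(2). Although Lemma~\ref{Lemma33} is formulated for divided polynomials on a Lie algebra, its proof of part~(2) is carried out ``in a slightly more general context of polynomial maps of spaces'' and therefore applies to any homogeneous polynomial map $f:\til{J}\times\cdots\times\til{J}\to\til{J}$ once condition~($\ast$) of that proof is verified. Taking $\Omega=\{b\otimes e_i:b\in J,\ i\geq 1\}$, condition~($\ast$) is automatic: any polynomial expression built from the Jordan operations $x\mapsto x^2$ and $Q$ that has degree $\geq 2$ in some slot, when evaluated at $b\otimes e_i$ in that slot, acquires a factor of $e_i^{\,d}$ with $d\geq 2$, which vanishes in $E$.

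Armed with the span-equality, the rest of the proof is a short diagram chase. Since $K_a'$ is an $F$-subspace of $\til{J}$ and every value of $f$ is assumed to lie in $K_a'$, the span of all values of $f$ lies in $K_a'$, so by the span-equality every value of $\til{f}$ also lies in $K_a'$. Using the decomposition $\til{J}=\sum_i(J\otimes e_i+\til{J}e_i)$ together with the multilinearity of $\til{f}$,
\[
\til{f}(\til{J},\ldots,\til{J})=\sum_i\til{f}(J\otimes e_i+\til{J}e_i,\til{J},\ldots,\til{J}).
\]
The hypothesis of the lemma forces each summand to lie in $J\otimes e_i+\til{J}e_i$, and combined with its membership in $K_a'$ we conclude
\[
\til{f}(J\otimes e_i+\til{J}e_i,\til{J},\ldots,\til{J})\subseteq K_a'\cap(J\otimes e_i+\til{J}e_i)\subseteq K_a
\]
by the very definition of $K_a$. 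Hence every value of $\til{f}$, and therefore every value of $f$, lies in $K_a$.

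The only point that requires any care is the transfer of Lemma~\ref{Lemma33}(2) from the Lie to the quadratic-Jordan setting. Since its proof never invokes any feature specific to Lie algebras beyond the formal apparatus of homogeneous polynomial maps and their full linearizations, and since condition~($\ast$) in the Jordan case reduces to the nilpotence relation $e_i^2=0$ in $E$, no genuinely new difficulty arises; the whole argument is essentially a translation of the proof of Lemma~\ref{Lemma41}(2) with $\ad$-operations replaced by the Jordan quadratic operator $Q(a)$.
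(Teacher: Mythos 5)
Your proof is correct and is essentially identical to the paper's intent: the paper simply states that the proof is similar to that of Lemma~\ref{Lemma41}(2), and you have reproduced that argument faithfully in the Jordan setting, including the appeal to the polynomial-map version of Lemma~\ref{Lemma33}(2), the decomposition $\til{J}=\sum_i(J\otimes e_i+\til{J}e_i)$, and the inclusion $K_a'\cap(J\otimes e_i+\til{J}e_i)\subseteq K_a$ from the definition of $K_a$.
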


The proof is similar to the proof of Lemma \ref{Lemma41} (2).

%\begin{lemma}
%	\label{Lemma18}
%	If $b$ is an absolute zero divisor of a homotope $J^{(a)}$ then $bQ(a)$ is an absolute zero divisor of the algebra $J.$
%	\begin{proof}
%		We have $Q(bQ(a))=Q^\ast(b)Q(a)=0,$ which proves the lemma.
%	\end{proof}
%\end{lemma}

Let $f$ be an element of the free quadratic Jordan algebra $FJ\langle X\rangle,$ which is not an $S$-identity.  Let $M=M(f)$ be the variety of quadratic Jordan algebras satisfying the identity $f=0$ (see \cite{J2,J3,ZSSS}).

\underline{Definition.} We say that a finite sequence of homogeneous elements $h_1,h_2,\cdots,h_r\in FJ\langle X\rangle$ is an absolute zero divisor sequence for $M$ if for an arbitrary quadratic Jordan algebra $J\in M$
\begin{enumerate}[(i)]
	\item every value of $h_r$ on $\til J=J\otimes_F E$ is an absolute zero divisor of the algebra $\til J$;
	\item if $h_k=h_{k+1}=\cdots=h_r=0$ identically hold on $\til J,\ 2\leq k\leq r,$ then an arbitrary value of $h_{k-1}$ on $\til J$ is an absolute zero divisor of $\til J.$
\end{enumerate}

Recall that in this section we always assume that char$F=p>0$.

\begin{proposition}
	\label{Proposition2}
	For an arbitrary element $f\in FJ\langle X\rangle$ that is not an $S$-identity the variety $M(f)$ has a finite absolute zero divisor sequence $h_1,h_2,\cdots,h_r$ with $h_1=x_1\in X.$
	\begin{proof}
Let $F_M\langle X\rangle $ be the free algebra in the variety $M=M(f)$ on the set of free generators $X$. Since the factor algebra of $F_M\langle X\rangle$ modulo the McCrimmon radical can be approximated by prime nondegenerate algebras (\cite{Z3}, \cite{Th}), Lemma \ref{Lemma16} implies that there exists $d\geq 1$ such that $y=\text{Sym}_d(x_1,\cdots, x_d)$ lies in the McCrimmon radical of $F_M\langle X\rangle$. Consider the homotope algebra $F_M\langle X\rangle^{(x_{d+1})}$ Since an absolute zero divisor of a Jordan algebra is an absolute zero divisor of every homotope, it follows that the McCrimmon radical of $F_M\langle X\rangle$ lies in the McCrimmon radical of $F_M\langle X\rangle^{(x_{d+1})}$. In particular, the element $y$ lies in the McCrimmon radical of $F_M\langle X\rangle^{(x_{d+1})}$ and therefore is nilpotent.

Let $a^{(k, x_{d+1})}$ denote the $k^{th}$ power of an element $a$ in the homotope algebra $F_M\langle X\rangle^{(x_{d+1})}$. There exists $m\geq 2$ such that $y^{(m-1,x_{d+1})}=0$. Then $x_{d+1}^{(m,y)}=y^{(m-1,x_{d+1})}Q(x_{d+1})=0$. This implies that $x^{(m,y)}=0$ holds identically on $F_M\langle X\rangle$.
%		Let $F_M\langle X\rangle$ be the free algebra in the variety $M=M(f)$ on the set of free generators $X.$  By \Cref{Lemma16} there exists $d\geq1$ such that $y=\text{Sym}_d(x_1,\cdots,x_d)$ lies in the nilradical Nil$(F_M\langle X\rangle).$  Hence (\cite{Z1,M2}) the homotope $F_M\langle X\rangle^{(y)}$ is a nil algebra of bounded index.
		
%		Let $a^{(k,y)}$ denote the $k$-th power of an element $a$ in the homotope $F_M\langle X\rangle^{(y)}.$  Let $x^{(m,y)}=0$ hold identically on $F_M\langle X\rangle.$  
%Then by \Cref{Lemma17,Lemma18} the sequence
%		\[y,x_{d+1}Q(y),\cdots,x_{d+1}^{(m-1,y)}Q(y)\]
%		is an absolute zero divisor sequence in $M.$
Then by Lemmas \ref{Lemma17} and \ref{Lemma18}, the sequence
\[y, x_{d+1}Q(y), \cdots, x_{d+1}^{(m-1,y)}Q(y), x_{d+1}^{(2m-1,y)}Q(y), \cdots, x_{d+1}^{(m+1,y)}Q(y)\]
is an absolute zero divisor sequence in $M$.

Indeed, since the Jordan algebra $F_M\langle X\rangle ^{(y)}/K_y$ satisfies the identity $x^m=0$, Lemma \ref{Lemma17} (\ref{thing1}) implies that the elements $x_{d+1}^{(2m-1,y)}+K_y,$ $\cdots, x_{d+1}^{(m+1,y)}+K_y$ are absolute zero divisors in $F_M\langle X\rangle ^{(y)}/K_y$. By Lemma \ref{Lemma18}, the elements $x_{d+1}^{(2m-1,y)}Q(y),\cdots, x_{d+1}^{(m+1,y)}Q(y)$ are absolute zero divisors of the algebra $F_M\langle X\rangle$.

If $J\in M$ and $x_{d+1}^{(2m-1,y)}Q(y)=\cdots=x_{d+1}^{(m+1,y)}Q(y)=0$ hold identically on $\til{J}$, then for arbitrary elements $a_1,\cdots, a_d\in\til{J}$, \linebreak $b=\text{Sym}_d(a_1,\cdots, a_d)$, $c\in\til{J}$ the $i$-th power $c^{(i,b)}$, $m\leq i\leq 2m-1$, lies in $K_b'$. By Lemma \ref{Lemma42}, we have $c^{(i,b)}\in K_b$. In other words, the Jordan algebra $\til{J}^{(b)}/K_b$ satisfies the identities $x^m=x^{m+1}=\cdots=x^{2m-1}=0$. By Lemma \ref{Lemma17} (2), for an arbitrary element $c\in\til{J}$, the $(m-1)$-th power $c^{(m-1,b)}$ is an absolute zero divisor in $\til{J}^{(b)}/K_b$. By Lemma \ref{Lemma18}, the element $c^{(m-1,b)}Q(b)$ is an absolute zero divisor of $\til{J}$. 

If $J\in M$ and $x_{d+1}^{(m-1,y)}Q(y)=x_{d+1}^{(2m-1,y)}Q(y)=\cdots=x_{d+1}^{(m+1,y)}Q(y)=0$ holds identically on $\til{J}$, then using Lemma \ref{Lemma42} as above, we conclude that the algebra $\til{J}^{(b)}/K_b$ satisfies the identities $x^{m-1}=\cdots=x^{2m-1}=0$.

Again, by Lemma \ref{Lemma17} (2) and Lemma \ref{Lemma18}, every value of $x_{d+1}^{(m-2,y)}Q(y)$ (and so on) is an absolute zero divisor of $\til{J}$.

%If $J\in M$ and $x_{d+1}^{(2m-1,y)}Q(y)=\cdots=x_{d+1}^{(m+1,y)}Q(y)=0$ hold identically on $\til{J}$, then for arbitrary elements $a_1,\cdots, a_d\in \til{J}$ and $b=\text{Sym}_d(a_1,\cdots,a_d)$, the Jordan algebra $\til{J}^{(b)}/K_b$ satisfies the identities $x^m=x^{m+1}=\cdots=x^{2m-1}=0$. By Lemma \ref{Lemma17} (\ref{thingg2}), for an arbitrary element $a\in \til{J}$, the $(m-1)$-th power $a^{(m-1,b)}$ is an absolute zero divisor in $J^{(b)}/K_b$. By Lemma \ref{Lemma18}, the element $a^{(m-1,b)}Q(b)$ is an absolute zero divisor of $\til{J}$.

%If $J\in M$ and $x_{d+1}^{(m-1,y)}Q(y)=x_{d+1}^{(2m-1,y)}Q(y)=\cdots=x_{d+1}^{(m+1,y)}Q(y)=0$ hold identically on $\til{J}$, then the algebra $\til{J}^{(b)}/K_b$ above satisfies the identities $x^{m-1}=\cdots=x^{2m-1}=0$. Again, by Lemma \ref{Lemma17} (\ref{thingg2}) and Lemma \ref{Lemma18}, every value of $x_{d+1}^{(m-2,y)}Q(y)$ is an absolute zero divisor of $\til{J}$ and so on.
		
		If an algebra $J$ lies in $M$ and $y=\text{Sym}_d(x_1,\cdots,x_d)=0$ holds identically on $J$ then the algebra $\til J$ is nil of bounded index $\leq d.$  Again by \Cref{Lemma17} we conclude that 
\begin{align*}
x_1,x_1^2,\cdots,x_1^{d-1},x_1^{2d-1},\cdots, x_1^{d+1}, y, x_{d+1}Q(y),\cdots,\\
 x_{d+1}^{(m-1,y)}Q(y),x_{d+1}^{(2m-1,y)}Q(y),\cdots, x_{d+1}^{(m+1,y)}Q(y)
\end{align*}
		is an absolute zero divisor sequence, which finishes the proof of the proposition.		
	\end{proof}
\end{proposition}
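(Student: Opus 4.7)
The plan is to build the sequence $h_1,\ldots,h_r$ inside the free algebra $F_M\langle X\rangle$ of the variety $M=M(f)$ on a countable free generating set $X=\{x_1,x_2,\ldots\}$. The strategy is to locate a single distinguished element $y$ that lies in the McCrimmon radical of $F_M\langle X\rangle$, and then build a descending ladder of absolute zero divisors by exploiting the homotope construction with base point $y$, so that the ``most nilpotent'' end of the ladder is always an absolute zero divisor and the ``least nilpotent'' end is the generator $x_1$.

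First, I would locate $y$. Since $f=0$ is a nontrivial Jordan identity of some degree $n$, the subdirect product decomposition of nondegenerate quadratic Jordan algebras from \cite{Z3,Th}, combined with \Cref{Lemma16}, forces every nondegenerate prime quotient of $F_M\langle X\rangle$ to satisfy $\mathrm{Sym}_{d(n)}=0$. Hence for $d=d(n)$ the element $y:=\mathrm{Sym}_d(x_1,\ldots,x_d)$ lies in the McCrimmon radical of $F_M\langle X\rangle$. Because any absolute zero divisor of a quadratic Jordan algebra $J$ remains an absolute zero divisor in every homotope $J^{(a)}$, the element $y$ also lies in the McCrimmon radical of $F_M\langle X\rangle^{(x_{d+1})}$, and in particular is nilpotent there; pick a minimal $m\geq 2$ with $y^{(m-1,x_{d+1})}=0$. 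Then $x_{d+1}^{(m,y)}=y^{(m-1,x_{d+1})}Q(x_{d+1})=0$ is an identity of $M$.

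Now I construct the sequence by descent. For any $J\in M$ and any evaluation $b=y(a_1,\ldots,a_d)$, the algebra $\til J^{(b)}/K_b$ satisfies $x^m=0$, so by \Cref{Lemma17}(1) the elements $x_{d+1}^{(i,b)}$ for $m+1\leq i\leq 2m-1$ are absolute zero divisors modulo $K_b$; \Cref{Lemma18} then pulls them back to give absolute zero divisors $x_{d+1}^{(i,y)}Q(y)$ of $\til J$ itself, which will serve as the top of the sequence. If every one of these vanishes identically on $\til J$, then \Cref{Lemma42} upgrades their values from $K_b'$ into $K_b$, so $\til J^{(b)}/K_b$ now satisfies the additional identities $x^m=x^{m+1}=\cdots=x^{2m-1}=0$. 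By \Cref{Lemma17}(2), $c^{(m-1,b)}$ becomes an absolute zero divisor modulo $K_b$, and \Cref{Lemma18} again promotes $x_{d+1}^{(m-1,y)}Q(y)$ to an absolute zero divisor of $\til J$. Iterating this step descends through $x_{d+1}^{(m-2,y)}Q(y),\ldots,x_{d+1}Q(y)$, and finally $y$. If $y$ itself also vanishes identically on $\til J$, then $\til J$ is nil of bounded index $d$, and direct application of \Cref{Lemma17}(1) to $x_1$ yields absolute zero divisors $x_1^{2d-1},\ldots,x_1^{d+1}$, whereupon \Cref{Lemma17}(2) provides the further descent $x_1^{d-1},\ldots,x_1^2,x_1$. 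The assembled sequence, ordered so that the terminal candidate is $h_r=x_{d+1}^{(m+1,y)}Q(y)$ and the initial one is $h_1=x_1$, matches the required recipe.

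The main obstacle is the descent step: to invoke \Cref{Lemma17}(2) at each stage I must be able to pass from the hypothesis that the previous level of candidates vanishes \emph{identically} on $\til J$ to the \emph{structural} statement that $\til J^{(b)}/K_b$ satisfies a stronger string of nilpotency identities. This is exactly the role of \Cref{Lemma42}, which replaces membership in $K_b'$ by membership in $K_b$ once full linearization and the decomposition $\til J=\sum_i(J\otimes e_i+\til J e_i)$ are accounted for; without it the descent cannot be iterated. A secondary, purely setup, point that must be checked early on is that the McCrimmon radical is preserved under the homotope $J\mapsto J^{(a)}$, which is what guarantees the nilpotence of $y$ in $F_M\langle X\rangle^{(x_{d+1})}$ and thus gets the whole induction off the ground.
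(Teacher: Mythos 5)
Your proposal is correct and follows essentially the same route as the paper: locate $y=\mathrm{Sym}_d(x_1,\ldots,x_d)$ in the McCrimmon radical via Lemma~\ref{Lemma16} and the subdirect-product decomposition, pass to the homotope $F_M\langle X\rangle^{(x_{d+1})}$ to get a nil identity $x^{(m,y)}=0$, descend through the $y$-homotope using Lemma~\ref{Lemma17}(1)/(2), Lemma~\ref{Lemma18}, and Lemma~\ref{Lemma42}, and prepend the powers of $x_1$ to reach $h_1=x_1$ in the nil case. The places you flagged as crucial (preservation of the McCrimmon radical under homotopy, and the role of Lemma~\ref{Lemma42} in upgrading $K_b'$ to $K_b$ so that Lemma~\ref{Lemma17}(2) becomes applicable at each stage of the descent) are exactly the points the paper's proof turns on.
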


\begin{conjecture}
	If $J$ is a quadratic Jordan PI-algebra over a field of characteristic $p>0$ then the algebra $\til J$ is nil of bounded index.
\end{conjecture}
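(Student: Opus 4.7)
The plan is to combine the PI structure theory of quadratic Jordan algebras with a ``Grassmann trick'' that exploits the square-zero relations $e_i^2=0$ in $E$. The key preliminary observation is that every monomial element $a\otimes e_\pi\in\til J$ is automatically an absolute zero divisor of $\til J$: indeed, a direct calculation (in an associative envelope, or from the Jordan axioms using commutativity of $E$) yields $Q(a\otimes e_\pi)\cdot(b\otimes e_\tau)=cQ(a)\otimes e_\pi e_\tau e_\pi$, and $e_\pi e_\tau e_\pi=e_\pi^2 e_\tau=0$ since $|\pi|\geq 1$. Consequently every $u\in\til J$ is a finite sum of absolute zero divisors.

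I would first reduce to the nondegenerate case via $\til J/\widetilde{Mc(J)}\cong\widetilde{J/Mc(J)}$. By Zelmanov--Thedy \cite{Z3,Th}, $J/Mc(J)$ is a subdirect product of nondegenerate prime PI quadratic Jordan algebras, and Lemma \ref{Lemma16} produces a uniform $d=d(n)$ such that every such prime factor satisfies $\text{Sym}_d=0$. Since multilinear identities are preserved by $-\otimes_F E$, the quotient $\widetilde{J/Mc(J)}$ also satisfies $\text{Sym}_d=0$. For $u=\sum_\pi a_\pi\otimes e_\pi$ the standard decomposition, the power $u^d$ computed via the associative envelopes in the four cases of Lemma \ref{Lemma16} reduces to a sum indexed by ordered $d$-tuples $(\pi_1,\ldots,\pi_d)$ of pairwise disjoint subsets (all others contain some $e_\pi^2=0$). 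Collecting orderings of each fixed unordered partition yields exactly $\text{Sym}_d(a_{\pi_1},\ldots,a_{\pi_d})\otimes e_{\pi_1\cup\cdots\cup\pi_d}$, which vanishes by the identity. Hence $u^d=0$ in $\widetilde{J/Mc(J)}$, giving nil of index $d$. The Albert and quadratic-form cases of Lemma \ref{Lemma16} are handled by the same expansion, with their bounds $27(p-1)+1$ and $4p-2$ from the proof of that lemma.

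For the radical part, I would use the absolute zero divisor sequence $h_1=x_1,h_2,\ldots,h_r$ from Proposition \ref{Proposition2}, applied to the variety $M(f)$ containing $\til J$, and induct on $r$. The base case $r=1$ gives $x^2=0$ identically. For the inductive step, the ideal $I_k\triangleleft\til J$ generated by values of $h_k$ (after killing the higher $h_j$) consists of absolute zero divisors, while the quotient $\til J/I_k$ carries a shorter sequence; one promotes each step to bounded nilpotency by combining Zelmanov's theorem that absolute zero divisors generate locally nilpotent ideals with the observation above that every element of $\til J$ is a \emph{finite} sum of absolute zero divisors, hence controlled by the sequence length.

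The main obstacle is precisely this last promotion: upgrading ``locally nilpotent ideal generated by absolute zero divisors'' to ``nil of bounded index'' in the Jordan PI setting. In the associative setting the analogous step is the Razmyslov--Kemer--Braun theorem on nilpotency of the Jacobson radical of an affine PI-algebra; its Jordan-algebraic analogue over characteristic $p$ is, to my knowledge, not established in the generality needed here, and is presumably the reason the statement remains conjectural rather than a theorem. A secondary technical obstacle is verifying that $\widetilde{Mc(J)}$ coincides with (or at least is close to) the McCrimmon radical of $\til J$; this may fail in small characteristic and could require a ``Grassmann-stable'' refinement of the radical compatible with the tensor factor $E$.
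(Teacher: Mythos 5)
You have proposed a ``proof'' for what is explicitly labeled a \emph{conjecture} in the paper --- there is no proof of this statement in the paper to compare against, and the paper deliberately works around the conjecture by establishing the weaker Proposition \ref{Proposition2} (the absolute zero divisor sequence) instead. So the appropriate evaluation is whether your sketch could close the conjecture, and it cannot.

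Your sketch contains two genuine observations that align with the paper's machinery. First, that every standard component $a\otimes e_\pi$ of $\til J$ is an absolute zero divisor follows immediately from the quadratic extension of $Q$ to $\til J$, namely $Q(a\otimes e_\pi)=Q(a)\otimes e_\pi^2=0$; this holds for arbitrary $J$, not just PI, and is precisely why $\til J$ is always nil (locally nilpotent, in fact). Second, your treatment of the nondegenerate quotient is sound: $J/Mc(J)$ is a subdirect product of prime nondegenerate PI quadratic Jordan algebras, each of which satisfies $\text{Sym}_{d(n)}=0$ by Lemma \ref{Lemma16}, and that multilinear identity passes to the subdirect product and to the tensor extension; for $u=\sum_\pi a_\pi\otimes e_\pi$ the expansion of $u^{d(n)}$ in $\widetilde{J/Mc(J)}$ leaves only multilinear terms (all repeated-index contributions die on $e_\pi^2$), and these are values of $\text{Sym}_{d(n)}$. (Small slip: the base case $r=1$ of your induction --- where $h_1=x_1$ is already an absolute zero divisor --- gives $x^3=xQ(x)=0$, not $x^2=0$, under the paper's definition that an absolute zero divisor is an element with $Q(a)=0$.)

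The genuine gap is exactly the one you yourself flag: bounding the nil index on the radical. Being a finite sum of absolute zero divisors, or generating a locally nilpotent ideal, gives nilpotence of each element but does \emph{not} give a uniform bound on the index --- and the PI hypothesis has to be what produces the bound. In the associative setting the corresponding step (nilpotency of the radical of an affine PI-algebra) is Braun's theorem, and even there the bound is subtle and restricted to affine algebras; the Jordan analogue in characteristic $p$ with the quadratic operator $Q$ is not established, and this is precisely why the paper records the statement as a conjecture and instead proves Proposition \ref{Proposition2} as a usable substitute. Your inductive scheme on the length of the absolute zero divisor sequence does not by itself convert local nilpotence into a bound: at each step one passes to a homotope modulo a kernel ideal and can only conclude nilpotence of individual elements, with no uniform control over the degree. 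Nothing in your sketch supplies that control, so the proposal is not a proof.
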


Now let's come back to the Lie algebra $L$ and the Jordan algebra $J_a=\til L^m/K_a.$

\begin{lemma}
	\label{Lemma19}
	Let $b+K_a$ be a nonzero absolute zero divisor of the Jordan algebra $\til{L}^m/K_a$. Then the element $b\ad^{[2]}(a)$ is a nonzero sandwich of the Lie algebra $\til{L}^m.$
	\begin{proof}
		Let $b = \sum\limits_{\pi} b_\pi$ be the standard decomposition. For an arbitrary element $c \in \til{L}^m$ we have 
		\[(c+K_a)Q(b+K_a) = \sum c \ad^{[2]}(a)\ad(b_{\pi_1})\ad(b_{\pi_2})+K_a.\]
		Hence by Lemma \ref{Lemma14},
		\[\sum c \ad^{[2]}(a)\ad(b_{\pi_1})\ad(b_{\pi_2})\ad^{[2]}(a) \]
		\[=\sum c\ad(b_{\pi_1}\ad^{[2]}(a))\ad(b_{\pi_2}\ad^{[2]}(a)) = 0.\]
		Let $\Omega =\{b_{\pi_i}\ad^{[2]}(a)\}.$ We showed that $\til{L}^mU_2(\Omega)=(0).$ By Lemma \ref{Lemma12} the element $b\ad^{[2]}(a)$ is a sandwich of the Lie algebra $\til{L}^m.$
	\end{proof}
	%If $b+K_a$ is an absolute zero divisor of the quadratic Jordan algebra $J=\til L^m/K_a$ then 
	%\[\til L^m\adx[2][](b\adx[2][](a))=\til L^m(\sum\ad(b_{\pi_1}\adx[2][](a))\ad(b_{\pi_2}\adx[2][](a))=(0).\]
	%\begin{proof}
	%	By \Cref{Lemma14} we have
	%	\[\ad(b_{\pi_1}\adx[2][](a))\ad(b_{\pi_2}\adx[2][](a))=\adx[2][](a)\ad(b_{\pi_1})\ad(b_{\pi_2})\adx[2][](a)\]
	%	on $\til L^m,$ which immediately implies the assertion.
	%\end{proof}
\end{lemma}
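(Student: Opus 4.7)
The strategy is to apply Lemma~\ref{Lemma12} to the family $\Omega=\{b_\pi\,\ad^{[2]}(a)\}_\pi$ indexed by the standard decomposition $b=\sum_\pi b_\pi$, whose sum is exactly $b\,\ad^{[2]}(a)$. Two ingredients are needed: (i) $U_2(\ad(\Omega))=0$ on $\til{L}^m$, and (ii) conditions (U\ref{U1}), (U\ref{U2}) for $\ad(\Omega)$.

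For (i), unfold $(c+K_a)Q(b+K_a)$ for arbitrary $c\in\til{L}^m$. By the definition of $Q$ on $J_a$,
\[
(c+K_a)\,Q(b+K_a)=c\,\ad^{[2]}(a)\sum\ad(b_{\pi_1})\ad(b_{\pi_2})+K_a,
\]
summed over unordered pairs of standard components. The absolute zero divisor assumption forces the operator in front of $+K_a$ into $K_a\subseteq K_a'$, so right-multiplying by $\ad^{[2]}(a)$ annihilates it. Lemma~\ref{Lemma14}, applied with its $\Omega$ being the family underlying $\ad^{[2]}(a)$ (whose hypotheses hold because $\ad^{[k]}(a)=0$ for $k\geq 3$) and with $y_1=b_{\pi_1}$, $y_2=b_{\pi_2}$, rewrites each summand $\ad^{[2]}(a)\ad(b_{\pi_1})\ad(b_{\pi_2})\ad^{[2]}(a)$ as $\ad(b_{\pi_1}\ad^{[2]}(a))\ad(b_{\pi_2}\ad^{[2]}(a))$. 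Summing over pairs gives $c\,U_2(\ad(\Omega))=0$ for every $c\in\til{L}^m$, which is (i).

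For (ii), condition (U\ref{U1}) is visible: $b_\pi\in L^m\otimes e_\pi$ with $\pi\neq\emptyset$ forces every standard component of $b_\pi\,\ad^{[2]}(a)$ to contain $\pi$, placing $\ad(b_\pi\ad^{[2]}(a))$ in $\til{D}_{i_0}$ for any fixed $i_0\in\pi$. For (U\ref{U2}), Lemma~\ref{Lemma14} read in both orderings gives
\[
[\ad(b_{\pi_1}\ad^{[2]}(a)),\ad(b_{\pi_2}\ad^{[2]}(a))]=\ad^{[2]}(a)\,\ad([b_{\pi_1},b_{\pi_2}])\,\ad^{[2]}(a),
\]
and this vanishes by Lemma~\ref{Lemma9}(\ref{Lemma9.3}): the expansion of $\ad([b_{\pi_1},b_{\pi_2}]\,\ad^{[4]}(a))=0$ collapses to exactly this middle term because every other term carries a factor $\ad^{[j]}(a)$ with $j\geq 3$.

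Having secured (U\ref{U1}), (U\ref{U2}) and $U_2(\ad(\Omega))=0$ on $\til{L}^m$, Lemma~\ref{Lemma12} applies and identifies $\sum_{d\in\Omega}d=b\,\ad^{[2]}(a)$ as a sandwich of $\til{L}^m$. The load-bearing step is the factoring identity of Lemma~\ref{Lemma14}, which is exactly the bridge between the quadratic Jordan operation $Q$ on $J_a$ and the divided-power operators $U_k$ on $\til{L}^m$; everything else is assembly. Nonzeroness of $b\,\ad^{[2]}(a)$ is not automatic from $b+K_a\neq 0$ alone (one only gets $b\notin K_a$, which is formally weaker than $b\notin K_a'$), so that clause must be ensured by the ambient use of the lemma, where $b$ is obtained from the absolute zero divisor sequence of Proposition~\ref{Proposition2} and carries the extra regularity required.
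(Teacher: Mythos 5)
Your argument matches the paper's proof: compute $Q(b+K_a)$, factor $\ad^{[2]}(a)\ad(b_{\pi_1})\ad(b_{\pi_2})\ad^{[2]}(a)$ via Lemma~\ref{Lemma14}, and apply Lemma~\ref{Lemma12} to $\Omega=\{b_\pi\ad^{[2]}(a)\}_\pi$; you go slightly further by explicitly verifying conditions (U1) and (U2) for $\ad(\Omega)$, which the paper leaves implicit, and your derivation of (U2) from $\ad([b_{\pi_1},b_{\pi_2}]\ad^{[4]}(a))=0$ via Lemma~\ref{Lemma9}(\ref{Lemma9.3}) is correct. Your closing remark about nonzeroness is also well founded: $b\notin K_a$ does not by itself force $b\notin K_a'$, the paper's own proof of the lemma makes no attempt to establish the ``nonzero'' clause, and in the actual application (Proposition~\ref{Proposition3}) the requisite nonvanishing is supplied instead by regularity of the divided polynomial $v$ together with the polynomial-map argument of Lemma~\ref{Lemma42}, consistent with what you observe.
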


Let $j(y_1, \cdots, y_d)$ be an arbitrary Jordan polynomial, i.e., an element of the free Jordan algebra. The polynomial $j$ defines a function $\til{L}^m/K_a \times \cdots \times \til{L}^m/K_a \rightarrow \til{L}^m/K_a$ and, therefore, a function $\til{L}^m \times \cdots \times \til{L}^m \rightarrow \til{L}^m/K_a.$

\begin{lemma}
\label{Lemma35}
Let $j(y_1, \cdots, y_q)$ be a multilinear Jordan polynomial. There exists a homogeneous divided polynomial $j'(y_1, \cdots, y_q, x_1, \cdots, x_r)$ defined on $L^m,$ such that the value $j(b_1, \cdots, b_q)$ in the Jordan algebra $\til{L}^m/K_a$ is equal to $j'(b_1, \cdots, b_q, a_1, \cdots, a_r) + K_a.$ In particular, 
\[j(b_1, \cdots, b_q)\text{ ad}^{[2]}w(a_1, \cdots, a_r)\] 
\[= j'(b_1, \cdots, b_q, a_1, \cdots, a_r)\text{ ad }^{[2]}w(a_1, \cdots, a_r).\]
%There exists a divided polynomial $j'(y_1, \cdots, y_d, x_1, \cdots, , x_r)$ defined on $L^m$, such that
%\[j(b_1, \cdots, b_d)\ad^{[2]}w(a_1, \cdots, a_r)=j'(b_1, \cdots, b_d, a_1, \cdots, a_r)\]
%for arbitrary elements $b_1, \cdots, b_d, a_1, \cdots, a_r \in \til{L}^m$. If the Jordan polynomial $j$ is homogeneous then the divided polynomial $j'$ is homogeneous as well.
\begin{proof}
We will proceed by induction on the construction of the Jordan polynomial $j.$ Let $j=\alpha j_1 + \beta j_2,$ where $\alpha, \beta \in F$ and $j_1, j_2$ are multilinear Jordan polynomials, such that the divided polynomials $j'_1, j_2'$ exist. Then we let $j'=\alpha j_1'+\beta j_2'.$ Suppose that $j=j_1\circ j_2$, where $j_1, j_2$ are multilinear Jordan polynomials on disjoint variables. %The notation $j(x_1,\cdots, x_q)=j_1(x_1,\cdots, x_q)\circ j_2(x_1,\cdots, x_q)$ means that $j_1$ and $j_2$ depend on some (not all!) variables $x_1,\cdots, x_q$. It does not contradict the assumption that $j_1, j_2$ depend on disjoint variables.

We have $j(b_1,\cdots, b_q)=[a,j_1(b_1,\cdots,b_q), j_2(b_1,\cdots,b_q)]+K_a$ and we let
\begin{align*}
&j'(y_1,\cdots,y_q,x_1,\cdots,x_r)=\\
&[w(x_1,\cdots,x_r),j_1'(y_1,\cdots,y_q,x_1,\cdots,x_r),j_2'(y_1,\cdots,y_q,x_1,\cdots,x_r)].
\end{align*}
Finally, let $j=\{j_1,j_2,j_3\},$ where $j_1, j_2, j_3$ are multilinear Jordan polynomials on disjoint variables. Arguing as above, we let
\[j'=[j_2'\ad_{x_1}^{[2]}w(x_1,\cdots,x_r),j_1',j_3'].\]
This completes the proof of the lemma.
\end{proof}
\end{lemma}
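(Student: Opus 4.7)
The plan is to proceed by structural induction on the multilinear Jordan polynomial $j$, translating each Jordan operation on $J_a = \til{L}^m/K_a$ into a corresponding divided polynomial operation in $U(L^m)$. The key tool is the explicit description (given just before the lemma) of the Jordan operations on $J_a$: namely $x\circ y = [[a,x],y]+K_a$ and $\{x,y,z\} = [y\ad^{[2]}(a),x,z]+K_a$, where $a=w(a_1,\dots,a_r)$ and $w$ has been chosen to satisfy the hypotheses of (DP\ref{DP3}).

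The base case is $j = y_i$, for which we simply take $j' = y_i \in (X)\subseteq U(L^m)$ by (DP\ref{DP1}). Since $j$ is built from variables by linear combinations and the multilinear operations $\circ$ and $\{\cdot,\cdot,\cdot\}$ (which are the full linearizations of the quadratic operations $x^2$ and $Q$), it suffices to show closure of $U(L^m)$ under the corresponding constructions on $j'$. For a linear combination $j = \alpha j_1 + \beta j_2$ one sets $j' = \alpha j_1' + \beta j_2'$. For a Jordan product $j = j_1\circ j_2$ on disjoint sets of variables, I would define
\[
j'(y_1,\dots,y_q,x_1,\dots,x_r) = [\,w(x_1,\dots,x_r),\, j_1',\, j_2'\,],
\]
which belongs to $U(L^m)$ by repeated use of (DP\ref{DP2}) applied to $w\in U(L^m)$ and the inductively constructed $j_1',j_2'$. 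For the triple product $j = \{j_1,j_2,j_3\}$ on disjoint variables, I would define
\[
j'(y_1,\dots,y_q,x_1,\dots,x_r) = [\,j_2'\,\ad_{x_1}^{[2]}w,\, j_1',\, j_3'\,],
\]
where the operator $\ad_{x_1}^{[2]}w$ makes sense as a divided polynomial by (DP\ref{DP3}) with $k=2$, using that $w$ satisfies (DP\ref{DP3}\ref{DP3i}), (DP\ref{DP3}\ref{DP3ii}) by its construction in Lemma \ref{Lemma10}. In each inductive step, correctness of the identification $j(b_1,\dots,b_q) = j'(b_1,\dots,b_q,a_1,\dots,a_r) + K_a$ follows by substituting $x_i = a_i$ and unwinding the translations $x\circ y = [[a,x],y]+K_a$, $\{x,y,z\}=[y\ad^{[2]}(a),x,z]+K_a$.

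For the second assertion, once $j(b_1,\dots,b_q) - j'(b_1,\dots,b_q,a_1,\dots,a_r)\in K_a$ is established, I would use the definition $K_a = \sum_i(L^m\otimes e_i + \til{L}^m e_i)\cap K_a'$ together with $K_a'\ad^{[2]}(a) = 0$ and $\ad^{[2]}(a) = \ad_{x_1}^{[2]}w(a_1,\dots,a_r)$ to conclude that the difference is annihilated by $\ad_{x_1}^{[2]}w(a_1,\dots,a_r)$ on the right.

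The main obstacle, and really the only delicate point, is checking that the triple-product step legitimately produces a divided polynomial: one needs $\ad_{x_1}^{[2]}w$ to be a bona fide divided operator, which is not automatic from (DP\ref{DP3}) alone but requires the conditions (DP\ref{DP3}\ref{DP3i}), (DP\ref{DP3}\ref{DP3ii}) imposed on $w$. That these hold for the $w$ we use is exactly what was arranged in Lemma \ref{Lemma10}, so the induction goes through; the remaining bookkeeping (homogeneity, compatibility with the grading, and respecting the standard decomposition of the inputs) is routine from the rules (DP\ref{DP2}) and (DP\ref{DP3}).
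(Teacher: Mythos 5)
Your proposal is correct and follows essentially the same approach as the paper's proof: structural induction on $j$, with the identical translations $j' = [w, j_1', j_2']$ for the Jordan product and $j' = [j_2'\ad_{x_1}^{[2]}w, j_1', j_3']$ for the triple product, relying on (DP2), (DP3) and the translation of the Jordan operations on $J_a$ into Lie-algebra terms. Your explicit treatment of the base case $j' = y_i$ and the ``in particular'' clause via $K_a \subseteq K_a'$ and $K_a'\ad^{[2]}(a) = 0$ is a reasonable elaboration of what the paper leaves implicit.
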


\begin{proposition}
	\label{Proposition3}
	There exist integers $k \geq 1,$ $t\geq 1$ and a homogeneous regular divided polynomial $v$ defined on $L^k$ such that every value of $v$ on $\til{L}^k$ is a sum of $t$ sandwiches of the algebra $\til{L}^k.$
	\begin{proof}
		Recall that there exists a homogeneous regular divided polynomial $w=w(x_1, \cdots, x_r)$ defined on $L^m,$ $m \geq 1,$ linear in $x_r$ and  such that
		\begin{enumerate}[(i)]
			\item $[w(a_1,\cdots,a_{r-1},a), w(a_1,\cdots, a_{r-1},b)]=0$ for arbitrary elements \linebreak $a, b, a_1, \cdots, a_{r-1} \in \til{L}^m;$
			\item $\til{L}^m\text{ ad}_{x_r}^{[t]}(w)=0$ holds identically on $\til{L}^m$ for $t \geq 3.$
		\end{enumerate}
		For an arbitrary $i\geq m,$ arbitrary elements $a_1,\cdots,a_r\in \til L^i$ consider $a=w(a_1,\cdots,a_r)$ and denote $\adx[2][](a)=\ad_{x_r}^{[2]}(w(a_1,\cdots,a_r)).$
		
		%\begin{remark*}
			%We use the notation $\text{ad}^{[2]}(a)$ throughout the text in somewhat different contexts. It always means the divided $2^{\text{nd}}$ power of $\text{ad}(a)$ with respect to a decomposition of $a$ into a sum of elements satisfying the conditions (U\ref{U1}), (U\ref{U2}). We hope that the choice of a decomposition is clear from the context.
		%\end{remark*}

Aruging as in the proof of Lemma \ref{Lemma2}, we conclude that there exists an element $f\in FJ\langle X\rangle$ such that $f$ is not an $S$-identity and all quadratic Jordan algebras $\til L^i/K_a;\ i\geq m;\ a_1,\cdots,a_r\in \til L^i$ satisfy the identity $f=0.$
		
		By \Cref{Proposition2} there exists an absolute zero divisor sequence $h_1=x_1,h_2,\cdots,h_s$ of the variety $M(f).$

If $J$ is an algebra from the variety $M(f)$ such that $J=\sum\limits_i I_i$, $I_i\unlhd J$, $I_i^2=(0)$, then $J$ and $J\otimes_F E$ satisfy the same identities. Hence, every value of $h_s$ on $J$ is an absolute zero divisor of $J$ and if $h_k=\cdots=h_s=0$ identically hold on $J$, then every value of $h_{k-1}$ on $J$ is an absolute zero divisor. 

Jordan algebras $\til{L^m}/K_a$ that have been discussed above have this property. Indeed, $\til{L^m}/K_a=\sum\limits_i I_i$, where $I_i=L^m\otimes e_i+\til{L^m}e_i+K_a/K_a$.
		
		For an integer $i\geq m$ and elements $a_1,\cdots,a_r\in\til L^i$ let $s(i,a_1,\cdots,a_r)$ be a maximal integer $j,\ 1\leq j\leq s,$ such that $h_j$ is not identically zero on $\til L^i/K_a.$  If $h_1=x_1$ is identically zero on $\til L^i/K_a,$ that is, $\til L^i=K_a,$ then we let $s(i,a_1,\cdots,a_r)=0.$
		
		Let $s(i)=\max\{s(i,a_1,\cdots,a_r)|a_1,\cdots,a_r\in \til L^i\}.$  Clearly, $s(m)\geq s(m+1)\geq \cdots.$  Let this decreasing sequence stabilize at $t=s(k)=s(k+1)=\cdots.$
		
		If $t=0$ then $\til{L^k}=K_a$, which means that $\til{L^k}\ad_{x_1}^{[2]}w(a_1,\cdots,a_r)=(0)$ for arbitrary elements $a_1,\cdots,a_r\in \til{L^k}$. By Lemma \ref{Lemma12} every value of $w$ on $\til{L^k}$ is a sandwich of the algebra $\til{L^k}$. Therefore assume that $t \geq 1.$

Let us summarize the above. For arbitrary elements $a_1, \cdots, a_r \in \til{L^k}$ let $a=w(a_1, \cdots, a_r)$, $\ad^{[2]}(a)=\ad_{x_r}^{[2]}w(a_1, \cdots, a_r)$, $K_a=\til{L^k}\cap ker\ad^{[2]}(a)$. Every value of the Jordan polynomial $h_t$ on the Jordan algebra $\til{L^k}/K_a$ is an absolute zero divisor. For every $k'\geq k$ there exist elements $a_1, \cdots, a_r \in \til{L^{k'}}$ such that $h_t$ is not identically zero on $\til{L^{k'}}/K_a$. In particular, the Jordan polynomial $h_t$ is regular.

Suppose that $h_t=h_t(y_1,\cdots, y_q)$. Let $\mu$ be the total degree of the homogeneous Jordan polynomial $h_t$. The full linearization $\til{h_t}$ of the polynomial $h_t$ depends on $\mu$ variables. An arbitrary value of the polynomial $\til{h_t}$ is a linear combination of $2^\mu=\ell$ values of the polynomial $h_t$. Let $\til{h_t}'(y_1, \cdots, y_q, x_1, \cdots, x_r)$ be the homogeneous divided polynomial of Lemma \ref{Lemma35} defined on $\til{L^k}$. Let $v(y_1,\cdots, y_q, x_1, \cdots, x_r)=\til{h_t'}(y_1, \cdots, y_q, x_1, \cdots, x_r)\ad_{x_r}^{[2]}w(x_1, \cdots, x_r)$.

For arbitrary elements $b_1, \cdots, b_q, a_1, \cdots, a_r \in \til{L^k}$, we have
\[\til{h_t}(b_1, \cdots, b_q)\ad_{x_r}^{[2]}w(a_1, \cdots, a_r)=v(b_1, \cdots, b_q, a_1, \cdots, a_r).\]

We claim that the divided polynomial $v$ is regular. Indeed, it was shown above that for arbitrary $k'\geq k$, there exist elements $a_1,\cdots, a_r\in\til{L^{k'}}$ such that the Jordan polynomial $h_t$ is not identically zero on $\til{L^{k'}}/K_a$. Lemma \ref{Lemma33} (2) was proved for arbitrary polynomial maps that include Jordan polynomials. Hence by Lemma \ref{Lemma33} (2), the linear spans of the sets of values of the Jordan polynomials $h_t$ and $\til{h_t}$ on the Jordan algebra $\til{L^{k'}}/K_a$ are equal. Hence $\til{h_t}$ is not identically zero on $\til{L^{k'}}/K_a$. By Lemma \ref{Lemma35}, the homogeneous divided polynomial $v=\til{h_t}\ad_{x_r}^{[2]}(w)$ is not identically zero on $\til{L^{k'}}$. This implies regularity of $v$.

By Lemma \ref{Lemma19} every value of $v$ on $\til{L^k}$ is a sum of $\ell=2^\mu$ sandwiches of the Lie algebra $\til{L^k}$. This completes the proof of the proposition.
	\end{proof}
\end{proposition}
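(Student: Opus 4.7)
The plan is to exploit the quadratic Jordan algebras $J_a=\til{L^m}/K_a$ constructed in Proposition \ref{Proposition1}. These algebras inherit a polynomial identity from the PI on $L$, and Proposition \ref{Proposition2} then supplies an absolute zero divisor sequence whose values, once pulled back via Lemma \ref{Lemma19}, become sandwiches in $\til{L^m}$.

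First I would start from the regular homogeneous divided polynomial $w=w(x_1,\ldots,x_r)$ produced by Lemma \ref{Lemma10} combined with Lemma \ref{Lemma11}: defined on $L^m$, linear in $x_r$, satisfying condition (DP\ref{DP3}), and with $\til{L^m}\ad^{[t]}_{x_r}(w)=0$ identically on $\til{L^m}$ for every $t\geq 3$. For any $i\geq m$ and any $a_1,\ldots,a_r\in\til{L^i}$, set $a=w(a_1,\ldots,a_r)$; by Proposition \ref{Proposition1}, the quotient $\til{L^i}/K_a$ carries a quadratic Jordan structure. Arguing as in Lemma \ref{Lemma2}, the multilinear polynomial identity satisfied by $L$ yields a fixed element $f\in FJ\langle X\rangle$, not an $S$-identity, vanishing identically on every such $J_a$ (the point being that $f$ can be chosen uniformly in $i$ and in the $a_j$'s).

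Next, invoke Proposition \ref{Proposition2} on the variety $M(f)$ to obtain an absolute zero divisor sequence $h_1=x_1,h_2,\ldots,h_s$. For each $i\geq m$ and each tuple $(a_1,\ldots,a_r)$, let $s(i,a_1,\ldots,a_r)$ be the largest index $j$ with $h_j$ not identically zero on $J_a$ (and $0$ if even $h_1$ vanishes, i.e.\ $\til{L^i}=K_a$), and set $s(i)=\max_{a_1,\ldots,a_r} s(i,a_1,\ldots,a_r)$. Since $s(m)\geq s(m+1)\geq\cdots$, this sequence stabilizes at some value $t$ from some level $k$ onward. If $t=0$, then $\til{L^k}=K_a$ holds identically, which forces $\til{L^k}\ad^{[2]}_{x_r}(w)=0$; Lemma \ref{Lemma12} then says every value of $w$ on $\til{L^k}$ is already a sandwich, and I take $v=w$ with sandwich count $1$.

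If $t\geq 1$, I apply Lemma \ref{Lemma35} to the full linearization $\til{h_t}$ of $h_t$ (which has total degree $\mu$) to produce a homogeneous divided polynomial $\til{h_t}'$ representing $\til{h_t}$ modulo $K_a$, and define $v=\til{h_t}'\cdot\ad^{[2]}_{x_r}(w)$. Regularity of $v$ follows because, by choice of $k$, the polynomial $h_t$ is non-vanishing on $\til{L^{k'}}/K_a$ for all $k'\geq k$; Lemma \ref{Lemma33}(2) (which applies to Jordan polynomial maps as well) then transfers this to $\til{h_t}$, and Lemma \ref{Lemma35} transports it to $v$. Finally, each value of $\til{h_t}$ on $\til{L^k}/K_a$ is a $\pm 1$-linear combination of at most $2^\mu$ values of $h_t$, each of which is an absolute zero divisor of $J_a$; by Lemma \ref{Lemma19}, right multiplication by $\ad^{[2]}(a)$ turns each such value into a sandwich of $\til{L^k}$, so every value of $v$ is a sum of at most $t=2^\mu$ sandwiches. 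The main subtlety I anticipate is the interplay between the outer copy of $E$ (already present in $\til{L^m}$) and the inner copy that Proposition \ref{Proposition2} is formulated against: to apply the absolute zero divisor sequence directly to $J_a$ rather than to $J_a\otimes_F E$, one must observe that $J_a$ decomposes as a sum of square-zero ideals $I_i=(L^m\otimes e_i+\til{L^m}e_i+K_a)/K_a$, so that $J_a$ and $J_a\otimes_F E$ satisfy exactly the same identities.
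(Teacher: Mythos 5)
Your proposal is correct and follows the paper's own argument essentially step by step: the same $w$ from Lemmas \ref{Lemma10}--\ref{Lemma11}, the same uniform $S$-identity $f$ obtained as in Lemma \ref{Lemma2}, the same use of Proposition \ref{Proposition2} with the stabilizing index $t=s(k)$, the same split into the $t=0$ and $t\geq 1$ cases, and the same passage through $\til{h_t}$, Lemma \ref{Lemma35}, Lemma \ref{Lemma33}(2), and Lemma \ref{Lemma19} to produce $v$ with $2^\mu$ sandwiches. You also correctly anticipate and resolve the one genuine subtlety (applying the absolute zero divisor sequence to $J_a$ itself rather than to $J_a\otimes_F E$, via the decomposition of $J_a$ into square-zero ideals $I_i$), which is precisely the observation the paper makes.
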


\section{Sandwiches in $\til L$}
\label{Section5}

Let $x\in \til L,\ x=\sum\limits_\pi x_\pi$ the standard decomposition.  Suppose that
\begin{equation}
[x_\pi,x_\tau]=0 \label{Star1}
\end{equation}
for arbitrary $\pi,\tau.$  As above denote $\adx[k][](x)=\sum\ad(x_{\pi_1})\cdots\ad(x_{\pi_k}),$ where the sum runs over all $k$-element subsets $(\pi_1,\cdots,\pi_k).$  As we have already noticed in \Cref{Lemma9}(2), $A(x)=\text{Id}+\sum\limits_{k=1}^\infty \adx[k][](x)$ is an automorphism of the algebra $\til L.$

Let elements $x_1,\cdots,x_d\in\til L$ satisfy condition \ref{Star1}, \linebreak$A=A(x_1)\cdots A(x_d)\in \Aut \til L.$  The following lemma is straightforward.

\begin{lemma}
	\label{Lemma20}
	For arbitrary elements $a_1,\cdots,a_d\in\til L$ we have
	\begin{align*}
	&	a_1A\otimes\cdots\otimes a_dA-\sum\limits_{i=1}^d a_1A\otimes\cdots a_i\otimes\cdots\otimes a_dA+\\
	&	\sum\limits_{1\leq i\neq j\leq d}a_1A\otimes\cdots\otimes a_i\cdots\otimes a_j\cdots\otimes a_dA-\cdots\pm a_1\otimes\cdots\otimes a_d=\\
	&	\sum\limits_{\sigma\in S_d}[a_1,x_{\sigma(1)}]\otimes\cdots\otimes[a_d,x_{\sigma(d)}]\\
	&	+\text{terms involving at least two elements from one of the sets } \{x_{i_\pi}\}_\pi.
	\end{align*}
\end{lemma}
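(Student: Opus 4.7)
The first step is to recognize the alternating sum on the left-hand side as the expanded tensor product $\bigotimes_{i=1}^d(a_iA-a_i)$, via the standard inclusion--exclusion identity
\[\bigotimes_{i=1}^d(b_i-c_i)=\sum_{T\subseteq\{1,\ldots,d\}}(-1)^{|T|}\bigotimes_i d_i^T\]
with $b_i=a_iA$, $c_i=a_i$ and $d_i^T=b_i$ for $i\notin T$, $d_i^T=c_i$ for $i\in T$. This reproduces exactly the signs appearing on the LHS.

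Next, I would exploit the factorization $A(x_j)=\prod_{\pi}(\Id+\ad(x_{j\pi}))$, valid because assumption~\eqref{Star1} makes the individual factors commute (so $[\ad(x_{j\pi}),\ad(x_{j\tau})]=\ad[x_{j\pi},x_{j\tau}]=0$) and because $x_j\in\til{L}$ has only finitely many nonzero standard components. Hence $a_iA-a_i$ expands as a sum, over nonempty selection sets $S_i$ of pairs $(j,\pi)$, of monomials $a_i\prod_{(j,\pi)\in S_i}\ad(x_{j\pi})$, where the factors are ordered by $j$ from left to right (inherited from the ordering $A(x_1)\cdots A(x_d)$). Taking the tensor product, the terms contributing to $\bigotimes_i(a_iA-a_i)$ are indexed by tuples $(S_1,\ldots,S_d)$ of nonempty selection sets, each such term using the multiset of components $\bigsqcup_i S_i$.

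The heart of the argument is then a combinatorial reduction. A monomial avoids the stated remainder exactly when, for each $j$, the multiset $\bigsqcup_i\{\pi:(j,\pi)\in S_i\}$ has size at most one---i.e.\ each $x_j$ is touched, via a single standard component, in at most one tensor slot (counted with multiplicity, so even a reused component $x_{j\pi}$ across two different slots lands in the remainder). Since each $S_i$ is nonempty and there are only $d$ slots but also only $d$ indices $j$, pigeonhole forces each $|S_i|=1$ and the assignment $i\mapsto j_i$ to be a permutation $\sigma\in S_d$. Summing freely over the remaining $\pi$-choices recombines $\sum_{\pi}\ad(x_{\sigma(i),\pi})=\ad(x_{\sigma(i)})$, yielding exactly $\sum_{\sigma\in S_d}\bigotimes_i[a_i,x_{\sigma(i)}]$ for the ``clean'' part. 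The main obstacle is purely one of bookkeeping: fixing the multiplicity convention for ``two elements'' so that reused components are correctly absorbed into the remainder, and using~\eqref{Star1} to justify the product form of $A(x_j)$; once these are in place, the pigeonhole count is routine---consistent with the author's remark that the lemma is ``straightforward.''
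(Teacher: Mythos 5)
The paper offers no proof for this lemma, merely declaring it ``straightforward,'' so there is no competing argument to compare against. Your proof correctly supplies the intended details: the LHS is $\bigotimes_{i}(a_iA-a_i)$ by inclusion-exclusion, the factorization $A(x_j)=\prod_\pi(\mathrm{Id}+\ad(x_{j\pi}))$ is legitimate because condition~(5.1) forces $[\ad(x_{j\pi}),\ad(x_{j\tau})]=\ad[x_{j\pi},x_{j\tau}]=0$, and the pigeonhole count (nonempty $S_i$, at most one $(j,\cdot)$ per $j$, $d$ slots versus $d$ indices) correctly isolates the permutation sum $\sum_{\sigma\in S_d}\bigotimes_i[a_i,x_{\sigma(i)}]$ with everything else falling into the ``two elements from one $\{x_{j\pi}\}_\pi$'' remainder. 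Your remark about the multiplicity convention --- a repeated component $x_{j\pi}$ across two slots belongs to the remainder --- is exactly the right reading and is what makes the counting argument close; one could add that the $j$-ordering of the $\ad$-factors is only relevant in the remainder, since each slot of the clean part carries a single $\ad$-factor and thus needs no ordering. The argument is correct and consistent with the author's assertion.
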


A. N. Grishkov \cite{Gri} showed that in a Lie algebra over a field of zero characteristic a sandwich generates the locally nilpotent ideal.  We will prove an analog of this result for the algebra $\til L$.  The proof essentially depends on the following result from \cite{KZ}:

There exists a function $KZ:\bbn\rightarrow \bbn$ such that in an arbitrary Lie algebra if elements $a_1,\cdots,a_n$ are sandwiches then the subalgebra $\langle a_1\cdots,a_n\rangle$ is nilpotent of degree $\leq KZ(n).$

Let $f(m,n)=KZ((n+1)^m)$.

\begin{lemma}
	\label{Lemma21}
	Let $I$ be an ideal of a Lie algebra $L,\ a\in\til I$ a sandwich in $\til I.$  Let $S\subset\til L$ be a finite set of $\leq n$ elements.  Then the subalgebra of $\til L$ generated by commutators $[a,b_1\cdots,b_t],$ where $b_i\in S,\ t\leq m,$ is nilpotent of degree $\leq f(m,n).$
	\begin{proof}
		Consider a commutator \[c=[[a,b_{11},\cdots,b_{1t_1}],[a,b_{21},\cdots,b_{2t_2}],\cdots,[a,b_{q1},\cdots,b_{qt_q}]],\]
		where $b_{ij}\in S,\ t_i\leq m,\ 1\leq i\leq q,\ q=f(m,n).$
		Our aim is to show that $c=0.$
		
		Let an element $b\in S$ occur in the commutator $c$ $|b|$ times.  Clearly $\sum\limits_{b\in S} |b|=t_1+\cdots+t_q.$  Choose $|b|$ new elements $x_{b,1},\cdots,x_{b,|b|}$ in $\til L$ and replace all $|b|$ occurrences of $b$ in $c$ by the symmetrized sum in \linebreak$x_{b,1},\cdots,x_{b,|b|}:$
		\[\cdots\underbrace{b\cdots b}_{|b|}\cdots\rightarrow \sum\limits_{\sigma\in S_{|b|}}\cdots x_{b,\sigma(1)}\cdots x_{b,\sigma(|b|)}\cdots.\]
		We will get a new expression $c'$ in $a,x_{b,j}$'s$,\ b\in S,\ 1\leq j\leq |b|.$
		Denote it $c'=c'(a,x_{b,1},\cdots,x_{b,|b|})$. To show that $c=0$ it is sufficient to show that $c'=0.$ Indeed, let $b=b_1+\cdots+b_k$ be the standard decomposition of $b$. If $k<|b|$, then $c$ is a sum of expressions, each containing one of the elements $b_1,\cdots, b_k$ at least twice. If $k\geq|b|$, then $c=\sum c'(a, b_{i_1},\cdots, b_{i_{|b|}})$, where the sum runs over all $|b|$-element subsets of $\{b_1,\cdots, b_k\}$.
		
		Let $x_{bj}=\sum\limits_\pi x_{bj\pi},\ a=\sum\limits_\pi a_\pi$ be the standard decompositions, \linebreak $x_{bj\pi}=x_{bj\pi}'\otimes e_\pi,\ a_\pi=a_\pi'\otimes e_\pi,\ x_{bj\pi}'\in L, a_\pi'\in I.$
		
		Let $I(b,j)$ be the ideal of the Lie algebra $L$ generated by the subset $\{x_{bj\pi}\}_\pi.$  Suppose at first that for arbitrary $b\in S,\ i\leq j \leq |b|$ we have
		\begin{equation}\label{Star2}
		[I(b,j),I(b,j)]=(0).
		\end{equation}
		Then we can consider the automorphism
		\[A(x_{b,j})=\text{Id}+\sum\limits_{k\geq 1}\adx[k][](x_{b,j}),\ 1\leq j\leq|b|\] and the automorphism $A(b)=A(x_{b,1})\cdots A(x_{b,|b|}).$  By \Cref{Lemma20} for arbitrary elements $a_1,\cdots,a_{|b|}\in\til L$ we have
		\begin{align*}
		&a_1A(b)\otimes\cdots\otimes a_{|b|}A(b)-\sum a_1A(b)\otimes\cdots\otimes a_i\otimes\cdots\otimes a_{|b|}A(b)+\\
		&\sum a_1A(b)\otimes\cdots a_i\otimes\cdots\otimes a_j\otimes\cdots a_{|b|}A(b)-\cdots=\\
		&\sum[a_1,x_{b,\sigma(1)}]\otimes\cdots\otimes[a_{|b|},x_{b,\sigma(|b|)}].
		\end{align*}
		Replacing each symmetric set $x_{b,1},\cdots,x_{b,|b|}$ in the element $c'$ by expressions of the left hand side types we can represent $c'$ as a linear combination of commutators
		\[[a\phi_{11}\cdots\phi_{1m},\cdots,a\phi_{q1}\cdots\phi_{qm}],\]
		where each $\phi_{ij}$ is one of the automorphisms $A(b_1),\cdots,A(b_n),Id.$  There are $\leq (n+1)^m$ elements $a\phi_{i1},\cdots\phi_{im}$ and all of them are sandwiches in the algebra $\til I.$  By the choice of $q=KZ((n+1)^m)$ we conclude that $[a\phi_{11},\cdots,\phi_{1m},\cdots,a\phi_{q1}\cdots\phi_{qm}]=0.$
		
		Now we will drop the assumption that 
		\[[I(b,j),I(b,j)]=(0).\]
		Let $\Lie\langle X\rangle$ be the free Lie algebra on the set of free generators $X=\{ \{x'_{b,j,\pi}\}_\pi,\ \{a_\pi\}_\pi\}.$  Let $I(b,j)$ be the ideal of $\Lie\langle X\rangle$ generated by the set $\{x'_{b,j,\pi}\}_\pi$.  Let $I=\sum\limits_{b,j}[I(b,j),I(b,j)].$  Let $J$ be the ideal of $\Lie\langle X\rangle$ generated by all relations needed to make $a=\sum\limits_\pi a'_\pi e_\pi$ a sandwich in the ideal generated by $a$.  So, $J$ is generated by
		\[[a'_\pi,\rho,a'_\tau]+[a'_\tau,\rho,a'_\pi],[a'_\pi,\rho_1,\rho_2,a'_\tau]+[a'_\tau,\rho_1,\rho_2,a'_\pi],\]
		where $\rho,\rho_1,\rho_2$ are arbitrary commutators in $X$ involving at least one element $a'_\mu.$
		
		Consider the Lie algebra $L=\Lie\langle X\rangle/I+J.$  Since this algebra satisfies condition \ref{Star2} the element $c'$ computed in the algebra $\til L$ lies in $(I+J)\otimes E.$  The ideals $I$ and $J$ are graded with respect to each generator.  The element $c'$ has total degree one with respect to variables $\{x'_{b,j,\pi}\}_\pi$ for each $b,j.$  Since the ideal $I$ does not contain homogeneous elements having degree one with respect to all $\{x'_{b,j,\pi}\}_\pi$ it follows that $c'\in J\otimes E.$  This finishes the proof of the lemma.		
	\end{proof}
\end{lemma}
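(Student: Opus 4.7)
The plan is to fix a left-normed commutator
\[c=[[a,b_{11},\cdots,b_{1t_1}],[a,b_{21},\cdots,b_{2t_2}],\cdots,[a,b_{q1},\cdots,b_{qt_q}]]\]
with $q=f(m,n)=KZ((n+1)^m)$ and show $c=0$. I would first perform a multilinearization trick: for each $b\in S$ let $|b|$ denote the number of times $b$ occurs inside $c$, introduce fresh variables $x_{b,1},\ldots,x_{b,|b|}\in\widetilde{L}$, and replace each block of $|b|$ copies of $b$ by the symmetrized sum $\sum_{\sigma\in S_{|b|}}x_{b,\sigma(1)}\cdots x_{b,\sigma(|b|)}$. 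Call the resulting expression $c'$. Using the standard decomposition $b=\sum_k b_k$ and noting that substituting $(x_{b,1},\ldots,x_{b,|b|})=(b_{i_1},\ldots,b_{i_{|b|}})$ and summing over $|b|$-element subsets of the support of $b$ recovers $c$ (terms with a repeated standard component vanish because $e_\pi^2=0$), it suffices to prove $c'=0$.

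The heart of the argument is a temporary hypothesis analogous to (1): assume the ideals $I(b,j)$ generated in $L$ by the components of each $x_{b,j}'$ satisfy $[I(b,j),I(b,j)]=(0)$. Under this hypothesis the formal series $A(x_{b,j})=\mathrm{Id}+\sum_{k\geq1}\ad^{[k]}(x_{b,j})$ is an automorphism of $\widetilde{L}$, and so is the composition $A(b)=A(x_{b,1})\cdots A(x_{b,|b|})$. Lemma~20 then lets me rewrite each symmetrized block $\sum_{\sigma}[a_1,x_{b,\sigma(1)}]\otimes\cdots\otimes[a_{|b|},x_{b,\sigma(|b|)}]$ in terms of the tensor products $a_1A(b)\otimes\cdots\otimes a_{|b|}A(b)$ together with correction tensors; the corrections involve two elements from a single set $\{x_{b,j}\}$ and therefore vanish in $c'$ because $c'$ is $S_{|b|}$-symmetric and multilinear in these variables. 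Expanding every symmetrized block this way represents $c'$ as a linear combination of commutators $[a\phi_{11}\cdots\phi_{1m},\ldots,a\phi_{q1}\cdots\phi_{qm}]$ where each $\phi_{ij}\in\{A(b_1),\ldots,A(b_n),\mathrm{Id}\}$. Because $a$ is a sandwich of $\widetilde{I}$ and automorphisms of $\widetilde{L}$ preserve the sandwich property, each $a\phi_{i1}\cdots\phi_{im}$ is again a sandwich of $\widetilde{I}$, and there are at most $(n+1)^m$ of them. The Kostrikin--Zelmanov theorem then forces this commutator of length $q=KZ((n+1)^m)$ to vanish.

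To remove the auxiliary abelianness assumption, I would pass to the free Lie algebra $\mathrm{Lie}\langle X\rangle$ on the generators $\{x'_{b,j,\pi}\}$ and $\{a'_\pi\}$, let $I=\sum_{b,j}[I(b,j),I(b,j)]$ encode the abelianness assumption, and let $J$ be the ideal encoding the sandwich relations satisfied by $a$ (squares and multiplication--sandwich relations involving the $a'_\mu$). The previous step shows that, computed in $\widetilde{\mathrm{Lie}\langle X\rangle/(I+J)}$, one has $c'=0$, hence in $\widetilde{\mathrm{Lie}\langle X\rangle}$ we get $c'\in(I+J)\otimes E$. Here I rely on a multigrading argument: both $I$ and $J$ are homogeneous with respect to each generator, while $c'$ is multilinear (degree one) in every $x'_{b,j,\pi}$; since no element of $I$ has total degree one in all of the $x$-generators (each commutator in $I$ uses two generators from the same $I(b,j)$), we conclude $c'\in J\otimes E$. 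Translating back, this is precisely the vanishing of $c'$ in $\widetilde{L}$, proving the lemma.

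The main obstacle I anticipate is the bookkeeping in the second step: making sure that the ``correction tensors'' produced by Lemma~20 really do disappear after symmetrization in each variable $x_{b,j}$, and that the resulting sandwiches $a\phi_{i1}\cdots\phi_{im}$ are correctly counted so that the number does not exceed $(n+1)^m$. The free--Lie--algebra reduction in the final step is clean in principle but requires care to verify that the grading genuinely separates $I$ from the degree-one-in-each-$x$ component.
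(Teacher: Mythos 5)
Your proposal reproduces the paper's own argument in all essentials: the same multilinearization trick replacing each group of $|b|$ occurrences of $b\in S$ by a symmetrized sum in fresh variables $x_{b,j}$, the same reduction (via the standard decomposition and $e_\pi^2=0$) from $c'=0$ to $c=0$, the same temporary abelianness hypothesis $[I(b,j),I(b,j)]=(0)$ making $A(x_{b,j})$ an automorphism, the same application of Lemma~20 to reach a commutator of at most $(n+1)^m$ sandwiches killed by the Kostrikin--Zelmanov bound $q=KZ((n+1)^m)$, and the same removal of the auxiliary hypothesis by passing to the free Lie algebra modulo $I+J$ and using the multi-degree-one-in-each-$\{x'_{b,j,\pi}\}_\pi$ grading to force $c'\in J\otimes E$, which maps to zero because $a$ is a genuine sandwich in $\til I$. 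The only minor imprecision is your claim that the Lemma~20 correction tensors ``vanish in $c'$ because $c'$ is $S_{|b|}$-symmetric and multilinear'': the corrections involve two standard components $x_{b,j,\pi}$, $x_{b,j,\tau}$ of the same $x_{b,j}$ (not two different $x_{b,j}$'s), and in the paper they are absorbed by the final degree-one grading argument rather than disappearing at the Lemma~20 stage; this does not change the structure or validity of the overall proof.
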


Now, let $A$ be an associative enveloping algebra of the algebra $L,\ L\subseteq A^{(-)},\ \til L=L\otimes_FE,\ \til A=A\otimes_FE.$

\begin{lemma}
	\label{Lemma22}
	Let $a\in\til L$ be a sandwich.  Then for an arbitrary element $b\in \til L$ we have $[a,b]^p=0.$
	\begin{proof}
		N. Jacobson \cite{J1} noticed that
		 \[\{x_1,\cdots,x_p\}=\sum\limits_{\sigma\in S_p}x_{\sigma(1)}\cdots x_{\sigma(p)}=\sum\limits_{\sigma\in S_{p-1}}[x_p,x_{\sigma(1)},\cdots,x_{\sigma(p-1)}].\]
		 Let $b=\sum\limits_\pi b_\pi$ be the standard decomposition of the element $b.$  Then $[a,b]=\sum\limits_\pi [a,b_\pi]$ and $[a,b]^p=\sum\{[a,b_{\pi_1}],\cdots,[a,b_{\pi_p}]\}.$  Each summand on the right hand side is equal to zero since $[[a,b_{\pi_i}],[a,b_{\pi_j}]]=0,$ which proves the lemma.
	\end{proof}
\end{lemma}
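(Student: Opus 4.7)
My plan is to pass to the associative enveloping algebra $\til A$, expand $[a,b]^p$ using the standard decomposition of $b$, and then exploit the nilpotence of $E$ together with Jacobson's identity to reduce the computation to a two-line application of the sandwich conditions on $a$.

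First, write $b=\sum_\pi b_\pi$ in its standard decomposition, so that $[a,b]=\sum_\pi[a,b_\pi]$ and hence in $\til A$
\[
[a,b]^p=\sum_{(\pi_1,\dots,\pi_p)}[a,b_{\pi_1}]\cdots[a,b_{\pi_p}],
\]
where the sum runs over all ordered $p$-tuples of indices. The crucial observation is that because $b_\pi=b'_\pi\otimes e_\pi$, every monomial in the expansion of $[a,b_\pi]$ carries $e_\pi$ as a factor of its $E$-component, and therefore every monomial in the product above carries the $E$-factor $e_{\pi_1}e_{\pi_2}\cdots e_{\pi_p}$. Since $E$ is commutative and $e_i^2=0$, this product vanishes as soon as any two $\pi_i$ coincide. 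So only the tuples $(\pi_1,\dots,\pi_p)$ with pairwise distinct (in fact disjoint) indices contribute, and those terms group by unordered $p$-element subsets to give $\sum\{[a,b_{\pi_1}],\dots,[a,b_{\pi_p}]\}$, where $\{\cdot,\dots,\cdot\}$ is the full symmetric $p$-fold product.

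At this point I would invoke Jacobson's identity, recalled at the start of the proof: the symmetric product $\{x_1,\dots,x_p\}$ equals the Lie expression $\sum_{\sigma\in S_{p-1}}[x_p,x_{\sigma(1)},\dots,x_{\sigma(p-1)}]$. Applied to $x_i=[a,b_{\pi_i}]$, every iterated commutator in the sum begins with an inner bracket $[[a,b_{\pi_p}],[a,b_{\pi_{\sigma(1)}}]]$. I would then verify that this inner bracket is zero directly from the sandwich conditions: expanding,
\[
\bigl[[a,b_\pi],[a,b_\tau]\bigr]=\bigl(\ad(a)\ad(b_\pi)-\ad(b_\pi)\ad(a)\bigr)[a,b_\tau]=\ad(a)\ad(b_\pi)\ad(a)b_\tau-\ad(b_\pi)\ad(a)^2b_\tau,
\]
and both summands vanish because $a$ is a sandwich: $\ad(a)^2=0$ and $\ad(a)\ad(c)\ad(a)=0$ for every $c\in\til L$. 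Hence every iterated commutator in Jacobson's expansion is zero, so every symmetric product is zero, and $[a,b]^p=0$.

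The only step that is not essentially formal is the bookkeeping in the first paragraph — making sure that each term of $[a,b_{\pi_1}]\cdots[a,b_{\pi_p}]$ truly carries the expected $E$-factor and that the grouping by $p$-element subsets yields exactly the symmetric product $\{\cdot,\dots,\cdot\}$. Everything else is a direct application of Jacobson's formula and the two defining identities of a sandwich, and the characteristic $p$ enters precisely through Jacobson's formula, which is what turns the $p$-th power into a Lie-theoretic expression where the sandwich hypothesis can be used.
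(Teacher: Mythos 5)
Your proof is correct and follows the paper's argument essentially step for step: you write $[a,b]$ via the standard decomposition, observe that $[a,b]^p$ collapses to a sum of symmetrized products $\{[a,b_{\pi_1}],\ldots,[a,b_{\pi_p}]\}$ because $e_i^2=0$, apply Jacobson's identity, and kill every term by the vanishing of the inner bracket $[[a,b_\pi],[a,b_\tau]]$, which is exactly what the paper invokes. Your proposal merely makes explicit the bookkeeping over $E$ and the direct computation $[[a,b_\pi],[a,b_\tau]]=0$ from the two sandwich identities, both of which the paper leaves to the reader.
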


\begin{lemma}
	\label{Lemma23}
	Let $I$ be an ideal of the Lie algebra $L.$  Let $a\in \til I$ be a sandwich in $\til I$ such that $a^p=0$ in the algebra $\til A$.  Let $S\subset \til L$ be a finite set of $\leq n$ elements.  Then the associative subalgebra of $\til A$ generated by commutators $[a,b_1,\cdots,b_t]$, where $b_i\in S,\ t\leq m,$ is nilpotent of degree $\leq p^{(n+1)^{mf(m,n)}}.$
	\begin{proof}
		Denote $q=p^{(n+1)^{mf(m,n)}}.$  We need to show that an arbitrary product
		\[[a,b_{11},\cdots,b_{1t_1}]\cdots[a,b_{q1},\cdots,b_{qt_q}],\]
		where $b_{ij}\in B,\ t_i\leq m,\ 1\leq i\leq q,$ is equal to 0.
		
		Arguing as in the proof of \Cref{Lemma21} we can reduce the problem to showing that an arbitrary product $(a\phi_{11}\cdots\phi_{1m})\cdots(a\phi_{q1}\cdots\phi_{qm})=0,$ where $\phi_{ij}\in \Aut\til A,\ \phi_{ij}(\til L)=\til L,\ \phi_{ij}(\til I)=\til I,\ \#\{\phi_{ij}|1\leq i\leq q, 1\leq j\leq m\}\leq n+1,$.
		
		Let $Y=\{a\phi_{i1}\cdots\phi_{im},1\leq i\leq q\},\ |Y|\leq (n+1)^m.$  An arbitrary element $y$ from $Y$ is a sandwich in $\til I$ and $y^p=0$ in $\til A.$
		
		Let $L_1$ be the Lie algebra generated by $Y.$  By \Cref{Lemma21} $L_1^{f(m,n)}=(0).$  Let $\rho_1,\cdots,\rho_r$ be left normed commutators in $Y$ that form a basis of $L_1,\ r\leq |Y|^{f(m,n)}\leq(n+1)^{mf(m,n)}.$
		
		By \Cref{Lemma22} for each commutator $\rho_i$ we have $\rho_i^p=0.$  Now the Poincare-Birkhoff-Witt theorem implies the assertion of the lemma.
	\end{proof}
\end{lemma}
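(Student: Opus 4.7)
The plan is to mirror the strategy of \Cref{Lemma21}, upgrading the conclusion from Lie nilpotency to associative nilpotency by combining the Poincar\'{e}--Birkhoff--Witt theorem with the $p$-th power bound of \Cref{Lemma22}.

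First, I would reduce the statement, exactly as in the proof of \Cref{Lemma21}, to showing that every associative product
\[(a\phi_{11}\cdots\phi_{1m})\cdots(a\phi_{q1}\cdots\phi_{qm})\]
equals zero in $\til{A}$, where each $\phi_{ij}\in\Aut(\til{A})$ is either the identity or a product of automorphisms $A(x_{b,1})\cdots A(x_{b,|b|})$ attached to some $b\in S$; in particular $\phi_{ij}(\til{L})=\til{L}$ and $\phi_{ij}(\til{I})=\til{I}$, and the total number of distinct $\phi_{ij}$ is at most $n+1$. The reduction runs verbatim: symmetrize each occurrence of $b$ in the target word by fresh copies $x_{b,1},\ldots,x_{b,|b|}$, rewrite via \Cref{Lemma20}, and remove the auxiliary abelian-ideal assumption by the free-Lie-algebra presentation argument used at the end of the proof of \Cref{Lemma21}. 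The set $Y=\{a\phi_{i1}\cdots\phi_{im}\mid 1\leq i\leq q\}$ then satisfies $|Y|\leq (n+1)^m$, and each $y\in Y$ is a sandwich of $\til{I}$ with $y^p=0$ in $\til{A}$, since both properties are preserved by any automorphism of $\til{A}$ stabilising $\til{I}$.

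Next, by the Kostrikin--Zelmanov theorem quoted just before \Cref{Lemma21}, the Lie subalgebra $L_1=\langle Y\rangle$ is nilpotent of degree at most $KZ(|Y|)\leq KZ((n+1)^m)=f(m,n)$. Hence $L_1$ admits a linear basis of $r\leq |Y|^{f(m,n)}\leq (n+1)^{mf(m,n)}$ left-normed commutators $\rho_1,\ldots,\rho_r$ in elements of $Y$. Each such $\rho_i$ has the form $\pm[y,b]$ with $y\in Y$ a sandwich of $\til{I}$ and $b\in\til{L}$, so \Cref{Lemma22} yields $\rho_i^p=0$ in $\til{A}$. Applying PBW to the enveloping associative algebra of $L_1$ inside $\til{A}$, this subalgebra is linearly spanned by ordered monomials $\rho_1^{k_1}\cdots\rho_r^{k_r}$ with $0\leq k_i\leq p-1$, so any product of more than $r(p-1)$ elements of $L_1$ vanishes. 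This gives associative nilpotency of degree at most $r(p-1)+1\leq p^r\leq p^{(n+1)^{mf(m,n)}}$, as required.

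The main obstacle is the opening reduction: one must verify that the symmetrization and grading arguments of \Cref{Lemma21}, which there control a single outer Lie commutator, continue to work when the target object is the associative product $[a,b_{11},\ldots,b_{1t_1}]\cdots[a,b_{q1},\ldots,b_{qt_q}]$ rather than an outer Lie bracket, so in particular that the $I+J$ quotient still separates the diagonal contribution from the quadratic corrections in the chosen multi-grading. Once the reduction is in place, the rest is a clean combination of \Cref{Lemma22}, PBW, and the counting above.
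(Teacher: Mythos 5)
Your argument follows the paper's proof closely: the same reduction (verbatim from Lemma~\ref{Lemma21}) to a product of sandwiches $a\phi_{i1}\cdots\phi_{im}\in Y$, the same use of the Kostrikin--Zelmanov bound to get $L_1^{f(m,n)}=(0)$, the same count $r\leq(n+1)^{mf(m,n)}$, the same appeal to Lemma~\ref{Lemma22} for $\rho_i^p=0$, and then PBW. (Citing the KZ theorem directly rather than re-invoking Lemma~\ref{Lemma21} for $L_1^{f(m,n)}=(0)$ is a harmless cosmetic variation, and your claim that each $\rho_i$ of length $\geq 2$ has the form $\pm[y,b]$ with $y\in Y$ a sandwich is the right way to use Lemma~\ref{Lemma22}; for length one, $\rho_i\in Y$ satisfies $\rho_i^p=0$ by hypothesis, a case you should note separately.)

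However, the final step as you wrote it contains a genuine error. You assert that since the subalgebra generated by $L_1$ is spanned by ordered monomials $\rho_1^{k_1}\cdots\rho_r^{k_r}$ with $0\leq k_i\leq p-1$, ``any product of more than $r(p-1)$ elements of $L_1$ vanishes.'' This does not follow: PBW re-ordering uses $\rho_i\rho_j=\rho_j\rho_i+[\rho_i,\rho_j]$, and the commutator term is a single basis element of $L_1$, so a product of $N$ elements of $L_1$ need not be a combination of ordered monomials of total degree $\geq N$. Concretely, take $p=2$ and $L_1$ the Heisenberg algebra $\langle x,y,z : [x,y]=z\rangle$ with $x^2=y^2=z^2=0$ in the enveloping algebra: then $r=3$ and $r(p-1)+1=4$, but $xyxy=xyz\neq 0$, so the associative nilpotency degree is $5$, not $4$. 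The correct argument filters $U(L_1)$ by the lower-central-series weight $d_i=\text{length}(\rho_i)$: $\omega^N$ is spanned (in $\til A$) by ordered monomials with $\sum k_i d_i\geq N$, so $\omega^N=0$ once $N>(p-1)\sum_i d_i$; one then bounds $\sum_i d_i$ (e.g.\ via the Witt numbers, $\sum_i d_i\leq\sum_{k<f(m,n)}|Y|^k<|Y|^{f(m,n)}\leq(n+1)^{mf(m,n)}$) and checks this is dominated by $p^{(n+1)^{mf(m,n)}}$. The paper compresses this into ``PBW implies the assertion,'' but the bound you supply to fill that gap is false, so the proof is incomplete as written.
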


\section{Proof of \Cref{Theorem1}}
\label{Section6}

Let a Lie algebra $L$ over a field $F$ of characteristic $p>0$ and its associative enveloping algebra $A$, $L\subseteq A^{(-)},\ A=\langle L\rangle,$ satisfy the conditions outlined at the beginning of $\mathsection$3:
\begin{enumerate}[1)]
	\item $L$ is a graded Lie algebra generated by elements $x_1,\cdots,x_m$ of degree 1; every element from the Lie set $S=S\langle x_1,\cdots,x_m\rangle$ is ad-nilpotent;
	\item $L$ satisfies a polynomial identity;
	\item the grading of $L$ extends to $A$; both algebras $L$ and $A$ are graded just infinite.
\end{enumerate}

Recall that an element $g(x_1,\cdots,x_r)$ of the free associative algebra is called a weak identity of the pair $(L,A)$ if $g(a_1,\cdots,a_r)=0$ for arbitrary elements $a_1,\cdots,a_r\in L.$  In particular, every Lie identity of the algebra $L$ can be viewed as a weak identity of the pair $(L,A).$

Let $k$ be a minimal degree of a nonzero weak identity satisfied by $(L,A).$  Without loss of generality we can assume that $(L,A)$ satisfies a weak identity
\[h(x_1,\cdots,x_k)=x_1\cdots x_k+\sum\limits_{1\neq\sigma\in S_k}\alpha_\sigma x_{\sigma(1)}\cdots x_{\sigma(k)}.\]
We remark that the pair $(\til{L}, \til{A})$ satisfies this weak identity as well.

Let $h(x_1,\cdots,x_k)=\sum\limits_{i=1}^k h_i(x_1,\cdots,\hat{x_i},\cdots,x_k)x_i.$  The ideal $M$ of the algebra $A$ generated by all values of $h_k(a_1,\cdots,a_{k-1}),\ a_i\in L$ is graded nonzero and therefore has finite codimension in $A$. Hence there exists $d\geq1$ such that $A^d\subseteq M.$  Denote $\widehat{A}=A+F 1,$

\begin{lemma}
	\label{Lemma24}
	For an arbitrary element $a\in \til{L}$ we have 
	\[A^da\subseteq\sum x_{i_1}\cdots x_{i_t}a\widehat{A},\ t\leq d-1.\]
	\begin{proof}
		For an arbitrary product of length $d$ we have 
		\[x_{i_1}\cdots x_{i_d}=\sum\limits_j \alpha_j v'_j h_{k}(\rho_{j1},\cdots,\rho_{j,k-1})v''_j,\]
	 where $\alpha_j\in F;\ v'_j,v''_j,$ and $\rho_{j1},\cdots,\rho_{j,k-1}$ are monomials and commutators in generators $x_1,\cdots,x_m$ of total length $d$.
		
		Let $v''_j=x_{\mu_1}\cdots x_{\mu_r}.$  Then
		\[v''_ja=a'+\sum\limits_t \pm w'_{jt}aw''_{jt},\]
		where $a'=[x_{\mu_1},[x_{\mu_2},[\cdots,[x_{\mu_r},a]\cdots];\ w'_{jt},w''_{jt}$ are products in generators of total degree equal to the degree of $v''_j$ and the products $w''_{jt}$ are not empty.  Hence,
		\[\begin{multlined}
		x_{i_1}\cdots x_{i_d}a=\sum\limits_j\alpha_jv'_jh_{k}(\rho_{j,1},\cdots,\rho_{j,k-1})a'+\\
		\sum\limits_{j,t}\alpha_jv_j'h_{k}(\rho_{j,1},\cdots,\rho_{j,k-1})w_{jt}'aw_{jt}''.
		\end{multlined}\]
		Furthermore,
		\[\begin{multlined}h_{k}(\rho_{j1},\cdots,\rho_{j,k-1})a'=h(\rho_{j1},\cdots,\rho_{j,k-1},a')-\\
		\sum\limits_{i=1}^{k-1}h_i(\rho_{j1},\cdots,a',\cdots,\rho_{j,k-1})\rho_{ji}=-\sum\limits_{i=1}^{k-1}h_{i}(\rho_{j1},\cdots,a',\cdots\rho_{j,k-1})\rho_{ji}.
		\end{multlined}\]
		We proved that $x_{i_1}\cdots x_{i_d}a$ is a linear combination of elements $w'aw'',$ where $w',w''$ are products in $x_1,\cdots,x_m,$ with the length of $w'$ less than $d.$
	\end{proof}
\end{lemma}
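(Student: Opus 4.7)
The plan is to exploit two ingredients: (i) the fact that $A^d\subseteq M$, so every length-$d$ associative monomial in the generators can be written as a linear combination of expressions of the form $v' h_k(\rho_1,\dots,\rho_{k-1})v''$, where $v',v''$ are monomials and $\rho_1,\dots,\rho_{k-1}$ are Lie commutators in $x_1,\dots,x_m$, with total generator-degree $d$; and (ii) the weak identity $h=0$, which we will apply with an element of $\til{L}$ substituted in the $k$-th slot in order to convert $h_k(\rho_1,\dots,\rho_{k-1})\cdot\ell$ into a sum of terms ending in one of the $\rho_i\in L$.

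First I would pick a generic length-$d$ monomial $x_{i_1}\cdots x_{i_d}$ and expand it via (i). Then I would pull $a$ through the right factor $v''=x_{\mu_1}\cdots x_{\mu_r}$ using $xy=yx+[x,y]$ repeatedly, obtaining
\[
v''\,a \;=\; a' \;+\; \sum_t \pm\,w'_t\,a\,w''_t,\qquad a'=[x_{\mu_1},[x_{\mu_2},[\dots,[x_{\mu_r},a]\dots]]],
\]
with every $w''_t$ nonempty. The tail-nonempty terms are already of the required shape with at most $d-1$ generators to the left of $a$, since each $w''_t$ uses at least one generator. The element $a'$ lies in $\til{L}$.

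Second, I would dispose of the remaining piece $v'\,h_k(\rho_1,\dots,\rho_{k-1})\,a'$. Here I use that the weak identity $h=x_1\cdots x_k+\cdots$ extends to the pair $(\til{L},\til{A})$ and that $a'\in\til{L}$, so plugging $a'$ into the $k$-th slot gives
\[
0 \;=\; h(\rho_1,\dots,\rho_{k-1},a') \;=\; h_k(\rho_1,\dots,\rho_{k-1})\,a' \;+\; \sum_{i=1}^{k-1} h_i(\rho_1,\dots,a',\dots,\rho_{k-1})\,\rho_i,
\]
whence $h_k(\rho_1,\dots,\rho_{k-1})\,a'=-\sum_{i}h_i(\rho_1,\dots,a',\dots,\rho_{k-1})\,\rho_i$. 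In each summand on the right, the factor $\rho_i\in L$ of generator-degree $\geq 1$ sits to the right of the slot containing $a'$. Expanding $a'$ back into its $2^r$ sandwich terms $\pm\,w'\,a\,w''$, each resulting monomial has the form (stuff)$\cdot a\cdot$(stuff) with at least one generator (coming from $\rho_i$, or from $w''$) to the right of $a$, so the number of generators to the left of $a$ is at most $d-1$.

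Combining both cases yields the containment $A^d\,a\subseteq \sum x_{i_1}\cdots x_{i_t}\,a\,\widehat{A}$ with $t\leq d-1$. The only non-mechanical point is the degree bookkeeping: I must check that whatever way $a'$ is expanded, the total generator-count $d$ is preserved and at least one generator always ends up on the right of $a$, which is ensured by the presence of either the factor $\rho_i$ from the weak identity or the nonempty tail $w''_t$ from the initial commutator push.
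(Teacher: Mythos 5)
Your proposal matches the paper's proof step for step: you use the same decomposition $x_{i_1}\cdots x_{i_d}=\sum_j\alpha_j v'_j h_k(\rho_{j1},\dots,\rho_{j,k-1})v''_j$ coming from $A^d\subseteq M$, the same commutator push $v''_j a=a'+\sum_t\pm w'_{jt}aw''_{jt}$, and the same application of the weak identity $h=0$ with $a'\in\til L$ in the last slot to rewrite $h_k(\rho_{j1},\dots,\rho_{j,k-1})a'$ as $-\sum_i h_i(\dots,a',\dots)\rho_{ji}$, so that $\rho_{ji}$ provides the needed generator to the right of $a$. The final bookkeeping you spell out (expanding $a'$ into $2^r$ sandwich terms and counting degrees) is the same observation the paper leaves implicit, so the two arguments are essentially identical.
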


Consider the function $g(m,n)=p^{(n+1)^{mf(m,n)}}$.

\begin{lemma}
	\label{Lemma25}
	Let $I$ be an ideal of the algebra $L$.  Let $a$ be a sandwich in $\til I$ and $a^p=0$ in $\til A.$  Let $a'=[a_1,\cdots,a_d],$ where $a_i=[a,u_{i1},\cdots,u_{it_i}],\ u_{ij}\in\til L,\ t_i\geq0,\ i=1,\cdots,d.$  Let $G=g(\max\limits_i\{ t_i+d-1\},t_1+\cdots+t_d+d^{d+1}\}).$  Then $(Aa')^G=(0).$ In particular, an arbitary element from $\text{Id}_{\til{L}}(a')^d$ generates a nilpotent ideal in $\til{A}.$
	\begin{proof}
		Suppose that $a'v_1a'\cdots v_{G-1}a'\neq0,$ where $v_1,\cdots,v_{G-1}$ are products in generators $x_1,\cdots,x_m$ of lengths $l(v_1),\cdots,l(v_{G-1})$ respectively.  Without loss of generality we will assume that the vector of lengths $(l(v_1),\cdots,l(v_{G-1}))$ is lexicographically minimal.  By \Cref{Lemma24} this implies that $l(v_i)\leq d-1,\ 1\leq i\leq G-1.$
		
		If $v=x_{j_1}\cdots x_{j_r}$ then we denote 
\[[va']=[x_{j_1},[x_{j_2},[\cdots[x_{j_r},a']\cdots]=(-1)^r[a',x_{j_r},x_{j_{r-1}},\cdots,x_{j_1}].\]
		Again by lexicographical minimality we have 
		\[a'v_1a'\cdots v_{G-1}a'=a'[v_1a']\cdots[v_{G-1}a'].\]

Lemma \ref{Lemma23} is not applicable to the sandwich $a'$ and elements $x_1,\cdots, x_m$ because $x_1,\cdots, x_m$ do not lie in $\til{L}$. However, for an arbitrary word $v=x_{j_1}\cdots x_{j_r}$, $r<d$, we have
\[[va']=\sum\limits_i\left[[v_{i1}a_1],\cdots,[v_{id}a_d]\right],\]
where $v_{i1},\cdots, v_{id}$ are words in $x_1,\cdots, x_m$ of total length $r<d$. Hence, at least one of these words is empty. Now we can apply Lemma \ref{Lemma23} to the sandwich $a$ and the set $S=\{[a,u_{i1},\cdots, u_{it_i},x_{j1},\cdots, x_{jr}]|1\leq i\leq d, 1\leq j_1,\cdots, j_r\leq m, 0\leq r<d\}\subset\til{L}$. By Lemma \ref{Lemma23} and the choice of the number $G$, we have $a'[v_1a']\cdots[v_{G-1}a']=0$, which proves the lemma.
	\end{proof}
\end{lemma}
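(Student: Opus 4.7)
The plan is to show $(Aa')^G=0$ by proving that any product $a' w_1 a' w_2 a'\cdots w_{G-1}a'$ in $\til A$, with each $w_k$ a monomial in $x_1,\dots,x_m$, vanishes. Suppose for contradiction such a product is nonzero and choose one whose length-vector $(\ell(w_1),\dots,\ell(w_{G-1}))$ is lexicographically minimal. Lemma~\ref{Lemma24} forces $\ell(w_k)\leq d-1$ for every $k$: otherwise the inclusion $A^d a\subseteq\sum x_{i_1}\cdots x_{i_t}a\widehat A$ rewrites the segment $w_k a'$ with a strictly shorter initial word, lowering the lex-vector. A second lex-minimality argument, based on $x_j a'=a' x_j+[x_j,a']$ (terms that leave $x_j$ to the left of $a'$ shorten $w_k$ while lengthening $w_{k-1}$, producing a smaller lex-vector), yields
\[
a' w_1 a'\cdots w_{G-1}a' \;=\; a'\,[w_1 a'][w_2 a']\cdots[w_{G-1}a'],
\]
where $[wa']=[x_{j_1},[x_{j_2},\ldots,[x_{j_r},a']\ldots]]\in\til L$.

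The main step is to rewrite each factor $[w_k a']$ in a form to which Lemma~\ref{Lemma23} applies. Expanding $a'=[a_1,\dots,a_d]$ and distributing $\ad(x_{j_1})\cdots\ad(x_{j_r})$ by the Leibniz rule gives
\[
[w_k a'] \;=\; \sum\pm\bigl[[v_1 a_1],[v_2 a_2],\dots,[v_d a_d]\bigr],
\]
where $v_1,\dots,v_d$ are subwords of $w_k$ of total length $r=\ell(w_k)<d$. Pigeonhole forces at least one $v_\nu$ to be empty, so the corresponding factor is the untouched $a_\nu=[a,u_{\nu1},\dots,u_{\nu t_\nu}]$, while every other factor takes the form $[a,u_{i1},\dots,u_{it_i},x_{\ell_1},\dots,x_{\ell_s}]$ with $s<d$, hence lies in the finite set
\[
S\;=\;\bigl\{\,[a,u_{i1},\dots,u_{it_i},x_{\ell_1},\dots,x_{\ell_s}] : 1\leq i\leq d,\ 0\leq s<d\,\bigr\}\,\cup\,\{u_{ij}\}\,\subset\,\til L.
\]
Applying antisymmetry and Jacobi to move the untouched $a_\nu$ to the head of the $d$-fold commutator, each summand becomes a linear combination of left-normed commutators $[a,b_1,\dots,b_T]$ with $b_i\in S$ and $T\leq\max_i\{t_i+d-1\}$. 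The factor $a'$ itself already has this shape through $[a_1,a_2,\dots,a_d]=[a,u_{11},\dots,u_{1t_1},a_2,\dots,a_d]$.

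Now Lemma~\ref{Lemma23}, applied to the sandwich $a$, the set $S$ (of cardinality at most $t_1+\cdots+t_d+d^{d+1}$), and commutator-length bound $\max_i\{t_i+d-1\}$, gives an associative subalgebra of $\til A$ that is nilpotent of degree $\leq G=g(\max_i\{t_i+d-1\},\,t_1+\cdots+t_d+d^{d+1})$. The product $a'[w_1 a']\cdots[w_{G-1}a']$ of $G$ elements of this subalgebra therefore vanishes, contradicting our assumption and establishing $(Aa')^G=0$. The ``in particular'' assertion follows because an arbitrary element of $\mathrm{Id}_{\til L}(a')^d$ is a sum of elements sharing the same structural shape as $a'$ (a $d$-fold bracket of iterated commutators of the sandwich $a$ with elements of $\til L$), so the same argument bounds the nilpotence of the associative ideal each such summand generates, and a sum of nilpotent associative ideals is nilpotent. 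The principal technical obstacle is the Jacobi rearrangement in the middle paragraph: verifying that after moving $a_\nu$ to the head of the multi-commutator, every trailing entry remains inside the prescribed finite $S$, together with the precise cardinality count that has to match the second argument of $g$ in the constant $G$.
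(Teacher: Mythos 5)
Your argument is correct and mirrors the paper's proof step for step: lexicographic minimality plus Lemma~\ref{Lemma24} to force $\ell(v_k)\leq d-1$, the reduction to $a'[v_1a']\cdots[v_{G-1}a']$, the Leibniz expansion $[va']=\sum_i[[v_{i1}a_1],\ldots,[v_{id}a_d]]$ with a pigeonholed empty slot, and then Lemma~\ref{Lemma23} applied to the single sandwich $a$ (precisely to circumvent the fact that $x_1,\ldots,x_m$ do not lie in $\til L$). The Jacobi step you flagged as the ``principal technical obstacle'' is not actually one: bringing the untouched $a_\nu$ to the head expresses each summand as a linear combination of left-normed commutators $[a,u_{\nu 1},\ldots,u_{\nu t_\nu},c_{\sigma(1)},\ldots,c_{\sigma(d-1)}]$ of length $t_\nu+d-1\leq\max_i\{t_i+d-1\}$, every tail entry of which lies in your $S$, and your explicit inclusion of $\{u_{ij}\}$ in $S$ --- which the paper's displayed set omits even though the $t_1+\cdots+t_d$ summand in the second argument of $g$ plainly counts them --- is exactly the right reading of the intended set.
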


Now our aim is the following:
\begin{proposition}
	\label{Proposition4}
	There exist integers $N\geq1,\ s\geq 1$ and a regular divided polynomial $v$ defined on $L^s$ such that every value of $v$ on $\til L^s$ generates a nilpotent ideal in $\til A$ of degree $\leq N.$
\end{proposition}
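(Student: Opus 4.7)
The plan is to assemble $v$ from the regular divided polynomial $v_0$ of \Cref{Proposition3} (whose values on $\til L^k$ are sums of $t$ sandwiches of $\til L^k$) by feeding $v_0$ through the iterated-commutator machinery of \Cref{Lemma25}, and to read off the bound $N$ from the weak-identity constants already assembled in \Cref{Lemma24}--\Cref{Lemma25}.

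The crucial preliminary observation is that each sandwich summand $b$ of any value of $v_0$ satisfies the additional relation $b^p=0$ in $\til A$. Indeed, by the construction of \Cref{Lemma19} via \Cref{Lemma12}, $b=\sum_{d\in\Omega}d$ for a finite family $\Omega\subset\til L$ whose images $\ad(d)$ pairwise commute (by (U\ref{U2})) and each lie in a single $\til D_i$ (by (U\ref{U1})). Because $L$ is graded just infinite, $Z(L)=0$, and hence each nonzero $e_\tau$-component of every such $d$ must contain the fixed index $i$; consequently every monomial appearing in an expansion of $d^p$ carries a factor $e_i^p=0$, giving $d^p=0$ in $\til A$. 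Since the $d$'s commute in $\til L\subseteq\til A$ and $\cha F=p$, the Frobenius identity $(\sum d_j)^p=\sum d_j^p$ yields $b^p=0$, which is exactly the hypothesis of \Cref{Lemma25}.

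I would then form
\[
v\ =\ \bigl[\,y_0,\ [v_0,u_{11},\ldots,u_{1t_1}],\ \ldots,\ [v_0,u_{d1},\ldots,u_{dt_d}]\,\bigr]
\]
with $y_0$ and all $u_{ij}$ fresh variables, every copy of $v_0$ evaluated on the same tuple of its own variables. This is a Lie polynomial in $v_0$ and fresh variables, hence a homogeneous divided polynomial by DP\ref{DP1}--DP\ref{DP2}. Expanding $v_0=b_1+\cdots+b_t$ in any evaluation exhibits the value of $v$ as a finite sum of terms
\[
[y_0,\,[b_{j_1},u_{11},\ldots,u_{1t_1}],\,\ldots,\,[b_{j_d},u_{d1},\ldots,u_{dt_d}]],\qquad j_i\in\{1,\ldots,t\},
\]
each of which, after at most one further DP\ref{DP3} layer placing it inside $\Id_{\til L}(a')^d$, falls under \Cref{Lemma25} applied to the finite family $\{b_1,\ldots,b_t\}$ of sandwiches, whose joint $p$-nilpotence was established in the previous paragraph. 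The arguments of \Cref{Lemma21}, \Cref{Lemma23} and \Cref{Lemma25} carry over to such a finite family without change, since the Kostrikin--Zelmanov bound $KZ$ accepts any finite set of sandwiches as input. Each term therefore generates a nilpotent ideal in $\til A$ of degree bounded in terms of $t$, $d$, $k$, $m$ and the weak-identity degree of $(L,A)$; and since a finite sum of nilpotent two-sided ideals in $\til A$ is nilpotent with degree bounded by the product of the individual degrees, the ideal $\til A\,v(\cdots)\,\til A$ is nilpotent of degree at most a uniform constant $N$.

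Regularity of $v$ follows from regularity of $v_0$ together with the just-infinite grading of $L$: for every $i\geq k$ one has $v_0(\til L^i,\ldots,\til L^i)\neq(0)$, and since centralizers of positive powers $L^j$ vanish, the outer bracketing by $y_0$ and the fresh $u_{ij}$'s cannot identically collapse $v$ on any $\til L^i$ with $i$ large enough. \textbf{The main obstacle} is the ``sum versus single sandwich'' gap: \Cref{Lemma25} is phrased for one sandwich, whereas every evaluation of $v_0$ yields a sum of up to $t$ sandwiches. This gap is closed exactly by the preliminary observation (joint $p$-nilpotence of every sandwich summand in $\til A$) together with the robustness of \Cref{Lemma21}--\Cref{Lemma25} under passing from one sandwich to a uniformly bounded finite family of sandwiches.
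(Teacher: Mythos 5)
Your proposal correctly identifies the two ingredients that must be reconciled --- Proposition~\ref{Proposition3} produces sums of $t$ sandwiches, while \Cref{Lemma25} is stated for a single sandwich --- but the way you close that gap and, more seriously, the regularity claim are both problematic.

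First, the gap-closing. You assert that \Cref{Lemma21}, \Cref{Lemma23}, \Cref{Lemma25} ``carry over to a uniformly bounded finite family of sandwiches without change.'' That is not established, and it is not what the paper does. \Cref{Lemma25} crucially uses that the element $a'=[a_1,\dots,a_d]$ lies in $\Id_{\til L}(a)^d$ for a \emph{single} sandwich $a$; this is what allows the length-reduction via \Cref{Lemma24} to terminate. In your construction each term $[y_0,[b_{j_1},\dots],\dots,[b_{j_d},\dots]]$ involves possibly $d$ \emph{distinct} sandwiches $b_{j_1},\dots,b_{j_d}$, so it does not generally lie in $\Id_{\til L}(b_j)^d$ for any single $j$. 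The paper resolves this by iterating the self-commutator $[w,y_1,\dots,y_\mu,w]$ enough times: forming $M_0=\{v\}$, $M_{i+1}=\{[w,y_1,\dots,y_\mu,w]\}$, and taking $i$ with $2^i\geq (d-1)t+1$. The pigeonhole principle then forces at least $d$ of the $2^i$ occurrences of $v$ to contribute the \emph{same} summand $b_j$, putting the value inside $\sum_j\Id_{\til L}(b_j)^d$ --- exactly the hypothesis of \Cref{Lemma25} applied sandwich by sandwich. Your $d$-fold bracket simply does not have enough copies of $v_0$ for that pigeonhole to work.

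Second, and this is the more fundamental omission, your argument for regularity of $v$ is incorrect. Nesting several copies of $v_0$ inside iterated commutators can very well collapse to zero identically on $\til L^i$ for large $i$ even though $v_0$ is regular; the centralizer-of-$L^j$ argument only protects against degeneration from the \emph{outer} bracket $[\,\cdot\,,y_0]$, not from the inner self-commutators $[v_0,\dots,v_0]$. The paper's proof is explicitly bifurcated around this: if no element of $M_i$ is regular, one has a regular $w$ whose self-commutators of bounded length all vanish on $\til L^t$, and then an entirely different mechanism takes over --- the regular polynomial $w_{2d}$ of \Cref{Corollary1}, \Cref{Lemma28}, and \Cref{Lemma29}, which obtains nilpotency from the minimal Lie identity degree and the weak-identity bound $d$ from \Cref{Lemma24}, not from the sandwich/Kostrikin--Zelmanov machinery. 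Your proposal has no analogue of this second branch, so even if the multi-sandwich claim were patched, the proof would still have a hole. (A minor point: the $p$-nilpotence of each sandwich summand that you re-derive by hand is exactly the content of \Cref{Lemma30}, which you could have cited.)
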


Suppose that the algebra $L$ satisfies an identity $f(x_0,x_1,\cdots,x_{n-1})=[x_0,x_1,\cdots,x_{n-1}]+\sum\limits_{1\neq\sigma\in S_{n-1}}\alpha_\sigma[x_0,x_{\sigma(1)},\cdots,x_{\sigma(n-1)}],$ where $\alpha_\sigma\in F$ and $n$ is the minimal degree of an identity satisfied by $L.$

Consider the following element of degree $n-1$: 
\begin{align*}
	&f_1(x_0,x_2,\cdots,x_{n-1})=[x_0,x_2,\cdots,x_{n-1}]+\\ 
	&\sum\limits_{\substack{1\neq\sigma\in S_{n-1}\\ \sigma(1)=1}} \alpha_\sigma[x_0,x_{\sigma(2)},\cdots,x_{\sigma(n-1)}]=x_0H(\ad(x_2),\cdots,\ad(x_{n-1})),
\end{align*}
where $H(y_2,\cdots,y_{n-1})=y_2\cdots y_{n-1}+\sum\limits_{\substack{1\neq\sigma\in S_{n-1}\\ \sigma(1)=1}}\alpha_\sigma y_{\sigma(2)}\cdots y_{\sigma(n-1)}.$

\begin{lemma}
	\label{Lemma26}
	Let $w(x_1,\cdots,x_r)$ be a regular divided polynomial defined on $L^s.$  Then the divided polynomial 
	\[w'(x_1,\cdots,x_r,y_2,\cdots,y_{n-1})=f_1(w(x_1,\cdots,x_r),y_2,\cdots,y_{n-1})\]
	 is also regular.
	 \begin{proof}
	 	Choose $t\geq s$ and elements $a_1,\cdots,a_r\in\til L^t$ such that $a'=w(a_1,\cdots,a_r)\neq 0.$  Suppose that for arbitrary elements $b_2,\cdots,b_{n-1}\in\til L^t$ we have $f_1(a',b_2,\cdots,b_{n-1})=0.$  Let $a'=\sum\limits_\pi a'_\pi$ be the standard decomposition, $a'_\pi=\bar{a'_\pi}\otimes e_\pi,\ \bar{a'_\pi}\in L^t.$  Then the assumption above means that for an arbitrary $\pi$ we have $f_1(\bar{a'_\pi},L^t,\cdots,L^t)=(0).$  Choose $\pi$ such that $\bar{a'_\pi}\neq 0.$  Let $R=\langle \text{Id},\ad(x),x\in L\rangle\subseteq\End_F(L)$ be the multiplication algebra of the algebra $L.$  Consider the ideal $\text{id}_L(\bar{a'_\pi})=\bar{a'_\pi}R$ generated by the element $\bar{a'_\pi}$ in $L.$
	 	
	 	For an arbitrary element $x\in L$ we have 
	 	\[[\ad(x),H(ad(L^t),\cdots,\ad(L^t))]\subseteq H(\ad(L^t),\cdots,ad(L^t)).\]	 
	 	Hence,
	 	\[RH(ad(L^t),\cdots,\ad(L^t))\subseteq H(\ad(L^t),\cdots,ad(L^t))R.\]
	 	Now for an arbitrary operator $P\in R$ we have
	 	\[\begin{multlined}f_1(\bar{a'_\pi}P,L^t,\cdots,L^t)=\bar{a'_\pi}PH(L^t,\cdots,L^t)\\
	 	\subseteq\bar{a'_\pi}H(L^t,\cdots,L^t)R=f_1(\bar{a'_\pi},L^t,\cdots,L^t)R=(0).\end{multlined}\]
	 	Since the algebra $L$ is graded just infinite it follows that $\text{id}_L(\bar{a'_\pi})\supseteq L^k$ for sufficiently large $k\geq t.$  We proved that $f_1(L^k,\cdots,L^k)=(0).$
	 	
	 	The algebra $L^k$ is finitely generated by \Cref{Lemma3}.  By the induction assumption on the degree of the identity $f$ we conclude that the Lie algebra $L^k$ is finite dimensional.  Therefore the algebra $L$ is finite dimensional as well which contradicts our assumption that the algebra $L$ is graded just infinite, proving the lemma.
	 \end{proof}
\end{lemma}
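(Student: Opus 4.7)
The plan is to argue by contradiction, reducing the non-regularity of $w'$ on some $\til{L}^t$ to a lower-degree PI on a power of $L$, and then invoking the induction hypothesis on $n$ (the minimal degree of a Lie identity of $L$) together with graded just-infiniteness. Suppose $w'$ fails to be regular, so there exists $t\geq s$ with $w'(\til{L}^t,\ldots,\til{L}^t)=(0)$. Since $w$ itself is regular, I can pick $a_1,\ldots,a_r\in \til{L}^t$ with $a'=w(a_1,\ldots,a_r)\neq 0$, and in its standard decomposition $a'=\sum_\pi \bar{a'_\pi}\otimes e_\pi$ fix some $\pi_0$ with $\bar{a'_{\pi_0}}\neq 0$. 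Because $f_1(x_0,x_2,\ldots,x_{n-1})=x_0 H(\ad(x_2),\ldots,\ad(x_{n-1}))$ is linear in $x_0$, the assumption gives
\[
\sum_\pi \bigl(\bar{a'_\pi} H(\ad(c_2),\ldots,\ad(c_{n-1}))\bigr)\otimes e_\pi e_{j_2}\cdots e_{j_{n-1}}=0
\]
whenever we plug in $b_i=c_i\otimes e_{j_i}$ with $c_i\in L^t$. Choosing the indices $j_2,\ldots,j_{n-1}$ pairwise distinct and disjoint from every $\pi$ appearing in the (finite) support of $a'$, the elements $e_\pi e_{j_2}\cdots e_{j_{n-1}}$ are distinct nonzero basis vectors of $E$. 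Therefore each coefficient must vanish, and in particular $\bar{a'_{\pi_0}} H(\ad(L^t),\ldots,\ad(L^t))=(0)$, i.e.\ $f_1(\bar{a'_{\pi_0}},L^t,\ldots,L^t)=(0)$.

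Next I would spread this vanishing over the whole Lie ideal generated by $\bar{a'_{\pi_0}}$ using the multiplication algebra $R=\langle \Id,\ad(x):x\in L\rangle$. Since $L^t$ is an ideal, $[\ad(x),\ad(b)]=\ad([x,b])\in \ad(L^t)$ for any $b\in L^t$, so the Leibniz rule applied to the associative polynomial $H$ yields
\[
\ad(x)\cdot H(\ad(L^t),\ldots,\ad(L^t))\subseteq H(\ad(L^t),\ldots,\ad(L^t))+H(\ad(L^t),\ldots,\ad(L^t))\ad(x).
\]
Induction on word length in $R$ then gives $R\cdot H(\ad(L^t),\ldots)\subseteq H(\ad(L^t),\ldots)\cdot R$, so for every $P\in R$,
\[
\bar{a'_{\pi_0}}P\cdot H(\ad(L^t),\ldots)\subseteq \bar{a'_{\pi_0}}H(\ad(L^t),\ldots)\cdot R=(0).
\]
Hence $f_1(\text{id}_L(\bar{a'_{\pi_0}}),L^t,\ldots,L^t)=(0)$.

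Graded just-infiniteness of $L$ then forces $\text{id}_L(\bar{a'_{\pi_0}})\supseteq L^k$ for some $k\geq t$, so $f_1(L^k,L^k,\ldots,L^k)=(0)$: the subalgebra $L^k$ satisfies a nontrivial multilinear Lie identity of degree $n-1$. By Lemma \ref{Lemma3} the ideal $L^k$ is finitely generated, and the ad-nilpotency of all iterated commutators in $a_1,\ldots,a_m$ is inherited by the relevant commutators in the generators of $L^k$. The induction hypothesis on $n$ (applied to Theorem \ref{Theorem1} for Lie algebras satisfying an identity of degree $<n$) then yields that $L^k$ is nilpotent, hence finite dimensional. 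Combined with $\dim_F(L/L^k)<\infty$ this makes $L$ finite dimensional, contradicting the infinite-dimensionality built into the graded just-infinite hypothesis. The main obstacle is the first step—extracting the coefficient-wise vanishing $f_1(\bar{a'_{\pi_0}},L^t,\ldots,L^t)=(0)$ from the single hypothesis that $f_1(a',\til{L}^t,\ldots,\til{L}^t)=(0)$, which requires exploiting both linearity of $f_1$ in its first argument and the freedom to pick $b_i$'s living in pairwise disjoint sectors of the exterior algebra $E$; once this is in hand, the multiplication-algebra spreading, the just-infinite closure, and the final induction on $n$ are essentially formal.
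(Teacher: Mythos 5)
Your proposal is correct and follows essentially the same route as the paper: extract componentwise vanishing $f_1(\bar{a'_{\pi_0}},L^t,\ldots,L^t)=(0)$, spread it over $\mathrm{id}_L(\bar{a'_{\pi_0}})$ via the multiplication algebra $R$, use graded just-infiniteness to get a degree-$(n-1)$ identity on some $L^k$, and conclude by Lemma \ref{Lemma3} and induction on the degree of the identity. The only difference is that you spell out the coefficient-extraction step (choosing the $b_i$ in fresh sectors $e_{j_i}$ of $E$), which the paper leaves implicit; this is a correct filling-in of that detail, not a different argument.
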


\begin{corollary}\label{Corollary1}
	Let $q\geq 1.$  Choose $r+q(n-2)$ distinct variables $x_1,\cdots,x_r,\ y_{ij},\ 1\leq i\leq q,\ 2\leq j\leq n-1.$  Then the divided polynomial
	\[\begin{multlined}w_q=w(x_1,\cdots,x_r)H(\ad(y_{11}),\cdots,\ad(y_{1,n-1}))\cdots\\ H(\ad(y_{q1},\cdots,\ad(y_{q,n-1}))\end{multlined}\]
is regular.
\end{corollary}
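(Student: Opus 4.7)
The plan is to prove Corollary \ref{Corollary1} by induction on $q$, using Lemma \ref{Lemma26} as the engine at each step. The base case $q=1$ is immediate: by construction $w_1 = f_1(w(x_1,\ldots,x_r), y_{1,2},\ldots,y_{1,n-1})$, and Lemma \ref{Lemma26} applied to the hypothesized regular divided polynomial $w$ gives exactly the regularity of $w_1$.

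For the inductive step, I would first verify that $w_{q-1}$ is itself a divided polynomial in the sense of the rules (DP\ref{DP1})--(DP\ref{DP3}) of Section \ref{Section3}. The multilinear Lie polynomial $f_1$ lies in the ideal $(X)\subseteq L \ast \Lie\langle X\rangle$ and therefore in $U(L^s)$ by (DP\ref{DP1}); iterated composition with previously constructed divided polynomials (including the variables $y_{ij}\in X$) via the composition rule (DP\ref{DP2}) then places each $w_i$, $1\leq i\leq q-1$, in $U(L^s)$. Assuming inductively that $w_{q-1}$ is regular, observe that $w_q = f_1(w_{q-1}, y_{q,2},\ldots,y_{q,n-1})$, so Lemma \ref{Lemma26} applies with $w_{q-1}$ playing the role of $w$, yielding regularity of $w_q$.

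The only point that warrants care is confirming that the hypothesis of Lemma \ref{Lemma26} is robust enough to be iterated, i.e., that the lemma's output is an admissible input in the next round. The paragraph above handles exactly this: Lemma \ref{Lemma26} imposes no structural restriction on the polynomial in its hypothesis beyond its being a regular divided polynomial defined on some $L^t$, and rule (DP\ref{DP2}) preserves both properties under the composition $f_1(w_{q-1},\cdots)$. Beyond this bookkeeping, I anticipate no significant obstacle; the corollary is essentially a clean iteration of the lemma just proved.
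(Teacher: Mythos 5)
Your proof is correct and takes exactly the approach the paper intends: the corollary is stated without an explicit proof precisely because it is the straightforward iteration of Lemma \ref{Lemma26}, noting that $w_q = f_1(w_{q-1}, y_{q,2},\ldots,y_{q,n-1})$ and that rules (DP\ref{DP1})--(DP\ref{DP2}) propagate the ``divided polynomial'' status through the composition at each step.
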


\begin{lemma}
	\label{Lemma27}
	For arbitrary elements $a,b_2,\cdots,b_{n-1},c\in\til L$ we have
	\[[f_1(a,b_2,\cdots,b_{n-1}),c]\in\sum F[a,b_{i_2},\cdots,[b_{i_k},c],\cdots,b_{i_{n-1}}],\]
where $i_2,i_3,\cdots,i_{n-1}$ is a permutation of $2,\cdots,n-1;\ 2\leq k\leq n-1.$

\begin{proof}
	We have 
	\[\begin{multlined}aH(\ad(b_2),\cdots,\ad(b_{n-1})\ad(c)=a\ad(c)H(\ad(b_2),\cdots,\ad(b_{n-1}))+\\
	\sum aH(\ad(b_2),\cdots,\ad([b_k,c]),\cdots,\ad(b_{n-1})).\end{multlined}\]
Let us represent the polynomial $f$ of minimal degree as
\[f(x_0, x_1, \cdots, x_{n-1})=\sum\limits_{i=1}^{n-1}x_0\ad(x_i)H_i(\ad(x_1), \cdots, \widehat{\ad(x_i)}, \cdots, \ad(x_{n-1})),\]
where $H_1=H$. Then
\begin{align*}
&a\ad(c)H(\ad(b_2), \cdots, \ad(b_{n-1}))=-c\ad(a)H(\ad(b_2), \cdots, \ad(b_{n-1}))\\
&=\sum\limits_{i=2}^{n-1}[c,b_i]H_i(\ad(a), \ad(b_2), \cdots, \widehat{\ad(b_i)}, \cdots, \ad(b_{n-1}))\\
& \in \sum F[a,b_{i_2}, \cdots, [b_{i_k},c], \cdots, b_{i_{n-1}}].
\end{align*}
\end{proof}
\end{lemma}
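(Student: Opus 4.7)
The plan is to prove Lemma 27 by a direct computation using the derivation property of $\ad$ and the multilinear identity $f=0$.

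First I would write $[f_1(a,b_2,\cdots,b_{n-1}),c]=aH(\ad(b_2),\cdots,\ad(b_{n-1}))\ad(c)$ and push $\ad(c)$ through the product of operators $\ad(b_i)$. Using $[\ad(b_i),\ad(c)]=\ad([b_i,c])$, this yields a ``main'' term $a\,\ad(c)\,H(\ad(b_2),\cdots,\ad(b_{n-1}))$ plus a sum of ``error'' terms in which one $\ad(b_k)$ has been replaced by $\ad([b_k,c])$. The error terms are visibly of the form $[a,b_{i_2},\cdots,[b_{i_k},c],\cdots,b_{i_{n-1}}]$ (up to scalars coming from the coefficients of $H$), so they already lie in the claimed span.

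Next I would handle the main term. Since $a\,\ad(c)=[a,c]=-c\,\ad(a)$, it equals $-c\,\ad(a)\,H(\ad(b_2),\cdots,\ad(b_{n-1}))$. Now I invoke the Lie identity $f=0$, written in the form $f(x_0,x_1,\cdots,x_{n-1})=\sum_{i=1}^{n-1}x_0\,\ad(x_i)\,H_i(\ad(x_1),\cdots,\widehat{\ad(x_i)},\cdots,\ad(x_{n-1}))$ with $H_1=H$. Substituting $x_0=c$, $x_1=a$, $x_i=b_i$ for $2\leq i\leq n-1$, the vanishing of $f$ lets me express $c\,\ad(a)\,H(\ad(b_2),\cdots,\ad(b_{n-1}))$ as a linear combination of terms $c\,\ad(b_i)\,H_i(\ad(a),\ad(b_2),\cdots,\widehat{\ad(b_i)},\cdots,\ad(b_{n-1}))$ for $i=2,\cdots,n-1$.

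Finally, each such term rewrites as $[c,b_i]\,H_i(\ad(a),\ad(b_2),\cdots,\widehat{\ad(b_i)},\cdots,\ad(b_{n-1}))=-[b_i,c]\,H_i(\cdots)$, which is a linear combination of left-normed commutators starting with $[b_i,c]$ followed by $a$ and the remaining $b_j$'s in some order. Reordering using the Jacobi identity (the starting entry $[b_i,c]$ can be moved into the position of the ``$k$-th slot'' among the $b$'s at the cost of producing further commutators of the same shape), every such term lies in the span $\sum F[a,b_{i_2},\cdots,[b_{i_k},c],\cdots,b_{i_{n-1}}]$. Combined with the error terms from the first step, this completes the proof.

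There is no serious obstacle: the argument is entirely formal, combining the derivation identity for $\ad$ with one application of $f=0$. The only thing requiring care is the bookkeeping to verify that after reversing the left-normed commutator $[c,b_i,\ldots]$ into one starting with $a$, the result genuinely sits in the span described, with $[b_i,c]$ playing the role of the bracket in the $k$-th slot.
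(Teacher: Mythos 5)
Your proposal follows the same three-step computation as the paper: split $[f_1(a,b_2,\ldots,b_{n-1}),c]=aH(\ad(b_2),\ldots,\ad(b_{n-1}))\ad(c)$ by pushing $\ad(c)$ through $H$ (Leibniz), rewrite the main term as $-c\ad(a)H(\ldots)$ and expand it via the minimal identity $f=0$ in the form $\sum_i x_0\ad(x_i)H_i(\ldots)$ with $x_0=c,\ x_1=a$, and observe the resulting terms $[c,b_i]H_i(\ldots)$ lie in the claimed span. Your closing remark that the last containment still requires Jacobi bookkeeping (moving the leading $[b_i,c]$ into an interior slot while keeping $a$ in front) is a fair acknowledgment of a step the paper states without detail; otherwise the two proofs are essentially identical.
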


\begin{lemma}
	\label{Lemma28}
	Let $\nu\geq 1.$  Suppose that a divided polynomial $w(x_1,\cdots,x_r)$ is defined on $L^s$ and for all $t\leq q(n-2)+\nu$ the divided polynomial $[w(x_1,\cdots,x_r),y_1,\cdots,y_t,w(x_1,\cdots,x_r)]$ is identically zero on $\til L^s.$  Then for arbitrary elements $a_k,b_{ij}\in\til L^s$ we have
	\[\begin{multlined}
	[ w _q(a_k,b_{ij},1\leq k\leq r,\ 1\leq i\leq q,\ 2\leq j\leq n-1),\\
	\underbrace{L,L,\cdots,L}_{\mu},\underbrace{L^s,\cdots,L^s}_{\nu},w(a_1,\cdots,a_r)]=(0)\end{multlined}\]
	for $\mu\leq q.$
	\begin{proof}
		Applying \Cref{Lemma27} $q$ times we get
		\[[w_q(a_k,b_{ij}),\underbrace{L,\cdots,L}_\mu]\subseteq[w(a_1,\cdots,a_r),\underbrace{\til L^s,\cdots,\til L^s}_{q(n-2)}],\]
		which implies the assertion of the lemma.
	\end{proof}
\end{lemma}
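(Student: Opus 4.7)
The strategy is to rewrite the target commutator as a linear combination of values of the divided polynomial $[w(x_1,\cdots,x_r),y_1,\cdots,y_t,w(x_1,\cdots,x_r)]$ (with $t\leq q(n-2)+\nu$ and all $y_i\in\til{L}^s$), which vanishes identically on $\til{L}^s$ by hypothesis. Two observations drive this reduction.

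First, iterating $w_q=f_1(w_{q-1},b_{q2},\cdots,b_{q,n-1})=w_{q-1}H(\ad(b_{q2}),\cdots,\ad(b_{q,n-1}))$ and expanding each $H$ as a linear combination of monomials in the $\ad(b_{ij})$'s shows that, for every $0\leq k\leq q$, the element $w_{q-k}$ is a sum of left-normed commutators $[w,d_1,\cdots,d_{(q-k)(n-2)}]$ whose entries $d_j$ are permutations of the $b_{ij}$'s, hence in $\til{L}^s$.

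Second, I would show by induction on $\mu$ (with $0\leq\mu\leq q$) that
\[[w_q,\ell_1,\cdots,\ell_\mu]\in\spn\{[w,c_1,\cdots,c_{q(n-2)}]:c_j\in\til{L}^s\}.\]
The base $\mu=0$ is the first observation. For the inductive step, write the previous stage as a sum of expressions $[w_{q-k},c_1,\cdots,c_{k(n-2)}]$ with $1\leq k\leq\mu-1$ and $c_j\in\til{L}^s$, and apply $\ad(\ell_\mu)$ via the Leibniz rule. This produces a term $[[w_{q-k},\ell_\mu],c_1,\cdots,c_{k(n-2)}]$---to which \Cref{Lemma27} is applied to rewrite $[w_{q-k},\ell_\mu]=[f_1(w_{q-k-1},b_{(q-k)2},\cdots,b_{(q-k),n-1}),\ell_\mu]$ as a sum of $[w_{q-k-1},b'_1,\cdots,b'_{n-2}]$ with $b'_j\in\til{L}^s$ and $\ell_\mu$ absorbed into one of them---together with terms $[w_{q-k},c_1,\cdots,[c_j,\ell_\mu],\cdots,c_{k(n-2)}]$ in which $\ell_\mu$ is absorbed directly into an internal entry, legitimate since $\til{L}^s$ is an ideal. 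In every case the outcome takes the form $[w_{q-k'},c'_1,\cdots,c'_{k'(n-2)}]$ with $k'\leq\mu\leq q$, and expanding $w_{q-k'}$ via the first observation yields a sum of left-normed commutators starting with $w$ and having exactly $q(n-2)$ entries in $\til{L}^s$.

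Bracketing the resulting sum with $m_1,\cdots,m_\nu\in L^s$ and finally with $w(a_1,\cdots,a_r)$ preserves the form, since $[\til{L}^s,L^s]\subseteq\til{L}^s$ and $w(a_1,\cdots,a_r)\in\til{L}^s$. By iterated Jacobi manipulation the result becomes a linear combination of values of $[w,y_1,\cdots,y_t,w]$ with $t\leq q(n-2)+\nu$ and $y_i\in\til{L}^s$, each of which vanishes by hypothesis, completing the proof. The main technical obstacle is the combinatorial bookkeeping of the inductive absorption: at every step one must verify that $\ell_\mu$ is truly consumed (via \Cref{Lemma27} or Jacobi swap into an $\til{L}^s$-entry) and that the cumulative count of \Cref{Lemma27} invocations stays bounded by $\mu$, so that the hypothesis $\mu\leq q$ keeps $q-k'\geq 0$ throughout and the expansion of $w_{q-k'}$ into a commutator beginning with $w$ remains available.
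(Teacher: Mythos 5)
Your argument is correct and follows exactly the mechanism behind the paper's terse proof: apply \Cref{Lemma27} and the Leibniz rule repeatedly to establish $[w_q,\underbrace{L,\cdots,L}_\mu]\subseteq[w,\underbrace{\til{L}^s,\cdots,\til{L}^s}_{q(n-2)}]$, then extend by the $\nu$ further brackets and $w$ to land in the vanishing divided polynomial. You make the paper's "applying \Cref{Lemma27} $q$ times" explicit as an induction on $\mu$; aside from a harmless indexing slip (the inductive hypothesis should allow $0\leq k\leq\mu-1$, not $1\leq k\leq\mu-1$, so that the base case $k=0$ is included), this is essentially the paper's proof with the bookkeeping spelled out.
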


\begin{lemma}
	\label{Lemma29}
	Let $a\in \til L$ and $[a,\underbrace{\til L,\cdots,\til L}_\mu,a]=(0)$ for $\mu\leq2d.$  Then for an arbitrary element $b\in \til L$ the commutator $[a,b]$ generates a nilpotent ideal in $\til A$ of degree $\leq m^d(p-1)+1.$
	\begin{proof}
		Recall that the algebra $L$ is generated by $m$ elements $x_1,\cdots,x_m.$  Suppose that $[a,b]v_1[a,b]\cdots v_{N-1}[a,b]\neq0,$ where $v_i$ are products of the generators and the vector of lengths $(l(v_1),\cdots,l(v_{N-1})$ is lexicographically minimal among all vectors with this property.  Then by \Cref{Lemma24} $l(v_i)<d$ for $i=1,\cdots,N-1.$
		
		As above, for a product $v=x_{i_1},\cdots,x_{i_k}$ we denote
		\[[v[a,b]]=[x_{i_1}[x_{i_2},[\cdots[x_{i_k},[a,b]]\cdots].\] 
		 We have
		 \[[a,b]v_1\cdots v_{N-1}[a,b]=[a,b][v_1,[a,b]]\cdots[v_{N-1}[a,b]].\]
		 By our assumption the commutators $[a,b],\ [v_i[a,b]],\ 1\leq i\leq N-1,$ commute.  There are $<m^d$ such commutators.  Hence at least one commutator $[v_i[a,b]]$ occurs $\geq p$ times.  If $b=\sum\limits_\pi b_\pi$ is the standard decomposition then $[v_i [a,b]]^p$ is a sum of expressions 
		 \begin{align*}
		 &\{[v_i[a,b_{\pi_0}]],\cdots,[v_i[a,b_{\pi_{p-1}}]]\}=\\
		 &\sum\limits_{\sigma\in S_{p-1}}[[v_i,[a,b_{\pi_0}]],[v_i[a,b_{\pi_{\sigma(1)}}]],\cdots,[v_i[a,b_{\pi_{\sigma(p-1)}}]]]=0.
		\end{align*}
		Hence $[v_i[a,b]]^p=0,$ which finishes the proof of the lemma.
	\end{proof}
\end{lemma}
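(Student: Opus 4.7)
The plan is to bound the nilpotence index by showing that every associative product $[a,b]v_1[a,b]v_2\cdots v_{N-1}[a,b]$, with $v_i\in\til{A}$ and $N=m^d(p-1)+1$, equals zero. Arguing by contradiction, assume some such product is nonzero and pick one whose vector of monomial lengths $(l(v_1),\ldots,l(v_{N-1}))$ (treating each $v_i$ as a product of the generators $x_1,\ldots,x_m$) is lexicographically minimal. Lemma \ref{Lemma24} lets us rewrite $v_i[a,b]$ whenever $l(v_i)\geq d$ as a linear combination of products with a strictly shorter left prefix; the resulting configurations are lexicographically smaller, hence vanish by minimality, forcing $l(v_i)<d$ for every $i$.

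For $v_i=x_{j_1}\cdots x_{j_r}$ with $r<d$, the associative product $v_i[a,b]$ expands into the nested commutator $[v_i[a,b]]=[x_{j_1},[x_{j_2},[\cdots,[x_{j_r},[a,b]]\cdots]]$ plus terms in which one or more $x_{j_k}$ migrate past $[a,b]$. Substituting these migrated terms back into the full product yields configurations with a strictly shorter $v_i$-prefix and therefore contribute $0$ by lexicographical minimality. The product thus collapses to
\[
[a,b]\cdot[v_1[a,b]]\cdots[v_{N-1}[a,b]],
\]
a product of $N$ Lie brackets lying in $\til L$.

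The core of the argument is to show these $N$ factors pairwise commute and each has vanishing $p$-th power. Every $[v_i[a,b]]$ is a Lie word containing $a$ exactly once, so $[[v_i[a,b]],[v_j[a,b]]]$ is a Lie element in which $a$ appears exactly twice with at most $l(v_i)+l(v_j)+2\leq 2d$ other entries from $\til L$. Iterated use of Jacobi rewrites any such element as a linear combination of brackets $[a,c_1,\ldots,c_\mu,a]$ with $c_k\in\til L$ and $\mu\leq 2d$, which vanish by hypothesis; an analogous count handles $[[a,b],[v_i[a,b]]]$. For the $p$-th power, write $b=\sum_\pi b_\pi$ in its standard decomposition. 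Each summand $[v_i[a,b_\pi]]$ lies in $A\otimes e_\pi E$ (as $b_\pi\in L\otimes e_\pi$), and $e_\pi^2=0$ forces $[v_i[a,b_\pi]]^2=0$. The summands pairwise commute by the same Jacobi count (middle length $\leq 2l(v_i)+2\leq 2d$), and since $\til{A}$ has characteristic $p$,
\[
[v_i[a,b]]^p=\Bigl(\sum_\pi[v_i[a,b_\pi]]\Bigr)^p=\sum_\pi[v_i[a,b_\pi]]^p=0.
\]

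Finally, there are only $1+m+\cdots+m^{d-1}<m^d$ monomials of length $<d$, so among the $N=m^d(p-1)+1$ factors some $[v_i[a,b]]$ must appear at least $p$ times; their mutual commutativity assembles these occurrences into a $p$-th power that vanishes, contradicting nonzeroness. The main obstacle I anticipate is executing the Jacobi reduction in the commutativity step: one must verify carefully that every Lie monomial arising in $[[v_i[a,b]],[v_j[a,b]]]$ can indeed be rewritten as $[a,c_1,\ldots,c_\mu,a]$ with $\mu\leq 2d$, a delicate bookkeeping task on the free Lie algebra in which the middle-entry count must not inflate past $2d$ under any awkward Jacobi rearrangement.
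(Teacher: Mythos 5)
Your proof is correct and follows the paper's route in all its structural steps: the lexicographic-minimality reduction via \Cref{Lemma24} to monomials $v_i$ of length $<d$, the collapse of the associative product to $[a,b][v_1[a,b]]\cdots[v_{N-1}[a,b]]$, the pairwise-commutativity of the factors via the middle-length count $l(v_i)+l(v_j)+2\leq 2d$, and the pigeonhole count forcing some $[v_i[a,b]]$ to occur at least $p$ times among fewer than $m^d$ classes. Where you diverge is in the final step, showing $[v_i[a,b]]^p=0$. The paper invokes Jacobson's identity $\sum_{\sigma\in S_p}x_{\sigma(1)}\cdots x_{\sigma(p)}=\sum_{\sigma\in S_{p-1}}[x_p,x_{\sigma(1)},\cdots,x_{\sigma(p-1)}]$ to convert the symmetrized expansion of $\bigl(\sum_\pi[v_i[a,b_\pi]]\bigr)^p$ into a sum of left-normed Lie commutators, each of which vanishes because its leading bracket $[[v_i[a,b_{\pi_0}]],[v_i[a,b_{\pi_{\sigma(1)}}]]]$ is already zero by the middle-length hypothesis. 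You instead observe that each component $[v_i[a,b_\pi]]$ lands in $L\otimes e_\pi E$ (because $a\in L\otimes E$ and $b_\pi\in L\otimes e_\pi$, so every monomial carries $e_\pi$ and at least one further $e$-factor), whence its square vanishes because $e_\pi^2=0$; combined with the already-established pairwise commutativity and the Frobenius identity $(\sum c_\pi)^p=\sum c_\pi^p$ in characteristic $p$, this yields $[v_i[a,b]]^p=0$. Both arguments require the same pairwise commutativity of the components and the same characteristic-$p$ input; your route avoids the Jacobson identity by exploiting the nilpotency of the Grassmann coefficients directly, which is arguably more elementary and is implicitly what powers the paper's omission of the repeated-$\pi$ terms in the Jacobson expansion. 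One small caution: in your commutativity step the entries between the two occurrences of $a$ include the generators $x_j\in L$, which do not literally lie in $\til L$; one should note (as the paper implicitly does) that the hypothesis $[a,\til L,\ldots,\til L,a]=(0)$ already forces $[a,c_1,\ldots,c_\mu,a]=0$ for $c_k\in L$ as well, by tensoring each $c_k$ with a fresh $e_{k}$ and stripping the resulting Grassmann factors.
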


\begin{lemma}
	\label{Lemma30}
	For an arbitrary sandwich of the algebra $\til L^s$ we have $a^p=0$ in $\til A.$
	\begin{proof}
		Recall that the algebra $A$ is a homomorphic image of the subalgebra $\langle \ad(x),x\in L\rangle\subseteq End_F(L).$ 
		
		Hence it is sufficient to show that $\ad(a)^p=0$ in $\til L.$  If $p\geq 3$ then $\til L\ad(a)^p\subseteq[\til L^s,a,a]=(0).$  Let $p=2.$  We have $[\til L^s,a,a]=(0).$  Since the mapping $\til L\rightarrow\til L,\ x\rightarrow [x,a,a]$ is a derivation of $\til L$ it follows that $[\til L,a,a]$ lies in the centralizer of $\til L^s.$  In a graded just infinite algebra $L$ the centralizer of $L^s$ is zero.  Hence $[\til L,a,a]=(0)$ and again $\ad(a)^2=0$, which finishes the proof of the lemma.
			\end{proof}
		\end{lemma}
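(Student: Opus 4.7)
The plan is to reduce the claim to the adjoint representation and then handle the two cases $p\geq 3$ and $p=2$ separately. Since $\til A$ is a homomorphic image of the associative subalgebra of $\End_F(\til L)$ generated by $\ad(\til L)$, it will suffice to prove that $\ad(a)^p=0$ as an operator on $\til L$.

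For $p\geq 3$, I will use that $\til L^s$ is a graded ideal of $\til L$ containing $a$, so $\til L\,\ad(a)\subseteq\til L^s$. The sandwich hypothesis gives $\ad(a)^2=0$ on $\til L^s$, hence
\[
\til L\,\ad(a)^p \;\subseteq\; \til L^s\,\ad(a)^{p-1} \;\subseteq\; [\til L^s,a,a]\;=\;(0),
\]
since $p-1\geq 2$. This disposes of the odd-characteristic case.

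The case $p=2$ is the main obstacle, because the sandwich hypothesis only provides $\ad(a)^2=0$ on $\til L^s$, not on all of $\til L$. My plan is to upgrade this via two observations. First, in characteristic $p$ the $p$-th power of any derivation is again a derivation, so $D:=\ad(a)^2$ is a derivation of $\til L$. Second, any derivation of $\til L$ vanishing on the ideal $\til L^s$ maps $\til L$ into the centralizer $C_{\til L}(\til L^s)$: applying the Leibniz identity $D[x,y]=[Dx,y]+[x,Dy]$ with $x\in\til L$ and $y\in\til L^s$, the left-hand side vanishes (as $[x,y]\in\til L^s$) and $[x,Dy]=0$, yielding $[Dx,y]=0$.

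Finally, I will verify $C_{\til L}(\til L^s)=C_L(L^s)\otimes_F E=(0)$: the centralizer $C_L(L^s)$ is a graded ideal of $L$, and in a graded just infinite Lie algebra a nonzero graded ideal has finite codimension, so $C_L(L^s)\supseteq L^N$ for some $N$, forcing $L^{N+s}\subseteq[L^N,L^s]=(0)$, which contradicts $L$ being infinite dimensional and graded. Thus $D=\ad(a)^2=0$ on $\til L$, giving $a^2=0$ in $\til A$ as required. The crux of the argument is therefore the $p=2$ step, where one must combine the derivation property of $p$-th powers with the graded just infinite hypothesis to force the centralizer of $L^s$ to be trivial.
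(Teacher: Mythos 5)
Your proof follows the paper's argument step for step: the reduction to $\ad(a)^p$ on $\til L$, the immediate disposal of $p\geq 3$ via $\til L\,\ad(a)\subseteq\til L^s$, and—most importantly—the key idea for $p=2$ that $\ad(a)^2$ is a derivation of $\til L$ vanishing on $\til L^s$, whence the Leibniz rule forces its image into $C_{\til L}(\til L^s)$. This is precisely what the paper does. The only place you go beyond the paper is in trying to justify the final assertion that the centralizer of $L^s$ vanishes; the paper simply states this.

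Unfortunately, that supplied justification contains an error. From $C_L(L^s)\supseteq L^N$ you conclude $L^{N+s}\subseteq[L^N,L^s]$, but lower central series powers do not multiply this way: $L^{N+s}=[L^{N+s-1},L]$ is built from $L^N$ by bracketing with single generators, not by bracketing $L^N$ against $L^s$. Concretely, in the free Lie algebra on two generators one already has $[L^2,L^2]\subseteq\bigoplus_{i\geq 5}L_i$ while $L^4$ contains degree-$4$ elements, so $L^4\not\subseteq[L^2,L^2]$. Moreover, the conclusion you want cannot follow from gradedness and just-infiniteness alone: the Lie algebra with basis $a,b_1,b_2,\dots$, $[a,b_i]=b_{i+1}$, $[b_i,b_j]=0$, is graded just infinite with $C_L(L^2)\neq 0$ (it fails the standing ad-nilpotency hypothesis, which is exactly the missing ingredient). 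The correct route is: set $N'=\max(N,s)$; then $[L^{N'},L^{N'}]\subseteq[L^N,L^s]=(0)$, so $L^{N'}$ is abelian. Since $L^{N'}$ is a nonzero graded ideal it has finite codimension, so by Lemma \ref{Lemma3} (which uses the ad-nilpotency of the generating Lie set) it is finitely generated, hence finite-dimensional; finite codimension then forces $\dim_F L<\infty$, contradicting just-infiniteness. With that repair your argument coincides with the paper's.
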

		
\begin{proof}[Proof of \Cref{Proposition4}]
	We will start with the regular divided polynomial $v(x_1,\cdots,x_r)$ of \Cref{Proposition3} defined on $L^s.$  Every value of $v$ on $\til L^s$ is a sum of $t$ sandwiches of the algebra $\til{L}^s.$
	
	We will construct a sequence of finite sets $M_i$ of divided polynomials defined on $L^s,\ i\geq0.$
	
	Let $M_0=\{v\},\ M_{i+1}=\{[w,y_1,\cdots,y_\mu,w]|w\in M_i,\ \mu\leq4d(n-2);\ y_1,\cdots,y_\mu \text{ are variables not involved in } w\}.$
	
	Let $2^i\geq (d-1)t+1,$ $w \in M_i.$ Let $b$ be a value of the divided polynomial $v$ on $\til{L}^s.$ By Proposition \ref{Proposition3}, $b=b_1 + \cdots + b_t,$ where for $1 \leq j \leq t,$ each element $b_j$ is a sandwich of the algebra $\til{L}^s$. Since $2^i \geq (d-1)t +1,$ it follows that the value $c$ of the divided polynomial $w$ that is obtained by iterating the value $b$ of $v$ lies in $\sum\limits_{j=1}^{t}\text{id}_{\til{L}}(b_j)^d.$ By Lemma \ref{Lemma25} there exists an integer $G \geq 1$ such that every value of the divided polynomial $w$ on $\til{L}^s$ generates a nilpotent ideal in $\til{A}$ of degree $\leq G.$

	%Let $2^i\geq d,\ w\in M_i.$  By \Cref{Lemma25,Lemma20} there exists an integer $G\geq 1$ such that every value of the divided polynomial $w$ on $\til L^s$ generates a nilpotent ideal in $\til A$ of degree $\leq G.$
	
	If at least one divided polynomial in $M_i$ is regular, then we are done.
	
	If none of the divided polynomials in $M_i$ is regular and $i$ is the minimal integer with this property then there exists a regular divided 
	polynomial $w(x_1,\cdots,x_r)$ defined on $\til L ^s$ and an integer $t\geq s$ such that all the divided polynomials
	\[[w(x_1,\cdots,x_r),y_1,\cdots,y_\mu,w(x_1,\cdots,x_r)],\ 1\leq\mu\leq 4d(n-2),\]
	are identically zero on $\til L^t.$
	
	Consider the regular divided polynomial $w_{2d}(x_1,\cdots,x_r,y_{ij},1\leq i\leq2d,\ 2\leq j\leq n-1).$  By \Cref{Lemma28} we have
	\[[w_{2d}(a_1,\cdots,a_r,b_{ij}),\underbrace{L,L,\cdots,L}_\mu,\underbrace{L^t,\cdots,L^t}_\nu,w(a_1,\cdots,a_r)]=(0),\]
	$ \mu\leq2d,\ \nu\leq 2d(n-2)$ for arbitrary elements $a_1,\cdots,a_r,b_{ij}\in\til L^t$. We have $w_{2d}(a_1, \cdots, a_r, b_{ij})\in \sum\limits_{\nu=1}^{2d(n-2)}[w(a_1, \cdots, a_r), \underbrace{L^t, \cdots, L^t}_\nu].$
Therefore
	\[[w_{2d}(a_1,\cdots,a_r,b_{ij}),\underbrace{L,L,\cdots,L}_\mu,w_{2d}(a_1,\cdots,a_r,b_{ij})]=(0)\]
	for $\mu\leq 2d$.
	
	The divided polynomial \[w'_{2d}(x_0,x_1,\cdots,x_r,y_{ij})=[w_{2d}(x_1,\cdots,x_r,y_{ij}),x_0]\] 
	 is regular.  Indeed, regularity of $w_{2d}$ has been established in Corollary \ref{Corollary1}. If the divided polynomial $w_{2d}'$ vanishes on some power of $L$, then some power of $L$ has a nonzero centralizer. Since the algebra $L$ is graded just infinite, it follows that this centralizer is of finite codimension. Hence the algebra $L$ is solvable and therefore finite dimensional, a contradiction. By \Cref{Lemma29} an arbitrary value of $w'_{2d}$ on $\til L^t$ generates a nilpotent ideal in $\til A$ of degree $\leq m^d(p-1)+1.$  This finishes the proof of \Cref{Proposition4}.
\end{proof}

By \Cref{Proposition4} there exist integers $s\geq 1,\ N_1\geq1$ and a regular divided polynomial $w$ defined on $L^s$ such that every value of $w$ on $\til L^s$ generates a nilpotent ideal in $\til A$ of degree $\leq N_1.$  Let $f(x_1,x_2,\cdots,x_r)\in\til L\ast\Lie\langle X\rangle$ be the linearization of the divided polynomial $w$.  If $l$ is the degree of $w$ then every value of $f$ in $\til L^s$ is a linear combination of $2^l$ values of $w$.  Hence for arbitrary $a_1,a_2,\cdots a_r \in\til L^s$ the element $f(a_1,\cdots,a_r)$ generates a nilpotent ideal in $\til A$ of degree $\leq N_2=2^l(N_1-1)+1.$

Choose variables $x_{ij} \in X$, $1\leq i\leq r$, $1 \leq j \leq N_2$. The pair $(L,A)$ satisfies the system of weak identities
\begin{align*}
&F_{N_2}=\{F_{N_2}(x_{ij},y_k)=\sum\limits_{\sigma_1,\cdots, \sigma_r\in S_{N_2}}f(x_{1\sigma_1(1)}, x_{2\sigma_2(1)},\cdots, x_{r\sigma_r(1)})y_1\\
&f(x_{1\sigma_1(2)}, x_{2\sigma_2(2)},\cdots, x_{r\sigma_r(2)})y_2\cdots y_{N_2-1}f(x_{1\sigma_1(N_2)}, x_{2\sigma_2(N_2)},\cdots, x_{r\sigma_r(N_2)})\\
&=0\},
\end{align*}
where $x_{ij}$ take values in $L^s$ and $y_k$'s are arbitrary products of independent variables taking values in $L$. Thus values of $y_k$'s lie in $A$.

Indeed, these weak identities are satisfied by the pair $(\til{L}, \til{A})$. It remains to notice that the pair $(L,A)$ and $(\til{L},\til{A})$ satisfy the same multilinear weak identities. 

Let $N$ be the minimal integer such that $(L,A)$ satisfies the weak identities $F_N=0$.

Let $a=F_{N-1}(a_{ij},b_k)\neq 0$, $a_{ij}\in L^s$, $1\leq i \leq r$, $1\leq j\leq N-1$; $b_k\in A$. Choose $t > \max\limits_{i,j}\deg({a_{ij}})$. By regularity of the polynomial $f$, there exist homogeneous elements $a_1, \cdots, a_r \in L^t$ such that $f(a_1, \cdots, a_r) \neq 0$. The identities $F_N=0$ immediately imply the following lemma.

%Choose variables $x_{ij}\in X,\ 1\leq i\leq r,\ 1\leq j\leq N_2.$  The pair $(L,A)$ satisfies the weak identity 
%\begin{align*}
%&F_{N_2}(x_{ij},y_k)=\sum\limits_{\sigma_1,\cdots,\sigma_r\in S_{N_2}}f(x_{1\sigma_1(1)},x_{2\sigma_2(1)},\cdots,x_{r\sigma_r(1)})\cdot\\
%& y_1f(x_{1\sigma_1(2)},x_{2\sigma_2(2)},\cdots,x_{r\sigma_r(2)})y_2\cdots\\
%&\cdots y_{N_2-1}f(x_{1\sigma_1(N_2)},x_{2\sigma_2(N_2)},\cdots,x_{r\sigma_r(N_2)})=0,
%\end{align*}
%where $x_{ij}$ take values in $L^s$, $y_k\in A.$
%
%Let $N$ be a minimal integer such that $(L,A)$ satisfies the weak identity $F_N=0.$
%
%Let $a=F_{N-1}(a_{ij},b_k)\neq0,\ a_{ij}\in L^s,\ 1\leq i\leq r,\ 1\leq j\leq N-1;\ b_k\in A.$  Choose $t>\max\limits_{i,j}\deg a_{ij}.$  By regularity of the polynomial $f$ there exist elements $a_1,\cdots,a_r\in L^t$ such that $f(a_1,\cdots,a_r)\neq0.$
%
%The identity $F_N=0$ immediately implies the following lemma.

\begin{lemma}
	\label{Lemma31}
	Let $b_\mu,c_\mu\in A+F1.$  Then $(\sum\limits_\mu b_\mu f(a_1,\cdots,a_r)c_\mu)a$ is a linear combination of elements $(\sum\limits_\mu b_\mu f(a_1',\cdots,a_r')c_\mu)a',$ where $a'\in A,\ a_1'\in\{a_1,a_{1k},\ 1\leq k\leq N-1\},\ a_2'\in \{a_2,a_{2k},\ 1\leq k\leq N-1\},\cdots,$ and at least one $a_i'$ lies in $\{a_{ik},\ 1\leq k\leq N-1\}.$
\end{lemma}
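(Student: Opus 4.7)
The plan is to apply the minimal-degree weak identity $F_N=0$ under a carefully chosen substitution that isolates $(\sum_\mu b_\mu f(a_1,\ldots,a_r) c_\mu) a$ as its ``leading'' contribution. I substitute $x_{ij}=a_{ij}$ for $1\leq j\leq N-1$ and $x_{iN}=a_i$, and take $y_1=c_\mu$ together with $y_k=b_{k-1}$ for $2\leq k\leq N-1$, where the $b_1,\ldots,b_{N-2}$ are precisely the elements already appearing inside $a=F_{N-1}(a_{ij},b_k)$. Then I left-multiply the resulting identity by $b_\mu$ and sum over $\mu$.

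The key observation is that the sum $F_N=\sum_{\sigma_1,\ldots,\sigma_r\in S_N}$ splits naturally according to the positions $\sigma_i^{-1}(N)$. I separate a ``diagonal'' subsum (Case A) where $\sigma_i(1)=N$ for every $i$, from all remaining summands (Case B). In Case A the first $f$-factor becomes $f(a_1,\ldots,a_r)$, while the remaining $N-1$ factors, interlaced with $b_1,\ldots,b_{N-2}$, reassemble to $F_{N-1}(a_{ij},b_k)=a$. Thus Case A contributes $f(a_1,\ldots,a_r)\cdot c_\mu\cdot a$, and after left-multiplication by $b_\mu$ and summation it yields $(\sum_\mu b_\mu f(a_1,\ldots,a_r)c_\mu)\cdot a$. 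Every Case B summand has some $\sigma_{i_0}(1)\neq N$, so its first $f$-factor is $f(a_1',\ldots,a_r')$ with each $a_i'\in\{a_i,a_{i1},\ldots,a_{i,N-1}\}$ and at least one $a_i'=a_{i\sigma_i(1)}$ in the ``$a_{ik}$'' set --- exactly the shape demanded by the lemma.

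The crucial point is that everything to the right of $c_\mu$ in each summand is independent of $\mu$, since $\mu$ enters only through $y_1=c_\mu$. Hence, collecting terms according to the first-factor tuple $(a_1',\ldots,a_r')$ and letting $a'\in A$ be the (sign-adjusted) aggregate of the $\mu$-independent tails, the identity $\sum_\mu b_\mu F_N=0$ rearranges to
\[
\Big(\sum_\mu b_\mu f(a_1,\ldots,a_r)c_\mu\Big)a \;=\; \sum \Big(\sum_\mu b_\mu f(a_1',\ldots,a_r')c_\mu\Big)a',
\]
which is the desired conclusion. The only real work is the combinatorial bookkeeping of the Case A versus Case B split; the fact that the $c_\mu$'s always sit immediately to the right of the first $f$-factor (so that the grouping $\sum_\mu b_\mu f(\cdot)c_\mu$ is legitimate) follows automatically from the placement of $y_1$ in the definition of $F_N$.
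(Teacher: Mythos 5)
Your proof is correct and reconstructs exactly what the paper leaves implicit when it says the lemma ``immediately'' follows from $F_N = 0$: substitute $x_{ij}=a_{ij}$ for $j\le N-1$, $x_{iN}=a_i$, $y_1=c_\mu$ and $y_{k}=b_{k-1}$ for $2\le k\le N-1$, multiply on the left by $b_\mu$, sum over $\mu$, and split the sum over $(\sigma_1,\ldots,\sigma_r)\in S_N^{\,r}$ according to whether all $\sigma_i(1)=N$. The Case A subsum (all $\sigma_i(1)=N$) reassembles, via the bijection $\tau_i(j)=\sigma_i(j+1)$ between $\{\sigma_i:\sigma_i(1)=N\}$ and $S_{N-1}$, into $\left(\sum_\mu b_\mu f(a_1,\ldots,a_r)c_\mu\right)F_{N-1}(a_{ij},b_k)=\left(\sum_\mu b_\mu f(a_1,\ldots,a_r)c_\mu\right)a$, while every Case B summand has the required form with some $a_{i_0}'\in\{a_{i_0,1},\ldots,a_{i_0,N-1}\}$ and a $\mu$-independent tail $a'\in A$. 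This is the intended argument. One point worth making explicit, since you rely on it: for $c_\mu$ equal to a scalar multiple of $1$ the substitution $y_1=c_\mu$ requires $F_N=0$ to hold with $y_k$ in $\widehat A=A+F1$ rather than merely in $A$; this is guaranteed by the way $F_{N_2}=0$ is derived from the nilpotency of the ideal $\operatorname{Id}_{\til A}(f(a_1,\ldots,a_r))$ (the ideal allows the identity element as an outer factor), and it persists for the minimal $N$ once that reading is fixed throughout.
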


The ideal $I$ generated by $f(a_1,\cdots,a_r)$ in $A$ has finite codimension.  Let $A^l\subseteq I.$ 

\begin{corollary}\label{CorollaryL31}
	Let $u$ be a homogeneous element of degree $\geq l.$  Then for arbitrary $b_\mu,c_\mu\in A+F1$ the element $(\sum\limits_\mu b_\mu u c_\mu)a$ is a linear combination of elements $(\sum\limits_\mu b_\mu u'c_\mu)a',$ where $u'\in A$ are homogeneous elements, $\deg u'<\deg u.$
\end{corollary}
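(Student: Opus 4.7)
The plan is to combine the graded structure of the ideal $I$ generated by $f(a_1,\dots,a_r)$ in $A$ with \Cref{Lemma31} applied termwise. Since $\deg u \geq l$, we have $u \in A^l \subseteq I$. Because the generator $f(a_1,\dots,a_r)$ is homogeneous, $I$ is a graded ideal of $A$, so I will write $u$ as a finite sum
\[
u = \sum_{\alpha} v_{\alpha}\, f(a_1,\dots,a_r)\, w_{\alpha},
\]
with $v_\alpha, w_\alpha \in A + F1$ homogeneous and $\deg v_\alpha + \deg f(a_1,\dots,a_r) + \deg w_\alpha = \deg u$ for every $\alpha$.

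Next, for each $\alpha$, I will apply \Cref{Lemma31} to $\bigl(\sum_\mu (b_\mu v_\alpha)\, f(a_1,\dots,a_r)\, (w_\alpha c_\mu)\bigr)\,a$, with $b_\mu v_\alpha$ and $w_\alpha c_\mu$ playing the roles of $b_\mu$ and $c_\mu$ in the lemma. This rewrites each such expression as a linear combination of terms
\[
\Bigl(\sum_\mu b_\mu\, v_\alpha f(a_1^\beta,\dots,a_r^\beta) w_\alpha\, c_\mu\Bigr)\,a^\beta,
\]
where each $a_i^\beta$ lies in $\{a_i\}\cup\{a_{ik}: 1\leq k\leq N-1\}$ and at least one $a_i^\beta$ lies in $\{a_{ik}\}$. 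Setting $u'_{\alpha\beta} := v_\alpha\, f(a_1^\beta,\dots,a_r^\beta)\, w_\alpha$, which is homogeneous since $f$ is multilinear in homogeneous arguments and $v_\alpha, w_\alpha$ are homogeneous, and then summing over $\alpha$, I will obtain the required expression for $\bigl(\sum_\mu b_\mu u c_\mu\bigr)a$.

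The only point needing a brief check is the strict inequality $\deg u'_{\alpha\beta} < \deg u$: in the setup preceding \Cref{Lemma31} the integer $t$ was chosen with $t > \max_{i,k}\deg a_{ik}$ and the homogeneous $a_i$ were taken in $L^t$, so $\deg a_i \geq t > \deg a_{ik}$ for every $i,k$. Since $f$ has positive degree in each variable, replacing at least one $a_i$ by some $a_{ik}$ strictly decreases $\deg f(a_1^\beta,\dots,a_r^\beta)$, and hence $\deg u'_{\alpha\beta} < \deg u$. No serious obstacle arises; the corollary is essentially a bookkeeping consequence of \Cref{Lemma31} applied to a graded decomposition of $u$ inside $I$.
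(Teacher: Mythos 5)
Your proof is correct and takes essentially the same approach as the paper: both decompose $u\in I$ as a homogeneous sum of terms $b_\nu' f(a_1,\dots,a_r) c_\nu'$ and then invoke Lemma~\ref{Lemma31} together with the degree inequalities $\deg a_{ik}<t\leq\deg a_i$ to get the strict drop in $\deg u'$. The only cosmetic difference is that you apply Lemma~\ref{Lemma31} separately to each term of the decomposition rather than once to the combined family $\{(b_\mu b_\nu',c_\nu' c_\mu)\}_{\mu,\nu}$, which just produces a linear combination with more (equally valid) terms.
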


Indeed, from $u\in A^\ell \subseteq I$ it follows that $u=\sum\limits_\nu b_\nu' f(a_1, \cdots, a_r)c_\nu'$, where $b_\nu', c_\nu'$ are homogeneous elements, $\deg b_\nu'+\deg c_\nu'+\sum\limits_{i=1}^r\deg a_i=\deg u$. Lemma \ref{Lemma31} implies that $(\sum\limits_{\mu,\nu}b_\mu b_\nu' f(a_1, \cdots, a_r)c_\nu' c_\mu)a$ is a linear combination of elements $(\sum\limits_{\mu,\nu} b_\mu b_\nu' f(a_1', \cdots, a_r')c_\nu' c_\mu )a',$ where\linebreak $\deg a_i' \leq \deg a_i$ for all $i$ and at least for one $i$, we have $\deg a_i' < \deg a_i$. Hence for $u'=\sum\limits_\nu b_\nu' f(a_1', \cdots, a_r')c_\nu '$ we have $\deg u' < \deg u.$

\begin{lemma}
	\label{Lemma32}
	Let $h(x_1,\cdots,x_q)$ be a multilinear element of the free associative algebra such that for arbitrary homogeneous elements \linebreak$u_1,\cdots,u_q\in A$ of degrees $\deg u_1<l,\cdots,\deg u_q<l$ we have \linebreak$h(u_1,\cdots,u_q)=0.$  Then $h=0$ holds identically on $A.$
	\begin{proof}
		Let $v_1,\cdots,v_q\in A$ be homogeneous elements of $A$ such that $h(v_1,\cdots,v_q)\neq0.$  Let the total degree $\sum\limits_{i=1}^q\deg(v_i)$ be minimal among all $q$-tuples with this property.  At least one element $v_i$ has degree $\geq l.$

	Let us show that the graded just infinite algebra $A$ is graded prime.  Indeed, if $I_1,I_2$ are nonzero graded ideals of $A$ then $A^{t_1}\subseteq I_1,A^{t_2}\subseteq I_2$ for some integers $t_1,t_2\geq 1.$  If $I_1I_2=(0)$ then $A^{t_1+t_2}=(0),$ a contradiction.
		
		Hence there exists an element $b\in A$ such that $h(v_1,\cdots,v_q)ba\neq0.$
		
		By \Cref{CorollaryL31} the element $h(v_1,\cdots,v_q)ba$ is a linear combination of elements $h(v_1	,\cdots,v_{i-1},v',v_{i+1},\cdots,v_q)ba',$ where $\deg v'<\deg v_i.$  This contradicts the minimality of $\sum\limits_{i=1}^q\deg(v_i)$ and finishes the proof of the lemma.
	\end{proof}
\end{lemma}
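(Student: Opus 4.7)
The plan is to argue by contradiction, exploiting graded primeness of $A$ together with the degree-reduction mechanism built into \Cref{CorollaryL31}. Assume $h$ is not identically zero on $A$; since $h$ is multilinear and $A$ is spanned by its homogeneous components, I may choose homogeneous $v_1,\dots,v_q\in A$ with $h(v_1,\dots,v_q)\neq 0$ and the total degree $\sum_i\deg(v_i)$ minimal. The hypothesis then forces $\deg(v_i)\geq l$ for at least one index $i$.

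The first substep is to observe that the graded just infinite algebra $A$ is graded prime. Indeed, any nonzero graded ideal $I_j$ of $A$ has finite codimension and hence contains some power $A^{t_j}$; if $I_1I_2=(0)$, then $A^{t_1+t_2}=(0)$, contradicting $\dim_F A=\infty$. Applied to the graded ideals generated by $h(v_1,\dots,v_q)$ and by the fixed nonzero element $a=F_{N-1}(a_{ij},b_k)$, graded primeness yields a homogeneous element $b\in A$ with
\[h(v_1,\dots,v_q)\,b\,a\neq 0.\]

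The second substep is the actual degree reduction. By multilinearity of $h$ in the variable $x_i$, write
\[h(v_1,\dots,v_q)=\sum_\mu b_\mu\,v_i\,c_\mu,\]
where the $b_\mu,c_\mu\in A+F1$ are built from the remaining $v_j$'s. Since $\deg(v_i)\geq l$, \Cref{CorollaryL31} (applied with $u=v_i$) expresses $h(v_1,\dots,v_q)\,b\,a$ as a linear combination of elements $\bigl(\sum_\mu b_\mu u'c_\mu\bigr)\,b\,a'$ with $u'\in A$ homogeneous and $\deg(u')<\deg(v_i)$. Using multilinearity in the reverse direction, $\sum_\mu b_\mu u'c_\mu=h(v_1,\dots,v_{i-1},u',v_{i+1},\dots,v_q)$. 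Since the total sum is nonzero, at least one summand is nonzero, so in particular $h(v_1,\dots,v_{i-1},u',v_{i+1},\dots,v_q)\neq 0$ for some $u'$ with $\deg(u')<\deg(v_i)$. This strictly decreases $\sum_j\deg(v_j)$, contradicting the minimality of our chosen tuple.

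The only step requiring care — and the one I would double-check — is the passage from ``$h(v_1,\dots,v_q)\,b\,a\neq 0$'' to ``$h(v_1,\dots,v_{i-1},u',v_{i+1},\dots,v_q)\neq 0$ for some $u'$.'' Nonvanishing of the tail $ba'$ after the substitution is not automatic, so the argument must rely on the entire linear combination being nonzero rather than any individual term; then one picks a nonzero summand and uses multilinearity of $h$ to reinterpret it as a substitution into $h$ of smaller total degree. The remainder of the proof is a routine combination of graded primeness with the recursion furnished by \Cref{CorollaryL31}.
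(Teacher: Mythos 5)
Your proof is correct and follows essentially the same line as the paper's: graded primeness of $A$ to produce $b$ with $h(v_1,\dots,v_q)ba\neq 0$, then multilinearity in $x_i$ to put $h(v_1,\dots,v_q)ba$ in the form $(\sum_\mu b_\mu v_i (c_\mu b))a$ so that Corollary~\ref{CorollaryL31} applies, yielding a contradiction with the minimal total degree. The only difference is that you spell out the step the paper leaves implicit — that nonvanishing of the linear combination forces some summand $h(v_1,\dots,v_{i-1},u',v_{i+1},\dots,v_q)ba'$ to be nonzero, whence $h$ is nonzero on a tuple of strictly smaller total degree — and that reasoning is exactly right.
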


\begin{remark*}
A nonzero element $h(x_1,\cdots, x_q)$ satisfying the hypothesis of Lemma \ref{Lemma32} exists for an arbitrary finitely generated algebra. Moreover, for an arbitrary associative algebra $A$ and a finite dimensional subspace $V\subset A$, there exists a multilinear element $h(x_1,\cdots, x_q)$ of the free associative algebra such that $h(u_1,\cdots, u_q)=0$ for all elements $u_1,\cdots, u_q\in V$. Indeed, it is sufficient to choose an element that is skew-symmetric in $x_1,\cdots, x_q$, where $q=\dim_FV+1$ , for example the element
\[h(x_1,\cdots, x_q)=\sum\limits_{\sigma\in S_q}(-1)^{|\sigma|}x_{\sigma(1)}\cdots x_{\sigma(q)}.\]
\end{remark*}

\begin{proof}[Proof of \Cref{Theorem1}] 
	
	It is known (see \cite{NO}) that a graded prime algebra is prime.  By \Cref{Lemma32}, $A$ is a PI-algebra.  The prime PI-algebra $A$ has a nonzero center $Z$ and the ring of fractions $(Z\setminus\{0\})^{-1}A$ is a finite dimensional associative algebra over the field $(Z\setminus\{0\})^{-1}Z$ (see \cite{R}).  Now the Engel-Jacobson theorem \cite{J1} implies that the algebra $A$ is nilpotent, a contradiction.  Thus \Cref{Theorem1} is proved.
\end{proof}

\begin{proof}[Proof of \Cref{Theorem2}]
	Let $G$ be a residually-$p$ finitely generated torsion group.  Let $G=G_1\geq G_2\geq\cdots$ be the Zassenhaus filtration.  Consider the Lie algebra $L_p(G)=\bigoplus\limits_{i\geq 1}G_i/G_{i+1}.$  Because of the torsion property of elements of $G$, an arbitrary homogeneous element $a\in G_i/G_{i+1}$ of the Lie algebra $L_p(G)$ is ad-nilpotent (see \cite{K3,VL}).
	
	Consider the subalgebra $L$ of $L_p(G)$ generated by $G_1/G_2.$  If the Lie algebra $L_p(G)$ satisfies a polynomial identity then by \Cref{Theorem1} the finitely generated Lie algebra $L$ is nilpotent.  This implies that the pro-$p$ completion $G_{\hat{p}}$ of the group $G$ is $p$-adic analytic and therefore linear (see \cite{DSMS}).  Now finiteness of $G$ follows from theorems of Burnside and Schur \cite{J3}.
\end{proof}

\begin{proof}[Proof of \Cref{Theorem3}]
	Let $Fr$ be the free pro-$p$ group.  Let $Fr=Fr{_1}>Fr_{2}>\cdots$ be the Zassenhaus filtration of $Fr$.  Suppose that the pro-$p$ completion $G_{\hat{p}}$ satisfies the pro-$p$ identity $w=1,\ w\in Fr{_n}\setminus Fr_{n+1},$ hence $w=\rho_1^{p^{s_1}}\cdots\rho_r^{p^{s_r}}w',$ where each $\rho_i$ is a left normed group commutator of length $l_i,\ p^{s_i}\cdot l_i=n,\ w'\in Fr_{n+1}.$
	
	Considering, if necessary, $[w,x_0]$ instead of $w$, we can assume that $n$ is not a multiple of $p$, and $w=\rho\cdots\rho_rw',$ where all commutators $\rho_1,\cdots,\rho_r$ are of length $n.$
	
	Let $\bar{\rho_i}$ be the commutator from the free Lie algebra that mimics the group commutator $\rho_i$.  Then the Lie algebra $L_p(G)$ satisfies the polynomial identity $\sum\limits_i\bar{\rho_i}=0.$  By \Cref{Theorem2} we conclude that $|G|<\infty.$
\end{proof}

\bibliographystyle{alphanum}
\bibliography{Lie Algebras and Torsion Groups with Identity}

\end{document}